\def\R{\mathbb R}
\def\N{\mathbb N}
\def\T{{\mathcal T}}
\def\H{{\mathcal H}}
\def\Rmax{R_{\mathrm{max}}}
\def\Rmin{R_{\mathrm{min}}}
\def\B{{\mathcal B}}
\def\Ri{{\mathcal R }}
\def\L{{\mathcal L}}
\def\M{{\mathcal M}}
\let\originalleft\left
\let\originalright\right
\renewcommand{\left}{\mathopen{}\mathclose\bgroup\originalleft}
\renewcommand{\right}{\aftergroup\egroup\originalright}
\newcommand{\ft}[0]{\footnotesize}
\newcommand{\im}{\mathrm{im}}
\def\bcr{\begin{color}{red}}
\def\bcb{\begin{color}{blue}}
\definecolor{darkgreen}{RGB}{0,150,0}
\def\bcg{\begin{color}{darkgreen}}
\def\ec{\end{color}}
\def\be{\begin{equation}}
\def\ee{\end{equation}}
\newtheorem{theorem}{Theorem}[section]
\newtheorem{definition}[theorem]{Definition}
\newtheorem{proposition}[theorem]{Proposition}
\newtheorem{defnprop}[theorem]{Definition \& Proposition}
\newtheorem{cor}[theorem]{Corollary}
\newtheorem{lemma}[theorem]{Lemma}
\newtheorem{defnlem}[theorem]{Definition \& Lemma}
\newtheorem{defnrem}[theorem]{Definition \& Remark}
\newtheorem{remark}[theorem]{Remark}
\numberwithin{equation}{section}
\title{A Birman-Schwinger Principle in General Relativity: Linearly Stable Shells of Collisionless Matter Surrounding a Black Hole}
\author{Sebastian~G\"unther, Gerhard~Rein, Christopher~Straub \vspace{0.4cm}   \\ 
  Department of Mathematics, University of Bayreuth, Germany}
\begin{document}

\maketitle

\begin{abstract}
We develop a Birman-Schwinger principle for the spherically symmetric, asymptotically flat Einstein-Vlasov system. It characterizes stability properties of steady states such as the positive definiteness of an Antonov-type operator or the existence of exponentially growing modes in terms of a one-dimensional variational problem for a Hilbert-Schmidt operator. This requires a refined analysis of the operators arising from linearizing the system, which uses action-angle type variables. For the latter, a single-well structure of the effective potential for the particle flow of the steady state is required. This natural property can be verified for a broad class of singularity-free steady states. As a particular example for the application of our Birman-Schwinger principle we consider steady states where a Schwarzschild black hole is surrounded by a shell of Vlasov matter. 
We prove the existence of such steady states and derive linear stability if the mass of the Vlasov shell is small compared to the mass of the black hole.
\end{abstract}

\vspace{-.5cm}

\tableofcontents

\section{Introduction}

In the framework of general relativity, we consider a large ensemble of massive particles which interact only through the Einstein equations 
\begin{equation}\label{eq:einstein_equations}
	G_{\alpha \beta} = 8\pi T_{\alpha \beta}.
\end{equation}
Here the Einstein tensor $G_{\alpha\beta}$ is induced by a Lorentzian metric $g_{\alpha \beta}$ with signature $(-,+,+,+)$ and determines the curvature of spacetime.  The energy-momentum tensor $T_{\alpha \beta}$ represents the matter and energy content of spacetime. Greek indices always run from $0$ to $3$. We write the metric in local coordinates $x^\alpha$, 
\[ 
ds^2 = g_{\alpha \beta} dx^\alpha dx^\beta,
\]
where the Einstein summation convention is employed. 
We assume that all the particles have rest mass equal to $1$ and
move forward in time. Thus, the particle density $f$ is supported on the
corresponding mass shell, and we write $f = f (t, x^i , p^j )$, where
we use $t:=x^0$ as a timelike coordinate,  Latin indices run from
$1$ to $3$, $p^\alpha$ are the canonical momentum variables
corresponding to $x^\alpha$, and $p^0$ is eliminated via the
mass shell condition $g_{\alpha\beta} p^\alpha p^\beta  = -1$.


The energy-momentum tensor induced by $f$ is
\begin{equation}\label{eq:energy_mom_tensor}
	T_{\alpha\beta} = \int_{\R^3} p_\alpha p_\beta f |g|^{\frac 12} \frac{dp^1 dp^2 dp^3}{-p_0},
\end{equation}
where we define $|g|$ as the modulus of the determinant of the metric. The evolution of $f$ is determined by the Vlasov equation
\begin{equation}\label{eq:vlasoveq_general}
	\partial_t f + \frac{p^i}{p^0} \partial_{x^i} f - \frac{1}{p^0} \Gamma^i_{\alpha \beta} p^\alpha p^\beta \partial_{p^i} f=0 ,
\end{equation}
also called the collisionless Boltzmann equation, where $\Gamma^\gamma_{\alpha \beta}$ are the Christoffel symbols corresponding to the metric $g_{\alpha\beta}$. The Einstein-Vlasov system is given by \eqref{eq:einstein_equations}, \eqref{eq:energy_mom_tensor}, \eqref{eq:vlasoveq_general} together with suitable boundary conditions and initial data for $f$. We consider isolated systems and thus impose asymptotic flatness of the spacetime. In its general form, the Einstein-Vlasov system is very difficult to handle which is why we restrict our study to the spherically symmetric case. We refer to \cite{An2011,Rein95} for an in-depth discussion.

\subsection{The Einstein-Vlasov system in Schwarzschild coordinates}
We simplify the Einstein-Vlasov system by introducing so-called Schwarzschild coordinates where the metric takes the form
\begin{equation}\label{eq:schwarzschild_metric}
	ds^2 = -e^{2\mu(t,r)} dt^2 + e^{2\lambda(t,r)} dr^2 + r^2 ( d\theta ^2 + \sin^2(\theta) d\phi^2). 
\end{equation}
The metric coefficients $\mu$ and $\lambda$ are functions on $I \times [0,\infty[$ for an interval $I\subset \R$
and depend on the time-coordinate $t\in I$ and the area radius $r\geq0$; $t$ can be thought of as the proper time of an observer located at spatial infinity.
The coordinates $\theta \in [0,\pi]$ and $\phi \in [0,2\pi]$ correspond to the angular coordinates in spherical symmetry. We introduce Cartesian coordinates
\[ 
	x = (x^1, x^2, x^3) = r(\sin(\theta) \cos(\phi), \sin(\theta) \sin(\phi), \cos(\theta)),
\]
and non-canonical momentum variables 
\[ 
	v^i = p^i + (e^\lambda-1) \frac{x\cdot p}{r} \frac{x^i}{r}, \quad i=1,2,3. 
\]
Here $x \cdot p$ denotes the Euclidean scalar product and we define $|v|^2 \coloneqq v\cdot v$. Derivatives with respect to $t$ and $r$ are denoted with $\dot{\phantom{.}}$ and $'$ respectively. Spherical symmetry of $f$ corresponds to 
\begin{equation}\label{eq:sphsym}
		f(t,x,v) = f(t,Ax, Av), \quad x,v\in \R^3, \quad A \in \mathrm{SO}(3).
\end{equation}
In these variables the Einstein-Vlasov system in spherical symmetry and Schwarzschild coordinates reads as follows: 
\begin{align}
	\partial_t f + e^{\mu-\lambda} \frac{v}{\sqrt{1+|v|^2}} \cdot \partial_x f- \left (\dot \lambda \frac{x\cdot v}{r} + e^{\mu-\lambda} \mu' \sqrt{1+|v|^2} \right  ) \frac{x}{r} \cdot \partial_v f=0, \label{eq:vlasov}
\end{align}
\begin{align}
	e^{-2\lambda}(2r\lambda'-1)+1 &= 8\pi r^2 \rho_f \label{eq:fieldeq1}, \\
	e^{-2\lambda}(2r\mu'+1)-1 &= 8\pi r^2 p_f	\label{eq:fieldeq2} , \\
	\dot \lambda= -4\pi r& e^{\lambda+\mu} j_f \label{eq:fieldeq3}, \\
	e^{-2\lambda} \left (\mu'' + (\mu'-\lambda')(\mu'+\frac 1r)\right ) - &e^{-2\mu} \left (\ddot{\lambda} + \dot \lambda(\dot \lambda-\dot \mu)\right ) = 8\pi q_f \label{eq:fieldeq4} ,
\end{align}
\begin{align}
	\rho_f(t,r) &= \rho_f(t,x)= \int_{\R^3}   \sqrt{1+|v|^2} \, f(t,x,v)\, dv ,\label{eq:rho}\\
	p_f(t,r) &= p_f(t,x)= \int_{\R^3}  \left  (\frac{x\cdot v}{r}\right )^2 \, f(t,x,v)\, \frac{dv}{ \sqrt{1+|v|^2} } , \label{eq:p} \\
	j_f(t,r )&= j_f(t,x)=  \int_{\R^3}   \frac{x \cdot v}{r} \,f(t,x,v)\, dv , \label{eq:j} \\
	q_f(t,r) &= q_f(t,x) = \frac 12 \int_{\R^3} \left |\frac{x \times v}{r}\right |^2 \,f(t,x,v) \frac{dv}{ \sqrt{1+|v|^2} }. \label{eq:q}
\end{align}
Here, \eqref{eq:vlasov} is the Vlasov equation, \eqref{eq:fieldeq1}--\eqref{eq:fieldeq4} are the field equations, and~\eqref{eq:rho}--\eqref{eq:q} represent the energy momentum tensor. The system is not complete without boundary conditions and initial data. In both settings we consider, we impose an asymptotically flat spacetime, i.e.,
\begin{equation}\label{eq:asym_flat}
	\lim_{r\to\infty}\mu(t,r) = 0  = 	\lim_{r\to\infty}\lambda(t,r) ,
\end{equation}
and that $f(t)$ has compact support. For the remaining boundary and initial conditions, we distinguish between two situations. On the one hand, we consider singularity-free spacetimes with non-negative, spherically symmetric, initial distributions $\mathring f \in C^1_c(\R^3\times \R^3)$ satisfying
\begin{equation}\label{eq:sing_free_2mr}
	4\pi \int_0^r \rho_{\mathring f}(s)s^2 \, ds < \frac{r}{2}, \quad r  > 0,
\end{equation} 
and impose 
\begin{equation}\label{eq:sing_free_boundary}
	\lambda(t,0) = 0 .
\end{equation}
We call \eqref{eq:vlasov}--\eqref{eq:sing_free_boundary} the {\em singularity-free Einstein-Vlasov system}. On the other hand, we consider the setting where a Schwarzschild black hole of mass $M>0$ is situated at the center of the spacetime. Schwarzschild coordinates can only cover points of the spacetime where $r>2M$. Therefore, we allow non-negative, spherically symmetric, initial distributions $\mathring f \in C^1_c\left (\{x\in \R^3\mid r>2M\}\times \R^3\right  )$ with
\begin{equation}\label{eq:sing_2mr}
	M + 	4\pi \int_{2M}^r \rho_{\mathring f}(s)s^2 \, ds<  \frac{r}{2}, \quad r > 2M,
\end{equation} 
and prescribe
\begin{equation}\label{eq:blackhole_boundary}
	\lim_{r\to 2M} e^{-2\lambda(t,r)} = 0.
\end{equation}
Accordingly, we call \eqref{eq:vlasov}--\eqref{eq:asym_flat}, \eqref{eq:sing_2mr}, \eqref{eq:blackhole_boundary} the {\em Einstein-Vlasov system with Schwarzschild-singularity of mass $M$}. In both settings ($M=0$ and $M>0$), there are conserved quantities. Firstly, the ADM-mass given by
\[ 
M_{ADM} \coloneqq M +  4 \pi \int_{2M}^\infty \rho_f(r) r^2 \, dr
\]
and, secondly, the Casimir functionals 
\[ 
N_{\zeta} \coloneqq  \int_{\{r>2M\}} \int_{\R^3} e^{\lambda} \, \zeta(f) \, dvdx,
\]
where $\zeta \in C^1(\R)$ with $\zeta(0)=0$. The special case $N \coloneqq N_{\mathrm{id}}$ can be interpreted as the number of particles or equivalently the rest mass of the system. 

A comment on terminology is at order: when talking about the Einstein-Vlasov system, we generally mean both settings introduced above. If a statement is only applicable in one setting, we always refer to the singularity-free or the Schwarzschild-singularity situation, respectively. 

In the singularity-free setting there exist unique, local in-time solutions for smooth, compactly supported initial data together with a continuation criterion \cite{Rein95,RR92a}. 
Similar results are known in other coordinates, e.g., in maximal-areal coordinates~\cite{GueRe21} or maximal isotropic coordinates~\cite{Re97}. 
In the case of a Schwarzschild singularity at the center, it can be shown that the methods from \cite{ReReSch95} yield a global existence result in Schwarzschild time. 

\subsection{Steady states and previous stability results}\label{sc:introsc}

The Einstein-Vlasov system possesses a plethora of physically reasonable equilibria whose stability behavior is analyzed in this work.

In the singularity-free setting, a classical way~\cite{RaRe2013,Rein94,ReRe00} of constructing steady states is due to the following observation: Any sufficiently regular function of the form $f_0=\varphi(E,L)$ solves the Vlasov equation~\eqref{eq:vlasov} since the {\em particle energy}
\begin{equation}\label{eq:ststparticleenergy}
	E=E(x,v)= e^{\mu_0(r)} \sqrt{1+|v|^2},
\end{equation}
with $\mu_0$ induced by $f_0$ via \eqref{eq:fieldeq1}--\eqref{eq:p}, and the {\em angular momentum} $L \coloneqq |x\times v|^2$ are preserved along the characteristic flow of the Vlasov equation. 
Consequently, the Einstein-Vlasov system reduces to the field equations. 
It is well-known that for a large class of microscopic equations of state~$\varphi$ there exist solutions of the reduced system which correspond to compactly supported steady states with finite mass; we recall these arguments in Section~\ref{sc:stst_classic}.

The first work analyzing stability in the context of the singularity-free Einstein-Vlasov system is~\cite{Ip1980}, where the system is linearized around suitable equilibria and it is shown that linear stability investigations come down to the spectral analysis of an associated unbounded operator. 
This is similar as for the spherically symmetric, gravitational Vlasov-Poisson system---the non-relativistic counterpart to the Einstein-Vlasov system---where both linear and non-linear stability of all physically relevant steady states is known by now~\cite{An1961,DoFeBa,GuRe99,KS,LeMeRa12}. In the relativistic setting, 
it was shown on a linearized level in~\cite{HaRe2013,HaRe2014} that suitable singularity-free steady states are stable for small values of the central redshift, i.e., if the setting is not too relativistic. In~\cite{HaLiRe2020} the converse statement was proven, i.e., steady states become linearly unstable for large values of the central redshift; note that this is in sharp contrast to the non-relativistic situation. In addition, it is shown that there is a trichotomy in phase space into a stable, unstable, and center space. In work towards non-linear stability,  it was tried to obtain steady states as minimizers of an appropriate energy-Casimir functional in~\cite{Wo}, but~\cite{Wo} contains serious flaws~\cite{AnKu20}. In~\cite{AnKu} the corresponding Euler-Lagrange equation is solved, but non-linear stability is still very much elusive. 

Non-linear stability in the singularity-free setting has, however, been studied numerically for decades. A question of particular interest is to determine the point(s) where stability changes along suitable families of equilibria, e.g., steady states with fixed equation of state parametrized by a redshift factor. It has been conjectured and supported in various numerical investigations~\cite{AnRe2006, Praktikum20, Ip1980, ShTe1985_2, Ze1971, Ze1966} that changes in stability correspond to critical points of the so-called binding energy
\[ 
E_b = \frac{N-M_{ADM}}{N}.
\]
However, recent numerical evidence~\cite{GueStRe21} strongly contradicts this hypothesis and shows that stability behaviors can be much more diverse than previously thought. As already stated by Ipser~and~Thorne~\cite{Ip1980}, new versatile criteria are needed in order to gain more understanding of stability issues in general relativity.

The existence of static shells around a Schwarzschild black hole has been shown in~\cite{Jab2021,Rein94}, but we develop a different approach adapted to our linear stability analysis. More precisely, we consider the modified ansatz
\begin{equation}\label{eq:ststansatzlochINTRO}
	f^\delta(x,v)=\delta\,\tilde\chi(r)\,\varphi\left(E(x,v),L(x,v)\right),
\end{equation}
where $\delta>0$ controls the size of the static solution, $\tilde\chi\colon\R\to\R$ is a radial cut-off function, and~$\varphi$ is similar to the singularity-free setting. Again, $E$ is the particle energy~\eqref{eq:ststparticleenergy} associated to $f^\delta$ and $L$ is the angular momentum. If~$\tilde{\chi}$ and~$\varphi$ are suitably chosen, we  show in Section~\ref{sc:stst_hole} that~\eqref{eq:ststansatzlochINTRO} indeed defines a physically reasonable static solution of the Einstein-Vlasov system with Schwarzschild-singularity  by combining the techniques from~\cite{RaRe2013} and~\cite{Rein94}; we will have $\tilde\chi\equiv1$ on the steady state support.

To the authors' knowledge, there are no previous results---neither analytical nor numerical---concerning stability of shells of Vlasov matter around a black hole.
Recently, related steady states were constructed for the massless Einstein-Vlasov system~\cite{An2021}.
Moreover, non-linear stability of certain static solutions of the spherically symmetric Vlasov-Poisson system with a fixed central point mass---which can be interpreted as the non-relativistic analogue of the Einstein-Vlasov system with Schwarzschild-singularity---was shown in~\cite{Sc09}.

\subsection{Main results}
Our first main result introduces a reduced, one-dimensional variational principle for the linear stability of static solutions to the Einstein-Vlasov system and gives a new sufficient condition for linear stability. We emphasize that the theorem is applicable to steady states as described above both with and without a Schwarzschild-singularity; we discuss the required assumptions below and refer to Section~\ref{sc:ststconditions} for a detailed specification of the classes of steady states we handle in our investigation.

\begin{theorem}[A reduced variational principle]\label{thm:mathur_stability}
	Consider a static solution to the Einstein-Vlasov system as above. Then, there exists a semi-explicit integral kernel $K \in L^2([0,\infty[^2)$ which is compactly supported and depends on the steady state, such that the following holds:
	\begin{enumerate}[label=(\alph*)]
		\item\label{it:stab1} The steady state is linearly stable if, and only if,
		\[
		\sup \limits_{ \substack{G\in L^2([0,\infty[) \\ \|G\|_{2}=1} }  \int_{0}^{\infty}\int_{0}^{\infty} K(r,s) G(r)G(s) \, ds dr < 1.
		\]
		If equality holds, there exists a zero-frequency mode but no exponentially growing mode.
		\item\label{it:stab2} The number of exponentially growing modes of the steady state is finite and strictly bounded by $\|K\|_{L^2([0,\infty[^2)}^2$. 
		\item\label{it:stab3} The steady state is linearly stable if $\|K\|_{L^2([0,\infty[^2)}<1$.
	\end{enumerate}
\end{theorem}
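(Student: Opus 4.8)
The plan is to reduce the linear stability problem to a spectral question for a self-adjoint operator and then convert that into the one-dimensional variational principle via a Birman–Schwinger-type argument. First I would set up the linearization of the Einstein–Vlasov system around the given steady state $f_0 = \varphi(E,L)$, working in the spherically symmetric, compactly supported class. The dynamically accessible perturbations are generated by the Vlasov flow, so the natural object is the Antonov-type operator $\mathcal{A}$ acting on (odd-in-$v$, or appropriately parametrized) perturbations, which is formally self-adjoint and bounded below on the relevant Hilbert space; linear stability is equivalent to $\mathcal{A} \geq 0$, and exponentially growing modes correspond to negative eigenvalues of $\mathcal{A}$ (equivalently, to negative directions of the second variation of the energy-Casimir functional). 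I would write $\mathcal{A} = \mathcal{T}^* \mathcal{T} - \mathcal{B}$, where $\mathcal{T}$ is the transport part and $\mathcal{B}$ is a positive, gravitationally-induced potential-type operator that factors through the macroscopic density: $\mathcal{B} = \mathcal{C}^* \mathcal{C}$ with $\mathcal{C}$ mapping into a one-dimensional (radial) space $L^2([0,\infty[)$. This factorization is the heart of the matter and is exactly where the action-angle variables for the particle flow (and hence the single-well hypothesis on the effective potential) enter: they are needed to invert the transport operator $\mathcal{T}^*\mathcal{T}$ on its range and to express everything in terms of the radial variable.

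The key steps, in order. (1) Use the action-angle variables to diagonalize the free transport operator and identify $\ker \mathcal{T}^*\mathcal{T}$ and a bounded inverse on the orthogonal complement; this yields the resolvent-type operator $(\mathcal{T}^*\mathcal{T})^{-1}$ needed below. (2) Establish the Birman–Schwinger correspondence: $\mathcal{A}$ has $n$ negative eigenvalues if and only if the Birman–Schwinger operator $\mathcal{C}\,(\mathcal{T}^*\mathcal{T})^{-1}\mathcal{C}^*$, acting on $L^2([0,\infty[)$, has $n$ eigenvalues strictly greater than $1$; and $\mathcal{A}$ has a nontrivial kernel (zero-frequency mode) transverse to $\ker\mathcal{T}$ exactly when $1$ is an eigenvalue. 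Define $K(r,s)$ to be the integral kernel of this Birman–Schwinger operator $\mathcal{K} := \mathcal{C}\,(\mathcal{T}^*\mathcal{T})^{-1}\mathcal{C}^*$; it is symmetric, and because $\mathcal{C}$ has compactly supported range (the steady state support is compact) and $(\mathcal{T}^*\mathcal{T})^{-1}$ is bounded there, $K \in L^2([0,\infty[^2)$ with compact support, i.e., $\mathcal{K}$ is Hilbert–Schmidt. (3) Part \ref{it:stab1} is then immediate: linear stability $\iff \mathcal{A}\geq 0 \iff \mathcal{K} \leq 1 \iff \sup_{\|G\|_2=1}\langle \mathcal{K}G,G\rangle \leq 1$, with the strict/non-strict dichotomy giving the statement about zero-frequency versus growing modes. (4) For part \ref{it:stab2}, the number of exponentially growing modes equals the number of eigenvalues of $\mathcal{K}$ exceeding $1$ (counted with multiplicity); since $\mathcal{K}$ is Hilbert–Schmidt, $\sum_j \lambda_j(\mathcal{K})^2 = \|K\|_{L^2}^2$, and each eigenvalue above $1$ contributes more than $1$ to this sum, so the count is finite and strictly less than $\|K\|_{L^2([0,\infty[^2)}^2$. (5) Part \ref{it:stab3} is the contrapositive of \ref{it:stab2}: if $\|K\|_{L^2([0,\infty[^2)} < 1$ then $\mathcal{K}$ has no eigenvalue $\geq 1$ (its operator norm is bounded by its Hilbert–Schmidt norm), so $\mathcal{A} > 0$ and the steady state is linearly stable. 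Alternatively \ref{it:stab3} follows directly from \ref{it:stab1} since $\sup_{\|G\|_2=1}\langle\mathcal{K}G,G\rangle \leq \|\mathcal{K}\| \leq \|K\|_{L^2} < 1$.

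I expect the main obstacle to be step (1)–(2): rigorously justifying the factorization $\mathcal{A} = \mathcal{T}^*\mathcal{T} - \mathcal{C}^*\mathcal{C}$ with the correct domains, and making the Birman–Schwinger reduction precise in this infinite-dimensional, unbounded-operator setting — in particular controlling $(\mathcal{T}^*\mathcal{T})^{-1}$ on the closure of the range of $\mathcal{T}$, handling the kernel of $\mathcal{T}$ (the $(E,L)$-dependent functions, which are dynamically trivial and must be quotiented out), and verifying that the single-well/action-angle machinery indeed delivers the needed invertibility and the $L^2$-boundedness and compact support of the kernel $K$. The black-hole case additionally requires checking that the boundary behavior at $r \to 2M$ does not spoil the compact support or the Hilbert–Schmidt property of $\mathcal{K}$; since the steady state support stays strictly away from the horizon, this should go through, but it needs to be checked. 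Once the Birman–Schwinger operator and its kernel are in hand, parts \ref{it:stab1}–\ref{it:stab3} are short consequences of elementary Hilbert–Schmidt spectral theory.
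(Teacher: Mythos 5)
Your Birman--Schwinger scaffolding (steps (3)--(5)) and the final Hilbert--Schmidt arithmetic match the paper, but the decomposition you start from in steps (1)--(2) is not the one that actually occurs, and this is where the real work of the paper lives. Linearizing and applying Antonov's trick does \emph{not} give $\mathcal{A} = \mathcal{T}^*\mathcal{T} - \mathcal{C}^*\mathcal{C}$ with $\mathcal{T}$ the pure transport operator; it gives $\L = -\B^2 - \Ri$ where $\B = \T + \mathcal{S}$ and $\mathcal{S}$ is a bounded, skew-symmetric but \emph{non-local} operator coming from the coupling to the linearized metric (the $j_f$ and $\mu_f'$ terms). The cross terms $\T\mathcal{S} + \mathcal{S}\T$ are not sign-definite, so you cannot peel them off and absorb them into a nonnegative ``potential'' part; the nonnegative split is $-\B^2 \geq 0$ and $\Ri \geq 0$, and the Birman--Schwinger operator must be built from $\B^{-2}$, not $(\T^*\T)^{-1}$. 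This changes everything in step (1): action-angle variables diagonalize $\T$, not $\B$. The kernel of $\B$ is not ``functions of $(E,L)$'' --- it consists of such functions plus an extra integral correction (Lemma~\ref{lemma:kernel_B}), and the inverse $\B^{-1}$ is constructed as $(\mathrm{id}-\Pi)\widetilde\B^{-1}$, where $\Pi$ is the orthogonal projection onto $\ker(\B)$ and is \emph{not} explicitly known. This projection propagates into the integral kernel $K$, which is exactly why the theorem only claims a ``semi-explicit'' kernel.

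Concretely, the paper's Section~\ref{sc:properties_B} (characterization of $\ker(\B)$, $\ker(\B)^\perp = \im(\B)$, the explicit right-inverse $\widetilde{\B}^{-1}$, the Poincar\'e inequality for $\B$, and the fact that $\ker(\B^2) = \ker(\B)$) is the technical core that your plan skips over by treating the free transport operator. Once $\B^{-2}$ is under control, the Birman--Schwinger operator $Q = -\sqrt{\Ri}\,\B^{-2}\sqrt{\Ri}$ and its reduction to the Mathur operator $\M$ on $L^2([\Rmin,\Rmax])$ (because $\im\left(\sqrt{\Ri}\right)$ consists of functions of the form $|\varphi'|\,w\,\alpha_0(r)\,F(r)$, i.e. your $\mathcal{C}$) proceed exactly as you describe, and the rest of your argument --- eigenvalue equivalence, the min-max monotonicity in the coupling parameter needed to pass from ``$0$ is an eigenvalue of $\L_\gamma$'' to ``number of negative eigenvalues of $\L$'', the Hilbert--Schmidt estimate $\#\{\lambda_j>1\} < \sum_j \lambda_j^2 = \|K\|_{L^2}^2$, and the bound $\|\M\| \leq \|\M\|_{HS}$ --- coincides with the paper. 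So the gap is localized but central: you must work with $\B$, not $\T$, and be prepared for $K$ to involve the non-explicit projection $\Pi$ onto $\ker(\B)$.
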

We define linear stability through the (strict) positivity of the second-order variation of the energy-Casimir functional, see Definition~\ref{def:linear_stab} and Remark~\ref{remark:linear_stab}; there we also describe what we mean by exponentially growing modes and zero-frequency modes. 

The terminology {\em zero-frequency mode} is due to~\cite[\S{}~IV{\textit{f}}]{IT68}, where it was shown that a zero-frequency mode carries one steady state to another nearby equilibrium. Another point of view is that the situation of a zero-frequency mode but no exponentially growing modes corresponds to the point where linear stability might change along a family of steady states depending on some parameter in a continuous way. Thus, the criterion provided by~\ref{it:stab1} might be useful to understand the onset of instability along families of equilibria.

Part~\ref{it:stab2} can be interpreted as a {\em Birman-Schwinger type bound} on the number of exponentially growing modes. Birman-Schwinger bounds are classical in quantum mechanics, where they are, e.g., used to bound the number of negative eigenvalues of Schr\"odinger operators by integrals of the potential, cf.~\cite[Sc.~4.3]{LiSe10} or~\cite[Thm.~XIII.10]{ReSi4}.

There are two limitations of our result. Firstly, our analysis requires that the steady state under investigation is of single-well structure (cf.~Definition~\ref{def:jeans}) and that the periods of the  particle motions (cf.~Definition~\ref{def:periodfunction}) are bounded and bounded away from zero within the equilibrium configuration. These properties are rigorously verified for static shells around a Schwarzschild black hole provided that the mass of the shell is sufficiently small compared to the black hole. In the singularity-free setting, we show that these properties are satisfied for isotropic equilibria which are not too relativistic, but emphasize that numerical simulations indicate that they are true for a much larger class of steady states.
We elaborate more on these assumptions in Remark~\ref{remark:single_well_numerics} as well as in the following subsection.

Secondly, the integral kernel $K$ is not fully explicit; see~\eqref{eq:kernel_K} for its definition. It contains a projection onto the kernel of an important operator; while the latter operator and its kernel are known explicitly, the orthogonal projection onto this space is, unfortunately, not explicitly known to us.

Nonetheless, we are able to apply Theorem~\ref{thm:mathur_stability} rigorously to small matter shells surrounding a black hole.

\begin{theorem}[Linear stability of small matter shells around a Schwarzschild black hole]\label{thm:shell_stab}
	Consider the (spherically symmetric) Einstein-Vlasov system with a Schwarzschild-singularity of mass $M>0$ at the center. There exist families of steady states $(f^\delta)_{\delta>0}$ which are linearly stable for $0<\delta \ll1$, where the parameter $\delta>0$ controls the size of the Vlasov shell.	As $\delta$ goes to zero, the metric coefficients converge uniformly on $]2M,\infty[$ to the vacuum Schwarzschild metric coefficients of mass $M$ and the densities $f^\delta$ converge pointwise to zero on $\{r>2M\} \times \R^3$. 
\end{theorem}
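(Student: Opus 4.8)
The plan is to realize the matter shells as small perturbations of the exterior vacuum Schwarzschild spacetime of mass $M$, and then to apply Theorem~\ref{thm:mathur_stability}. Concretely, I would (i)~construct the family $(f^\delta)_{\delta>0}$ together with the claimed convergence as $\delta\to0$; (ii)~verify that every $f^\delta$ with $0<\delta\ll1$ meets the hypotheses of Theorem~\ref{thm:mathur_stability} --- admissible ansatz, single-well structure, and particle periods that are bounded and bounded away from zero; and (iii)~show that the kernel $K^\delta$ furnished by that theorem satisfies $\|K^\delta\|_{L^2([0,\infty[^2)}\to0$ as $\delta\to0$, so that part~\ref{it:stab3} gives linear stability for all sufficiently small $\delta$. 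Throughout, closeness to the explicitly known vacuum Schwarzschild background turns each step into a perturbation argument around a configuration we understand completely.

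For (i) I would invoke the construction in Section~\ref{sc:stst_hole}: fix an admissible $\varphi$ in the sense of Section~\ref{sc:ststconditions}, with $L$ bounded away from $0$ on $\supp\varphi$ so that no particle reaches the horizon, and a cut-off $\tilde\chi$ equal to $1$ on a fixed neighbourhood of the eventual shell; solving the reduced field equations by a fixed-point/continuation argument in $\delta$ in the spirit of~\cite{RaRe2013,Rein94} then yields $f^\delta$ of the form~\eqref{eq:ststansatzlochINTRO}. Since $\rho_{f^\delta}=O(\delta)$ with support in a fixed compact annulus, the mass function $m^\delta(r)=M+4\pi\int_{2M}^r\rho_{f^\delta}(s)s^2\,ds$ satisfies $m^\delta-M=O(\delta)$ uniformly in $r$; in particular $m^\delta(r)<r/2$ for all $r>2M$ once $\delta$ is small, so that~\eqref{eq:sing_2mr} holds. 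Consequently $e^{-2\lambda^\delta}=1-2m^\delta/r$ and, upon integrating the field equation~\eqref{eq:fieldeq2} for $\mu'$ inwards from $r=\infty$, also $\mu^\delta$, converge uniformly on $]2M,\infty[$ to the vacuum Schwarzschild coefficients of mass $M$; the pointwise convergence $f^\delta\to0$ is immediate, and $[\Rmin^\delta,\Rmax^\delta]$ converges to the fixed interval determined by $\varphi$, $\tilde\chi$ and the limiting metric.

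Step~(ii) is where the specific geometry enters. For the exterior vacuum Schwarzschild metric the effective potential governing the radial particle motion at angular momentum $L>0$ is, up to a monotone reparametrisation, $\Psi_L(r)=(1-2M/r)(1+L/r^2)$, whose relevant features are classical: analysing $\Psi_L'$ shows that $\Psi_L$ possesses two critical points in $]2M,\infty[$ precisely when $L>12M^2$ --- a maximum and, at larger radius, a nondegenerate minimum, the familiar stable circular orbit --- while the degenerate regimes ($L\le12M^2$, or energies at or above the potential barrier) are excluded once $\supp\varphi$ lies in a fixed compact subset of $\{(E,L):L>12M^2\}$ of energies strictly below the barrier. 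On such a set the period function $(E,L)\mapsto T^0(E,L)$ (cf.~Definition~\ref{def:periodfunction}) of the limiting configuration is continuous and strictly positive, hence bounded and bounded away from zero. Since $\Psi_L^\delta$ and its $r$-derivatives converge uniformly to $\Psi_L^0$ on the relevant compact set, the single-well property (cf.~Definition~\ref{def:jeans}) and the two-sided period bounds persist for all $0<\delta\ll1$, uniformly in $\delta$; here the $\delta\to0$ limit is the benign end, as no flattening or splitting of the well occurs there.

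For (iii) one opens up the definition~\eqref{eq:kernel_K} of $K^\delta$: it is assembled from the action-angle variables of the steady-state flow, the period function $T^\delta$, the metric coefficients, the derivative $\partial_E f^\delta$ of the steady state, and an orthogonal projection onto the (explicitly describable) kernel of an associated operator. The only ingredient carrying the matter is $\partial_E f^\delta=\delta\,\tilde\chi\,\partial_E\varphi=O(\delta)$, whereas every remaining ingredient --- the metric coefficients (close to vacuum Schwarzschild), $T^\delta$ with its uniform two-sided bounds from step~(ii), $\partial_E\varphi$ and $\tilde\chi$, and the projection (of norm $\le1$) --- is bounded uniformly in $\delta$ on the fixed compact annulus containing the support of $K^\delta$. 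This forces $\|K^\delta\|_{L^2([0,\infty[^2)}\le C\delta\to0$, so $\|K^\delta\|_{L^2([0,\infty[^2)}<1$ for $\delta$ small, and Theorem~\ref{thm:mathur_stability}\,\ref{it:stab3} gives linear stability; the convergence assertions in the statement were already established in step~(i). I expect the main obstacle to be exactly this uniform-in-$\delta$ bookkeeping: guaranteeing that localising the shell neither lets the particle periods blow up (near the potential barrier) nor degenerate, and that the $O(\delta)$ smallness of the matter genuinely propagates through~\eqref{eq:kernel_K} --- notably through the projection onto the kernel of the associated operator, which is not given by a closed formula and must therefore be controlled via uniform operator bounds together with the continuous dependence of that kernel subspace on $\delta$, rather than by explicit computation.
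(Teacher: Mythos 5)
Your proposal tracks the paper's proof closely: existence and the convergence claims are established as in Section~\ref{sc:stst_hole} and Lemma~\ref{lemma:lochyconvergence}, the single-well structure and two-sided period bounds for $0<\delta\ll1$ as in Proposition~\ref{prop:lochjeans} and Lemma~\ref{lemma:lochperiodsbounded}, and the $\|K^\delta\|_{L^2}=O(\delta)$ estimate via Theorem~\ref{thm:mathur_stability}~\ref{it:stab3} is exactly Theorem~\ref{thm:shell_stab_2}, including the key realization that the non-explicit projection is controlled simply by $\|\mathrm{id}-\Pi^\delta\|\le1$. The only variation is in how you establish the period bounds: you appeal to continuity of the limiting period function on a compact closure plus perturbation, whereas Lemma~\ref{lemma:lochperiodsbounded} estimates the defining integral directly by splitting near and away from $r_L$; the paper explicitly notes your continuous-extension route as an alternative that would require the techniques of~\cite{Kunze}, so both are sound.
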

In contrast to Theorem~\ref{thm:mathur_stability}, all properties required for the result above are rigorously proven for a large variety of families of steady states; for a detailed description of these equilibria we refer to Theorem~\ref{thm:shell_stab_2}.

We emphasize the fact that in previous linear stability results \cite{HaRe2013, HaRe2014}  it was essential that the corresponding steady state is close to Newtonian. Here the metric of the steady state under consideration is close to the Schwarzschild metric so that from a physics point of view we are studying a very different situation.

The mere existence of these stationary solutions shows that small, spherically symmetric perturbations of Schwarzschild spacetime consisting of Vlasov matter do not necessarily converge asymptotically to a new vacuum Schwarzschild spacetime, since small values of $\delta$ correspond to small mass of the Vlasov matter compared to the mass of the central singularity. The linear stability result above suggests that the same is also true for sufficiently weak perturbations of these small static shells, i.e., the class of spherically symmetric solutions which start close to the Schwarzschild spacetime but do not converge to a new vacuum Schwarzschild spacetime does not seem to be pathological. This is in sharp contrast to the celebrated result in~\cite{DaHoRoTa} where it is shown on a non-linear level and without symmetry assumptions that small, vacuum perturbations of Schwarzschild spacetime converge asymptotically to another member of the Schwarzschild family, modulo the Kerr solutions.

In the singularity-free case the situation is different. It is known that small initial data of the Einstein-Vlasov system converges asymptotically to Minkowski space~\cite{FaJoSm21, LiTa19}; in the special case of spherical symmetry this was already proven in~\cite{RR92a}. 

\subsection{Methodology and outline of the paper}\label{sc:methodology}
We now present the techniques used to derive our main results since we believe they are quite flexible and can be adapted to various problems in future work.

In Section~\ref{sc:stst} a plethora of steady states for the Einstein-Vlasov system is constructed using the ansatz described in Section~\ref{sc:introsc}. In Section~\ref{sc:stst_hole} we prove  the existence of a new set of static matter shells of the form~\eqref{eq:ststansatzlochINTRO} around a Schwarzschild black hole.

Crucial parts of our investigation are based on {\em action-angle type variables}, which we introduce in Section~\ref{sc:aacoords}. While action-angle variables are a classical tool in Hamiltonian mechanics~\cite{Arnold,LaLi,LB1994}, they have been used recently to derive a Birman-Schwinger principle in Newtonian galactic dynamics~\cite{HaReSt21,Kunze} and to analyze phase mixing~\cite{RiSa2020}.

In order to define action-angle type variables, it is necessary that every particle orbit within some fixed equilibrium configuration can be uniquely characterized by its particle energy~$E$ and angular momentum~$L$; the latter two quantities are integrals of the characteristic system. This corresponds to the effective potential associated with the steady state having a {\em single-well structure}. A rigorous description of this property is given in Section~\ref{sc:jeans_sc}, where we also discuss its validity. In the case of a Schwarzschild-singularity, we can rigorously show that small static shells have this property by considering the limit $\delta\to0$ in~\eqref{eq:ststansatzlochINTRO}; a related result has been obtained in~\cite{Jab2021}. In the singularity-free setting, we prove that a steady state has single-well structure if it is isotropic, i.e., $f_0=\varphi(E)$, and satisfies 
\begin{equation}\label{eq:INTROtwomoverrloweronethird}
	\frac{2m(r)}r\leq\frac13,\quad r>0,
\end{equation}
where $m$ is the quasi-local mass of the equilibrium. The interpretation of the condition~\eqref{eq:INTROtwomoverrloweronethird} is that it corresponds to the steady being not too relativistic. However, this is not yet satisfying since a desired application is to analyze steady states as the redshift gets larger. Although we cannot rigorously show the single-well structure in the latter setting, we note that numerical simulations clearly show its presence for large values of the central redshift for a wide class of steady states, e.g., for general isotropic equilibria.

We emphasize that the single-well structure of the effective potential is related to Jeans' theorem, which is known to be violated for certain steady states of the singularity-free system, see~\cite{Sch99} and the numerical study in~\cite{AnRe2007}. This is in sharp contrast to the non-relativistic situation, where all relevant steady states of the radial Vlasov-Poisson system are of single-well structure~\cite[Lemma~2.1]{LeMeRa11} and satisfy Jeans' theorem~\cite{Batt86}. 

Another point of view is that the single-well structure of the effective potential leads to the associated characteristic flow to be ergodic, cf.~\cite[Sc.~II.5]{ReSi1}.

In the case of a single-well structure, all particle motions within the associated equilibrium configuration are trapped and every particle orbit is either stationary or time-periodic. A necessary property for the following analysis is that the periods of these orbits are bounded and bounded away from zero on the steady state support. 
In Section~\ref{sc:periodfunction} we prove these bounds for the same static solutions for which we show the presence of a single-well structure, but emphasize that numerical simulations show their validity for a much larger class of equilibria including general isotropic steady states. Similar bounds on the particle periods are crucial to derive a Birman-Schwinger principle in the non-relativistic setting, cf.~\cite[Prop.~B.1]{HaReSt21} and~\cite[Theorems~3.2 and~3.5]{Kunze}.

In Section~\ref{sc:linearization} the Einstein-Vlasov system is formally linearized around a fixed steady state with the properties discussed above. As in~\cite{HaLiRe2020,IT68} we apply Antonov's trick~\cite{An1961} to arrive at the second-order evolution equation
\[ 
\partial^2_t f + \L f = 0
\]
for the odd-in-$v$ part $f$ of the perturbation. $\L$ is called the {\em linearized operator} or {\em Antonov operator} and it is of the form
\begin{equation}
	\L = -\B^2 -\Ri=-\left(\T+\mathcal S\right)^2-\Ri,
\end{equation}
where $\Ri$ and $\mathcal S$ are non-local operators and $\T$ is the transport operator associated to the characteristic flow of the equilibrium.
In Section~\ref{sc:def_operators} we carefully define these operators on a suitable Hilbert space $H$, which is the $L^2$ space over the steady state support with weight~$\frac{e^{\lambda_0}}{-\partial_E\varphi}$. This weight causes $\Ri$ to be symmetric, $\mathcal S$ to be skew-symmetric, and both of these operators to be bounded on $H$. Moreover, it is shown in~\cite{ReSt20} that $\T$ can be defined on a dense subset of $H$ such that the resulting operator is skew-adjoint. Section~\ref{sc:properties_operators} is entirely devoted to an in-depth analysis of the operators $\T$, $\B$, and $\Ri$. Overall, this leads to~$\L$ being an unbounded, self-adjoint operator on the subspace $\H$ of odd in $v$ functions.
Observe that in order for $H$ to be a Hilbert space, we have to require that
\begin{equation}\label{eq:ansatzmonotonic}
	\varphi'\coloneqq\partial_E\varphi<0
\end{equation}
on the steady state support, i.e., the concentration of ever more energetic particles is decreasing within the equilibrium configuration. Nonetheless, we emphasize that this condition is natural from a physics of view~\cite{Ze1971}. 
Moreover, eqn.~\eqref{eq:ansatzmonotonic} is the reason why we do not consider the steady states of the Einstein-Vlasov system with a Schwarzschild-singularity constructed in~\cite{Jab2021}, as the distribution functions of steady states derived there are not monotonic in the particle energy.

Linear stability corresponds to the positivity of the spectrum of $\L$, i.e., $\inf\left(\sigma(\L)\right)>0$. We show that the essential spectrum of $\L$ is always positive, which implies that every non-positive element in the spectrum of $\L$ has to be an eigenvalue. It thus remains to characterize these non-positive eigenvalues of $\L$, which we do by deriving a {\em Birman-Schwinger principle} in Section~\ref{sc:birman_schwinger_principle}. This principle originates from quantum mechanics, where it is, e.g., used to investigate the presence of eigenvalues below a given energy level of time-independent Schr\"odinger operators of the form~$-\Delta-V$ with prescribed potential $V\geq0$. Classical references covering the Birman-Schwinger principle in quantum mechanics are~\cite[Sc.~12.4]{LiLo01}, \cite[Sc.~4.3]{LiSe10}, \cite[Sc.~XIII.3]{ReSi4} or~\cite[Sc.~III.3]{Si}; we present it in our specific situation:

A formal calculation shows that $0$ is an eigenvalue of $\L_\gamma \coloneqq - \B^2 - \frac 1\gamma \Ri$ for $\gamma>0$ if and only if $\gamma$ is an eigenvalue of the {\em Birman-Schwinger operator}
\begin{equation}
	Q \coloneqq - \sqrt \Ri\,\B^{-2} \sqrt\Ri;
\end{equation}
the existence of $\sqrt\Ri$ and $\B^{-2}$ on suitable spaces is derived---with considerable effort---in Section~\ref{sc:properties_operators}.
In Section~\ref{appendix} we show that $\Ri\geq0$ implies that eigenvalues of $\L_\gamma$ are non-decreasing and continuous in~$\gamma$ and that the spectrum of $\L_\gamma$ gets positive for sufficiently large~$\gamma$. Hence, the number of non-positive eigenvalues of $\L=\L_1$ equals the number of $\gamma\geq1$ such that $\L_\gamma$ has the eigenvalue~$0$, which is the same as the number of eigenvalues~$\geq1$ of~$Q$. These identities are proven in Section~\ref{ssc:birman_schwinger_operator}, where it is also shown that the (algebraic) multiplicities of eigenvalues carry over from one operator onto the other.

The spectral analysis of the operator $Q$ now simplifies by observing that $\im(Q)\subset\im(\sqrt\Ri)$ and that every function in the image of $\sqrt\Ri$ is of the form $|\varphi'(E,L)|\,w\,\alpha_0(r)\,F(r)$ for some $F\in L^2([0,\infty[)$ and fixed $\alpha_0$ depending on the underlying steady state. Thus, we investigate the operator $\M\colon L^2([0,\infty[)\to L^2([0,\infty[)$ defined by
\begin{equation}
	Q\left(|\varphi'(E,L)|\,\frac{x\cdot v}r\,\alpha_0(r)\,F(r)\right)=|\varphi'(E,L)|\,\frac{x\cdot v}r\,\alpha_0(r)\,\left(\M F\right)(r),
\end{equation}
which we call the {\em reduced operator} or {\em Mathur operator}, as the reduction process goes back to Mathur~\cite{Ma} who studied a related problem in the context of the Vlasov-Poisson system with an external potential. 
The operator $\M$ is reduced in the sense that it acts only on radial functions and not on functions on the full phase-space, like $Q$ and $\L$ do.
Still, the non-zero eigenvalues of~$Q$ and~$\M$ correspond to one another, which allows us to limit the spectral analysis to~$\M$.

In Section~\ref{sc:mathur_operator} we prove that the reduced operator~$\M$ is symmetric, non-negative, and Hilbert-Schmidt with integral kernel $K$, cf.~\cite[Thm.~VI.22 et~seq.]{ReSi1}. Moreover, in Section~\ref{ssc:mathur_operator_explicit} we derive a semi-explicit representation of~$K$ based on the in-depth understanding of $\B^{-2}$ and several further properties of~$\B$ established in Section~\ref{sc:properties_B}. 
We thus arrive at a variational principle for $\M$ consisting of integration over the radial, semi-explicitly known integral kernel $K$ which fully describes the presence of non-positive eigenvalues of $\L$ in a quantitative way.

As an application of this general Birman-Schwinger principle, we show in Section~\ref{sc:stab_blackhole_solutions} that matter shells around a Schwarzschild black hole at the center are linearly stable by making these steady states sufficiently small compared to the mass of the central black hole. 

\vspace*{.5cm}
\noindent
{\bf Acknowledgments.}
The authors thank Mahir~Had\v zi\'c for inspiring discussions. Some parts of this work were developed in the stimulating atmosphere of the Erwin Schr\"odinger International Institute for Mathematics and Physics during the thematic program \enquote{Mathematical Perspectives of Gravitation beyond the Vacuum Regime}, which we thank for its hospitality. 

\section{Steady states}\label{sc:stst}

In this section we introduce the static solutions whose linear stability properties are analyzed in our work. We distinguish between two conceptually different situations:
Firstly, a singularity-free situation where the Schwarzschild metric \eqref{eq:schwarzschild_metric} can cover all possible radii, i.e., $2m(r)<r$ for the quasi-local mass $m$ and $r>0$. We recall the construction of such steady states in Section~\ref{sc:stst_classic}. Secondly, our theory also works in the case of a spacetime with a Schwarzschild-singularity, i.e., a black hole, of mass $M>0$ at the center. In Section~\ref{sc:stst_hole} we show how to construct suitable steady states in that situation.

\subsection{Singularity-free stationary solutions}\label{sc:stst_classic}

For the construction of stationary solutions to the singularity-free Einstein-Vlasov system in the spherically symmetric case we briefly recall the arguments in~\cite{RaRe2013}; see \cite{Rein94,ReRe00} for slightly different approaches. We consider a microscopic equation of state of the form
\begin{equation} \label{eq:ansatzsingularityfreestst}
	f(x,v) = \varphi(E,L) = \Phi\left (1-\frac{E}{E_0}\right ) (L-L_0)^l_+,
\end{equation} 
where $l> -\frac12$ and an index $+$ denotes the positive part of a function. Moreover, $L_0 \geq 0$ gives a lower bound for the angular momentum, i.e., $L_0>0$ corresponds to solutions with a vacuum region at the center of the steady state. 
In particular, the choice $l=0=L_0$ causes~$\varphi$ to depend solely on the particle energy $E$; such static solutions are called {\em isotropic}.
As an aside, we note that the explicit form of the $L$-dependency in~\eqref{eq:ansatzsingularityfreestst} is solely chosen for the sake of simplicity and it is straight-forward to extend our analysis to steady states with more general $L$-dependencies.
We impose that $\Phi= \Phi(\alpha)$ fulfills the following conditions:
\begin{enumerate}[label=($\Phi\arabic*$)]
	\item\label{it:assphi1} $\Phi \colon \R \to [0,\infty[$, $\Phi\in L^\infty_{\mathrm{loc}}(\R)$, and $\Phi(\alpha)=0$ for $\alpha\leq0$.
	\item\label{it:assphi2} There exist constants $c_1,c_2>0$, $\alpha_0>0$, and $0\leq k<l+\frac{3}{2}$ such that 
	\[ 
	c_1\alpha^k \leq \Phi(\alpha)  \leq c_2 \alpha^k, \quad \alpha \in ]0,\alpha_0].
	\]
\end{enumerate}
Common examples for this function are $\Phi(\alpha)=\left(e^\alpha-1\right)_+$ or $\Phi(\alpha)=\alpha_+^k$ with $0\leq k<l+\frac32$; the resulting steady states are known as a King model or polytropes, respectively.
These properties of $\Phi$ together with the presence of the cut-off energy $E_0\in]0,1[$ in~\eqref{eq:ansatzsingularityfreestst} will guarantee a compact support and finite mass.
Inserting the ansatz $f(x,v) = \varphi(E(x,v),L(x,v))$ with $E$ given by~\eqref{eq:ststparticleenergy} into the singularity-free Einstein-Vlasov system reduces the system to an equation for the metric coefficient $\mu$. It turns out that it is more convenient to make $E_0$ part of the unknowns and to consider $y\coloneqq \ln(E_0) - \mu$ instead of $\mu$. The equation for $y$ reads
\begin{align}\label{eq:ssdgl}
	y'(r) = - \frac1{1-\frac{8\pi}{r}\int_0^rs^2G(s,y(s))\,ds} \left( \frac{4\pi}{r^2} \int_0^rs^2G(s,y(s))\, ds + 4\pi r H(r,y(r)) \right), \quad y(0)=y_0,
\end{align}
for a prescribed initial value $y_0>0$, where 
\begin{align}
	G(r,y) &\coloneqq 2\pi c_l\,r^{2l} e^{3y}  \int_0^{1-e^{-y}\sqrt{1+\frac{L_0}{r^2}}} \Phi(\alpha)  (1-\alpha)^2 \left (e^{2y} (1-\alpha)^2-1-\frac{L_0}{r^2}\right )^{l+\frac{1}{2}} \, d\alpha, \label{eq:G_phi_singfree}\\
	H(r,y) &\coloneqq 2\pi d_l\,r^{2l} e^y \int_0^{1-e^{-y}\sqrt{1+\frac{L_0}{r^2}}} \Phi(\alpha)  \left (e^{2y} (1-\alpha)^2-1-\frac{L_0}{r^2}\right )^{l+\frac{3}{2}} \, d\alpha, \label{eq:H_phi_singfree}
\end{align}
for $(r,y)\in]0,\infty[\times\R$ with $e^{-y}\sqrt{1+\frac{L_0}{r^2}}<1$ and $G(r,y)\coloneqq0\eqqcolon H(r,y)$ otherwise. Here,
\begin{equation*}
	c_l\coloneqq \int_0^1 \frac{s^l}{\sqrt{1-s}}\,ds ,\quad d_l\coloneqq \int_0^1 s^l \sqrt{1-s}\, ds.
\end{equation*}
These quantities are related to the density and pressure induced by~$f$ via 
\[ 
\rho_f(r) = G(r,y(r)), \quad p_f(r) = H(r,y(r)),\quad r>0.
\]
In \cite{RaRe2013} it is shown that under the assumptions \ref{it:assphi1},\ref{it:assphi2} there exists a unique solution $y\in C^1([0,\infty[)$ of \eqref{eq:ssdgl} with $y_{\infty} \coloneqq \lim_{r\to\infty} y(r)<0$. Setting $E_0 = e^{y_{\infty}}$,  $\mu = \ln(E_0) - y$, and 
\[
\lambda(r) = -\frac{1}{2} \ln\left ( 1-\frac{8\pi}{r}\int_0^rs^2G(s,y(s)) \, ds \right ),\quad r>0,
\]
then defines a non-trivial stationary solution of the singularity-free Einstein-Vlasov system with finite mass and compact support. We denote by
\begin{equation}\label{eq:twomoverr}
	m(r)\coloneqq4\pi\int_0^r\rho_f(s)s^2\,ds = \frac{r}{2}\left (1-e^{-2\lambda(r)}\right ),\quad r>0,
\end{equation}
the quasi-local mass induced by the density~$\rho$ of the equilibrium.

To summarize,  for fixed $L_0$, $l$, and $\Phi$ as above, we obtain a family of static solutions parameterized by the initial value $y_0>0$ 
which is related to the central redshift factor, see~\cite[(2.11)]{GueStRe21}. For the study of linear stability we will later impose some further assumptions on the stationary solution which are stated in Section~\ref{sc:ststconditions}.

\subsection{Matter shells surrounding a Schwarzschild black hole}\label{sc:stst_hole}

We look for a time-independent solution of the Einstein-Vlasov system with a Schwarzschild-singularity of given mass $M>0$ of the form
\begin{equation}\label{eq:lochdf}
	f^\delta(x,v)=\delta\,\chi(r-r_0)\,\varphi(E(x,v),L(x,v)),
\end{equation}
where $\delta\geq0$ and $\chi\in C^\infty(\R)$ is a non-negative, radial cut-off function with $\chi(s)=0$ for $s\leq0$ and $\chi(s)=1$ for $s\geq\eta_0$ with suitable $\eta_0>0$. The microscopic equation of state $\varphi$ is again of the form~\eqref{eq:ansatzsingularityfreestst} with $E_0$ replaced by a $\delta$-dependent cut-off energy $E^\delta$. Here, $l>-\frac12$ and we assume that $\Phi$ satisfies~\ref{it:assphi1} together with
\begin{enumerate}[label=($\Phi\arabic*$)]\addtocounter{enumi}{2}
	\item\label{it:assphi3} There exists a constant $\alpha_0>0$ such that $\Phi>0$ on $]0,\alpha_0[$.
\end{enumerate}
Due to the presence of the radial cut-off function, $f^\delta$ will only be a solution of the Vlasov equation if the parameters $r_0,\eta_0,L_0>0$ and $E^\delta\in]0,1[$ are chosen suitably. We will derive such a choice of parameters by analyzing the metric quantity $\mu^\delta$ induced by $f^\delta$. The equation for $\mu^\delta$ reads
\begin{equation}\label{eq:lochmudgl}
	(\mu^\delta)'(r)=\frac1{1-\frac2r\left(M+m^\delta(r)\right)}\left(\frac{M+m^\delta(r)}{r^2}+4\pi rp^\delta(r)\right),\quad r>2M,
\end{equation}
together with the boundary condition $\lim_{r\to\infty}\mu^\delta(r)=0$. Here, $m^\delta$ is given by
\begin{equation}\label{eq:lochlocalmass}
	m^\delta(r)\coloneqq4\pi\int_{2M}^rs^2\rho^\delta(s)\,ds,\quad r>2M,
\end{equation}
and $\rho^\delta\coloneqq\rho_{f^\delta}$, $p^\delta\coloneqq p_{f^\delta}$ are the density and pressure induced by $f^\delta$ via \eqref{eq:rho} and \eqref{eq:p}, respectively. Note that the latter quantities only refer to the Vlasov part of the static solution. In the case $\delta=0$ we just obtain the Schwarzschild solution, i.e., the solution of \eqref{eq:lochmudgl} is of the form
\begin{equation}\label{eq:lochmuss}
	\mu^0(r)=\frac12\ln\left(1-\frac{2M}r\right),\quad r>2M.
\end{equation}
In the pure Schwarzschild case, the {\em effective potential} is given by
\begin{equation}\label{eq:locheffpotss}
	\Psi_L^0(r)\coloneqq e^{\mu^0(r)}\sqrt{1+\frac L{r^2}} = \sqrt{1-\frac{2M}r}\sqrt{1+\frac L{r^2}}, \quad L\geq0,\,r>2M.
\end{equation}
We state some properties of $\Psi_L^0$ in the following lemma and refer the reader to~\cite[\S~19]{Chandrasekhar} or~\cite[Appendix~A]{Jab2021} for more details.
\begin{lemma}\label{lemma:lochpropertieseffpotss}
	The effective potential $\Psi_L^0$ in the pure Schwarzschild case has the following properties:
	\begin{enumerate}[label=(\alph*)]
		\item For every $L\geq0$ we have that $\lim_{r\to2M}\Psi_L^0(r)=0$ and $\lim_{r\to\infty}\Psi_L^0(r)=1$.
		\item For every $L>12M^2$ there exist two unique zeros $r_L^0>s_L^0>2M$ of $\left(\Psi_L^0\right)'$. Furthermore, $\left(\Psi_L^0\right)''(s_L^0)<0<\left(\Psi_L^0\right)''(r_L^0)$, i.e., $\Psi_L^0$ attains a strict local maximum in $s_L^0$ and a strict local minimum in $r_L^0$. Hence, $\Psi_L^0(r_L^0)< \min\left \{ 1, \Psi_L^0(s_L^0)\right \}$, and $\Psi_L^0(s_L^0)>1$ is equivalent to $L>16M^2$.
		\item For every $L>12M^2$ and $E\in]\Psi_L^0(r_L^0),\min\{1,\Psi_L^0(s_L^0)\}[$ there exist three unique radii 
		\[2M<r_0^0(E,L)<s_L^0<r_-^0(E,L)<r_L^0<r_+^0(E,L)\]
		such that
		\[ \Psi_L^0(r_0^0(E,L))=E=\Psi_L^0(r_\pm^0(E,L)). \]
		Moreover, $r_-^0(E,L)>4M$.
	\end{enumerate}
\end{lemma}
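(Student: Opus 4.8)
The plan is to push every assertion through the rational function $W_L\coloneqq(\Psi_L^0)^2=(1-\tfrac{2M}r)(1+\tfrac L{r^2})$ on $]2M,\infty[$. Since $\Psi_L^0>0$ there, $\Psi_L^0$ and $W_L$ have the same critical points and the same intervals of monotonicity, and at a critical point $r^\ast$ the identity $W_L''(r^\ast)=2\Psi_L^0(r^\ast)(\Psi_L^0)''(r^\ast)$ shows that the signs of $(\Psi_L^0)''(r^\ast)$ and $W_L''(r^\ast)$ coincide. Part~(a) is then immediate: as $r\to2M$ the factor $\sqrt{1-2M/r}$ tends to $0$ while $\sqrt{1+L/r^2}$ stays bounded, and as $r\to\infty$ both factors tend to $1$.

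For part~(b) I would compute $W_L'(r)=\tfrac2{r^4}q(r)$ with $q(r)\coloneqq Mr^2-Lr+3ML$, a quadratic with discriminant $L(L-12M^2)$ and positive leading coefficient. Hence for $L>12M^2$ there are exactly two real zeros $s_L^0<r_L^0$, given explicitly by $\tfrac1{2M}\bigl(L\mp\sqrt{L(L-12M^2)}\bigr)$, and squaring the (manifestly positive) inequality $L-4M^2>\sqrt{L(L-12M^2)}$ yields $s_L^0>2M$. Since $q<0$ strictly between its zeros and $q>0$ outside, $\Psi_L^0$ is strictly increasing on $]2M,s_L^0[$, strictly decreasing on $]s_L^0,r_L^0[$, and strictly increasing on $]r_L^0,\infty[$; thus $s_L^0$ is a strict local maximum and $r_L^0$ a strict local minimum. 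The claimed second-derivative signs come from $W_L''(r^\ast)=2q'(r^\ast)/(r^\ast)^4$ at a zero $r^\ast$ of $q$, together with $q'(s_L^0)=-\sqrt{L(L-12M^2)}<0<\sqrt{L(L-12M^2)}=q'(r_L^0)$. Monotonicity gives $\Psi_L^0(r_L^0)<\Psi_L^0(s_L^0)$ and $\Psi_L^0(r_L^0)<\lim_{r\to\infty}\Psi_L^0=1$, hence $\Psi_L^0(r_L^0)<\min\{1,\Psi_L^0(s_L^0)\}$. For the equivalence, using $q(s_L^0)=0$ to eliminate $L$ reduces $\Psi_L^0(s_L^0)>1$ to $s_L^0<4M$ (indeed $W_L(s_L^0)-1=L(4M-s_L^0)/(s_L^0)^3$), which by the explicit formula for $s_L^0$ is in turn equivalent to $L>16M^2$.

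Part~(c) is a bookkeeping exercise with the intermediate value theorem on the three monotonicity intervals. For $E$ in the stated window: on $]2M,s_L^0[$ the function $\Psi_L^0$ increases continuously from $0$ to $\Psi_L^0(s_L^0)>E$, producing a unique $r_0^0$; on $]s_L^0,r_L^0[$ it decreases from $\Psi_L^0(s_L^0)$ to $\Psi_L^0(r_L^0)<E$, producing a unique $r_-^0$; on $]r_L^0,\infty[$ it increases from $\Psi_L^0(r_L^0)$ to $1>E$, producing a unique $r_+^0$; strict monotonicity gives uniqueness on each piece and the ordering $2M<r_0^0<s_L^0<r_-^0<r_L^0<r_+^0$. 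For $r_-^0>4M$ I would split on $L$: if $12M^2<L\le16M^2$ then $s_L^0\ge4M$, so $r_-^0>s_L^0\ge4M$; if $L>16M^2$ then $W_L(4M)=\tfrac12+\tfrac L{32M^2}>1>E^2$ while $s_L^0<4M<r_L^0$ (note $r_L^0>L/(2M)>6M$), so $4M$ lies in the interval on which $\Psi_L^0$ is strictly decreasing and $\Psi_L^0(4M)>E=\Psi_L^0(r_-^0)$ forces $4M<r_-^0$.

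I expect the only genuinely delicate points to be the three algebraic comparisons $s_L^0>2M$, $\Psi_L^0(s_L^0)>1\Leftrightarrow L>16M^2$, and $r_-^0>4M$; each reduces to squaring an inequality between some $L-cM^2$ and $\sqrt{L(L-12M^2)}$ and reading off a sign, and with the explicit formulas for $s_L^0$ and $r_L^0$ in hand these are routine rather than genuine obstacles. Much of this material is classical, cf.\ the treatments cited in~\cite[\S~19]{Chandrasekhar} and~\cite[Appendix~A]{Jab2021}.
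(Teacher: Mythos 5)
Your proof is correct and complete. The paper does not actually give a proof of Lemma~\ref{lemma:lochpropertieseffpotss}; it simply cites~\cite[\S~19]{Chandrasekhar} and~\cite[Appendix~A]{Jab2021}, and the argument you supply — passing to the rational function $W_L=(\Psi_L^0)^2$, reducing the critical-point analysis to the quadratic $q(r)=Mr^2-Lr+3ML$ with discriminant $L(L-12M^2)$, and using the explicit roots $s_L^0,r_L^0=\tfrac{1}{2M}\bigl(L\mp\sqrt{L(L-12M^2)}\bigr)$ to check the algebraic thresholds $s_L^0>2M$, $s_L^0<4M\Leftrightarrow L>16M^2$ (via the tidy identity $W_L(r^\ast)-1=L(4M-r^\ast)/(r^\ast)^3$ at a zero of $q$), and $r_-^0>4M$ — is exactly the standard computation those references carry out. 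All the arithmetic checks out: $W_L'(r)=\tfrac2{r^4}q(r)$, $W_L''(r^\ast)=\tfrac2{(r^\ast)^4}q'(r^\ast)$ at a critical point, $q'(s_L^0)=-\sqrt{L(L-12M^2)}<0<q'(r_L^0)$, $W_L(4M)=\tfrac12+\tfrac{L}{32M^2}>1$ for $L>16M^2$, and the case split for $r_-^0>4M$ is handled cleanly on both sides of $L=16M^2$. The intermediate-value bookkeeping in part~(c) uses the strict piecewise monotonicity established in part~(b) together with the limits from part~(a), so uniqueness and the ordering of the three radii follow immediately. No gaps.
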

We can now specify how we choose the remaining parameters in~\eqref{eq:lochdf}. First, let $L_0>12M^2$ be arbitrary. Then, fix some intermediate parameter $E^0\in]\Psi_{L_0}^0(r_{L_0}^0),\min\{1,\Psi_{L_0}^0(s_{L_0}^0)\}[$. Next, let $r_0\coloneqq s_{L_0}^0$ and choose $\eta_0>0$ sufficiently small such that $r_0+\eta_0<r_-^0(E^0,L_0)$; we note that the resulting static solution will not depend on~$r_0$ and~$\eta_0$. 
The choice of all these parameters together with the behavior of $\Psi_L^0$ is illustrated in Figure~\ref{img:eff_pot}.

\begin{figure}[h]
	\begin{center}
		\centering
		\resizebox{!}{.72\textwidth}{\input{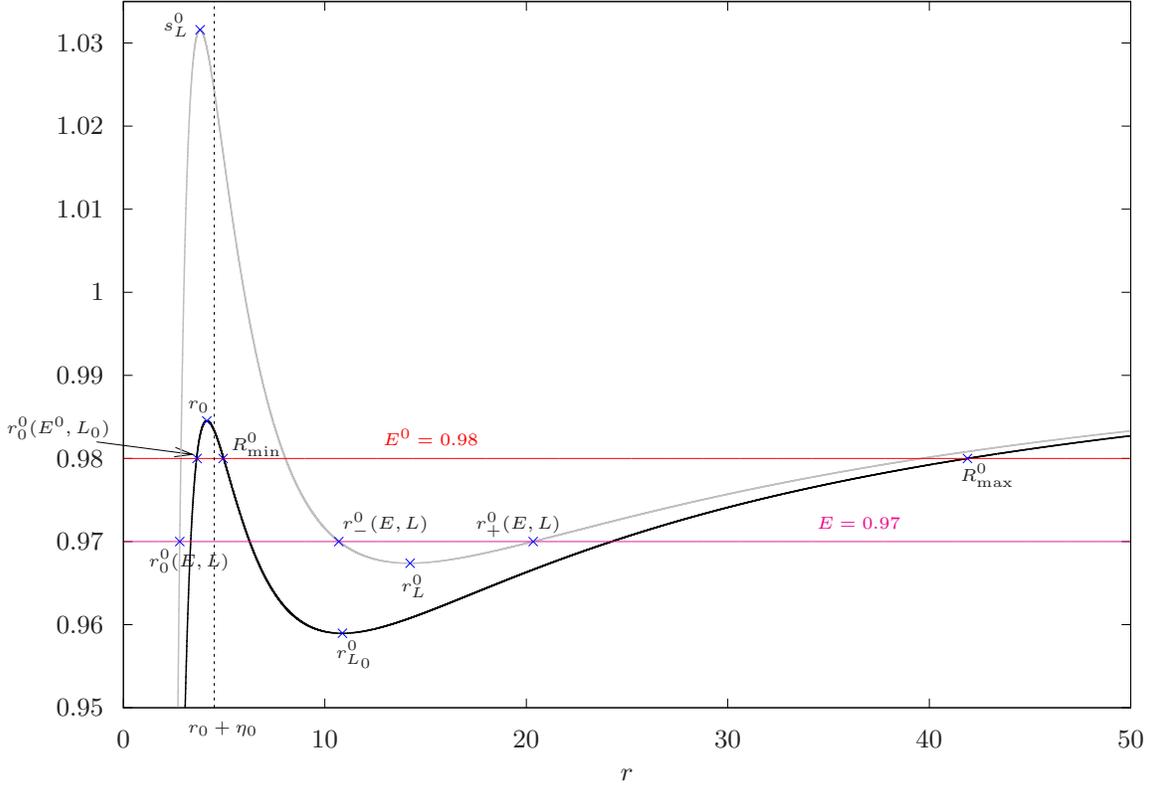}}\vspace*{-.16cm}
	\end{center}
	\vspace*{-.5cm}
	\caption{An illustration of a possible choice of parameters and the effective potential in the pure Schwarzschild case for $M=1$, $L_0=15$, and $E^0=0.98$. The black line corresponds to $\Psi_{L_0}^0$ and the grey line to $\Psi^0_{L}$ with $L=18$. 
	}
	\label{img:eff_pot}
\end{figure}

The only parameter which is still left free is the cut-off energy $E^\delta$. By considering $y^\delta\coloneqq\ln(E^\delta)-\mu^\delta$, we turn $E^\delta$ into an unknown variable and arrive at the following equation for $y^\delta$ on $]2M,\infty[$:
\begin{equation}\label{eq:lochydeltagl}
	(y^\delta)'(r)=-\frac1{1-\frac2r\left(M+m^\delta(r)\right)}\left(\frac{M+m^\delta(r)}{r^2}+4\pi rp^\delta(r)\right),\quad y^\delta(4M)=y_0,
\end{equation}
for which we choose the initial value
\begin{equation}
	y_0\coloneqq \ln\left(\sqrt2\,E^0\right).
\end{equation}
Eqn.~\eqref{eq:lochydeltagl} is a closed system for $y^\delta$ since we can express $\rho^\delta$ and $p^\delta$ in terms of $y^\delta$ by plugging~\eqref{eq:lochdf} into~\eqref{eq:rho} and~\eqref{eq:p}:
\begin{align}\label{eq:lochrhoandp}
	\rho^\delta(r) = \delta\,\chi(r-r_0)\,G(r,y^\delta(r)), \quad p^\delta(r) = \delta\,\chi(r-r_0)\,H(r,y^\delta(r)),\quad r>2M,
\end{align}
where $G$ and $H$ are given by~\eqref{eq:G_phi_singfree} and~\eqref{eq:H_phi_singfree}.
A first simple observation is that in the pure Schwarzschild case, i.e., $\delta=0$, the solution of~\eqref{eq:lochydeltagl} takes on the form
\begin{equation}
	y^0(r)=y_0-\ln\left(\sqrt2\right)-\frac12\ln\left(1-\frac{2M}r\right)=\ln\left(E^0\right)-\mu^0(r),\quad r>2M,
\end{equation}
where $\mu^0$ is defined in~\eqref{eq:lochmuss}. We next investigate the existence of a solution for positive~$\delta$:
\begin{lemma}\label{lemma:lochydeltaproperties}
	For every $\delta\geq0$ there exists a unique solution $y^\delta\in C^1(]2M,\infty[)$ of~\eqref{eq:lochydeltagl} with~\eqref{eq:lochlocalmass} and~\eqref{eq:lochrhoandp} inserted into the right hand side of the differential equation.
	In particular, $2\left(M+m^\delta(r)\right)<r$ for $r>2M$.
	Furthermore, 
	\begin{equation}\label{eq:lochydeltaestimatey0}
		y^\delta\leq y^0 \quad\text{ on } \left]2M, \infty\right[,
	\end{equation}
	as well as
	\begin{equation}\label{eq:lochydeltaequaly0}
		y^\delta(r)=y^0(r) \quad\text{ for } r\in\left]2M, \Rmin^0\right],
	\end{equation}
	where
	\begin{equation}\label{eq:lochdefRminRmax}
		\Rmin^0\coloneqq r_-^0(E^0,L_0)>4M,\quad \Rmax^0\coloneqq r_+^0(E^0,L_0).
	\end{equation}
	In addition, $\rho^\delta$ and $p^\delta$ do not vanish on the whole domain $]2M,\infty[$ if $\delta>0$, and
	\begin{equation}\label{eq:lochrhopcompactsupport}
		\rho^\delta(r)=0=p^\delta(r) \quad\text{ for } r\in\left]2M,\Rmin^0\right]\cup\left[\Rmax^0,\infty\right[.
	\end{equation}
	Lastly, $y_\infty^\delta\coloneqq \lim_{r\to\infty}y^\delta(r)$ exists with $y_\infty^\delta\in]-\infty,0[$ and the relation
	\begin{equation}\label{eq:lochyinfdelta}
		y_\infty^\delta=y^\delta(\Rmax^0)+\frac12\ln\left(1-\frac2{\Rmax^0}\left(M+M^\delta\right)\right)
	\end{equation}
	holds, where $M^\delta$ is the total Vlasov mass given by
	\begin{equation}\label{eq:lochMdeltadef}
		M^\delta\coloneqq\lim_{r\to\infty}m^\delta(r)=4\pi\int_{\Rmin^0}^{\Rmax^0}s^2\rho^\delta(s)\,ds.
	\end{equation}
\end{lemma}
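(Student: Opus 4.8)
The plan is to treat \eqref{eq:lochydeltagl} (with \eqref{eq:lochlocalmass}, \eqref{eq:lochrhoandp} substituted in) as an integro-differential equation and establish existence, uniqueness, and all the listed properties in one intertwined continuation argument; the essential difficulty is that the sources $\rho^\delta,p^\delta$ depend on $y^\delta$ both explicitly, through the radial cut-off $\chi$, and implicitly, through the energy threshold $e^{-y}\sqrt{1+L_0/r^2}<1$ encoded in $G$ and $H$. For the local theory I would recast the problem as a first-order system for $(y^\delta,m^\delta)$, adjoining $(m^\delta)'(r)=4\pi r^2\delta\chi(r-r_0)G(r,y^\delta(r))$. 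As in \cite{RaRe2013}, $G$ and $H$ are continuous and $C^1$ in $y$ on the set $\{e^{-y}\sqrt{1+L_0/r^2}<1\}$ (differentiation under the integral sign is legitimate thanks to $l>-\tfrac12$) and vanish together with their $y$-derivative along its boundary, so the right-hand side of the system is locally Lipschitz on $\{1-\tfrac2r(M+m^\delta)>0\}$. Since $\chi(\cdot-r_0)$ annihilates the source on $]2M,r_0]=\,]2M,s_{L_0}^0]$, there $m^\delta\equiv0$ and $(y^\delta)'=(y^0)'$, so it is natural to integrate the system forward from $r_0=s_{L_0}^0$, where $m^\delta$ vanishes unconditionally, and to pin down the free datum $y^\delta(s_{L_0}^0)$ by shooting so that $y^\delta(4M)=y_0$; monotone dependence on initial data gives uniqueness of this choice, and $y^0(4M)=\ln(\sqrt2 E^0)=y_0$ is already consistent with $y^\delta\equiv y^0$ near the center.

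\emph{The coupled bootstrap.} The pivotal observation is that $y^\delta(r)\le y^0(r)$ forces $e^{-y^\delta(r)}\sqrt{1+L_0/r^2}\ge e^{-y^0(r)}\sqrt{1+L_0/r^2}=\Psi_{L_0}^0(r)/E^0$, so $G(r,y^\delta(r))=0=H(r,y^\delta(r))$ whenever $\Psi_{L_0}^0(r)\ge E^0$. By Lemma~\ref{lemma:lochpropertieseffpotss} and the parameter choices, $\{\Psi_{L_0}^0<E^0\}\cap\,]2M,\Rmax^0]=\,]2M,r_0^0(E^0,L_0)[\cup\,]\Rmin^0,\Rmax^0[$ with $r_0^0(E^0,L_0)<s_{L_0}^0=r_0$, so on the first interval $\chi$ vanishes as well, and $\Psi_{L_0}^0\ge E^0$ on $[\Rmax^0,\infty[$. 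Hence, \emph{as long as} $y^\delta\le y^0$, the matter is confined to $[\Rmin^0,\Rmax^0]$. I would therefore run a bootstrap on the largest interval on which simultaneously $1-\tfrac2r(M+m^\delta)>0$ and $y^\delta\le y^0$: on it, confinement gives $\rho^\delta=p^\delta=0$, hence $m^\delta\equiv0$ and $(y^\delta)'=(y^0)'$, on $]2M,\Rmin^0]$, and with $y^\delta(4M)=y_0=y^0(4M)$ and $4M<\Rmin^0$ this yields $y^\delta=y^0$ on $]2M,\Rmin^0]$, i.e.\ \eqref{eq:lochydeltaequaly0}, in particular $m^\delta(\Rmin^0)=0$, $y^\delta(\Rmin^0)=y^0(\Rmin^0)$. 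For $r>\Rmin^0$, since $x\mapsto\frac{M+x}{r(r-2(M+x))}$ is increasing on $[0,r/2)$ and $m^\delta\ge0=m^0$, $p^\delta\ge0=p^0$, one gets $(y^\delta)'\le(y^0)'$, which propagates $y^\delta\le y^0$ (giving \eqref{eq:lochydeltaestimatey0}) and, with confinement, $\rho^\delta=p^\delta=0$ off $[\Rmin^0,\Rmax^0]$ — this is \eqref{eq:lochrhopcompactsupport}, whence $M^\delta=4\pi\int_{\Rmin^0}^{\Rmax^0}s^2\rho^\delta\,ds$. The bootstrap interval is all of $]2M,\infty[$: it could only terminate by $1-\tfrac2r(M+m^\delta)$ reaching $0$ at some finite $r_c>\Rmin^0$, but then the bracket in \eqref{eq:lochydeltagl} stays bounded below while the prefactor degenerates, so $(y^\delta)'\to-\infty$ and $y^\delta\to-\infty$; then $e^{-y^\delta}\sqrt{1+L_0/r^2}\to\infty$, so $\rho^\delta=p^\delta=0$ near $r_c$, $m^\delta$ becomes constant there, and $1-\tfrac2r(M+m^\delta)$ is increasing as $r\uparrow r_c$ — a contradiction. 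This simultaneously proves global existence and $2(M+m^\delta(r))<r$ for all $r>2M$.

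\emph{Non-triviality and the limit at infinity.} Because $\Psi_{L_0}^0(\Rmin^0)=E^0$ while $\Psi_{L_0}^0$ is strictly decreasing at $\Rmin^0\in\,]s_{L_0}^0,r_{L_0}^0[$, writing $\Psi_{L_0}^0(r)=E^0\exp\!\big(\tfrac12\ln(1+L_0/r^2)-y^0(r)\big)$ shows that $(y^0)'(\Rmin^0)$ strictly exceeds the derivative of $\tfrac12\ln(1+L_0/r^2)$ at $\Rmin^0$; since $m^\delta(\Rmin^0)=p^\delta(\Rmin^0)=0$ the same strict inequality holds with $y^\delta$, so $e^{-y^\delta(r)}\sqrt{1+L_0/r^2}<1$ just above $\Rmin^0$, where in addition $\chi\equiv1$ (because $r_0+\eta_0<\Rmin^0$) and $\Phi>0$ near $0$ by~\ref{it:assphi3}; hence $\rho^\delta,p^\delta>0$ there for $\delta>0$. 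On $[\Rmax^0,\infty[$ we have $m^\delta\equiv M^\delta$ and $(y^\delta)'(r)=-\frac{M+M^\delta}{r(r-2(M+M^\delta))}$, which is integrable at infinity, so $y_\infty^\delta$ exists and is finite; a partial-fraction integration of this expression from $\Rmax^0$ to $\infty$ produces exactly \eqref{eq:lochyinfdelta}, and $y_\infty^\delta\le\lim_{r\to\infty}y^0(r)=\ln E^0<0$ follows from $y^\delta\le y^0$ together with $\mu^0\to0$.

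\emph{Main obstacle.} The crux is the second step. Because of the implicit energy threshold inside $G$ and $H$, one cannot establish the a~priori metric bound $2(M+m^\delta)<r$, the comparison $y^\delta\le y^0$, and the support localization $\supp\rho^\delta\subset[\Rmin^0,\Rmax^0]$ one after another; they have to be closed off together in a single continuation argument. Getting this logical loop to work — and, crucially, the self-regulation mechanism whereby the matter is forced to switch off (through $y^\delta\to-\infty$) before the Schwarzschild coordinate can degenerate, which is what makes $2(M+m^\delta)<r$ hold for \emph{every} $\delta\ge0$ and not merely for small shells — is where the real work lies; once it is in place, the remaining assertions reduce to bookkeeping with the effective-potential geometry of Lemma~\ref{lemma:lochpropertieseffpotss}.
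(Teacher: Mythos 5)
Your proposal follows the paper's strategy for the comparison $y^\delta\le y^0$, the confinement $\supp\rho^\delta\subset[\Rmin^0,\Rmax^0]$, the non-triviality, and the formula for $y^\delta_\infty$, but departs from it in one substantive place: the global extension in $r$. The paper simply cites a result from the 1994 work of Rein, which controls the denominator $1-\tfrac{2}{r}\bigl(M+m^\delta\bigr)$ via the Tolman--Oppenheimer--Volkov equation. You instead propose a self-contained continuation argument: if the denominator vanished at some finite $r_c$, then $y^\delta\to-\infty$, forcing the Vlasov matter to switch off and $m^\delta$ to freeze, so that the denominator would be increasing near $r_c$ — contradiction. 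This is a legitimate, arguably cleaner alternative. However, the pivotal step $(y^\delta)'\to-\infty\ \Rightarrow\ y^\delta\to-\infty$ is not automatic (a derivative can diverge in magnitude while the function stays bounded), and you leave it unjustified. To close it, use the a~priori bound $\rho^\delta\le\delta\,G(\cdot,y^0)$ on $[4M,r_c]$ (valid because $y^\delta\le y^0$ on the bootstrap interval and $G$ is nondecreasing in $y$), which makes $g(r)\coloneqq r-2\bigl(M+m^\delta(r)\bigr)$ Lipschitz and hence $g(r)\le K(r_c-r)$; then $|(y^\delta)'|\ge c/(r_c-r)$, whose integral diverges, giving $y^\delta\to-\infty$. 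Without this (or an equivalent rate estimate) the continuation argument has a genuine gap.

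A smaller point: the shooting from $r_0=s_{L_0}^0$ to hit $y_0$ at $4M$ is an unnecessary detour. Since $4M\in\,]2M,\Rmin^0[$ and $y^0$ already solves the initial value problem on $]2M,\Rmin^0]$ with $y^0(4M)=y_0$ (the source vanishes identically along $y^0$ there: by $\chi$ on $]2M,r_0]$ and by $\Psi^0_{L_0}\ge E^0$ on $[r_0,\Rmin^0]$), the uniqueness theorem for the IVP at $4M$ gives $y^\delta=y^0$ on $]2M,\Rmin^0]$ directly, with no free parameter to tune.
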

\begin{proof}
	Uniqueness and local existence follow by basic ODE theory since $G,H\in C^1(]0,\infty[\times\R)$, cf.~\cite[Lemma~2.2]{ReRe00}.
	Next, observe that $y^\delta=y^0$ defines a solution of~\eqref{eq:lochydeltagl} on $]2M,\Rmin^0]$, which is due to the two following facts: First, the presence of the radial cut-off function $\chi$ in~\eqref{eq:lochrhoandp} leads to $\rho^\delta(r)=0=p^\delta(r)$ for $2M<r\leq r_0=s_{L_0}^0$. Second, $\Psi_{L_0}^0(r)\geq E^0$ for $r\in[s_{L_0}^0,\Rmin^0]$ by Lemma~\ref{lemma:lochpropertieseffpotss}, which implies that $e^{-y^0(r)}\sqrt{1+\frac{L_0}{r^2}}\geq1$. Hence, $G(r,y^0(r))=0=H(r,y^0(r))$ for $r\in[s_{L_0}^0,\Rmin^0]$. We have thus proven~\eqref{eq:lochydeltaequaly0} and that the solution of~\eqref{eq:lochydeltagl} can be extended to the left up to the radius $2M$.
	
	For a proof that the solution can be extended to arbitrarily large radii we refer to~\cite[Theorem~3.4]{Rein94}. The main difficulty is to show that the denominator in the right hand side of~\eqref{eq:lochydeltagl} does not vanish, which is achieved by using the Tolman-Oppenheimer-Volkov equation~\cite[Lemma~3.3]{Rein94}.
	
	Now let $y^\delta\in C^1(]2M,\infty[)$ be the resulting solution. Since $\rho^\delta,p^\delta\geq0$, we obtain that $(y^\delta)'(r)\leq(y^0)'(r)$ for $r>2M$. Together with~\eqref{eq:lochydeltaequaly0} this implies~\eqref{eq:lochydeltaestimatey0}. Hence, 
	\[e^{-y^\delta(r)}\sqrt{1+\frac{L_0}{r^2}}\geq e^{-y^0(r)}\sqrt{1+\frac{L_0}{r^2}}=\frac{\Psi_{L_0}^0(r)}{E^0}\geq1 \quad\text{ for }r\geq\Rmax^0,\]
	i.e., $G(r,y^\delta(r))=0=H(r,y^\delta(r))$ for $r\geq\Rmax^0$ and~\eqref{eq:lochrhopcompactsupport} is shown. Moreover, in the case $\delta>0$, $y^\delta$ cannot equal $y^0$ on the whole domain $]2M,\infty[$ since, e.g., $G(r_{L_0}^0,y^0(r_{L_0}^0))$ and $H(r_{L_0}^0,y^0(r_{L_0}^0))$ are positive; recall that $E^0>\Psi_{L_0}^0(r_{L_0}^0)$ and~\ref{it:assphi1},~\ref{it:assphi3}. This proves that $\rho^\delta$ and $p^\delta$ cannot vanish identically.
	Lastly, \eqref{eq:lochrhopcompactsupport} implies that
	\[(y^\delta)'(r)=-\frac1{1-\frac2r\left(M+M^\delta\right)}\,\frac{M+M^\delta}{r^2},\quad r\geq\Rmax^0,\]
	from which we deduce that
	\[y^\delta(r)=y^\delta(\Rmax^0)+\frac12\ln\left(1-\frac2{\Rmax^0}\left(M+M^\delta\right)\right)-\frac12\ln\left(1-\frac2r\left(M+M^\delta\right)\right),\quad r\geq\Rmax^0.\]
	We therefore obtain~\eqref{eq:lochyinfdelta}. $y_\infty^\delta<0$ is a consequence of~\eqref{eq:lochydeltaestimatey0} and $y_\infty^0=\ln(E^0)<0$.
\end{proof}

Once the solution $y^\delta$ of~\eqref{eq:lochydeltagl} is known, we can recover the original functions as follows:

\begin{proposition}\label{prop:lochexistencestst}
	Let $\chi$, $\varphi$, $l$, and $\Phi$ be as stated at the start of this subsection with $L_0$, $r_0$, $\eta_0$, and $y_0$ as specified after Lemma~\ref{lemma:lochpropertieseffpotss}. For $\delta>0$ let $y^\delta$ be the solution of~\eqref{eq:lochydeltagl} provided by Lemma~\ref{lemma:lochydeltaproperties}. Then, letting $E^\delta\coloneqq e^{y_\infty^\delta}\in]0,1[$, $\mu^\delta(r)\coloneqq\ln(E^\delta)-y^\delta(r)$, and
	\[\lambda^\delta(r)\coloneqq-\frac12\ln\left(1-\frac{8\pi}r\left(M+m^\delta(r)\right)\right),\quad r>2M, \]
	as well as defining $\rho^\delta$, $p^\delta$ by~\eqref{eq:lochrhoandp} and $f^\delta$ by~\eqref{eq:lochdf} gives a time-independent solution of the Einstein-Vlasov system with Schwarzschild-singularity of mass $M$. More precisely, the Vlasov equation~\eqref{eq:vlasov} is solved in the sense that
	\begin{equation}\label{eq:lochdfdependsonEandL}
		f^\delta(x,v)=\delta\,\varphi(E(x,v),L(x,v)) \text{ for } (x,v) \text{ such that }f^\delta(x,v)>0, 
	\end{equation}
	i.e., $f^\delta$ is constant along characteristics of the stationary Vlasov equation. The solution is compactly supported and has finite mass. 
\end{proposition}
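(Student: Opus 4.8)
The plan is to substitute the stated definitions into the field equations and the Vlasov equation and verify them one at a time, leaning on Lemma~\ref{lemma:lochydeltaproperties} for the existence and regularity of $y^\delta$, for the key inequality $2(M+m^\delta(r))<r$ on $]2M,\infty[$, and for the identity $y^\delta=y^0$ near $r=2M$. First I would record the consistency relations $\rho^\delta=\rho_{f^\delta}$, $p^\delta=p_{f^\delta}$ and the analogous $q^\delta=q_{f^\delta}$ for the tangential pressure: these hold because $G$ and $H$ in~\eqref{eq:G_phi_singfree}--\eqref{eq:H_phi_singfree} were obtained precisely by the change of variables $v\mapsto(E,L)$ in~\eqref{eq:rho}--\eqref{eq:q}, exactly as in~\cite{RaRe2013}. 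After that the proof splits into three parts: the four field equations, the Vlasov equation, and compact support together with finiteness of the mass. The one genuinely new point --- the existence of such shells being otherwise close to~\cite{Rein94} --- is that the radial cut-off $\chi$ must not spoil the Vlasov equation, and this is where I expect the only real obstacle to lie.

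For the field equations: since $f^\delta$ is time-independent and even in $x\cdot v$, one has $j_{f^\delta}=0$, so~\eqref{eq:fieldeq3} is immediate. Multiplying~\eqref{eq:fieldeq1} by $r$ and integrating from $2M$ shows it is equivalent to $e^{-2\lambda^\delta(r)}=1-\tfrac2r(M+m^\delta(r))$, which is the definition of $\lambda^\delta$; letting $r\to2M$ and using $m^\delta(2M^+)=0$ (from $\rho^\delta\equiv0$ near $2M$) yields the boundary condition~\eqref{eq:blackhole_boundary}. A short algebraic manipulation shows that, given~\eqref{eq:fieldeq1}, the static form of~\eqref{eq:fieldeq2} is equivalent to the ODE~\eqref{eq:lochmudgl}; and $\mu^\delta=\ln(E^\delta)-y^\delta$ satisfies~\eqref{eq:lochmudgl} because $(\mu^\delta)'=-(y^\delta)'$ while $y^\delta$ solves~\eqref{eq:lochydeltagl}, the boundary condition $\mu^\delta(\infty)=0$ holding by the choice $E^\delta=e^{y_\infty^\delta}$. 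Finally~\eqref{eq:fieldeq4} is a consequence of the other three field equations together with the Vlasov equation via the contracted Bianchi identities (equivalently, via the static matter-conservation law relating $\rho^\delta,p^\delta,q^\delta$), so it needs no separate verification beyond citing~\cite{Rein94}.

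The Vlasov equation is the crux. Writing $\mathcal{T}_0$ for the transport part of~\eqref{eq:vlasov} with $\partial_t=\dot\lambda=0$ and $\mu=\mu^\delta$, $\lambda=\lambda^\delta$, and using that $E(x,v)=e^{\mu^\delta(r)}\sqrt{1+|v|^2}$ and $L=|x\times v|^2$ are constant along its characteristics (so $\mathcal{T}_0\varphi(E,L)=0$), one computes
\[
\mathcal{T}_0 f^\delta=\delta\,\varphi(E,L)\,\mathcal{T}_0\bigl[\chi(r-r_0)\bigr]=\delta\,\varphi(E,L)\,e^{\mu^\delta-\lambda^\delta}\,\frac{x\cdot v}{r\sqrt{1+|v|^2}}\,\chi'(r-r_0),
\]
so it suffices to show $\varphi(E(x,v),L)=0$ whenever $\chi'(r-r_0)\neq0$, i.e.\ whenever $r_0\le r\le r_0+\eta_0$. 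Here I would use $|v|^2=(x\cdot v/r)^2+L/r^2$, giving $E(x,v)\ge\Psi_L^\delta(r):=e^{\mu^\delta(r)}\sqrt{1+L/r^2}$; for $2M<r\le\Rmin^0$, Lemma~\ref{lemma:lochydeltaproperties} gives $y^\delta=y^0$, hence $\mu^\delta=\mu^0+\ln(E^\delta/E^0)$ and thus $\Psi_L^\delta=\tfrac{E^\delta}{E^0}\Psi_L^0$ there. Since $[r_0,r_0+\eta_0]\subset\,]2M,\Rmin^0]$ by the parameter choice, and $\Psi_{L_0}^0$ decreases on $[s_{L_0}^0,r_-^0(E^0,L_0)]=[r_0,\Rmin^0]$ from its local maximum down to the value $E^0$ (Lemma~\ref{lemma:lochpropertieseffpotss}), one gets $\Psi_{L_0}^0\ge E^0$ there; monotonicity of $L\mapsto\Psi_L^0(r)$ then upgrades this to $\Psi_L^\delta(r)\ge E^\delta$ for all $L\ge L_0$ and $r\in[r_0,r_0+\eta_0]$. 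Hence $E(x,v)\ge E^\delta$ for such $(x,v)$, so $\Phi(1-E/E^\delta)=0$ by~\ref{it:assphi1}, i.e.\ $\varphi(E,L)=0$, as needed. The same chain of inequalities shows that $f^\delta(x,v)>0$ forces $r>r_0+\eta_0$, where $\chi\equiv1$; this gives~\eqref{eq:lochdfdependsonEandL}, i.e.\ $f^\delta$ is constant along characteristics.

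For compact support and finite mass: by~\eqref{eq:lochrhopcompactsupport}, $\rho^\delta$ vanishes on $]2M,\Rmin^0]\cup[\Rmax^0,\infty[$, and since $f^\delta\ge0$ is continuous with $\rho^\delta(r)=\int\sqrt{1+|v|^2}f^\delta(x,v)\,dv$, this forces $f^\delta(x,\cdot)\equiv0$ off $r\in[\Rmin^0,\Rmax^0]$; on that compact interval the bound $E(x,v)<E^\delta$ on $\{f^\delta>0\}$ gives $\sqrt{1+|v|^2}<E^\delta e^{-\mu^\delta(r)}$, which is bounded because $\mu^\delta$ is continuous, so $\supp f^\delta$ is compact, and the mass is finite since $M_{ADM}=M+M^\delta<\infty$ by~\eqref{eq:lochMdeltadef}. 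As indicated, the only step that is not a routine adaptation of~\cite{Rein94,RaRe2013} is controlling the zero set of $\varphi$ relative to the transition region of $\chi$, which is precisely what the choices $r_0=s_{L_0}^0$, $r_0+\eta_0<r_-^0(E^0,L_0)$ and the identity $y^\delta=y^0$ near the center are engineered for.
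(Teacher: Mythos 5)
Your proposal is correct and follows essentially the same route as the paper's own (very brief) proof: the only real content is to check that the consistency relation~\eqref{eq:lochdfdependsonEandL} is not spoiled by the transition region of the cut-off $\chi$, and that is settled by the observation that $y^\delta=y^0$ on $\left]2M,\Rmin^0\right]$ forces $\Psi_L^\delta\geq E^\delta$ there, so $\varphi(E,L)=0$ for $r\in[r_0,r_0+\eta_0]\subset\left]2M,\Rmin^0\right[$ --- the same mechanism the paper invokes indirectly through~\eqref{eq:lochrhopcompactsupport}. You spell out the field-equation and compact-support verifications that the paper delegates to Lemma~\ref{lemma:lochydeltaproperties} and to the references~\cite{Rein94,RaRe2013}; that is harmless extra detail. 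One small remark: your derivation uses $e^{-2\lambda^\delta}=1-\tfrac{2}{r}\left(M+m^\delta(r)\right)$, which is the correct expression (it matches the denominator in~\eqref{eq:lochydeltagl} and the bound $2(M+m^\delta(r))<r$ from Lemma~\ref{lemma:lochydeltaproperties}); the factor $\tfrac{8\pi}{r}$ appearing in the proposition's statement of $\lambda^\delta$ is evidently a typo in the paper, and your silent correction is warranted.
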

\begin{proof}
	Due to the properties of $y^\delta$ derived in Lemma~\ref{lemma:lochydeltaproperties}, it remains to show~\eqref{eq:lochdfdependsonEandL}. 
	This might only fail for $r\in]r_0,r_0+\eta_0[$ since $f^\delta(x,v)=0$ for $2M<r\leq r_0$ and $f^\delta(x,v)=\delta\,\varphi(E(x,v),L(x,v))$ for $r\geq r_0+\eta_0$. However,~\eqref{eq:lochrhopcompactsupport} together with $r_0+\eta_0<r_-^0(E^0,L_0)=\Rmin^0$ immediately implies that $f^\delta(x,v)=0$ if $2M<r<r_0+\eta_0$.
\end{proof}

Note that if the microscopic equation of state is chosen to be sufficiently smooth, e.g., $\Phi\in C^1(\R)$ and $l>1$, then the solution obtained in the above proposition solves the Vlasov equation in the classical sense. Further assumptions on the static solution which are needed for the succeeding linear stability analysis are stated in Section~\ref{sc:ststconditions}.

\begin{remark}\label{remark:nested_shells}
	After obtaining such a static shell we can repeat the above method to construct another shell situated at larger radii. For the second shell, we have to choose $L_0$ large enough and consider $M+M^\delta$ as the interior mass.  
	Iteratively, we can obtain arbitrarily many, nested matter shells. This is analogous to the massless case~\cite{An2021}.
\end{remark}

\section{Action-angle type variables and single-well structure}\label{sc:jeans}

Spherical symmetry implies that the distribution function $f$ can be written in the form $f=f(t,r,w,L)$, where $w= \frac{x\cdot v}{r}$ and $L=|x\times v|^2$ can be interpreted as the radial and angular momentum, respectively. Integrals change via
\begin{equation}
	dx dv = 4\pi^2 \, drdwdL, \quad dv = \frac{\pi}{r^2}\, dw dL. 
\end{equation}
We now introduce action-angle type variables, which are a fundamental ingredient in our analysis. The main idea is to express a point $(r,w,L)$ in the spherically symmetric phase space in terms of the two integrals $(E,L)$ of the characteristic flow of a fixed steady state together with one angle-variable $\theta$ which determines the position along the orbit fixed by $(E,L)$. 

Before introducing action-angle type variables in Section~\ref{sc:aacoords}, we discuss a property of the underlying equilibrium which is necessary for such variables to be well-defined in Section~\ref{sc:jeans_sc} and study the characteristic flow in Section~\ref{sc:periodfunction}.

\subsection{Single-well structure}\label{sc:jeans_sc}


\begin{defnlem}\label{def:jeans}
	Let $f_0$ be a steady state of the Einstein-Vlasov system with or without a Schwarzschild-singularity as derived in Proposition~\ref{prop:lochexistencestst}  or in Section~\ref{sc:stst_classic}. Let $E_0\in]0,1[$ be the corresponding cut-off energy, $L_0\geq0$ the lower bound on the angular momentum, and $\mu_0$ and $\rho_0$ be the induced metric coefficient and density, respectively. Furthermore, for $L\geq0$ let the {\em effective potential} be given by 
	\begin{equation}\label{eq:defeffpot}
		\Psi_L(r)\coloneqq e^{\mu_0(r)}\sqrt{1+\frac L{r^2}}.
	\end{equation}
	Then the steady state is said to have {\em single-well structure} if for every $L>0$ with $L\geq L_0$ and $I_L\coloneqq\{\Psi_L<E_0\}\cap\{\rho_0>0\}\neq\emptyset$ there exists a unique radius $r_L\in I_L$ such that $\Psi_L'(r_L)=0$.
	
	In this case, we deduce the following properties for every $L$ as above:
	\begin{enumerate}[label=(\alph*)]
		\item\label{it:jeansdef1} $\Psi_L(r_L)=\min\limits_{I_L}\Psi_L$.
		\item\label{it:jeansdef2} For every $E\in]\Psi_L(r_L),E_0[$ there exist two unique radii $r_\pm(E,L)\in I_L$ such that $r_-(E,L)<r_L<r_+(E,L)$ and $\Psi_L(r_\pm(E,L))=E$.
	\end{enumerate}
\end{defnlem}
\begin{proof}
	Observe that $\{\rho_0>0\}=\{\Psi_{L_0}<E_0\}$ in the singularity-free situation and $\{\rho_0>0\}=\{\Psi_{L_0}<E_0\}\cap]s_{L_0}^0,\infty[$ in the case of a central Schwarzschild-singularity with $s_{L_0}^0$ given by Lemma~\ref{lemma:lochpropertieseffpotss}. In both settings, $\Psi_L$ thus equals $E_0$ at the boundaries of $I_L$, which yields the claimed properties.  
\end{proof}

In particular, note that $I_L$ and the interior of the radial steady state support $\{\rho_0>0\}$ are connected if the steady state has single-well structure.

We now discuss the presence of a single-well structure for the classes of steady states derived in Section~\ref{sc:stst}. We refer to~\cite[Appendix~D]{RiSa2020} for a related analysis.

\subsubsection{Singularity-free, isotropic steady states}\label{ssc:jeans_singfree}
The equilibria constructed in~\cite{Sch99} show that there are steady states of the singularity-free Einstein-Vlasov system as constructed in Section~\ref{sc:stst_classic} which do not have single-well structure. The explicit $L$-dependency of the static solutions is crucial for the results from~\cite{Sch99}, and it is an open question whether general isotropic steady states have single-well structure. We prove this statement for steady states satisfying an additional condition.

\begin{lemma}\label{lemma:jeansfortwomoverrloweronethird}
	Let $f$ be a static solution of the singularity-free Einstein-Vlasov system as constructed in Section~\ref{sc:stst_classic}. We require that the steady state is isotropic, i.e., the microscopic equation of state $\varphi=\varphi(E)$ is independent of $L$.
	If
	\begin{equation}\label{eq:twomoverrloweronethird}
		\frac{2m(r)}{r} \leq \frac 1 3,\quad r>0,
	\end{equation}
	where $m$ is defined in \eqref{eq:twomoverr}, then $f$ has single-well structure.
\end{lemma}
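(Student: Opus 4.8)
The plan is to reduce everything to a scalar inequality for $\Psi_L^2$ and to exploit that, for an isotropic steady state, $\mu_0$ satisfies an autonomous second-order structure through the field equations. Write $\Psi_L(r)^2 = e^{2\mu_0(r)}\bigl(1+\tfrac{L}{r^2}\bigr)$, so that critical points of $\Psi_L$ in the region $\{\Psi_L<E_0\}=\{\rho_0>0\}$ (using the identification from the proof of Definition \& Lemma~\ref{def:jeans}) are exactly the zeros of
\[
\frac{d}{dr}\bigl(\Psi_L^2\bigr)(r) = 2e^{2\mu_0(r)}\left(\mu_0'(r)\Bigl(1+\frac L{r^2}\Bigr) - \frac L{r^3}\right).
\]
Hence $\Psi_L'(r)=0$ is equivalent to $\mu_0'(r)\,r^3 + \mu_0'(r)\,r\,L = L$, i.e.
\[
L = \frac{r^3\mu_0'(r)}{1 - r\mu_0'(r)} =: g(r),
\]
valid on the set where $r\mu_0'(r)<1$ (which holds throughout the support since $\mu_0'>0$ there and $r\mu_0'$ stays small — this should be checked from the field equations and the bound~\eqref{eq:twomoverrloweronethird}). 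Single-well structure for a given $L$ then amounts to: $g$ is strictly monotone increasing on the connected radial support, so that $L=g(r)$ has at most one solution there. (Connectedness of $\{\rho_0>0\}$ in the isotropic case is standard: $\rho_0>0$ iff $e^{\mu_0(r)}<E_0$, and $\mu_0$ is increasing.)

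The heart of the argument is therefore to show $g'>0$ on the support. First I would compute $g'$ and see that its sign is governed by the sign of
\[
2r\mu_0'(r) + r^2\mu_0''(r) - r^3\mu_0'(r)\mu_0''(r) - \bigl(r^2\mu_0'(r)\bigr)^2 \cdot(\text{lower order}),
\]
so it comes down to bounding $\mu_0''$ from below in terms of $\mu_0'$ and $r$. For this I would use the field equations \eqref{eq:fieldeq1}--\eqref{eq:fieldeq2} in their static isotropic form. From \eqref{eq:fieldeq2}, $r\mu_0' = \tfrac12\bigl(e^{2\lambda_0}-1\bigr) + 4\pi r^2 e^{2\lambda_0} p_0$, and $e^{2\lambda_0} = (1-2m(r)/r)^{-1}$, so $r\mu_0' = \tfrac{m(r)/r + 4\pi r^2 p_0}{1-2m(r)/r}$. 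Differentiating this relation (which expresses $\mu_0''$ via $m' = 4\pi r^2\rho_0$, $\rho_0$, $p_0$, and $p_0'$), together with the Tolman–Oppenheimer–Volkov equation $p_0' = -\mu_0'(\rho_0+p_0)$ and the pointwise inequalities $p_0\le\rho_0/3$ valid for relativistic kinetic matter (indeed $p_0 + 2q_0 = \rho_0 - (\text{rest mass density})\le\rho_0$, and for the isotropic-in-the-potential ansatz one controls $p_0$ by $\rho_0$), one obtains a differential inequality of the shape $\mu_0'' \ge -C(\mu_0')^2 - (\text{terms vanishing with }m/r)$. The point of the hypothesis $2m(r)/r\le 1/3$ is precisely to make the denominator $1-2m/r$ bounded below by $2/3$ and to dominate the ``bad'' quadratic terms so that the combination above stays strictly positive; this is the step where $1/3$ (rather than the a priori available bound $<1$) is used.

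Putting this together: $g$ is $C^1$ and strictly increasing on the connected set $\{\rho_0>0\}$, so for each admissible $L$ the equation $g(r)=L$, equivalently $\Psi_L'(r)=0$, has at most one root $r_L$ in $I_L=\{\rho_0>0\}$; existence of a root when $I_L\ne\emptyset$ follows because $\Psi_L=E_0$ on $\partial I_L$ and $\Psi_L<E_0$ inside, forcing an interior minimum. By Definition \& Lemma~\ref{def:jeans} this is exactly single-well structure. The main obstacle I anticipate is the sign analysis of $g'$: it requires a clean lower bound on $\mu_0''$, and getting that bound in a form that visibly uses $2m/r\le 1/3$ — as opposed to merely $2m/r<1$ — is the delicate computation. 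I would organize it by introducing the auxiliary quantity $u(r):=r\mu_0'(r)$, deriving a first-order ODE / inequality for $u$ from the field equations, and checking that $g = \tfrac{r^2 u}{1-u}$ has positive derivative whenever $u$ satisfies that inequality and $u\le$ some explicit constant forced by $2m/r\le1/3$.
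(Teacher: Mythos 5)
Your reduction is the same as the paper's up to an algebraic inversion: the critical-point condition $\Psi_L'(r)=0$ is rewritten in the paper as $\tfrac{1}{L}=\tfrac{1}{r^3\mu_0'(r)}-\tfrac{1}{r^2}$, which is precisely $1/g(r)$ for your $g(r)=\tfrac{r^3\mu_0'(r)}{1-r\mu_0'(r)}$; your claim that $g$ is strictly increasing on the support is equivalent to the paper's claim that this right-hand side is strictly decreasing there. The ingredients you name — the field equations, $p=q$ by isotropy, the TOV equation, $3p\le\rho$, and $2m/r\le\tfrac13\Leftrightarrow e^{2\lambda}-1\le\tfrac12$ — are exactly those the paper deploys.

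The gap is that the decisive step, establishing the sign of $g'$, is never carried out. You write a schematic inequality of the form $\mu_0''\ge -C(\mu_0')^2-(\text{terms vanishing with }m/r)$ and then explicitly flag the sign analysis as ``the main obstacle I anticipate,'' proposing to ``organize'' the computation via $u=r\mu_0'$ without doing it. That computation is where the content of the lemma lives, and it is not a formality. The paper substitutes \eqref{eq:fieldeq1}, \eqref{eq:fieldeq2}, \eqref{eq:fieldeq4} (the last of which your sketch does not invoke, though differentiating \eqref{eq:fieldeq2} against the TOV equation is an equivalent route) together with $p=q$ to obtain a closed expression
\[
\left(\frac{1}{r^3\mu'}-\frac{1}{r^2}\right)'=-\frac{e^{2\lambda}}{r^3(\mu')^2}\,d(r)
\]
with an explicit bracket $d(r)$ depending polynomially on $\rho$, $p$, $m/r$, $e^{2\lambda}$, and only then uses $3p\le\rho$ and $e^{2\lambda}-1\le\tfrac12$ to conclude $d>0$ on $\{\rho>0\}$. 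Without producing that identity (or an equivalent) and verifying the sign, the proposal does not constitute a proof: the plan and the hypotheses you would invoke are correct, but the lower bound on $\mu_0''$ is conjectured rather than established, and it is exactly this estimate that encodes why $2m/r\le\tfrac13$ (rather than a weaker bound) is needed.

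A small imprecision along the way: the pointwise bound is $p+2q\le\rho$, which follows from $|v|^2/\sqrt{1+|v|^2}\le\sqrt{1+|v|^2}$ in \eqref{eq:rho}--\eqref{eq:q}, and for $p=q$ it gives $3p\le\rho$; the difference $\rho-(p+2q)$ is $\int f\,(1+|v|^2)^{-1/2}\,dv$, not quite the rest-mass density as your parenthetical suggests, though the inequality you need is correct.
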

\begin{proof}
	Consider $L>0$ such that $I_L\neq\emptyset$. Since $\Psi_L(r)>E_0$ as $r\to0$ and as $r\to\infty$, there exists at least one zero of $\Psi_L'$. In order to show that there is no more than one zero, we first observe that $\Psi_L'(r)=0$ is equivalent to
	\begin{equation}\label{eq:effpotzerocondition}
		\frac1L=\frac1{r^3\mu'(r)}-\frac1{r^2},\quad r>0.
	\end{equation} 
	We prove that the right hand side of~\eqref{eq:effpotzerocondition} is strictly monotonic on the steady state support.
	By isotropy, the radial pressure~$p$ and tangential pressure~$q$ of the steady state are equal, cf.~\cite[p.~563]{Rein94}; we suppress the index $f$ for better readability. Using~\eqref{eq:fieldeq1}, \eqref{eq:fieldeq2}, and \eqref{eq:fieldeq4} thus yields that
	\begin{multline*}
		\left(\frac1{r^3\mu'}-\frac1{r^2}\right)'=-\frac{e^{2\lambda}}{r^3(\mu')^2}\Bigg[16\pi p+4\pi\rho+\frac m{r^3}\\
		-e^{2\lambda}\left(48\pi^2r^2p^2-16\pi^2r^2\rho p+28\pi p\frac mr-4\pi\rho\frac mr+4\frac{m^2}{r^2}\right)\Bigg].
	\end{multline*}
	Let $d(r)$ denote the term in the square brackets. The isotropy of the steady state implies that $3p\leq\rho$ and we obtain that
	\begin{equation*}
		d(r)\geq 16\pi p+4\pi\rho+\frac m{r^3}-2\left(e^{2\lambda}-1\right)\left(4\pi p+\frac m{r^3}\right),
	\end{equation*}
	where we used that $e^{-2\lambda}=1-\frac{2m}r$. Lastly, observe that~\eqref{eq:twomoverrloweronethird} is equivalent to $e^{2\lambda}-1\leq\frac12$, from which we deduce that $d$ is positive on $\{\rho>0\}$.
\end{proof}

If one replaces $\frac 13$ with $\frac89$ in the right hand side of~\eqref{eq:twomoverrloweronethird}, the resulting inequality is satisfied for every equilibrium of the Einstein-Vlasov system by the extended Buchdahl inequality shown in~\cite{An2008}.
The more restrictive inequality~\eqref{eq:twomoverrloweronethird} is true if the isotropic steady state $f$ is not too relativistic. In context of Section~\ref{sc:stst_classic}, this can be seen as follows: If one fixes an isotropic ansatz as in Section~\ref{sc:stst_classic}, then~\eqref{eq:twomoverrloweronethird} is satisfied as long as $y_0$ is not too large, cf.~\cite[Cor.~3.2]{HaRe2014}.

\subsubsection{Steady states with a Schwarzschild-singularity}\label{ssc:jeans_loch}

We now prove that suitable static solutions as constructed in Section~\ref{sc:stst_hole} have single-well structure. The first step is to investigate their behavior as the size of the Vlasov part gets smaller.

\begin{lemma}\label{lemma:lochyconvergence}
	For fixed choices of $\chi$, $l$, $\Phi$, $L_0$, $r_0$, $\eta_0$, and $E^0$ as specified in Section~\ref{sc:stst_hole} let $(f^\delta)_{\delta>0}$ be the resulting family of static solutions provided by Proposition~\ref{prop:lochexistencestst} with corresponding $\mu^\delta$, $E^\delta$, $\rho^\delta$, $p^\delta$, and $M^\delta$ for $\delta>0$. As $\delta\to0$, these quantities behave as follows:
	$\rho^\delta\to0$, $p^\delta\to0$, and $\lambda^\delta\to\lambda^0$ uniformly on $]2M,\infty[$, $M^\delta\to0$, $E^\delta\to E^0$, and
	\begin{equation}\label{eq:lochmudeltaconvergence}
		(\mu^\delta)^{(k)}\to(\mu^0)^{(k)}\text{ uniformly on }]2M,\infty[,\quad k\in\{0,1,2\}.
	\end{equation}
\end{lemma}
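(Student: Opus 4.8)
The strategy is a bootstrap argument based on the differential equation~\eqref{eq:lochydeltagl} for $y^\delta$, exploiting that the right-hand side depends on $\delta$ essentially linearly through~\eqref{eq:lochrhoandp} and that everything is confined to the fixed compact radial interval $[\Rmin^0,\Rmax^0]$ by~\eqref{eq:lochrhopcompactsupport}. First I would record that on $]2M,\Rmin^0]$ we have $y^\delta=y^0$ by~\eqref{eq:lochydeltaequaly0}, so all convergence statements are trivial there, and on $[\Rmax^0,\infty[$ the solution is the explicit Schwarzschild-type expression from the proof of Lemma~\ref{lemma:lochydeltaproperties} with $M$ replaced by $M+M^\delta$; hence the whole problem reduces to the behaviour on the compact interval $[\Rmin^0,\Rmax^0]$ and to controlling $M^\delta$.

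\textbf{Key steps.} (1) \emph{A priori bound and equicontinuity of $y^\delta$.} From~\eqref{eq:lochydeltaestimatey0} and~\eqref{eq:lochydeltaequaly0} the family $(y^\delta)$ is bounded below by some constant on $[\Rmin^0,\Rmax^0]$ and bounded above by $y^0$; since $G,H$ are continuous, hence bounded on the compact set $[\Rmin^0,\Rmax^0]\times[\,\inf y^\delta,\,y^0(\Rmin^0)\,]$, the right-hand side of~\eqref{eq:lochydeltagl} is bounded uniformly in $\delta$ by $C\delta$ for $\delta$ small (the denominator stays bounded away from zero because $2(M+m^\delta(r))<r$ and $m^\delta\le M^\delta\le C\delta$). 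Thus $|(y^\delta)'|\le C\delta$ on $[\Rmin^0,\Rmax^0]$, which with $y^\delta(\Rmin^0)=y^0(\Rmin^0)$ gives $y^\delta\to y^0$ uniformly on $[\Rmin^0,\Rmax^0]$, and therefore on all of $]2M,\infty[$. (2) \emph{Convergence of the densities and masses.} Plugging $y^\delta\to y^0$ into~\eqref{eq:lochrhoandp} and using continuity of $G,H$ and the explicit factor $\delta$ gives $\rho^\delta\to0$ and $p^\delta\to0$ uniformly; then $M^\delta=4\pi\int_{\Rmin^0}^{\Rmax^0}s^2\rho^\delta\to0$, and consequently $m^\delta\to0$ uniformly, $E^\delta=e^{y_\infty^\delta}\to e^{y_\infty^0}=E^0$ via~\eqref{eq:lochyinfdelta}, and $\lambda^\delta\to\lambda^0$ uniformly from the explicit formula for $\lambda^\delta$ since $m^\delta\to0$. (3) \emph{Convergence of $\mu^\delta$ and its derivatives.} The case $k=0$ is immediate from $\mu^\delta=\ln(E^\delta)-y^\delta$ together with (1) and $E^\delta\to E^0$. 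For $k=1$, $(\mu^\delta)'=-(y^\delta)'$ and the right-hand side of~\eqref{eq:lochmudgl} (equivalently of~\eqref{eq:lochydeltagl}) converges uniformly to that of~\eqref{eq:lochmuss} because $m^\delta\to0$, $p^\delta\to0$ uniformly and the denominator converges uniformly to $1-\frac{2M}r$, which is bounded away from zero on $[\Rmin^0,\infty[$ and equal to the corresponding Schwarzschild quantity on $]2M,\Rmin^0]$. For $k=2$, differentiate~\eqref{eq:lochmudgl}: $(\mu^\delta)''$ is expressed through $m^\delta$, $(m^\delta)'=4\pi r^2\rho^\delta$, $p^\delta$, $(p^\delta)'$ and $(\mu^\delta)'$; the only genuinely new quantity is $(p^\delta)'=\delta\big(\chi'(r-r_0)H+\chi(r-r_0)(\partial_rH+\partial_yH\,(y^\delta)')\big)$, which is $O(\delta)$ uniformly by step (1) and the $C^1$-regularity of $H$, so $(\mu^\delta)''$ converges uniformly to $(\mu^0)''$.

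\textbf{Main obstacle.} The only delicate point is the uniformity of the estimates \emph{up to the endpoints} $r\to2M$ and $r\to\infty$: near $2M$ the factor $\frac1{1-\frac2r(M+m^\delta)}$ in~\eqref{eq:lochydeltagl} could in principle blow up, and as $r\to\infty$ one must check that the convergence of $\mu^\delta$ and its derivatives is genuinely uniform rather than merely locally uniform. Both are handled by the same observation: on $]2M,\Rmin^0]$ and on $[\Rmax^0,\infty[$ the functions $y^\delta$, $\mu^\delta$, $\mu^0$ are given by \emph{explicit closed-form expressions} (Schwarzschild-type, with mass $M$ resp. $M+M^\delta$), so the difference $\mu^\delta-\mu^0$ and its derivatives can be estimated there directly in terms of $M^\delta\to0$, with the potentially singular factors being identical in numerator structure and cancelling; thus no uniformity is lost at either end, and the substantive analysis is confined to the compact interval $[\Rmin^0,\Rmax^0]$ where the denominator is trivially bounded away from zero. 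This is exactly the structure that makes the bootstrap in steps (1)--(3) go through.
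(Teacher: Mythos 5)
Your overall strategy --- exploit the compactness of $[\Rmin^0,\Rmax^0]$ guaranteed by \eqref{eq:lochrhopcompactsupport}, bound the matter quantities by $O(\delta)$, propagate through the ODE~\eqref{eq:lochydeltagl}, and handle the second derivative via the chain rule (the paper's Tolman--Oppenheimer--Volkov route is equivalent) --- is exactly the paper's. However, the key estimate in your step (1) is false as stated, and in a way that is internally inconsistent. You claim that the right-hand side of \eqref{eq:lochydeltagl} is bounded by $C\delta$ and conclude $|(y^\delta)'|\le C\delta$ on $[\Rmin^0,\Rmax^0]$. This cannot be correct: the right-hand side contains the term $\frac{M+m^\delta(r)}{r^2}$, which tends to $\frac{M}{r^2}\neq 0$, so the right-hand side is $O(1)$, not $O(\delta)$. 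Moreover, if $|(y^\delta)'|\le C\delta$ were true, then with $y^\delta(\Rmin^0)=y^0(\Rmin^0)$ you would deduce $y^\delta\to y^0(\Rmin^0)$ (a \emph{constant}) uniformly, contradicting the desired conclusion $y^\delta\to y^0$, since $(y^0)'(r)=-\tfrac{M}{r^2(1-2M/r)}\neq0$. What you actually want is $|(y^\delta)'-(y^0)'|\le C\delta$ on $[\Rmin^0,\Rmax^0]$, obtained by subtracting the $\delta=0$ right-hand side and using $m^\delta,p^\delta=O(\delta)$ together with the denominator being bounded away from zero there (which uses $\Rmin^0>4M$). Integrating from $\Rmin^0$, where \eqref{eq:lochydeltaequaly0} gives equality, then yields $|y^\delta-y^0|\le C\delta$.

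A secondary issue: to bound $G(r,y^\delta(r))$ and $H(r,y^\delta(r))$ uniformly you appeal to compactness of the interval $[\inf y^\delta,\,y^0(\Rmin^0)]$, but you never establish a $\delta$-uniform lower bound for $y^\delta$ before invoking it, and the bound $m^\delta\le C\delta$ you use in the denominator already presupposes the density bound you only derive afterwards. The paper's route sidesteps this: $G,H$ are nonnegative and monotonically increasing in the $y$-variable, so the one-sided inequality $y^\delta\le y^0\le y_0$ (from~\eqref{eq:lochydeltaestimatey0} and monotonicity of $y^0$) already gives $0\le G(r,y^\delta(r))\le G(r,y_0)\le C$ on $[\Rmin^0,\Rmax^0]$ with no lower bound and no compactness in $y$ needed. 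This supplies $\rho^\delta,p^\delta,m^\delta,M^\delta=O(\delta)$ first, which then feeds cleanly into the corrected ODE estimate above. With these two repairs your proof matches the paper's.
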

\begin{proof}
	We start by showing the convergence of the densities $\rho^\delta$, which, due to~\eqref{eq:lochrhopcompactsupport}, can be restricted to $[\Rmin^0,\Rmax^0]$. As $\Rmin^0>4M$ and $y^\delta$ is decreasing, we obtain that $G(r,y^\delta(r))\leq G(r,y_0)$ for $r\in[\Rmin^0,\Rmax^0]$ since $G$ is an increasing function in the $y$-component, see~\cite[Lemma~3.1]{Rein94}. Hence, there exists $C>0$ such that $G(r,y^\delta(r))\leq C$ for $r\in[\Rmin^0,\Rmax^0]$ and every $\delta>0$, from which we deduce that $\rho^\delta\leq C\delta$. 
	This immediately implies that $M^\delta\to0$; recall~\eqref{eq:lochMdeltadef}. 
	Analogous arguments are valid with $\rho^\delta$ replaced by $p^\delta$. These convergences lead to $(y^\delta)'\to(y^0)'$ uniformly on $]2M,\infty[$; recall~\eqref{eq:lochydeltaequaly0} and note that $(y^\delta)'(r)\to0$ as $r\to\infty$ uniformly in $\delta>0$ since $2M^\delta<\Rmax^0-2M$. In particular, $\lambda^\delta\to\lambda^0$. After integration, we then deduce that $y^\delta\to y^0$ uniformly on $]2M,\infty[$ using similar arguments. Thus, $E^\delta\to E^0$, and~\eqref{eq:lochmudeltaconvergence} for $k\in\{0,1\}$ follows. Lastly, the uniform convergence of the second derivative of $\mu^\delta$ can be seen by differentiating~\eqref{eq:lochydeltagl} w.r.t.~$r$ and inserting the Tolman-Oppenheimer-Volkov equation
	\[
	(p^\delta)'(r)=(y^\delta)'(r)\left(p^\delta(r)+\rho^\delta(r)\right)-\frac2r\left(p^\delta(r)-q^\delta(r)\right),\quad r>2M,
	\]
	where $q^\delta$ has the same properties as~$\rho^\delta$ and~$p^\delta$; cf.~\cite[Lemma~3.3]{Rein94} for the latter claims. 
\end{proof}

These convergences now imply the single-well structure (cf.~Definition~\ref{def:jeans}) for $0<\delta\ll1$:

\begin{proposition}\label{prop:lochjeans}
	For fixed choices of $\chi$, $l$, $\Phi$, $L_0$, $r_0$, $\eta_0$, and $E^0$ as specified in Section~\ref{sc:stst_hole} let $(f^\delta)_{\delta>0}$ be the resulting family of static solutions provided by Proposition~\ref{prop:lochexistencestst}. Then there exists $\delta_0>0$ such that $f^\delta$ has single-well structure for any $0<\delta\leq\delta_0$. 
\end{proposition}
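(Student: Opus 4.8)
The plan is to leverage the convergence results of Lemma~\ref{lemma:lochyconvergence} together with the structural properties of the pure Schwarzschild effective potential recorded in Lemma~\ref{lemma:lochpropertieseffpotss}. Recall that for $\delta>0$ the steady state $f^\delta$ has cut-off energy $E^\delta$, induced metric coefficient $\mu^\delta$, density $\rho^\delta$, and associated effective potential $\Psi_L^\delta(r)=e^{\mu^\delta(r)}\sqrt{1+L/r^2}$; by construction $L_0>12M^2$ and $E^0\in]\Psi_{L_0}^0(r_{L_0}^0),\min\{1,\Psi_{L_0}^0(s_{L_0}^0)\}[$. I must show: for all $\delta$ small, every $L\geq L_0$ with $I_L^\delta\coloneqq\{\Psi_L^\delta<E^\delta\}\cap\{\rho^\delta>0\}\neq\emptyset$ admits a unique radius $r_L$ in $I_L^\delta$ with $(\Psi_L^\delta)'(r_L)=0$.

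The key observation is that on the radial steady-state support, which by~\eqref{eq:lochrhopcompactsupport} is contained in the fixed compact set $[\Rmin^0,\Rmax^0]\subset]4M,\infty[$ independent of $\delta$, the function $\mu^\delta$ and its first two derivatives converge uniformly to $\mu^0$ as $\delta\to0$ by~\eqref{eq:lochmudeltaconvergence}. Consequently $\Psi_L^\delta$, $(\Psi_L^\delta)'$, $(\Psi_L^\delta)''$ converge uniformly (on $[\Rmin^0,\Rmax^0]$, uniformly also in $L$ ranging over any compact set) to the corresponding quantities for $\Psi_L^0$. First I would handle $L$ in a bounded range, say $L_0\leq L\leq \Lambda$ for a large fixed $\Lambda$: on $[\Rmin^0,\Rmax^0]$ one has $(\Psi_L^0)''>0$ by Lemma~\ref{lemma:lochpropertieseffpotss}\ref{it:jeansdef2}-type reasoning — more precisely, $[\Rmin^0,\Rmax^0]=[r_-^0(E^0,L_0),r_+^0(E^0,L_0)]$ lies to the right of the maximum $s_{L_0}^0$ and brackets the strict minimum $r_{L_0}^0$, where $\Psi_{L_0}^0$ is strictly convex; for $L$ slightly larger the minimum $r_L^0$ only moves outward while convexity on the relevant window persists. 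Pick a uniform lower bound $(\Psi_L^0)''\geq c>0$ on the support region for all such $L$; then for $\delta$ small $(\Psi_L^\delta)''\geq c/2>0$ on $[\Rmin^0,\Rmax^0]\supset\mathrm{supp}\,\rho^\delta$, so $(\Psi_L^\delta)'$ is strictly increasing there, hence has at most one zero in $I_L^\delta$. Existence of a zero follows because $\Psi_L^\delta$ equals $E^\delta$ at the endpoints of $I_L^\delta$ (as noted in the proof of Definition~\&~Lemma~\ref{def:jeans}), forcing an interior critical point by Rolle.

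For the remaining regime $L>\Lambda$ with $\Lambda$ chosen large, I would argue that $I_L^\delta=\emptyset$, so the single-well condition is vacuous. Indeed $\Psi_L^\delta(r)\geq e^{\mu^\delta(r)}\sqrt{L}/r\geq c'\sqrt{L}/\Rmax^0$ on $[\Rmin^0,\Rmax^0]$ since $\mu^\delta$ is bounded below uniformly in $\delta$ (it converges uniformly to $\mu^0$ which is bounded below on this compact set), whereas $E^\delta<1$; choosing $\Lambda$ with $c'\sqrt{\Lambda}/\Rmax^0\geq1$ makes $\Psi_L^\delta\geq1>E^\delta$ on the whole support region, so $\{\Psi_L^\delta<E^\delta\}$ misses $\{\rho^\delta>0\}$. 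The main obstacle — and the point needing the most care — is establishing the uniform strict convexity $(\Psi_L^0)''\geq c>0$ on $[\Rmin^0,\Rmax^0]$ uniformly for $L\in[L_0,\Lambda]$: this requires a genuine (if elementary) analysis of $\Psi_L^0$ built from~\eqref{eq:locheffpotss}, showing that the convexity window around the minimum $r_L^0$ always contains $[r_-^0(E^0,L_0),r_+^0(E^0,L_0)]$, which one can extract from the explicit formula for $(\Psi_L^0)''$ (a rational function of $r$, $L$, $M$) together with Lemma~\ref{lemma:lochpropertieseffpotss}. A clean alternative is to fix the bounded $L$-window first, invoke uniform convergence to reduce everything to properties of the \emph{single} family $\{\Psi_L^0\}_{L\in[L_0,\Lambda]}$ in the pure Schwarzschild case, and cite~\cite[Appendix~A]{Jab2021} or~\cite[\S~19]{Chandrasekhar} for the requisite convexity statement there, then choose $\delta_0$ accordingly.
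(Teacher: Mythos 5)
Your overall strategy — uniform convergence of $\mu^\delta$, $(\mu^\delta)'$, $(\mu^\delta)''$ to the pure Schwarzschild quantities from Lemma~\ref{lemma:lochyconvergence}, plus a compactness argument in $L$ — is the same as the paper's, and the large-$L$ exclusion and Rolle step are sound. The gap is in the claim that $(\Psi_L^0)''\geq c>0$ on all of $[\Rmin^0,\Rmax^0]$, uniformly for $L$ in the relevant compact range. This is not established, and for some valid parameter choices it is simply false. Indeed, for large $r$ one has $\Psi_L^0(r)=1-M/r+L/(2r^2)+O(r^{-3})$, so $(\Psi_L^0)''(r)\approx -2M/r^3+3L/r^4<0$ eventually: the Schwarzschild effective potential has an inflection point to the right of its minimum $r_L^0$. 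If $L_0>16M^2$, then $\Psi^0_{L_0}(s^0_{L_0})>1$ and $E^0$ may be taken arbitrarily close to $1$, in which case $\Rmax^0=r_+^0(E^0,L_0)\to\infty$ and the support region $[\Rmin^0,\Rmax^0]$ is certain to pass the inflection point. Global convexity then fails on $[\Rmin^0,\Rmax^0]$, and the argument "strictly increasing $(\Psi_L^\delta)'$, hence at most one zero" collapses. Your own text flags this as "the point needing the most care," but defers it to a citation or a computation; in fact the assertion you would need to extract is false in general.

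The paper's proof avoids this by using a three-region decomposition of $J_L^0$ rather than a single convexity bound. It only requires $(\Psi_L^0)''>0$ on a neighborhood of $r_L^0$ (which is automatic from $(\Psi_L^0)''(r_L^0)>0$), and then controls the left and right pieces of $J_L^0$ by bounds on the \emph{first} derivative away from zero — negative to the left, positive to the right of that neighborhood. Uniform convergence of the first two derivatives then transfers exactly this sign structure of $(\Psi_L^\delta)'$ to small $\delta>0$, which is all that single-well structure requires: a unique zero of $(\Psi_L^\delta)'$ in $I_L^\delta$. To repair your argument you would need to replace "uniform convexity on the whole support" with this weaker and more robust combination (local convexity near $r_L^0$, first-derivative bounds elsewhere), which is precisely what the paper does.
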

\begin{proof}
	Let $\rho^\delta$ and $\mu^\delta$ be the density and metric coefficient induced by $f^\delta$, respectively, and let~$\Psi^\delta_L(r)$ be the corresponding effective potential given by~\eqref{eq:defeffpot} for $\delta>0$, $L>0$, and $r>2M$. Then,~\eqref{eq:lochydeltaestimatey0} and~\eqref{eq:lochrhopcompactsupport} imply that
	\[
	I_L^\delta\coloneqq\{\Psi_L^\delta<E^\delta\}\cap\{\rho^\delta>0\}\subset\{\Psi_L^0\leq E^0\}\cap[\Rmin^0,\Rmax^0]\eqqcolon J_L^0
	\]
	for $\delta>0$ and $L>0$; recall~\eqref{eq:lochdefRminRmax}. If $J_L^0\neq\emptyset$, then $r_L^0\in J_L^0$, where $r_L^0$ is defined in Lemma~\ref{lemma:lochpropertieseffpotss}. In this case, $\Psi_L^0$ behaves as follows on $J_L^0$: There exists an open interval around $r_L^0$ where $(\Psi_L^0)''$ is positive. 
	To the left of that interval, $(\Psi_L^0)'$ is negative, and to the right of this interval, $(\Psi_L^0)'$ is positive; in both of the latter regions, $(\Psi_L^0)'$ is bounded away from zero. Using the convergences~\eqref{eq:lochmudeltaconvergence} implies that $\Psi_L^\delta$ has the same properties on $J_L^0$ for $\delta\in]0,\delta_0]$ with some suitable $\delta_0>0$; note that $J_L^0$ is empty for large $L$, which is why we only have to consider a compact $L$-interval and can thus choose $\delta_0$ independent of $L$. 
\end{proof}

\subsection{The periodic particle motions and the period function}\label{sc:periodfunction}

Let $f_0$ be a steady state of the Einstein-Vlasov system with or without a Schwarzschild-singularity at the center and let $\mu_0$, $E_0$, $\lambda_0$, and $\rho_0$ be the corresponding static quantities as defined in Section~\ref{sc:stst}. In addition, we assume that the steady state has single-well structure as stated in Definition~\ref{def:jeans} and also employ the notation from that definition. 

We now analyze the characteristic flow of the steady state within (the interior of) its support
\begin{equation}\label{eq:defOmeganull}
	\Omega_0\coloneqq\{(r,w,L)\mid f_0(r,w,L) > 0\},	
\end{equation} 
which is given by the Hamiltonian-like ODE system
\begin{subequations}\label{eq:charsystem}
	\begin{align}
		\dot r&=e^{-\lambda_0(r)}\,\partial_wE(r,w,L)=e^{\mu_0(r)-\lambda_0(r)}\,\frac w{\sqrt{1+w^2+\frac L{r^2}}},\label{eq:charsystemr}\\
		\dot w&=-e^{-\lambda_0(r)}\,\partial_rE(r,w,L)=e^{\mu_0(r)-\lambda_0(r)}\left(\frac L{r^3\sqrt{1+w^2+\frac L{r^2}}}-\mu_0'(r)\,\sqrt{1+w^2+\frac L{r^2}}\right).\label{eq:charsystemw}
	\end{align}
\end{subequations}
The angular momentum $L\geq0$ can be interpreted as a parameter of this system, while the particle energy $E$ given by~\eqref{eq:ststparticleenergy} is clearly a conserved quantity. For fixed $(r,w,L)\in\Omega_0$ let $(R,W)=(R,W)(\cdot,r,w,L)$ be the unique maximal solution of \eqref{eq:charsystem} with parameter~$L$ satisfying the initial condition $(R,W)(0,r,w,L)=(r,w)$. We restrict ourselves to the case $L>0$ as $\Omega_0\cap\{L=0\}$ forms a set of measure zero which will not be of importance later on. 
Now observe that the radial component $R$ of the solution always stays in the interval~$I_L$ since $\Psi_L(r)\leq E(r,w,L)$; in particular, $I_L\neq\emptyset$, cf.~Definition~\ref{def:jeans}. If $(r,w)=(r_L,0)$, the solution is constant with energy $E=\Psi_L(r_L)$. In any other case, the solution is non-constant, bounded, exists on $\R$, and is time-periodic with orbit $\{(\tilde r,\tilde w)\mid E(\tilde r,\tilde w,L)=E(r,w,L),~\rho_0(\tilde r)>0\}$; note that the latter set is bounded and connected, because of the single-well structure of the steady state, and does not contain any stationary solution of~\eqref{eq:charsystem} since $E(r,w,L)>\Psi_L(r_L)$ and $\Psi_L'(s)\neq0$ for $s\in I_L\setminus\{r_L\}$ by Definition~\ref{def:jeans}. In particular, the orbit of any such solution is uniquely determined by $L$ and its energy value $E=E(r,w,L)$.

An important quantity in the context of action-angle variables is the period of the solutions, which can be derived by applying the inverse function theorem, see~\cite[\S3.1]{BiTr}.

\begin{defnlem}\label{def:periodfunction}
	Let $f_0$ be a steady state with single-well structure and define
	\begin{equation}\label{eq:defOmeganulltildeEL}
		\tilde\Omega_0^{EL}\coloneqq \{(E(r,w,L),L)\mid(r,w,L)\in\Omega_0,\,L>0,\,E(r,w,L)>\Psi_L(r_L)\}.
	\end{equation}
	For $(E,L)\in\tilde\Omega_0^{EL}$ let $(R,W)=(R,W)(\cdot,E,L)\colon\R\to I_L\times\R$ be the maximal solution of~\eqref{eq:charsystem} with parameter~$L$ satisfying the initial condition $(R,W)(0,E,L)=(r_-(E,L),0)$, where $r_L$ and $r_\pm(E,L)$ are defined in Definition~\ref{def:jeans}. Then the solution is time-periodic with period 
	\begin{equation}\label{eq:periodfunctiondef}
		T(E,L)\coloneqq2\int_{r_-(E,L)}^{r_+(E,L)}e^{\lambda_0(r)-\mu_0(r)}\frac E{\sqrt{E^2-\Psi_L^2(r)}}\,dr.
	\end{equation}
	The induced function $T\colon\tilde\Omega_0^{EL}\to]0,\infty[$ is called the {\em (radial) period function}.
\end{defnlem}

As explained above, any solution $(R,W)=(R,W)(\cdot,r,w,L)$ of the characteristic system~\eqref{eq:charsystem} with parameter $L$ and initial condition $(R,W)(0,r,w,L)=(r,w)$ for $(E(r,w,L),L)\in\tilde\Omega_0^{EL}$ is time-periodic with period $T(E(r,w,L),L)$. Moreover, the integral in~\eqref{eq:periodfunctiondef} is well-defined and finite since $\Psi_L<E$ on $]r_-(E,L),r_+(E,L)[$ and $\Psi_L'(r_\pm(E,L))\neq0$ by the single-well structure as stated in Definition~\ref{def:jeans}. Note that the characteristic motions are only periodic in the coordinates $(r,w,L)$ adapted to spherically symmetry, but are not necessarily periodic in Cartesian coordinates $(x,v)$.

As in the non-relativistic setting~\cite{HaReSt21,Kunze}, the radial period function $T$ and its properties are crucial to derive a Birman-Schwinger principle. We next show that this function is bounded and bounded away from zero on the steady state support for sufficiently small shells around a Schwarzschild black hole.

\begin{lemma}\label{lemma:lochperiodsbounded}
	Let $f^\delta$ be a fixed steady state of the Einstein-Vlasov system with a Schwarzschild-singularity at the center with $0<\delta\leq\delta_0$ as in Proposition~\ref{prop:lochjeans}. Then the corresponding period function $T$ is bounded and bounded away from zero on $\tilde\Omega_0^{EL}$.
\end{lemma}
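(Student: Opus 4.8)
The plan is to reduce the period bounds to uniform bounds on the integrand and on the turning points $r_\pm(E,L)$, and then to exploit that for small $\delta$ the steady state quantities are close to their pure Schwarzschild counterparts, for which everything is explicit via Lemma~\ref{lemma:lochpropertieseffpotss}. First I would note that, by Proposition~\ref{prop:lochjeans} and~\eqref{eq:lochrhopcompactsupport}, the support of $f^\delta$ is contained in the fixed compact radial shell $[\Rmin^0,\Rmax^0]\subset]4M,\infty[$, and $L$ ranges over the compact interval $[L_0,L_{\max}]$ with $L_{\max}$ determined by the condition $J_L^0\neq\emptyset$ (as in the proof of Proposition~\ref{prop:lochjeans}); in particular the metric coefficients $\mu^\delta,\lambda^\delta$ and their derivatives are bounded above and below on this compact set, uniformly in $\delta\in]0,\delta_0]$, by Lemma~\ref{lemma:lochyconvergence}. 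Thus $e^{\lambda_0-\mu_0}$ and $E$ in~\eqref{eq:periodfunctiondef} are bounded above and below, and the whole problem is to control the singular factor $\int_{r_-}^{r_+}(E^2-\Psi_L^2(r))^{-1/2}\,dr$ from above (for the upper bound on $T$) and below (for the lower bound).

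\textbf{Upper bound on $T$.} The integrand has integrable square-root singularities at $r_\pm(E,L)$ precisely because $(\Psi_L^\delta)'(r_\pm)\neq0$; the task is to make this quantitative uniformly in $(E,L)$ and $\delta$. Near $r_+$ one writes $E^2-\Psi_L^2(r)=(\Psi_L(r_+)^2-\Psi_L(r)^2)$ and uses that on the relevant region $(\Psi_L^\delta)'$ is bounded away from zero with a $\delta$-uniform constant: this is exactly the structural fact established in the proof of Proposition~\ref{prop:lochjeans} (to the left of the well $(\Psi_L^0)'<0$ and to the right $(\Psi_L^0)'>0$, both bounded away from zero, and~\eqref{eq:lochmudeltaconvergence} transfers this to $\Psi_L^\delta$). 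A lower bound $|(\Psi_L^\delta)'(r)|\geq c>0$ on a neighborhood of $r_+$, together with a lower bound on $\Psi_L^\delta$ itself, gives $E^2-\Psi_L^2(r)=\big(\Psi_L(r_+)-\Psi_L(r)\big)\big(\Psi_L(r_+)+\Psi_L(r)\big)\geq c'\,(r_+-r)$, whence the contribution of a fixed-size neighborhood of $r_+$ is $\lesssim\int_0^{\text{const}} s^{-1/2}\,ds<\infty$, uniformly; away from both turning points $E^2-\Psi_L^2$ is bounded below, so that part is harmless. The one subtlety is that when $L$ is close to the threshold value where $J_L^0$ degenerates, the interval $[r_-,r_+]$ could shrink and $E$ could approach $\Psi_L(r_L)$; but then the integrand is integrated over a shrinking interval, and a careful version of the estimate above (or simply rescaling near the non-degenerate minimum, where $(\Psi_L^\delta)''(r_L)$ is bounded below) keeps $T$ bounded — this is the place I expect the estimates to be most delicate.

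\textbf{Lower bound on $T$.} Here one argues in the opposite direction: $T(E,L)\geq 2\int_{r_-}^{r_+} e^{\lambda_0-\mu_0}\,dr\geq c\,(r_+(E,L)-r_-(E,L))$ since $E\geq\sqrt{E^2-\Psi_L^2(r)}$ trivially. So it suffices to bound $r_+(E,L)-r_-(E,L)$ away from zero, uniformly. This again follows from $\delta$-uniform control of $\Psi_L^\delta$: on the range of $(E,L)$ under consideration, $E$ stays bounded away from $\Psi_L(r_L)$ by a $\delta$-uniform amount — indeed, on the steady state support one has $E\in]\Psi_L(r_L),E^\delta[$ and, crucially, the construction forces $E$ to lie in a region where $\{\rho^\delta>0\}$, i.e. where $\Psi_L^\delta$ is genuinely below $E^\delta$; combined with the uniform upper bound on $|(\Psi_L^\delta)'|$ on the compact shell this yields $r_+-r_-\geq c>0$. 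Alternatively and more robustly, pass to the limit $\delta\to0$: by~\eqref{eq:lochmudeltaconvergence} the functions $\Psi_L^\delta$, $E^\delta$, and hence $r_\pm(E,L)$ and the integrand in~\eqref{eq:periodfunctiondef}, converge uniformly to their Schwarzschild analogues on the relevant compact sets, so $T^\delta$ converges uniformly to the Schwarzschild period function $T^0$, which is continuous and strictly positive on its (compact closure of the) domain; a compactness/continuity argument then gives uniform two-sided bounds for all small $\delta$. I would present the $\delta\to0$ continuity argument as the main line, using the quantitative turning-point estimates above only to justify that $T^0$ extends continuously up to the boundary of its parameter domain.
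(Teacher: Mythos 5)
Your upper-bound argument is essentially the paper's: you isolate the singular factor $\int_{r_-}^{r_+}(E^2-\Psi_L^2)^{-1/2}dr$, invoke the $\delta$-uniform structural facts from the proof of Proposition~\ref{prop:lochjeans} ($|(\Psi_L^\delta)'|\geq a$ away from $r_L$, $(\Psi_L^\delta)''\geq a$ near $r_L$), and recognize that a change of variables / rescaling near the non-degenerate minimum closes the argument. The paper carries this out explicitly with the split $S=S_l+S_m+S_r$; your sketch has the same ingredients and would work if completed.

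The lower-bound argument, however, has a genuine gap. Your primary estimate chain is $T\geq c\,(r_+(E,L)-r_-(E,L))$ followed by the claim $r_+-r_-\geq c'>0$. The latter is false: $\tilde\Omega_0^{EL}$ contains $(E,L)$ with $E$ arbitrarily close to $\Psi_L(r_L)$ (orbits near the circular orbit at the bottom of the well), and for such $(E,L)$ one has $r_\pm(E,L)\to r_L$, so $r_+-r_-\to 0$. Nothing in the construction of $f^\delta$ excludes these orbits from the support; the only constraint is $E<E^\delta$, not $E$ bounded away from $\Psi_L(r_L)$. Discarding the $E/\sqrt{E^2-\Psi_L^2}$ factor (via $E/\sqrt{E^2-\Psi_L^2}\geq1$) throws away exactly the singularity that rescues the lower bound as the well shrinks. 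The paper instead keeps the singular factor and uses a Taylor expansion: with $S(E,L)\coloneqq\int_{r_-}^{r_+}(E-\Psi_L)^{-1/2}dr$ one has $S\geq(r_+-r_L)/\sqrt{E-\Psi_L(r_L)}=\sqrt{2}/\sqrt{\Psi_L''(s)}$ for some $s\in]r_L,r_+[$, and $\Psi_L''$ is bounded above on the compact shell uniformly in $\delta$ and $L$. This bound is uniform precisely because the shrinking of $r_+-r_L$ and of $\sqrt{E-\Psi_L(r_L)}$ cancel. Your alternative $\delta\to0$ continuity argument points in a workable direction, but as written it simply asserts that $T^0$ extends continuously and positively to the closure of its parameter domain (in particular across $E=\Psi_L(r_L)$), which is the very thing that needs to be proved; without a quantitative turning-point estimate of the Taylor type you still have a hole at the bottom of the well.
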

\begin{proof}
	Let $\mu_0$, $E_0$, $\lambda_0$, $\Psi_L$, $\rho_0$, etc., be the quantities induced by the steady state $f_0=f^\delta$ as described in Proposition~\ref{prop:lochexistencestst} and Definition~\ref{def:jeans}.
	First, note that $E\in]\Psi_{L_0}(r_{L_0}),E_0[$ for $(E,L)\in\tilde\Omega_0^{EL}$ and that $\lambda_0$ and $\mu_0$ are bounded on the compact radial steady state support, which is why it suffices to prove that the function defined by
	\begin{equation*}
		S(E,L)\coloneqq\int_{r_-(E,L)}^{r_+(E,L)}\frac {dr}{\sqrt{E-\Psi_L(r)}}, \quad (E,L)\in\tilde\Omega_0^{EL},
	\end{equation*}
	is bounded and bounded away from zero on $\tilde\Omega_0^{EL}$. We start by establishing the lower bound by proceeding similarly as in the proof of~\cite[Lemma~B.4]{HaReSt21}. More precisely, for every $(E,L)\in\tilde\Omega_0^{EL}$ we obtain that
	\begin{equation}\label{eq:TboundedSlowerbound}
		S(E,L)\geq\frac{r_+(E,L)-r_L}{\sqrt{E-\Psi_L(r_L)}}=\frac{\sqrt2}{\sqrt{\Psi_L''(s)}}
	\end{equation}
	for some $s\in ]r_L,r_+(E,L)[$ with $\Psi_L''(s)>0$ by
	a second-order Taylor expansion. Since $\Psi_L''(s)$ is bounded for $s\in[\Rmin^0,\Rmax^0]$ and~$L$ is within the bounded range where $I_L\neq\emptyset$, the estimate~\eqref{eq:TboundedSlowerbound} yields that $\inf_{\tilde\Omega_0^{EL}}(S)>0$.
	
	In order to establish the upper bound recall that by the construction of $\delta_0$ in the proof of Proposition~\ref{prop:lochjeans} there exist $a,q>0$ such that for every $L\geq L_0$ with $I_L\neq\emptyset$ we have that $\Psi_L''\geq a$ on $[r_L-q,r_L+q]\cap\bar I_L$ as well as $\Psi_L'\leq-a$ on $]2M,r_L-q]\cap\bar I_L$ and $\Psi_L'\geq a$ on $[r_L+q,\infty[\cap\bar I_L$; note that the latter two domains are empty if $\Psi_L(r_L)$ gets close to $E_0$. We now split the integral $S(E,L)$ as follows:
	\begin{align*}
		S(E,L)&=\int_{r_-(E,L)}^{r_L-q}\frac{dr}{\sqrt{E-\Psi_L(r)}}+\int_{r_L-q}^{r_L+q}\frac{dr}{\sqrt{E-\Psi_L(r)}} + \int_{r_L+q}^{r_+(E,L)}\frac{dr}{\sqrt{E-\Psi_L(r)}}\\
		&\eqqcolon S_l(E,L)+S_m(E,L)+S_r(E,L),
	\end{align*}
	and estimate each part separately. Here, we leave out $S_l$ or $S_r$ if their domain of integration is empty, e.g., the $S_l$ term is present iff $r_-(E,L)\leq r_L-q$. When $S_l$ is present, $\Psi_L'\leq-a$ on $[r_-(E,L),r_L-q]\neq\emptyset$, which yields that
	\[
	S_l(E,L)\leq\frac1{\sqrt a}\int_{r_-(E,L)}^{r_L-q}\frac{dr}{\sqrt{r-r_-(E,L)}}\leq\frac2{\sqrt a}\sqrt{\Rmax^0}
	 \]
	by the mean value theorem. $S_r$ can be estimated in the same manner. This leaves us with $S_m$, which we split into the two parts
	\[
	S_m(E,L)=\int_{r_L-q}^{r_L}\frac{dr}{\sqrt{E-\Psi_L(r)}}+\int_{r_L}^{r_L+q}\frac{dr}{\sqrt{E-\Psi_L(r)}}\eqqcolon S_m^-(E,L)+S_m^+(E,L).
	\]
	Here, we again assumed that $r_L-q\geq r_-(E,L)$ and $r_L+q\leq r_+(E,L)$; otherwise replace $r_L\pm q$ by $r_\pm(E,L)$. The estimates we employ for $S_m^\pm$ are related to those used in the proof of~\cite[Prop.~2.8]{HaReSt21} and we will illustrate these techniques for $S_m^+$. First, changing variables via $\eta=\Psi_L(r)$ yields that
	\[
	S_m^+(E,L)=\int_{\Psi_L(r_L)}^{\Psi_L(r_L+q)}\frac{d\eta}{\sqrt{\left(\Psi_L'(r_+(\eta,L))\right)^2(E-\eta)}};
	\]
	note that $\Psi_L'(r)>0$ for $r_L<r<r_+(E,L)$. By the extended mean value theorem, for every $\eta\in]\Psi_L(r_L),\Psi_L(r_L+q)[$ there exists some $s\in]r_L,r_L+q[$ such that 
	\[
	\frac{\left(\Psi_L'(r_+(\eta,L))\right)^2}{\eta-\Psi_L(r_L)}=2 \Psi_L''(s)\geq2a;
	\]
	recall that $\Psi_L'(r_L)=0$. Hence,
	\[
	S_m^+(E,L)\leq\frac1{\sqrt{2a}}\int_{\Psi_L(r_L)}^E\frac{d\eta}{\sqrt{(\eta-\Psi_L(r_L))(E-\eta) }}=\frac\pi{\sqrt{2a}}.
	\]
	A similar calculation shows that $S_m^-(E,L)$ is bounded independently of $(E,L)$ as well. 
\end{proof}

The same result is true for singularity-free, isotropic steady states which are not too relativistic.

\begin{lemma}\label{lemma:Tboundedfortwomoverrloweronethird}
	Let $f$ be a steady state of the singularity-free Einstein-Vlasov system as in Lemma~\ref{lemma:jeansfortwomoverrloweronethird}, i.e., $f$ is isotropic and satisfies the inequality~\eqref{eq:twomoverrloweronethird}. Then the corresponding period function~$T$ is bounded and bounded away from zero on $\tilde\Omega_0^{EL}$.
\end{lemma}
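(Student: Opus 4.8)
The plan is to adapt the proof of Lemma~\ref{lemma:lochperiodsbounded}, replacing the limiting argument $\delta\to0$ used there (which supplied the uniform single-well estimates) by direct estimates based on the field equations and on the single-well structure from Lemma~\ref{lemma:jeansfortwomoverrloweronethird}. Keeping the notation $\mu_0,\lambda_0,\rho_0,E_0,\Psi_L,r_L,r_\pm(E,L),I_L$ from Definition~\ref{def:jeans}, writing $p_0$ for the (radial, equal to the tangential) pressure of $f$, and letting $R>0$ be the outer radius of the compact support $\{\rho_0>0\}$, the first step is the reduction already used in Lemma~\ref{lemma:lochperiodsbounded}: since $\mu_0,\lambda_0\in C([0,R])$ are bounded and $e^{\mu_0(0)}\le\Psi_L(r)\le E<E_0<1$ on $[r_-(E,L),r_+(E,L)]$ for $(E,L)\in\tilde\Omega_0^{EL}$, the prefactor $e^{\lambda_0-\mu_0}\,E\big/\sqrt{E+\Psi_L(\cdot)}$ in~\eqref{eq:periodfunctiondef} is bounded above and below uniformly, so it suffices to bound
\[
S(E,L)\coloneqq\int_{r_-(E,L)}^{r_+(E,L)}\frac{dr}{\sqrt{E-\Psi_L(r)}}
\]
above and below on $\tilde\Omega_0^{EL}$.

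The lower bound follows exactly as for~\eqref{eq:TboundedSlowerbound}: a second-order Taylor expansion of $\Psi_L$ at $r_L$ (using $\Psi_L'(r_L)=0$) gives $S(E,L)\ge\sqrt2\big/\sqrt{\Psi_L''(s)}$ for some $s\in(r_L,r_+(E,L))$, so one only needs $\Psi_L''$ bounded above on $[r_L,R]$ uniformly in $L$. Differentiating $\Psi_L=e^{\mu_0}\sqrt{1+L/r^2}$ twice produces $\mu_0',\mu_0''$ (bounded on $[0,R]$) and terms of size $L/r^4$; on the orbit $L/r^2\le E_0^2e^{-2\mu_0(0)}-1$, while the relation $\mu_0'(r_L)\,r_L(r_L^2+L)=L$ — which is just $\Psi_L'(r_L)=0$ — combined with the estimate $\mu_0'(r)\le Cr$ on $[0,R]$, from~\eqref{eq:fieldeq2} and $e^{2\lambda_0}\le\tfrac32$ (the latter being equivalent to~\eqref{eq:twomoverrloweronethird}), forces $L\le 2Cr_L^4$ unless $r_L$ is bounded below; in either case $L/r_L^4$, and hence $L/r^4$ on $[r_L,R]$, is uniformly bounded, so $\inf_{\tilde\Omega_0^{EL}}S>0$.

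For the upper bound I would reproduce the decomposition $S=S_l+S_m+S_r$ around $r_L$ from the proof of Lemma~\ref{lemma:lochperiodsbounded}, whose two quantitative ingredients now have to be established directly. The first is a uniform lower bound $\Psi_L''(r_L)\ge a_0>0$. Here I would use the identity
\[
\Psi_L''(r_L)=e^{\mu_0(r_L)+2\lambda_0(r_L)}\sqrt{1+L/r_L^2}\;d(r_L),
\]
which follows from $\Psi_L'(r_L)=0$ and the formula $g'(r_L)=-e^{2\lambda_0(r_L)}d(r_L)\big/\big(r_L^3(\mu_0'(r_L))^2\big)$ from the proof of Lemma~\ref{lemma:jeansfortwomoverrloweronethird} (with $g(r)=1/(r^3\mu_0'(r))-1/r^2$ and $d$ the bracket appearing there, which by isotropy and~\eqref{eq:twomoverrloweronethird} satisfies $d(r)\ge4\pi\rho_0(r)+12\pi p_0(r)$); since the supremum of the admissible angular momenta corresponds to a circular orbit with $e^{\mu_0(r_L)}<\Psi_L(r_L)=E_0$, all centres $r_L$ lie in a compact subinterval $[0,r_\ast]\subset\{\rho_0>0\}$, $r_\ast<R$, on which $\rho_0\ge\rho_{\min}>0$, and therefore $\Psi_L''(r_L)\ge4\pi e^{\mu_0(0)}\rho_{\min}=:a_0$. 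The second ingredient is a uniform lower bound on $|\Psi_L'|$ away from $r_L$: writing $\Psi_L'(r)=e^{\mu_0(r)}\sqrt{1+L/r^2}\,\big(\mu_0'(r)-\tfrac{L}{r(r^2+L)}\big)$ and using that $g$ is strictly decreasing with the quantitative rate $|g'(r)|\ge4\pi\rho_0(r)\big/\big(r^3(\mu_0'(r))^2\big)$, one bounds the factor $\mu_0'(r)-\tfrac{L}{r(r^2+L)}$ from below at a definite distance from $r_L$. Given these two ingredients, $S_m$ is controlled by Cauchy's mean value theorem applied to $\big((\Psi_L')^2,\Psi_L\big)$ exactly as in the estimate of $S_m^\pm$ in the proof of Lemma~\ref{lemma:lochperiodsbounded} (this bound involves $a_0$ only, not the width of the middle interval), and $S_l,S_r$ by the ordinary mean value theorem, as for $S_l$ there.

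I expect the main obstacle to be the second ingredient. Unlike the black-hole case, where $\Psi_L^\delta$ is a small $C^2$-perturbation of the explicit, well-behaved Schwarzschild effective potential on a fixed compact annulus, here $\Psi_L$ degenerates as $r\to0$ — $\Psi_L'''$ is unbounded near the origin — so the middle interval $[r_L-q,r_L+q]$ on which $\Psi_L''\ge a_0/2$ cannot be chosen with $q$ bounded below uniformly in $L$; one therefore has to split $S$ at an $L$-dependent radius and control that dependence through the quantitative monotonicity $|g'|\gtrsim\rho_0\big/\big(r^3(\mu_0')^2\big)$, equivalently through the bound $L/r_L^4\lesssim1$ already used above. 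It is precisely here that the hypotheses enter: isotropy gives $3p_0\le\rho_0$, and~\eqref{eq:twomoverrloweronethird} gives $e^{2\lambda_0}\le\tfrac32$, which together yield $d(r)\ge4\pi\rho_0(r)+12\pi p_0(r)>0$ on $\{\rho_0>0\}$ and hence both the strict decrease of $g$ and the uniform positivity of $\Psi_L''$ at the orbit centres.
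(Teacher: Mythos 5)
Your proposal takes a genuinely different route from the paper for the upper bound, and it contains a real gap that you partially acknowledge but underestimate.

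\paragraph{Where you agree with the paper.}
Your reduction to $S(E,L)$ and your lower bound on $S$ match the paper's: the paper also estimates $S(E,L)\ge\sqrt{2}/\sqrt{\Psi_L''(s)}$ via Taylor expansion and then establishes $L/r_L^4\lesssim 1$ from $\Psi_L'(r_L)=0$ to bound $\Psi_L''$ above along the orbit. Your identity
\[
\Psi_L''(r_L)=e^{\mu_0(r_L)+2\lambda_0(r_L)}\sqrt{1+L/r_L^2}\,d(r_L)
\]
is correct (one can verify $h'(r_L)=-g'(r_L)L^2r_L/(r_L^2+L)^2=e^{2\lambda_0(r_L)}d(r_L)$, with $h(r)=\mu_0'(r)-L/(r(r^2+L))$ and $g$ as in Lemma~\ref{lemma:jeansfortwomoverrloweronethird}); it is in fact equivalent to evaluating the paper's elliptic identity~\eqref{eq:ellipticeffpot} at $r=r_L$, since $(\Psi_L^2)''(r_L)=2\Psi_L(r_L)\Psi_L''(r_L)$ while $(\Psi_L^2)'(r_L)=0$. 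Your argument that the orbit centres $r_L$ lie in a compact $[0,r_\ast]$ with $r_\ast<R$ and $\rho_0\ge\rho_{\min}>0$ there is sound, so the uniform bound $\Psi_L''(r_L)\ge a_0>0$ holds.

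\paragraph{Where you diverge.}
For the upper bound the paper does not attempt the direct $S_l+S_m+S_r$ decomposition globally. Instead it works with $\Psi_L^2$, uses~\eqref{eq:ellipticeffpot} as a substitute for the Poisson equation, and — following the scheme of \cite[Sc.~B.1]{HaReSt21} — treats three regimes separately: (i) $L\ge L_1$ (where all radii on the orbit are bounded away from $0$ and the Lemma~\ref{lemma:lochperiodsbounded} decomposition applies with uniform constants), (ii) $E\le E_1$ (where a maximum principle for the radial elliptic equation~\eqref{eq:ellipticeffpot}, whose right-hand side is bounded away from zero on such orbits, gives the required control), and (iii) the remaining corner (small $L$, $E$ close to $E_0$), closed using the convexity of $\Psi_L^2$ on $]r_-(E_0,L),r_L[$ that follows from~\eqref{eq:ellipticeffpot} together with $(\Psi_L^2)'<0$ there. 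You by contrast try to run the Lemma~\ref{lemma:lochperiodsbounded} decomposition globally, with an $L$-dependent middle interval.

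\paragraph{The gap.}
Your second ingredient — a uniform lower bound on $|\Psi_L'|$ outside the middle interval — is not merely ``the main obstacle''; it is exactly the point at which the paper brings in the elliptic PDE and the maximum principle, and your proposal does not close it. Concretely: if the middle interval $[r_L-q_L,r_L+q_L]$ shrinks as $L\to0$ (which it must, as you correctly observe, since $\Psi_L'''\sim r_L^{-1}$), then the lower bound on $|\Psi_L'|$ at $r_L\pm q_L$ is only $\gtrsim a_0q_L\sim a_0r_L$. The $S_l$ estimate survives because the domain of $S_l$ also shrinks like $r_L$ (so $\sqrt{r_L}/\sqrt{r_L}$ is bounded), but the $S_r$ domain $[r_L+q_L,r_+(E,L)]$ does \emph{not} shrink when $r_+(E,L)$ is bounded away from $0$; applying the paper's $S_r$ estimate then gives $S_r\lesssim\sqrt{r_+(E,L)-r_L}/\sqrt{a_0q_L}\to\infty$. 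One can try to repair this by splitting $S_r$ again at some fixed radius and observing that $E-\Psi_L$ is bounded below on the inner piece, but then the argument has become a case analysis in $(E,L)$ that you have neither carried out nor shown to terminate — and which, in the problematic corner small $L$, $E$ near $E_0$, requires some structural input about $\Psi_L$ near the outer turning point. That structural input is precisely what the paper extracts from~\eqref{eq:ellipticeffpot}.

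A small additional inaccuracy: you claim the $S_m$ bound ``involves $a_0$ only, not the width of the middle interval.'' The extended mean value theorem in the $S_m^\pm$ estimate requires $\Psi_L''\ge a_1$ on the \emph{entire} middle interval, not just at $r_L$; the bound is $\pi/\sqrt{2a_1}$ with $a_1$ the infimum over the interval. Of course one may shrink the interval so that $a_1\ge a_0/2$, but then the degradation reappears in $S_l,S_r$ — this is the same issue, not a separate one.

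In summary: your lower bound is essentially the paper's argument; your explicit formula for $\Psi_L''(r_L)$ is a correct and useful reformulation of what the paper gets by evaluating~\eqref{eq:ellipticeffpot} at $r_L$; but for the upper bound you propose a direct approach that is materially different from the paper's elliptic-PDE/maximum-principle argument, and you leave the decisive estimate ($S_r$ in the regime $L\to0$, $E\to E_0$) unproven.
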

\begin{proof} We suppress the index $f$ for better readability. The lower bound on $T$ can be established similarly to Lemma~\ref{lemma:lochperiodsbounded}. The non-trivial part is to prove that $\Psi_L''(s)$ with $s\in]r_L,r_+(E,L)[$ is bounded, in particular as $L\to0$. This can be done by explicitly computing the second-order radial derivative of the effective potential and observing that $\frac L{r_L^4}$ is bounded as $L\to0$ since $\Psi_L'(r_L)=0$ implies that
	\begin{equation}
		\frac L{r_L^4}=\frac{\mu'(r_L)}{r_L}\left(1+\frac L{r_L^2}\right)\leq e^{2\lambda(r_L)}\left(4\pi p(r_L)+\frac{m(r_L)}{r_L^3}\right)E_0^2e^{-2\mu(r_L)}.
	\end{equation}
	Establishing the upper bound is more involved. It is achieved similarly as for the Vlasov-Poisson system in~\cite[Sc.~B.1]{HaReSt21}. The substitute for the Poisson equation in the Newtonian setting is the following elliptic equation
	\begin{multline}\label{eq:ellipticeffpot}
		\Delta_x(\Psi_L^2)=\frac1{r^2}\left(r^2(\Psi_L^2)'\right)'\\ =2e^{2\mu+2\lambda}\left[4\pi(\rho+3p)+4\pi r(p+\rho)\mu'+\frac L{r^2}\left(4\pi r(p+\rho)\mu'+4\pi(\rho-p)+\frac{e^{-2\lambda}}{r^2}-4\frac m{r^3}\right)\right],
	\end{multline}
	which can be derived using~\eqref{eq:fieldeq1}, \eqref{eq:fieldeq2}, \eqref{eq:fieldeq4}, and $p=q$. The crucial observation is that the right hand side of~\eqref{eq:ellipticeffpot} is positive on $\{\rho>0\}$ since $e^{-2\lambda}=1-\frac{2m}r$ by~\eqref{eq:twomoverr}, $\frac{2m}r\leq\frac13$ by~\eqref{eq:twomoverrloweronethird}, and $3p\leq\rho$ by isotropy.
	
	The boundedness of~$T$ on $\tilde\Omega_0^{EL}\cap\{L\geq L_1\}$ for any $L_1>0$ then follows as in Lemma~\ref{lemma:lochperiodsbounded} since $(\Psi_L^2)''(r_L)>0$ by~\eqref{eq:ellipticeffpot}. 
	Similar to~\cite[Lemma~B.2]{HaReSt21}, the maximum principle for~\eqref{eq:ellipticeffpot} yields that~$T$ is bounded on $\tilde\Omega_0^{EL}\cap\{E\leq E_1\}$ for any $E_1<E_0$ since the right hand side of~\eqref{eq:ellipticeffpot} is bounded away from zero on orbits corresponding to an energy value $E\leq E_1$.
	The remaining gap can be closed analogously to~\cite[Lemma~B.3]{HaReSt21}; observe that~\eqref{eq:ellipticeffpot} implies that $\Psi_L^2$ is convex on $]r_-(E_0,L),r_L[$.
%
%
\end{proof}

An alternate way to show the boundedness of $T$ is to extend it continuously onto the boundary of $\tilde\Omega_0^{EL}$ using the techniques from~\cite[Theorem~3.13]{Kunze}.

\subsection{Action-angle type variables}\label{sc:aacoords}

We come back to the original goal of this section---introducing action-angle type variables similar to~\cite[Sc.~5.1]{HaReSt21}. As in the previous subsection, let $f_0$ be a steady state of the Einstein-Vlasov system with or without a Schwarzschild-singularity at the center which has single-well structure as stated in Definition~\ref{def:jeans}. For $(E,L)\in\tilde\Omega_0^{EL}$ let $(R,W)=(R,W)(\cdot,E,L)$ be the unique global solution to the characteristic system~\eqref{eq:charsystem} with parameter~$L$ satisfying the initial condition $(R,W)(0,E,L)=(r_-(E,L),0)$. As shown in Lemma~\ref{def:periodfunction}, this solution is time-periodic with period $T(E,L)\in]0,\infty[$ and orbit $\{(r,w)\mid E(r,w,L)=E,~\rho_0(r)>0\}$. Now let
\begin{equation}
	\tilde\Omega_0\coloneqq\{(r,w,L)\in\Omega_0\mid (E(r,w,L),L)\in\tilde\Omega_0^{EL}\},
\end{equation}
where $\Omega_0$ and $\tilde\Omega_0^{EL}$ are defined in~\eqref{eq:defOmeganull} and~\eqref{eq:defOmeganulltildeEL}, respectively.
The action-angle type variables are then given by expressing some $(r,w,L)\in\tilde\Omega_0$ as
\begin{equation*}
	(r,w,L)=\left((R,W)(\theta\,T(E,L),E,L),L\right)
\end{equation*}
with $E=E(r,w,L)$ and suitable $\theta\in[0,1[$. Similar to Lemma~\ref{def:periodfunction}, $\theta$ is explicitly given by
\begin{equation}\label{eq:defthetarEL}
	\theta(r,E,L)\coloneqq\frac1{T(E,L)}\int_{r_-(E,L)}^re^{\lambda_0(s)-\mu_0(s)}\frac E{\sqrt{E^2-\Psi_L^2(s)}}\,ds \in\left[0,\frac12\right]
\end{equation}
if $w\geq0$, and $1-\theta(r,E,L)$ if $w<0$. In particular, the change of variables
\begin{equation}\label{eq:AAvariables}
	\tilde\Omega_0\ni(r,w,L)\mapsto (\theta,E,L)\in[0,1[\times\tilde\Omega_0^{EL}
\end{equation}
defined in this way is one-to-one. The interpretation of these new variables is that $(E,L)$---the \enquote{actions}---fix an orbit of the characteristic flow of the steady state and $\theta$---the \enquote{angle}---determines the position along this orbit. However, in comparison to \enquote{true} action-angle variables~\cite{Arnold,LaLi,LB1994}, our change of variables is not volume preserving as integrals change via  
\begin{equation}\label{eq:aacoordsvolumeelements}
	drdwdL = e^{-\lambda_0}\,T(E,L)\,d\theta dEdL;
\end{equation}
this is the reason why we refer to $(\theta,E,L)$ as action-angle {\em type} variables.
Lastly, note that the sets $\Omega_0$ and $\tilde\Omega_0$ as well as the sets 
\begin{equation}
	\Omega_0^{EL}\coloneqq\{(E(r,w,L),L)\mid(r,w,L)\in\Omega_0\}
\end{equation} 
and $\tilde\Omega_0^{EL}$ are equal up to sets of measure zero, respectively, which is why for the succeeding analysis it suffices to establish the change of variables on the smaller sets.

\section{Linearization of the Einstein-Vlasov system and the linearized operator}\label{sc:linearization}

In this section we introduce the linearized Einstein-Vlasov system which we use to investigate linear stability.

\subsection{The steady states under consideration}\label{sc:ststconditions}
We start by rigorously stating the class of steady states which we analyze in the following. 
Let $f_0$ be a static solution of the Einstein-Vlasov system as constructed in Section~\ref{sc:stst} with corresponding metric quantities $\lambda_0$ and $\mu_0$. If there is a Schwarzschild-singularity at the center (cf.~Section~\ref{sc:stst_hole}), we denote its mass by $M>0$; the case $M=0$ stands for the singularity-free situation (cf.~Section~\ref{sc:stst_classic}).
In both cases, $f_0$ is of the form
\begin{equation*}
	f_0(r,w,L)=\varphi(E(r,w,L),L),\quad (r,w,L)\in\Omega_0,
\end{equation*}
for some appropriate microscopic equation of state $\varphi\colon\R^2\to[0,\infty[$, where $E$ is the particle energy induced by $\mu_0$ via~\eqref{eq:ststparticleenergy} and $\Omega_0$ is the interior of the steady state support defined in~\eqref{eq:defOmeganull}. 
In addition, let $\Rmin\coloneqq\inf\{r\mid(r,w,L)\in\Omega_0\}$ and $\Rmax \coloneqq \sup \{r \mid (r,w,L)\in \Omega_0\}$ be the radial bounds of the steady state. 
Note that $\varphi$  includes a $\delta$-dependency in the situation of Section~\ref{sc:stst_hole}.
We further impose the following conditions on $f_0$:
\begin{enumerate}[label=(S\arabic*)]
	\item\label{it:stst1} The steady state has single-well structure as defined in Definition~\ref{def:jeans}.
	\item\label{it:stst2} The radial period function $T$ defined in Definition~\ref{def:periodfunction} is bounded and bounded away from zero on $\tilde\Omega_0^{EL}$; the latter set is defined in~\eqref{eq:defOmeganulltildeEL}.
	\item\label{it:stst3} The microscopic equation of state $\varphi$ is continuously differentiable with respect to $E$ on $\tilde\Omega_0^{EL}$ with $\varphi'\coloneqq\partial_E\varphi<0$ on $\tilde\Omega_0^{EL}$. On $\R^2\setminus\tilde\Omega_0^{EL}$ we set $\varphi'\coloneqq0$.
	\item\label{it:stst4} There exists $C>0$ such that 
	\begin{equation}\label{eq:phi_prime_bound}
	\int_{\R^3} |\varphi'(E,L)| \,dv =	\frac{\pi}{r^2}\int_0^{\infty} \int_\R |\varphi'(E, L)| \, dw dL \leq C, \quad r \in ]\Rmin,\infty[.  
	\end{equation}
\end{enumerate}

\begin{remark}\phantomsection\label{remark:single_well_numerics}
	\begin{enumerate}[label=(\alph*)]
		\item In Lemmas~\ref{lemma:jeansfortwomoverrloweronethird} and~\ref{lemma:Tboundedfortwomoverrloweronethird} we have proven that the conditions~\ref{it:stst1} and~\ref{it:stst2} are satisfied for isotropic steady states provided that they are not too relativistic. We emphasize, however, that numerical simulations clearly indicate that they are true for a much larger class of static solutions. For example, in the isotropic case this always seems to be the case; we have verified this numerically, e.g., for polytropes, the King model, and ansatzfunctions as used in \cite{GueStRe21}.	
		\item In the case of steady states with a Schwarzschild-singularity at the center as constructed in Section~\ref{sc:stst_hole}, the validity of conditions~\ref{it:stst1} and~\ref{it:stst2} has been shown in Proposition~\ref{prop:lochjeans} and Lemma~\ref{lemma:lochperiodsbounded}, respectively, if the self-consistent part is sufficiently small compared to the black hole. Numerical evidence points towards the validity~\ref{it:stst1} and~\ref{it:stst2} even for large values of $\delta$, at least for the ansatz functions we have employed.  However, this does not hold in full generality since multi-shells exist as commented on in Remark~\ref{remark:nested_shells}.
		\item \label{remark:S4condition}	Conditions~\ref{it:stst3} and~\ref{it:stst4} are satisfied if the energy-dependency $\Phi$ for the steady states from Section~\ref{sc:stst} is chosen suitably. The technical assumption~\ref{it:stst4} is, e.g., true if $\Phi'$ is bounded. In addition, \ref{it:stst4} can be verified for the steady states constructed in Section~\ref{sc:stst_hole} by explicitly calculating the integral over $L$ since $\Phi \in L^\infty_{\mathrm{loc}}(\R)$ and $\Rmin > 0$.
	\end{enumerate}
 \end{remark}
In both the singularity-free and Schwarzschild-singularity case, condition~\ref{it:stst1} is mandatory in order to introduce action-angle type variables as in Section~\ref{sc:aacoords} which are crucial for the following analysis. Note that~\ref{it:stst3} together with the assumption~\ref{it:assphi2} or~\ref{it:assphi3} on $\Phi$ causes $\Omega_0$ to be open.

\subsection{The first-order linearized system}\label{sc:firstorder_lin}

We linearize the system as in \cite{HaLiRe2020,HaRe2013,HaRe2014,IT68}, i.e., for $0<\varepsilon\ll1$ we plug $f_0 + \varepsilon f + \mathcal O(\varepsilon^2)$ into the Einstein-Vlasov system and dispense with terms of order $\mathcal O(\varepsilon^2)$. We omit the details of this calculation but point to the references above for more details. We arrive at the following linearized system. The linearized Vlasov equation reads
\begin{equation}\label{eq:vlasov_lin}
	\partial_t f = - e^{-\lambda_0} \{f,E\} + 4\pi r |\varphi'| e^{3\mu_0+\lambda_0} \frac{w^2}{E} j_f - e^{2\mu_0-\lambda_0} |\varphi'| w \mu_f',
\end{equation} 
where  $\{g,h\} \coloneqq \partial_x g \cdot \partial_v h - \partial_v g\cdot \partial_x h= \partial_r g\, \partial_w h - \partial_w g\, \partial_r h$ is the Poisson bracket of two differentiable functions $g(x,v)=g(r,w,L)$ and $h(x,v)=h(r,w,L)$. The linearized field equations are given by 
\begin{align}
			(re^{-2\lambda_0} \lambda_f )' = 4\pi r^2\rho_f	\label{eq:fieldeq1_lin} \\
			\mu_f' = 4\pi r e^{2\lambda_0} p_f +\left  (2\mu_0'+\frac{1}{r}\right ) \lambda_f \label{eq:fieldeq2_lin}
\end{align}
and the source terms are the same as in \eqref{eq:rho}--\eqref{eq:j}. As in the non-linear case we prescribe
\begin{equation}\label{eq:asym_flat_lin}
	\lim_{r\to\infty}\mu_f(t,r) = 0  = 	\lim_{r\to\infty}\lambda_f(t,r).
\end{equation}
In the singularity-free case we impose
\begin{equation}\label{eq:sing_free_boundary_lin}
	\lambda_f(0) = 0,
\end{equation}
in order to obtain a regular center, while in the setting with a Schwarzschild-singularity of mass $M$ the corresponding boundary condition is
\begin{equation}\label{eq:blackhole_boundary_lin}
	\lambda_f(4M) = 0;
\end{equation}
recall that the radial support $f_0$ is compactly contained in $[4M, \infty[$, see Lemma~\ref{lemma:lochydeltaproperties}, i.e., it suffices to impose a boundary condition at $r =4M$. In particular, integrating~\eqref{eq:fieldeq1_lin} yields that
\begin{equation}\label{eq:lambdaeqexplicit}
	\lambda_f (r) = \frac{4\pi e^{2\lambda_0(r)}}{r}\int_{\Rmin}^r \rho_f(s) s^2 \, ds, \quad r\in [4M,\infty[,
\end{equation}
for $M\geq 0$, where $M=0$ represents the singularity-free case. In summary, \eqref{eq:vlasov_lin}--\eqref{eq:sing_free_boundary_lin} constitutes the linearized, singularity-free Einstein-Vlasov system on $[0,\infty[\times \R \times [0,\infty[$ while \eqref{eq:vlasov_lin}--\eqref{eq:asym_flat_lin}, \eqref{eq:blackhole_boundary_lin} is referred to as the linearized Einstein-Vlasov system with a Schwarzschild-singularity of mass $M$ on  $[4M,\infty[\times \R \times [0,\infty[$. 

The theory of global in-time solutions to the linearized, singularity-free case was dealt with in \cite[Theorem~5.1]{HaRe2013}. This result can be translated to the case of a Schwarzschild-singularity as well. Global existence is interesting in itself, but not needed for our work, since we study the stability via spectral analysis. 

For linear stability considerations it is more convenient to write the linearized system as a second-order system in time. For this, several operators are needed in order to keep notation short. We will define them in the next subsection.
 
\subsection{Definition of the function spaces, operators, and linear stability}\label{sc:def_operators}

The setup that follows is similar to the one used in \cite{HaLiRe2020, HaReSt21}. The operators will be defined on the weighted $L^2$-space 
\begin{equation}\label{eq:defHilbertspace}
	 H \coloneqq \left \{f\colon \Omega_0 \to \R \text{ measurable} \;\Big|\; \|f\|_H < \infty \right  \},  
\end{equation}
where the norm is given by
 \[ 
 	 \|f\|_H^2\coloneqq 4\pi^2 \iiint_{\Omega_0} \frac{e^{\lambda_0(r)}}{|\varphi'(E,L)|}\, |f(r,w,L)|^2 \, dr dw dL;
 \]
recall that $\varphi'<0$ almost everywhere on $\Omega_0^{EL}$.
When it is clear that $E$ has to be interpreted as a function of $(r,w,L)$, we will not always write this dependence explicitly. Together with the associated scalar product
\[ 
	\langle f,g\rangle_{H} \coloneqq  4\pi^2 \iiint_{\Omega_0} \frac{e^{\lambda_0(r)}}{|\varphi'(E,L)|}\,f(r,w,L)\, g(r,w,L) \, dr dw dL, \quad g,h\in H,
\]
we obtain the real Hilbert space $(H,\langle \cdot,\cdot\rangle_H)$ by identifying functions which are equal almost everywhere ({\em a.e.}) as usual. We split $f\in H$ into its odd-in-$w$ part~$f_-$ and even-in-$w$ part~$f_+$ given by
\[f_\pm(r,w,L)=\frac12\left(f(r,w,L)\pm f(r,-w,L)\right),\quad\text{for a.e. }(r,w,L)\in\Omega_0,\]
i.e., $f = f_+ + f_-$ with $f_\pm(r,w,L)=\pm f_\pm(r,-w,L)$ for a.e.\ $(r,w,L) \in \Omega_0$; note that $\Omega_0$ is symmetric with respect to $w$ since $f_0$ is even in $w$. We define the subspace of $H$ consisting of odd-in-$w$ functions as
\[ 
	\H \coloneqq \{f\in H \, |\, f \text{ is odd in $w$ a.e.\ on } \Omega_0 \}.
\]
Similar to \cite[Remark~5.3]{HaReSt21}, parity in $w$ of $f\in H$ can be translated into action-angle variables---introduced in Section~\ref{sc:aacoords}---as follows:
\begin{align}
	f \text{ is even in $w$ a.e.} \,\, &\Leftrightarrow  \,\, f(\theta,E,L)=f(1-\theta,E,L) \text{ for a.e.\ } (\theta,E,L) \in ]0,1[\times\tilde\Omega^{EL}_0, \label{eq:even_in_w_theta} \\
	f \text{ is odd in $w$ a.e.}  \,\, &\Leftrightarrow  \,\, f(\theta,E,L)=-f(1-\theta,E,L)\text{ for a.e.\ } (\theta,E,L) \in ]0,1[\times \tilde\Omega^{EL}_0; \label{eq:odd_in_w_theta}
\end{align}
by a slight abuse of notation we do not distinguish between the function $f$ depending on the variables $(r,w,L)$ or on $(\theta,E,L)$.

An important quantity in the context of linear stability is the transport operator associated with the characteristic flow of the steady state. For a smooth function $f\in C^1(\Omega_0)$ it is given by
\begin{align}
	\T f \coloneqq& -e^{-\lambda_0}\{f,E\} \notag  \\
	=& -e^{\mu_0-\lambda_0} \left (\partial_r f \,\frac{w }{\sqrt{1+w^2+\frac{L}{r^2}}} - \partial_w f \,\left ( \mu_0'\,\sqrt{1+w^2+\frac{L}{r^2}} - \frac{L}{r^3 \sqrt{1+w^2+\frac{L}{r^2}}} \right )\right ). \label{eq:Tdef}
\end{align} 
We now extend this definition to a weak sense similar to~\cite[Definition~2.1]{ReSt20}. Furthermore, we introduce a related operator~$\B$ as in~\cite[Definition~4.11]{HaLiRe2020} which will be a crucial operator which we have to handle later. 
\begin{definition}\phantomsection\label{def:TandB}
\begin{enumerate}[label=(\alph*)]
	\item\label{it:TBdef1} For a function $f\in H$ the {\em transport term} $\T f$ {\em exists weakly} if there exists some $h \in H$ such that for every test function $\xi \in C^1_{c}(\Omega_0)$,
	\[ 
		\langle f, \T \xi \rangle_H = - \langle h, \xi \rangle_H. 
	\]
	In this case, we set $\T f = h $ in a weak sense. The domain of $\T$ is defined as 
	\[ 
		\mathrm D(\T) \coloneqq \{f \in H \, | \, \T f \text{ exists weakly}\},
	\]
	and the resulting operator $\T\colon \mathrm D(\T)\to H$ is called the {\em transport operator}. 
	\item\label{it:TBdef2} The operator $\B\colon \mathrm D(\T) \to H$ is defined by
	\[ 
		\B f \coloneqq \T f - 4\pi r |\varphi'| e^{2\mu_0+\lambda_0} \left (wp_f - \frac{w^2}{\sqrt{1+w^2+\frac{L}{r^2}}}\,j_f \right ).
	\]
	\item\label{it:TBdef3} The {\em residual operator} $\mathcal{R}\colon H \to H$ is defined by 
	\[ 
		\mathcal{R}f \coloneqq 4\pi |\varphi'|\,e^{3\mu_0} (2r\mu_0'+1)wj_f.
	\]
	\item\label{it:TBdef4} The {\em Antonov operator} $\L \colon \mathrm D(\L) \cap \H \to \H$ is defined on
	\[ 
		\mathrm D(\L)\coloneqq \mathrm D(\T^2) \coloneqq \{f \in H \, |\, f \in \mathrm D(\T), \, \T f \in \mathrm D(\T) \}
	\]
	and given by
	\[ 
		\L\coloneqq -\B^2 - \Ri.
	\]
\end{enumerate}
\end{definition}
We prove that the operators are well defined in the next section; for $\T$ in the weak sense we refer to~\cite[Remark~2]{ReSt20}. In order to obtain a second-order formulation of the linearized Einstein-Vlasov system, we split $f$ into its even-in-$w$ part and odd-in-$w$ part. This method is due to Antonov who first used it in the context of non-relativistic galactic dynamics~\cite{An1961}; for the Einstein-Vlasov system it was used in~\cite{IT68} and~\cite{HaLiRe2020}.
\begin{lemma}[\cite{HaLiRe2020}, Lemma~4.21]
	A formal linearization of the spherically symmetric Einstein-Vlasov system takes the form
	\begin{equation}\label{eq:second_order_lin}
		\partial^2_{t} f_- + \L f_- = 0
	\end{equation}
	where $\L$ is the Antonov operator. 
\end{lemma}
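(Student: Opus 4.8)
The plan is to start from the first-order linearized system \eqref{eq:vlasov_lin}--\eqref{eq:fieldeq2_lin} and split $f = f_+ + f_-$ into its even-in-$w$ and odd-in-$w$ parts. The key structural observations to exploit are: the source terms $\rho_f$, $p_f$, $\mu_f$, $\lambda_f$ depend only on the even part $f_+$ (since $\rho_f$ and $p_f$ are $w$-even integrands, these vanish on $f_-$), while the current $j_f$ depends only on the odd part $f_-$ (since $w \mapsto w$ is odd). Correspondingly, the right-hand side of \eqref{eq:vlasov_lin} decomposes: the transport term $-e^{-\lambda_0}\{f,E\}$ flips parity in $w$ (because $E$ is $w$-even, so $\{\cdot,E\}$ sends even to odd and vice versa), whereas the two field-coupling terms — the $j_f$ term carrying a factor $w^2/E$ and the $\mu_f'$ term carrying a factor $w$ — are respectively $w$-even and $w$-odd in their explicit prefactor. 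First I would write out the even and odd projections of \eqref{eq:vlasov_lin} separately.

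The even-in-$w$ projection of \eqref{eq:vlasov_lin} reads $\partial_t f_+ = -e^{-\lambda_0}\{f_-, E\} + 4\pi r |\varphi'| e^{3\mu_0+\lambda_0}\frac{w^2}{E} j_f$, i.e. $\partial_t f_+ = -\mathcal{B} f_-$ once one recognizes that the combination $-e^{-\lambda_0}\{f_-,E\}$ plus the $j_f$-term plus (the contribution coming from rewriting $p_f$ in terms of $f$, which vanishes on $f_-$) is exactly $-\mathcal{B}f_-$ as in Definition~\ref{def:TandB}\ref{it:TBdef2} — here one must use the linearized field equation \eqref{eq:fieldeq2_lin} to express $\mu_f'$ and then \eqref{eq:fieldeq1_lin}/\eqref{eq:lambdaeqexplicit} for $\lambda_f$, so that the $p_f$-term and the $w p_f$-term in $\mathcal{B}$ line up. The odd-in-$w$ projection reads $\partial_t f_- = -e^{-\lambda_0}\{f_+, E\} - e^{2\mu_0-\lambda_0}|\varphi'| w \mu_f'$, and again using \eqref{eq:fieldeq2_lin} to substitute $\mu_f' = 4\pi r e^{2\lambda_0} p_f + (2\mu_0'+\tfrac1r)\lambda_f$ together with \eqref{eq:lambdaeqexplicit}, one should be able to identify the right-hand side with $-\mathcal{B}f_+ - (\text{a piece that, upon differentiating again, produces } \mathcal{R}f_-)$; more cleanly, one writes $\partial_t f_- = -\mathcal{B}f_+ + (\text{terms})$ and keeps track of which terms survive.

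Then I would differentiate the odd equation in $t$ and substitute the even equation: $\partial_t^2 f_- = -\mathcal{B}\,\partial_t f_+ + \partial_t(\text{terms}) = \mathcal{B}^2 f_- + \partial_t(\text{terms})$. The residual operator $\mathcal{R}$ should emerge precisely from the $\partial_t$ of the leftover terms in the odd equation — specifically the $\lambda_f$-contribution to $w\mu_f'$, since $\dot\lambda$ is governed by \eqref{eq:fieldeq3}, which at the linearized level relates $\partial_t \lambda_f$ to $j_f$ (hence to $f_-$), producing a term $\propto |\varphi'| e^{3\mu_0}(2r\mu_0'+1) w j_f = \mathcal{R}f_-$. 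Collecting, $\partial_t^2 f_- = \mathcal{B}^2 f_- - \mathcal{R}f_- = -\mathcal{L}f_-$, which is \eqref{eq:second_order_lin}.

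The main obstacle I anticipate is the bookkeeping in the second step: correctly matching the field-coupling terms in \eqref{eq:vlasov_lin} — after eliminating $\mu_f'$, $\mu_f$, $\lambda_f$ via the linearized field equations and the explicit integral \eqref{eq:lambdaeqexplicit} — against the precise definitions of $\mathcal{B}$ and $\mathcal{R}$, including getting every factor of $e^{\mu_0}$, $e^{\lambda_0}$, $r$, and $E = e^{\mu_0}\sqrt{1+w^2+L/r^2}$ right, and verifying that the cross terms which would spoil the clean form $\partial_t^2 f_- = -\mathcal{L}f_-$ actually cancel. Since the statement is attributed verbatim to \cite[Lemma~4.21]{HaLiRe2020} and flagged as a \emph{formal} linearization, I would not dwell on domain/regularity issues for the operators (those are deferred to Section~\ref{sc:properties_operators}); the proof is a direct, if lengthy, algebraic manipulation, and I would present it by displaying the even and odd projections, then the differentiation-and-substitution, citing \cite{HaLiRe2020,IT68} for the parallel computation.
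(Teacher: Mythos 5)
The paper does not prove this lemma itself --- it cites \cite[Lemma~4.21]{HaLiRe2020} --- so there is no internal proof to compare against. Your sketch is the correct reconstruction of the Antonov argument used there: split $f=f_++f_-$, observe that $\rho_f,p_f,\mu_f,\lambda_f$ see only $f_+$ while $j_f$ sees only $f_-$, observe that $\T$ flips $w$-parity, project \eqref{eq:vlasov_lin} onto its even and odd parts, differentiate the odd equation in $t$, substitute the even one, and convert $\partial_t\lambda_f$ into $j_{f_-}$ via the linearization of \eqref{eq:fieldeq3} (equivalently, via Lemma~\ref{lemma:lambdadot}). That is exactly the right skeleton, and the paper's methodology section confirms it is the approach taken in \cite{HaLiRe2020,IT68}.

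However, your sign-tracking --- which you yourself flag as the main obstacle --- slips twice in a way that makes the displayed chain inconsistent with the paper's conventions. With $\T f=-e^{-\lambda_0}\{f,E\}$, $E=e^{\mu_0}\sqrt{1+w^2+L/r^2}$, and Definition~\ref{def:TandB}\ref{it:TBdef2}, one finds (using $p_{f_-}=0$, $j_{f_+}=0$)
\begin{align*}
\B f_-&=\T f_-+4\pi r|\varphi'|e^{3\mu_0+\lambda_0}\tfrac{w^2}{E}\,j_{f_-},\\
\B f_+&=\T f_+-4\pi r|\varphi'|e^{2\mu_0+\lambda_0}\,w\,p_{f_+},
\end{align*}
so the even and odd projections of \eqref{eq:vlasov_lin} (with $\mu_f'$ eliminated via \eqref{eq:fieldeq2_lin}) are
\begin{equation*}
\partial_t f_+=\B f_-,\qquad
\partial_t f_-=\B f_+-e^{2\mu_0-\lambda_0}|\varphi'|\,w\left(2\mu_0'+\tfrac1r\right)\lambda_{f_+},
\end{equation*}
i.e.\ with $+\B$, not $-\B$, in both places. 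Differentiating the odd equation, substituting $\partial_t f_+=\B f_-$, and using $\partial_t\lambda_{f_+}=-4\pi r e^{\lambda_0+\mu_0}j_{f_-}$ from the linearization of \eqref{eq:fieldeq3} gives
\begin{equation*}
\partial_t^2 f_-=\B^2 f_-+4\pi|\varphi'|e^{3\mu_0}(2r\mu_0'+1)\,w\,j_{f_-}=\B^2 f_-+\Ri f_-=-\L f_-,
\end{equation*}
consistent with $\L=-\B^2-\Ri$. In your draft the two stray signs on $\B$ cancel in the composition $\B^2$, but the third does not: the residual term must enter with a plus sign, and your final line ``$\partial_t^2 f_-=\B^2 f_--\Ri f_-=-\L f_-$'' is not an identity, since $-\L f_-=\B^2 f_-+\Ri f_-$. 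This is a bookkeeping slip rather than a structural flaw --- once the signs are repaired the argument goes through exactly as you outlined.
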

Note that, as $\L$ covers the evolution of the odd-in-$w$ part of the linear perturbation only, we have defined it only on the subspace of odd-in-$w$ functions $\H$.  We now define what we actually mean by linear stability: 
\begin{definition}\label{def:linear_stab}
	A steady state of the Einstein-Vlasov system as specified in Section~\ref{sc:ststconditions} is called {\em linearly stable} if the spectrum of~$\L$ is positive, i.e., 
	\[ 
		\inf(\sigma(\L)) > 0. 
	\]
	The number of linearly independent eigenfunctions corresponding to negative eigenvalues is called the \emph{number of exponentially growing modes} of the steady state. If zero is an eigenvalue of $\L$ we say that the steady state has a \emph{zero-frequency mode}.
\end{definition}
We will later show that $\L$ is self-adjoint which implies that $\L$ has real spectrum. Let us comment on why we choose the terminology above: 
\begin{remark}\phantomsection\label{remark:linear_stab}
	\begin{enumerate}[label=(\alph*)]
		\item If $\gamma = \inf(\sigma(\L)) >0$, by \cite[Prop.~5.12]{HiSi} the Antonov-type inequality
		\begin{equation}\label{AntIneqforL}
			\langle f, \L f\rangle_H \geq \gamma \|f\|_H^2,\
			f\in \mathrm D (\L)
		\end{equation}
		holds. On the other hand, it was shown in \cite{HaLiRe2020, IT68} that the energy
		\begin{equation}\label{AntIneq}
			\|\partial_t f_-\|_H^2 + \langle f_-, {\cal L} f_-\rangle_H
		\end{equation}
		is conserved along solutions of the linearized equation	$\partial_t^2 f_- + {\cal L} f_- = 0$. This implies linear	stability in the corresponding norm. But \eqref{AntIneqforL} is also a natural first step towards non-linear stability of the steady state $f_0$, since the latter is a critical point of a	suitably defined energy-Casimir functional which is conserved along the non-linear dynamics and whose second variation at $f_0$	corresponds to the quadratic form induced by ${\cal L}$.
		\item Consider an eigenvalue $\alpha<0$ of $\L$ with eigenfunction $f \in \H$, i.e., $\L f=\alpha f$. Then $g \coloneqq e^{\sqrt{-\alpha}\, t} f$  solves \eqref{eq:second_order_lin} and we get a solution of the linearized Einstein-Vlasov system which grows exponentially in time. We thus call $g$ an exponentially growing mode.
	\end{enumerate}
\end{remark}

\section{Properties of the operators}\label{sc:properties_operators}

In this section we consider steady states as stated in Section~\ref{sc:ststconditions}. We now gather some properties of the operators that were introduced above. The important identity
\begin{align}
	\frac{\pi}{r^2}\int_0^\infty \int_\R  w^2 |\varphi'(E(r,w,L),L)| \, dw dL =  \frac{e^{-2\lambda_0(r)-\mu_0(r)}}{4\pi r} (\lambda_0'+\mu_0') (r), \label{eq:HLRidentity}
\end{align}
for $r\in ]2M,\infty[$ will be used repeatedly. It can be derived by a simple integration by parts, see \cite[Lemma~4.4]{HaLiRe2020} and recall~\ref{it:stst3}. 
\subsection{The transport operator $\T$}\label{sc:properties_T}
The main advantage of the action-angle type variables (cf.\ Section~\ref{sc:aacoords}) is that the transport operator $\T$ is transformed into a one-dimensional derivative along the angle variable~$\theta$. For this we introduce the spaces
\begin{align}
	H^1_\theta &\coloneqq \{ y\in H^1(]0,1[) \, | \, y(0)=y(1)\} \label{eq:h1theta}, \\
	H^2_\theta &\coloneqq \{ y\in H^2(]0,1[) \, | \, y(0)=y(1) \text{ and } \dot y(0)=\dot y(1) \}  = \{ y\in H^1_{\theta} \, | \, \dot y \in H^1_\theta \}  \label{eq:h2theta},
\end{align}
where the boundary conditions are imposed for the continuous representatives which exist by the Sobolev embeddings $H^1(]0,1[) \hookrightarrow C([0,1])$ and  $H^2(]0,1[) \hookrightarrow C^1([0,1])$. We collect the following properties of the transport operator $\T$ as in~\cite{HaReSt21}: 
\begin{proposition}\phantomsection\label{prop:transport_char}
	\begin{enumerate}[label=(\alph*)]
		\item\label{it:Tprop1} $\T \colon \mathrm D(\T) \to H$ is well-defined and skew-adjoint as a densely defined operator on $H$, i.e., $\T^\ast=-\T$. Moreover, $\T^2 \colon \mathrm D(\T^2) \to H$ is self-adjoint.
		\item\label{it:Tprop2} The domains of $\T$ and $\T^2$ can be characterized by 
		\begin{align*}
			\mathrm{D}(\T) = \Big \{f \in H  \mid\,&  f(\cdot, E,L) \in H^1_{\theta} \text{ for a.e. } (E,L) \in \Omega_{0}^{EL} \\
			& \text{and } \iint_{\Omega_0^{EL}} \frac{T(E,L)^{-1}}{|\varphi'(E,L)|} \int_0^1 |\partial_\theta f(\theta, E,L)|^2 \, d\theta dEdL< \infty\Big\}, \\
			\mathrm{D}(\T^2) = \Big\{f \in H   \mid\,& f(\cdot, E,L) \in H^2_{\theta} \text{ for a.e. } (E,L) \in \Omega_{0}^{EL} \\
			& \text{and } \sum_{j=1}^{2}\iint_{\Omega_0^{EL}} \frac{T(E,L)^{1-2j}}{|\varphi'(E,L)|} \int_0^1 |\partial_\theta^j f(\theta, E,L)|^2 \, d\theta dEdL< \infty\Big\}.
		\end{align*} 
		In addition, for $f \in \mathrm D(\T)$,
		\[ 
		(\T f)(\theta,E,L) = -\frac{1}{T(E,L)}\,(\partial_\theta f)(\theta,E,L),
		\]
		and for $f\in \mathrm D(\T^2)$,
		\[ 
		(\T^2 f)(\theta,E,L) = \frac{1}{T(E,L)^2}\,(\partial_\theta^2 f)(\theta,E,L)
		\]
		for a.e.\ $(\theta,E,L) \in \Omega_0^{EL}$.
		\item\label{it:Tprop3} The kernel of $\T$ consists of functions only depending on $(E,L)$, i.e., 
		\begin{multline}\label{eq:kernel_T}
				\ker(\T) = \left\{f \in H \mid \exists g\colon \R^2 \to \R \text{ s.t. } f(r,w,L) = g(E(r,w,L),L)\text{ a.e.\ on } \Omega_0\right\}.
		\end{multline}
		\item\label{it:Tprop4} The range and the orthogonal complement of the kernel of $\T$ are equal and are given by 
		\begin{equation}
			\im(\T) = \ker(\T)^\perp =  \left \{ f\in H \mid \int_0^1 f(\theta,E,L) \, d\theta = 0  \text{ for a.e. } (E,L) \in \Omega_0^{EL}\right\}.
		\end{equation}
		\item\label{it:Tprop5} For every $f\in \mathrm D(\T)$ there exists a sequence $(f_n)_{n\in\N} \subset C^\infty_c (\Omega_0)$ such that 
		\[ 
			f_n \to f, \quad \T f_n \to \T f \quad \text{in } H\text{ as }n \to \infty.
		\]
		\item\label{it:Tprop6} $\T$ reverses $w$-parity, i.e., $(\T f)_{\pm} = \T(f_{\mp})$ for $f\in \mathrm D(\T)$, in particular, $f\in \mathrm D(\T)$ is equivalent to $f_{\pm} \in \mathrm D(\T)$. Moreover, the restricted operator $\T^2\colon\mathrm D(\T^2)\cap\H\to\H$ is self-adjoint as a densely defined operator on $\H$.
		\item\label{it:Tprop7}  $\T \colon \mathrm D(\T) \cap \ker(\T)^\bot \to \im(\T)$ is bijective. Its inverse ${\T}^{-1} \colon \im(\T)  \to \mathrm D(\T) \cap \ker(\T)^\bot $ is given by
		\[ 
		({\T}^{-1} f) (\theta,E,L) = - T(E,L) \left (\int_0^\theta f(\tau,E,L)\, d\tau  - \int_0^{1} \int_0^\sigma f(\tau,E,L)\, d\tau d\sigma \right )
		\]
		for a.e.\ $(\theta,E,L) \in [0,1]\times \Omega_0^{EL}$, is bounded, and reverses $w$-parity.
	\end{enumerate}
\end{proposition}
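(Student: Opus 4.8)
\emph{Proof idea.} The plan is to transport every assertion through the action-angle type change of variables \eqref{eq:AAvariables}. By the volume element formula \eqref{eq:aacoordsvolumeelements}, $H$ is isometric to the weighted space $L^2(]0,1[\times\tilde\Omega_0^{EL})$ with weight $\frac{4\pi^2\,T(E,L)}{|\varphi'(E,L)|}$, and in these coordinates the transport operator becomes the fibre-wise one-dimensional derivative
\[
\T f = -\frac{1}{T(E,L)}\,\partial_\theta f,
\]
since, by construction, the angle $\theta$ advances at the constant rate $T(E,L)^{-1}$ along each characteristic orbit (Section~\ref{sc:aacoords}). Because the orbits in $\Omega_0$ are closed curves with the endpoints $\theta=0$ and $\theta=1$ identified, functions in $C^1_c(\Omega_0)$ correspond precisely to functions that are $1$-periodic in $\theta$ and compactly supported in $(E,L)$. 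All of \ref{it:Tprop1}--\ref{it:Tprop7} then reduce to classical facts about $-\partial_\theta$ on periodic Sobolev spaces, fibred over $(E,L)$, and the arguments are the same as in \cite[Section~5]{HaReSt21} and \cite{ReSt20}; below I indicate the structure and the two points where the standing assumptions \ref{it:stst1}, \ref{it:stst2} enter.

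For \ref{it:Tprop1} I would note that, in action-angle coordinates, the characteristic flow is the pure shift $\theta\mapsto\theta+t/T(E,L)$, which leaves the weight $\frac{4\pi^2 T}{|\varphi'|}$ invariant; hence it generates a strongly continuous one-parameter unitary group on $H$ whose generator is $\T$, so $\T$ is skew-adjoint by Stone's theorem and $\T^2=-\T^\ast\T$ is self-adjoint (one may instead invoke \cite{ReSt20} directly). For \ref{it:Tprop2}, identifying the domain of this generator with the functions that lie in $H^1_\theta$, resp.\ $H^2_\theta$, in the $\theta$-variable for a.e.\ $(E,L)$ and have finite weighted Dirichlet-type integrals is a Fubini argument: testing the weak identity $\langle f,\T\xi\rangle_H=-\langle h,\xi\rangle_H$ against product test functions $\psi(\theta)\,\eta(E,L)$ forces $\partial_\theta f(\cdot,E,L)=T(E,L)\,h(\cdot,E,L)$ weakly, with $f(\cdot,E,L)\in H^1_\theta$, for a.e.\ $(E,L)$, and conversely; the explicit formulas for $\T f$ and $\T^2 f$ follow at once. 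Then \ref{it:Tprop3} and \ref{it:Tprop4} are immediate: $\ker(\T)$ consists of the $\theta$-independent functions, i.e.\ functions of $(E,L)$ only, which translates back to \eqref{eq:kernel_T}, and skew-adjointness gives $\overline{\im(\T)}=\ker(\T^\ast)^\perp=\ker(\T)^\perp=\{f\mid\int_0^1 f(\theta,E,L)\,d\theta=0\ \text{a.e.}\}$.

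To upgrade $\overline{\im(\T)}$ to $\im(\T)$, and to prove \ref{it:Tprop7}, the key is that $\T$ is bounded below on $\ker(\T)^\perp$: if $f\in\mathrm D(\T)\cap\ker(\T)^\perp$, then $f(\cdot,E,L)$ has zero mean on $]0,1[$ for a.e.\ $(E,L)$, so the Wirtinger inequality gives $\|f(\cdot,E,L)\|_{L^2(]0,1[)}\le\frac1{2\pi}\|\partial_\theta f(\cdot,E,L)\|_{L^2(]0,1[)}$; multiplying by the weight and using that $T$ is bounded above and below by \ref{it:stst2} yields $\|f\|_H\le C\,\|\T f\|_H$. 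Hence $\im(\T)$ is closed and equals $\ker(\T)^\perp$, and solving $-\frac1T\partial_\theta f=g$ for $g$ with zero $\theta$-mean, normalised so that $f$ itself has zero $\theta$-mean, produces exactly the stated formula for $\T^{-1}$, which is bounded by the same estimate. Part \ref{it:Tprop5} is a mollification: convolve in $\theta$ against a periodic approximate identity and truncate and mollify in $(E,L)$, checking graph-norm convergence via \ref{it:Tprop2}. Finally, \ref{it:Tprop6} is the one genuinely computational item: in \eqref{eq:Tdef} the coefficient of $\partial_r f$ is odd in $w$ and that of $\partial_w f$ is even in $w$, so $\T$ interchanges the even and odd parts, $(\T f)_\pm=\T(f_\mp)$; consequently $\T^2$ commutes with the parity involution $w\mapsto-w$, hence with the orthogonal projection onto $\H$, and the restriction $\T^2\colon\mathrm D(\T^2)\cap\H\to\H$ inherits self-adjointness.

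The main obstacle is the rigorous bookkeeping behind \ref{it:Tprop1} and \ref{it:Tprop2}: one must justify that \eqref{eq:AAvariables} is a legitimate change of variables (it is one-to-one only off a null set and is not measure preserving, so the weight $\frac{T}{|\varphi'|}$ must be carried along throughout), and one must pass from the global weak definition of $\T$ to the fibre-wise Sobolev description by Fubini while controlling the measure-zero sets $\{L=0\}$, $\{E=\Psi_L(r_L)\}$, and the boundary of $\Omega_0$. All of this runs parallel to \cite{HaReSt21,ReSt20}; the structural inputs specific to the present setting are that single-well structure \ref{it:stst1} is exactly what makes the action-angle coordinates well defined, and that the two-sided bound on the period function \ref{it:stst2} is exactly what makes the Wirtinger estimate --- and hence the bounded inverse $\T^{-1}$ --- work.
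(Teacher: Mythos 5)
Your proposal follows essentially the same route as the paper: the paper also transports everything through the action-angle change of variables \eqref{eq:aacoordsvolumeelements} so that $\T$ becomes the fibre-wise derivative $-T^{-1}\partial_\theta$ on periodic Sobolev spaces, citing \cite{ReSt20} for \ref{it:Tprop1} and \ref{it:Tprop5} (where the unitary-group/Stone argument you sketch is precisely the mechanism) and \cite[Sec.~5]{HaReSt21} for \ref{it:Tprop2}--\ref{it:Tprop4}, with \ref{it:Tprop6} by direct parity inspection. The only cosmetic deviation is that you derive closedness of $\im(\T)$ and boundedness of $\T^{-1}$ from a Wirtinger/Poincar\'e estimate on mean-zero fibres, while the paper simply verifies the explicit formula for $\T^{-1}$ from \ref{it:Tprop2} and \ref{it:Tprop3}; both arguments rest on the two-sided bound on the period function from \ref{it:stst2}, as you correctly flag.
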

\begin{proof}
	The first statement in~\ref{it:Tprop1} is proven in~\cite[Thm.~2.2]{ReSt20}; the proofs from \cite{ReSt20} also work in the present situation with a differing class of steady states. The second part follows by von~Neumann's theorem, cf.~\cite[Thm.~X.25]{ReSi2}. The characterizations in~\ref{it:Tprop2} can be shown similarly to~\cite[Lemma~5.2 and Corollary~5.4]{HaReSt21} using~\eqref{eq:aacoordsvolumeelements}; the only difference being one sign change in the transport operator and a different weight in the integration, but the weight difference $e^{\pm\lambda_0}$ is bounded on the steady state support. Part~\ref{it:Tprop3} follows from~\ref{it:Tprop2}; note that the kernel of $\partial_\theta\colon H^1_\theta\to L^2(]0,1[)$ consists of functions which are constant almost everywhere.	The equality of $\im(\T)$ and $\ker(\T)^\perp$ as well as the explicit characterization of this set stated in~\ref{it:Tprop4} are an application of~\ref{it:Tprop1}--\ref{it:Tprop3}, see~\cite[Lemma~5.5]{HaReSt21}. As for part~\ref{it:Tprop5} we refer to~\cite[Prop.~2]{ReSt20}. The claim~\ref{it:Tprop6} follows immediately from parity considerations and from the weak definition of $\T$. The formula for the inverse can be easily verified by using~\ref{it:Tprop2} and~\ref{it:Tprop3}. The fact that $\T^{-1}$ reverses $w$-parity follows by~\ref{it:Tprop6}. Note that the boundedness of the period function $T$ assumed in~\ref{it:stst2} is, e.g., needed for parts~\ref{it:Tprop3} and~\ref{it:Tprop7}.
\end{proof}

In passing we note that not all of the properties derived above require the existence of action-angle type variables via the single-well structure as stated in Definition~\ref{def:jeans}. 

\subsection{The operator $\B$}\label{sc:properties_B}
In order to derive a Birman-Schwinger principle for the operator $\L=-\B^2-\mathcal R$ we need in-depth knowledge about the operator $\B$ defined in Definition~\ref{def:TandB}~\ref{it:TBdef2}. Loosely speaking, we want to derive similar properties for $\B$ as the ones for $\T$ stated in Proposition~\ref{prop:transport_char}, which turns out to be much more difficult. In particular, we need to characterize its kernel, its image, and have to find its inverse. For this reason this section is quite technical and peppered with many non-trivial calculations. At the end of this section, we gather all important properties of $\B$ in Proposition~\ref{prop:Bproperties} in case the reader wants to skip the tedious but insightful technicalities.
First, we analyze the source terms.
\begin{lemma}\phantomsection\label{lemma:source_bounded}
	\begin{enumerate}[label=(\alph*)]
		\item\label{it:scbd1} The mappings 
		\begin{align*}
			H \ni f \mapsto |\varphi'| \rho_f \in H, \quad	H \ni f \mapsto |\varphi'| p_f \in H, \quad H \ni f \mapsto |\varphi'| j_f \in H
		\end{align*}
		are bounded, where $\rho_f$, $p_f$, $j_f$ are defined in~\eqref{eq:rho}--\eqref{eq:j}. The mapping 
		\begin{equation*}
			H \ni f \mapsto \lambda_f \in L^2([\Rmin,\Rmax])
		\end{equation*}
		is compact with $\lambda_f$ given by~\eqref{eq:lambdaeqexplicit}.
		\item\label{it:scbd2} For $f\in C^1_c(\Omega_0)$,
		\begin{align}
			p_f'&=-\mu_0'\,\left(p_f+\rho_f\right)-\frac2r\left(p_f-q_f\right)-e^{\lambda_0-\mu_0} j_{\T f},\label{eq:pfprime}\\
			j_f'&=- 2\left (\mu_0' + \frac1r\right ) j_f - e^{\lambda_0-\mu_0} \rho_{\T f} \label{eq:jfprime}.
		\end{align}
		The mappings
		\[ 
			\mathrm D(\T) \ni f \mapsto |\varphi'|\,(rp_f)' \in H, \quad \mathrm D(\T) \ni f \mapsto |\varphi'|\,(rj_f)' \in H
		\] 
		are well-defined and bounded, if $\mathrm D(\T)$ is equipped with the norm $\|\cdot\|_H + \| \T \cdot\|_H$ and the derivatives are taken in the weak sense. 
	\end{enumerate}
\end{lemma}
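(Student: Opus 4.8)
My plan is to establish part~(a) by a Cauchy--Schwarz estimate in the weighted space $H$ followed by a Hilbert--Schmidt argument, and part~(b) by a direct computation for test functions followed by a density argument. For the boundedness claims in~(a), I would first note that all four source integrands from~\eqref{eq:rho}--\eqref{eq:q} obey $|w|,\ w^2(1+w^2+L/r^2)^{-1/2},\ (L/r^2)(1+w^2+L/r^2)^{-1/2}\le\sqrt{1+w^2+L/r^2}=\sqrt{1+|v|^2}$, so a single estimate handles all of them (including $q_f$, which is needed in~(b)). On $\Omega_0$ one has $\sqrt{1+|v|^2}=e^{-\mu_0(r)}E\le e^{-\mu_0(r)}E_0$, and $\mu_0,\lambda_0$ are bounded on the compact radial support $[\Rmin,\Rmax]$ (recall $\Rmin>2M$ in the black-hole case), so $\sqrt{1+|v|^2}\le c_1$ on $\Omega_0$ for some $c_1>0$. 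Using $dx\,dv=4\pi^2\,dr\,dw\,dL$, Cauchy--Schwarz with weight $|\varphi'|^{-1}$, and assumption~\ref{it:stst4}, one obtains for $g_f\in\{\rho_f,p_f,j_f,q_f\}$ the pointwise bound $g_f(x)^2\le\big(\int_{\R^3}(1+|v|^2)|\varphi'|\,dv\big)\big(\int_{\R^3}f^2/|\varphi'|\,dv\big)\le c_1^2C\int_{\R^3}f^2/|\varphi'|\,dv$, with $C$ from~\eqref{eq:phi_prime_bound}; hence
\[
\big\||\varphi'|\,g_f\big\|_H^2=\iint e^{\lambda_0}|\varphi'|\,g_f^2\,dx\,dv=\int e^{\lambda_0}g_f(x)^2\Big(\int_{\R^3}|\varphi'|\,dv\Big)dx\le c_1^2C^2\iint\frac{e^{\lambda_0}}{|\varphi'|}\,f^2\,dx\,dv=c_1^2C^2\|f\|_H^2,
\]
using~\eqref{eq:phi_prime_bound} once more. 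The same pointwise bound, integrated in $x$ alone, also shows that $f\mapsto g_f$ is bounded from $H$ into $L^2([\Rmin,\Rmax],r^2\,dr)$.

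For the compactness of $f\mapsto\lambda_f$ I would use~\eqref{eq:lambdaeqexplicit} to write $\lambda_f(r)=\int_{\Rmin}^{\Rmax}\kappa(r,s)\,\rho_f(s)\,s^2\,ds$ with $\kappa(r,s)=4\pi e^{2\lambda_0(r)}r^{-1}\mathbf 1_{\{s\le r\}}$, and observe that $\int_{\Rmin}^{\Rmax}\int_{\Rmin}^{\Rmax}\kappa(r,s)^2\,s^2\,ds\,dr\le\frac{16\pi^2}{3}\int_{\Rmin}^{\Rmax}e^{4\lambda_0(r)}r\,dr<\infty$ — the powers of $r$ and $s$ keep this finite even when $\Rmin=0$ — so $\rho\mapsto\lambda$ is Hilbert--Schmidt, hence compact, as a map $L^2([\Rmin,\Rmax],s^2\,ds)\to L^2([\Rmin,\Rmax])$. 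Composing with the bounded operator $f\mapsto\rho_f$ from the previous paragraph gives compactness of $f\mapsto\lambda_f$.

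For part~(b) I would derive~\eqref{eq:pfprime}--\eqref{eq:jfprime} for $f\in C^1_c(\Omega_0)$ by differentiating $r^2j_f(r)=\pi\iint wf\,dw\,dL$ and $r^2p_f(r)=\pi\iint w^2(1+w^2+L/r^2)^{-1/2}f\,dw\,dL$ under the integral sign, carefully tracking the $r$-dependence of $(1+w^2+L/r^2)^{1/2}$. Solving~\eqref{eq:Tdef} for $w\,\partial_r f$ and inserting it, the $\T f$-terms collapse to a multiple of $\rho_{\T f}$, respectively $j_{\T f}$, while the $\partial_w f$-terms are integrated by parts in $w$ (no boundary contributions as $f\in C^1_c$); comparing with $(r^2g_f)'=2rg_f+r^2g_f'$ then yields the stated formulas. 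As a consistency check, for $f=f_0$ both reduce to the Tolman--Oppenheimer--Volkov equation, since $\T f_0=0$ and $j_{f_0}=0$.

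Finally, for the boundedness in~(b), equations~\eqref{eq:jfprime} and~\eqref{eq:pfprime} give $(rj_f)'=-(1+2r\mu_0')j_f-re^{\lambda_0-\mu_0}\rho_{\T f}$ and $(rp_f)'=-p_f+2q_f-r\mu_0'(p_f+\rho_f)-re^{\lambda_0-\mu_0}j_{\T f}$; multiplying by $|\varphi'|$ and applying the bounds from~(a) to the terms in $f$, and to $\rho_{\T f},j_{\T f}$ with $\T f$ in place of $f$, together with the boundedness of $\mu_0,\mu_0',\lambda_0,r$ on $[\Rmin,\Rmax]$ ($\mu_0$ being $C^1$ there), gives $\||\varphi'|(rp_f)'\|_H+\||\varphi'|(rj_f)'\|_H\le c(\|f\|_H+\|\T f\|_H)$ for $f\in C^1_c(\Omega_0)$. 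For general $f\in\mathrm D(\T)$ I would approximate by $f_n\in C^\infty_c(\Omega_0)$ with $f_n\to f$ and $\T f_n\to\T f$ in $H$ (Proposition~\ref{prop:transport_char}\,\ref{it:Tprop5}); then $rp_{f_n}\to rp_f$ and $rj_{f_n}\to rj_f$ in $L^2$, the left-hand sides are $H$-Cauchy, and a routine closedness argument identifies the limits with $|\varphi'|$ times the weak derivatives, so the estimate passes to $\mathrm D(\T)$ by continuity. I expect the explicit derivation of~\eqref{eq:pfprime}--\eqref{eq:jfprime} to be the main obstacle — one must compute all the $w$- and $r$-derivatives of $(1+w^2+L/r^2)^{1/2}$ and keep several moment integrals correctly bookkept — whereas the remaining steps are straightforward given Cauchy--Schwarz, assumption~\ref{it:stst4}, compactness of the support, and Proposition~\ref{prop:transport_char}.
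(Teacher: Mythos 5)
Your proof is correct. Part~\ref{it:scbd1}'s boundedness and part~\ref{it:scbd2} follow the paper's route exactly: the Cauchy--Schwarz estimate with weight $|\varphi'|^{-1}$ together with assumption~\ref{it:stst4} for boundedness, and the lengthy-but-elementary integration by parts plus approximation via Proposition~\ref{prop:transport_char}~\ref{it:Tprop5} for~\ref{it:scbd2}. Your two TOV-type identities $(rj_f)'$ and $(rp_f)'$ and the consistency check against $f=f_0$ are correct.

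Where you genuinely depart from the paper is in the compactness of $H\ni f\mapsto\lambda_f$. The paper argues by complete continuity: it takes a weakly null sequence $(f_n)$, rewrites $\lambda_{f_n}(r)$ as an $H$-inner product against $e^{-\lambda_0}|\varphi'|\sqrt{1+w^2+L/s^2}\,\mathds 1_{[\Rmin,r]}$, gets pointwise convergence to zero from weak convergence, and concludes $\|\lambda_{f_n}\|_{L^2}\to 0$ by dominated convergence with an $r$-integrable majorant coming again from Cauchy--Schwarz and~\ref{it:stst4}. You instead factor the map through $L^2([\Rmin,\Rmax],s^2\,ds)$, observe that $\rho\mapsto\lambda$ has kernel $\kappa(r,s)=4\pi e^{2\lambda_0(r)}r^{-1}\mathds 1_{\{s\le r\}}$, verify directly that $\iint\kappa(r,s)^2\,s^2\,ds\,dr\le\frac{16\pi^2}{3}\int e^{4\lambda_0}r\,dr<\infty$ even when $\Rmin=0$, and conclude this factor is Hilbert--Schmidt, hence compact; composing with the bounded map $f\mapsto\rho_f$ finishes. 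Both arguments are valid; yours has the advantage of producing an explicit quantitative Hilbert--Schmidt bound and avoiding the passage to subsequences, while the paper's is marginally more self-contained in that it never leaves the Hilbert space $H$ and reuses the same Cauchy--Schwarz machinery already in place for the boundedness estimates.
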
 
\begin{proof}
	The Cauchy-Schwarz inequality together with \eqref{eq:phi_prime_bound} implies the boundedness in part~\ref{it:scbd1}. For the compactness property, consider $(f_n)_{n\in \N}\subset H$ which converges weakly to $0$ in $H$. 
	Using~\eqref{eq:lambdaeqexplicit}, we obtain that
	\begin{equation}\label{eq:lambda_compact}
		\|   \lambda_{f_n} \|_{L^2([\Rmin,\Rmax])}^2 = \int_{\Rmin}^{\Rmax}\frac{e^{4\lambda_0(r)}}{r^2}\,\left\langle e^{-\lambda_0}|\varphi'|\sqrt{1+w^2+\frac L{s^2}}\,\mathds1_{[\Rmin,r]},f_n\right\rangle_H^2\,dr.
	\end{equation}	
	Weak convergence of $(f_n)$ in $H$ implies that the scalar product in~\eqref{eq:lambda_compact} converges pointwise for $r\in [\Rmin,\Rmax]$. By using the Cauchy-Schwarz inequality, \eqref{eq:phi_prime_bound}, and the boundedness of $(f_n)$, we obtain that the integrand in~\eqref{eq:lambda_compact} is uniformly bounded by an integrable function. Lebesgue's dominated convergence theorem thus yields the convergence of \eqref{eq:lambda_compact} to zero as desired. 
	
	The formulas for $p_f'$ and $j_f'$ can be deduced by a lengthy integration by parts. The last claim then follows with similar arguments as~\ref{it:scbd1} after approximating $f \in \mathrm D(\T)$ according to Proposition~\ref{prop:transport_char}~\ref{it:Tprop5}.
\end{proof}
The identity \eqref{eq:pfprime} can be interpreted as a generalized version of the Tolman-Oppenheimer-Volkov equation~\cite[Lemma~3.3]{Rein94}. We now prove that $\B$ is well defined. 
\begin{lemma}\phantomsection\label{lemma:Bselfadjoint}
	\begin{enumerate}[label=(\alph*)]
		\item\label{it:Bsa1} $\B \colon \mathrm D(\T) \to H$ is well-defined, skew-adjoint as a densely defined operator on~$H$, and reverses $w$-parity. Moreover, $\B^2 \colon \mathrm D(\T^2) \to H$ and the restricted operator $\B^2 \colon  \mathrm D(\T^2) \cap \H\to \H$ are well-defined and self-adjoint; the set $\mathrm D(\T^2)$ is defined in Definition~\ref{def:TandB}~\ref{it:TBdef4}.
		\item\label{it:Bsa2} For every $f\in \mathrm D(\T)$ there exists a sequence $(f_n)_{n\in\N} \in C^\infty_c (\Omega_0)$ such that 
		\[ 
		f_n \to f, \quad \B f_n \to \B f \quad \text{in } H\text{ as } n \to \infty.
		\]
	\end{enumerate}
\end{lemma}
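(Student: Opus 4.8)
The plan is to split $\B=\T+\mathcal S$ with the correction
\[
\mathcal S f\coloneqq\B f-\T f=-4\pi r|\varphi'|e^{2\mu_0+\lambda_0}\left(w\,p_f-\frac{w^2}{\sqrt{1+w^2+\frac L{r^2}}}\,j_f\right),
\]
and to show that $\mathcal S$ is a bounded, skew-adjoint, $w$-parity reversing operator on $H$, so that $\B$ becomes a bounded skew-adjoint perturbation of the skew-adjoint transport operator $\T$ from Proposition~\ref{prop:transport_char}. For boundedness, write $\mathcal S f=m_1\,(|\varphi'|\,p_f)+m_2\,(|\varphi'|\,j_f)$ with $m_1\coloneqq-4\pi re^{2\mu_0+\lambda_0}w$ and $m_2\coloneqq4\pi re^{2\mu_0+\lambda_0}w^2(1+w^2+L/r^2)^{-1/2}$. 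Since $f_0>0$ on $\Omega_0$ forces $E<E_0<1$ there, whence $\sqrt{1+w^2+L/r^2}=Ee^{-\mu_0}<E_0e^{-\mu_0}$, the quantities $w$ and $L/r^2$ are bounded on $\Omega_0$; with $r$ in the compact range $[\Rmin,\Rmax]$ this gives $m_1,m_2\in L^\infty(\Omega_0)$, and Lemma~\ref{lemma:source_bounded}~\ref{it:scbd1} yields $\|\mathcal S f\|_H\lesssim\|f\|_H$. Unfolding $\langle\cdot,\cdot\rangle_H$, using $dv=\tfrac\pi{r^2}\,dw\,dL$ and that $p_f,j_f$ are functions of $r$ alone, a short computation gives
\[
\langle\mathcal S f,g\rangle_H=-16\pi^2\int_{\Rmin}^{\Rmax}r^3e^{2\mu_0+2\lambda_0}\left(p_f\,j_g-j_f\,p_g\right)dr ,
\]
which is antisymmetric in $(f,g)$; hence $\mathcal S^\ast=-\mathcal S$. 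Finally $p_f=p_{f_+}$ and $j_f=j_{f_-}$ (the multipliers $w^2(1+w^2+L/r^2)^{-1/2}$ and $w$ being even resp.\ odd in $w$), while $m_1$ is odd and $m_2$ even in $w$; combining these facts, $\mathcal S$ maps even-in-$w$ functions to odd ones and vice versa, i.e.\ $(\mathcal S f)_\pm=\mathcal S(f_\mp)$.

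Granting this, part~\ref{it:Bsa1} is largely assembly. The operator $\B=\T+\mathcal S$ on $\mathrm D(\T)$ is densely defined and skew-symmetric with $\langle\B f,f\rangle_H=0$, so $\|(\lambda\mp\B)f\|_H^2=\lambda^2\|f\|_H^2+\|\B f\|_H^2\ge\lambda^2\|f\|_H^2$ for real $\lambda$; moreover $\lambda\mp\B=(\lambda\mp\T)\bigl(I-(\lambda\mp\T)^{-1}(\pm\mathcal S)\bigr)$ is surjective once $\lambda>\|\mathcal S\|$ (using skew-adjointness of $\T$ and $\|(\lambda\mp\T)^{-1}\|\le\lambda^{-1}$), so $\pm\B$ are maximal dissipative and $\B$ is skew-adjoint with $\mathrm D(\B)=\mathrm D(\T)$; reversal of $w$-parity by $\B$ follows from the same property of $\T$ (Proposition~\ref{prop:transport_char}~\ref{it:Tprop6}) and of $\mathcal S$, together with $f\in\mathrm D(\T)\iff f_\pm\in\mathrm D(\T)$. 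Since $\B$ is closed and densely defined, von~Neumann's theorem (\cite[Thm.~X.25]{ReSi2}) shows $\B^\ast\B=-\B^2$ is self-adjoint on its natural domain $\{f\in\mathrm D(\B)\mid\B f\in\mathrm D(\B)\}$; and because $\B$ reverses $w$-parity, $\B^2$ preserves it, so the even- and odd-in-$w$ subspaces reduce $\B^2$ and the restriction $\B^2\colon\mathrm D(\T^2)\cap\H\to\H$ is self-adjoint on $\H$ as well --- exactly as for $\T^2$ in Proposition~\ref{prop:transport_char}~\ref{it:Tprop6}.

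The main obstacle is to identify $\mathrm D(\B^2)=\{f\in\mathrm D(\T)\mid\B f\in\mathrm D(\T)\}$ with $\mathrm D(\T^2)$. This reduces to the claim that $\mathcal S$ maps $\mathrm D(\T)$ into $\mathrm D(\T)$, boundedly for the graph norm $\|\cdot\|_H+\|\T\,\cdot\|_H$: granting it, $f\in\mathrm D(\T^2)$ gives $\mathcal S f\in\mathrm D(\T)$ and hence $\B f=\T f+\mathcal S f\in\mathrm D(\T)$, while conversely $f\in\mathrm D(\B^2)$ gives $\T f=\B f-\mathcal S f\in\mathrm D(\T)$. To prove the claim I would use that $|\varphi'|$ is a first integral of the characteristic flow, so $\T(|\varphi'|\,h)=|\varphi'|\,\T h$, and write $\mathcal S f=|\varphi'|\,g_f$ with $g_f$ assembled from the radial functions $rp_f$, $rj_f$ and from factors ($w$, $w^2(1+w^2+L/r^2)^{-1/2}$, $e^{2\mu_0+\lambda_0}$) whose $w$- and $r$-derivatives are bounded on $\Omega_0$. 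Applying the chain rule, $\T g_f$ splits into terms containing $(rp_f)'$ and $(rj_f)'$ --- which by Lemma~\ref{lemma:source_bounded}~\ref{it:scbd2} produce elements of $H$ depending boundedly on $f$ in the graph norm --- and terms containing $p_f,j_f$ without derivatives, controlled by Lemma~\ref{lemma:source_bounded}~\ref{it:scbd1}; the remaining geometric coefficients ($e^{\mu_0-\lambda_0}$, $\mu_0'$, $L/r^3$, powers of $w$) are bounded on the steady state support, the only delicate point being the $L/r^3$ factor near $r=0$ in the singularity-free isotropic case, where the bound~\eqref{eq:phi_prime_bound} on the $(w,L)$-volume of the (thin) fibres of $\Omega_0$ absorbs its $O(1/r)$ growth. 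One makes this rigorous via the approximation in Proposition~\ref{prop:transport_char}~\ref{it:Tprop5}: for $f_n\in C^\infty_c(\Omega_0)$ with $f_n\to f$ and $\T f_n\to\T f$ in $H$, the identity $\T(\mathcal S f_n)=|\varphi'|\,\T g_{f_n}$ holds classically and the uniform estimates pass it to the limit, so $\mathcal S f\in\mathrm D(\T)$ with the asserted bound.

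Finally, part~\ref{it:Bsa2}: for $f\in\mathrm D(\B)=\mathrm D(\T)$ take $(f_n)\subset C^\infty_c(\Omega_0)$ as in Proposition~\ref{prop:transport_char}~\ref{it:Tprop5}; then $\B f_n=\T f_n+\mathcal S f_n\to\T f+\mathcal S f=\B f$ in $H$, since $\mathcal S$ is bounded. Thus the only genuinely technical ingredient beyond the estimates of Lemma~\ref{lemma:source_bounded} and standard perturbation theory for (skew-)adjoint operators is the graph-norm invariance of $\mathrm D(\T)$ under $\mathcal S$.
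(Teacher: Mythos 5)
Your proof follows the same strategy as the paper: decompose $\B=\T+\mathcal S$, show $\mathcal S$ is bounded and skew-symmetric on $H$ (arriving at the same explicit formula $\langle\mathcal Sf,g\rangle_H=(4\pi)^2\int r^3e^{2\mu_0+2\lambda_0}(p_gj_f-p_fj_g)\,dr$), conclude skew-adjointness of $\B$, then use von Neumann for $\B^2$, identify $\mathrm D(\B^2)=\mathrm D(\T^2)$ via $\mathcal S(\mathrm D(\T))\subset\mathrm D(\T)$ from Lemma~\ref{lemma:source_bounded}~\ref{it:scbd2}, and deduce~\ref{it:Bsa2} from Proposition~\ref{prop:transport_char}~\ref{it:Tprop5} and boundedness of $\mathcal S$. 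The only cosmetic deviation is that you reprove the bounded skew-symmetric perturbation result through a maximal-dissipativity/Neumann-series surjectivity argument rather than simply citing Kato--Rellich as the paper does, and you give a somewhat more hands-on account of the graph-norm bound already packaged in Lemma~\ref{lemma:source_bounded}~\ref{it:scbd2}.
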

\begin{proof}
	We can write $\B= \T + \mathcal S$ where $\mathcal S$ is bounded on $H$ according to Lemma~\ref{lemma:source_bounded}~\ref{it:scbd1} and $\mathcal S$ is skew-symmetric since for $f,g\in H$, 
	\begin{align*}
		\langle \mathcal S f, g \rangle_H &= \left \langle - 4\pi r  |\varphi'| e^{2\mu_0+\lambda_0} \left (w p_f - \frac{w^2}{\sqrt{1+w^2+\frac{L}{r^2}}} j_f \right ) , g \right \rangle_H \\
		&=  (4\pi)^2 \int_{\Rmin}^{\Rmax} e^{2\mu_0+2\lambda_0} r^3 (p_g j_f - p_f j_g) \, dr. 
	\end{align*}
	Hence, $\B=\T+\mathcal S$ is skew-adjoint by the Kato-Rellich theorem~\cite[Thm.~X.12]{ReSi2} with domain $\mathrm D(\B)\coloneqq \mathrm D(\T)$. Thus, von Neumann's theorem~\cite[Thm.~X.25]{ReSi2} implies that $\B^2$ is self-adjoint on the domain $\mathrm D(\B^2)\coloneqq\{ f\in\mathrm D(\B)\mid \B f\in \mathrm D(\B)\}$. Moreover, $\mathrm D(\B^2)=\mathrm D(\T^2)$. In order to see this equality, it remains to check that $\mathcal S f \in \mathrm D(\T)$ for $f\in\mathrm  D(\T)$, i.e.,
	\[ 
		\mathcal Sf=-4\pi r |\varphi'| e^{2\mu_0+\lambda_0} \left (w p_f - \frac{w^2}{\sqrt{1+w^2+\frac{L}{r^2}}} j_f \right ) \in \mathrm D(\T),
	\]
	which follows from Lemma~\ref{lemma:source_bounded}~\ref{it:scbd2}. Since $\T$ and $\mathcal S$ reverse $w$-parity, the same is true for $\B$. In particular, $\B^2$ preserves $w$-parity which implies the last statement in~\ref{it:Bsa1}. Furthermore, Proposition~\ref{prop:transport_char}~\ref{it:Tprop5} and Lemma~\ref{lemma:source_bounded}~\ref{it:scbd1} imply part~\ref{it:Bsa2} because $\mathcal S$ is bounded.
\end{proof}
Before analyzing the operators further, we need some auxiliary results and identities which will be important throughout the work. 
\begin{lemma}\label{lemma:lambdadot}
	Let $f \in \mathrm D(\mathcal{\T})$. Then the following identities hold for a.e.\ $r\in [\Rmin,\infty[$: 
	\begin{align}
		\lambda_{\mathcal{B}f}(r) &= -4\pi r e^{(\lambda_0 + \mu_0)(r)} j_f(r) \label{eq:lambdadotB}, \\
		\lambda_{e^{\mu_0+\lambda_0} \T f}(r) &= -4\pi r e^{(2\mu_0+2\lambda_0)(r)} j_f(r). \label{eq:lambdadotmod}
	\end{align}
\end{lemma}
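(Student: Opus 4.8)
The plan is to verify both identities first for $f\in C^\infty_c(\Omega_0)$ by a direct computation and then to extend them to arbitrary $f\in\mathrm D(\T)$ by density. For the density step one uses Proposition~\ref{prop:transport_char}~\ref{it:Tprop5} together with the fact that both sides of each identity depend continuously on $f$ in the appropriate topology: the map $f\mapsto\B f$ (resp.\ $f\mapsto e^{\mu_0+\lambda_0}\T f$) is bounded from $\mathrm D(\T)$, equipped with the graph norm $\|\cdot\|_H+\|\T\cdot\|_H$, into $H$, since $\B=\T+\mathcal S$ with $\mathcal S$ bounded on $H$ (cf.\ Lemma~\ref{lemma:Bselfadjoint} and Lemma~\ref{lemma:source_bounded}~\ref{it:scbd1}); the map $g\mapsto\lambda_g$ is bounded from $H$ into $L^2([\Rmin,\infty[)$ by Lemma~\ref{lemma:source_bounded}~\ref{it:scbd1}, noting that $\lambda_g$ is supported on $[\Rmin,\Rmax]$ up to a square-integrable vacuum tail; and $f\mapsto r\,e^{\lambda_0+\mu_0}j_f$ is bounded from $H$ into $L^2([\Rmin,\infty[)$ by the Cauchy-Schwarz inequality together with~\eqref{eq:phi_prime_bound} and~\eqref{eq:HLRidentity}.

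Now fix $f\in C^\infty_c(\Omega_0)$; by~\eqref{eq:lambdaeqexplicit} it suffices to identify the integrands $\rho_{\B f}(s)\,s^2$ and $\rho_{e^{\mu_0+\lambda_0}\T f}(s)\,s^2=e^{\mu_0(s)+\lambda_0(s)}\rho_{\T f}(s)\,s^2$ as total $s$-derivatives. For the transport contribution, identity~\eqref{eq:jfprime} gives $\rho_{\T f}=-e^{\mu_0-\lambda_0}\bigl(j_f'+2(\mu_0'+\tfrac1r)j_f\bigr)$, whence $e^{\mu_0+\lambda_0}\rho_{\T f}\,s^2=-\tfrac{d}{ds}\bigl(e^{2\mu_0}s^2 j_f\bigr)$; inserting this into~\eqref{eq:lambdaeqexplicit} and integrating yields~\eqref{eq:lambdadotmod}, the boundary term at $s=\Rmin$ vanishing because $j_f$ vanishes near $\Rmin$ (or because of the factor $s^2$ if $\Rmin=0$). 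For~\eqref{eq:lambdadotB} one additionally computes $\rho_{\mathcal S f}$ with $\mathcal S f=\B f-\T f$: since the density functional $\rho_{(\cdot)}$ only sees the even-in-$w$ part of its argument and the $p_f$-term of $\mathcal S f$ is odd in $w$, only the $j_f$-term survives, and evaluating the remaining weight $\frac{\pi}{r^2}\int_0^\infty\int_\R w^2|\varphi'|\,dw\,dL$ by~\eqref{eq:HLRidentity} gives $\rho_{\mathcal S f}=e^{\mu_0-\lambda_0}(\mu_0'+\lambda_0')\,j_f$. Adding this to $\rho_{\T f}$ produces $\rho_{\B f}=-e^{\mu_0-\lambda_0}\bigl(j_f'+(\mu_0'-\lambda_0'+\tfrac2r)j_f\bigr)$, so that $\rho_{\B f}\,s^2=-\tfrac{d}{ds}\bigl(e^{\mu_0-\lambda_0}s^2 j_f\bigr)$, and~\eqref{eq:lambdaeqexplicit} then gives~\eqref{eq:lambdadotB}, again with vanishing boundary contribution.

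I expect no essential difficulty here; the proof is purely computational once the Tolman-Oppenheimer-Volkov-type identity~\eqref{eq:jfprime} and the weight identity~\eqref{eq:HLRidentity} are in hand. The one point that deserves care is the bookkeeping that makes the integrands exact derivatives: one has to apply~\eqref{eq:HLRidentity} exactly so that the coefficient $2\mu_0'+\tfrac2r$ coming from $j_f'$ combines with the coefficient $\mu_0'+\lambda_0'$ from $\rho_{\mathcal S f}$ into $\mu_0'-\lambda_0'+\tfrac2r$, which is precisely the logarithmic derivative of $e^{\mu_0-\lambda_0}s^2$ (just as $2\mu_0'+\tfrac2r$ is that of $e^{2\mu_0}s^2$ in the first identity). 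A minor technical matter, already absorbed into the density argument above, is that~\eqref{eq:jfprime} is stated only for $C^1_c$-functions; carrying out all computations with smooth, compactly supported $f$ and passing to the limit only in the final identities avoids having to interpret the intermediate relations weakly.
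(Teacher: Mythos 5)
Your proof is correct, and it rests on the same three ingredients as the paper's: reduction to $f\in C^\infty_c(\Omega_0)$ by density (Proposition~\ref{prop:transport_char}~\ref{it:Tprop5}, Lemma~\ref{lemma:Bselfadjoint}~\ref{it:Bsa2}, Lemma~\ref{lemma:source_bounded}~\ref{it:scbd1}), an integration by parts, and the weight identity~\eqref{eq:HLRidentity}. The organization, however, is genuinely different from the paper's and arguably more transparent. The paper plugs the explicit expression~\eqref{eq:Tdef} for $\T f$ into~\eqref{eq:lambdaeqexplicit} and integrates by parts directly, producing three terms which then cancel via~\eqref{eq:HLRidentity}. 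You instead invoke the already-established formula~\eqref{eq:jfprime}, which packages that integration by parts as a pointwise identity for $\rho_{\T f}$, compute $\rho_{\mathcal S f}=e^{\mu_0-\lambda_0}(\mu_0'+\lambda_0')j_f$ by parity and~\eqref{eq:HLRidentity}, and observe that the sum $\rho_{\B f}\,s^2$ is the exact derivative $-\partial_s\bigl(e^{\mu_0-\lambda_0}s^2 j_f\bigr)$ (respectively $e^{\mu_0+\lambda_0}\rho_{\T f}\,s^2=-\partial_s\bigl(e^{2\mu_0}s^2 j_f\bigr)$). This exact-derivative viewpoint makes the cancellation structure visible at a glance and reuses Lemma~\ref{lemma:source_bounded}~\ref{it:scbd2} rather than redoing the calculation that underlies it, at the minor cost of having to track the extra term $\rho_{\mathcal S f}$ separately. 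Both routes are valid; yours trades a fresh integration by parts for a cleaner bookkeeping of logarithmic derivatives, and I verified that the coefficients do combine exactly as you claim.
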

\begin{proof}
	Combining the approximation results from Proposition~\ref{prop:transport_char}~\ref{it:Tprop5} and Lemma~\ref{lemma:Bselfadjoint}~\ref{it:Bsa2} with Lemma~\ref{lemma:source_bounded}~\ref{it:scbd1} allows us to assume that $f\in C^\infty_c(\Omega_0)$.
	We start with~\eqref{eq:lambdadotB} by writing $\T f$ as in~\eqref{eq:Tdef} and integrate by parts in \eqref{eq:lambdaeqexplicit} to obtain that
	\begin{multline*}
	\lambda_{\B f}(r) = -4\pi r e^{(\mu_0+\lambda_0)(r)}  j_{f}(r) - \frac{4\pi e^{2\lambda_0(r)}}{r} \int_{\Rmin}^{r} (\mu_0'+\lambda_0')(s)\,e^{(\mu_0-\lambda_0)(s)} j_{f}(s)  s^2 \, ds \\
		   + \frac{16\pi^3 e^{2\lambda_0(r)}}{r} \int_{\Rmin}^r s e^{(2\mu_0+\lambda_0)(s)}j_f(s) \int_{0}^{\infty}  \int_\R  w^2 |\varphi'| \, dw dL ds=  -4\pi r  e^{(\mu_0+\lambda_0)(r)} j_{f}(r),
	\end{multline*}
	where we have used \eqref{eq:HLRidentity} in the last step. 	For~\eqref{eq:lambdadotmod} similar arguments can be carried out. 
\end{proof}
\subsubsection{Characterization of $\ker(\B)$ and $\ker(\B)^\bot$}\label{ssc:kerB_kerBbot}
We now characterize the kernel of $\B$ using our knowledge about the kernel of $\T$, see Proposition~\ref{prop:transport_char}~\ref{it:Tprop3}. A comment on notation is in order. From now on we write $R=R(\theta,E,L)$ and $W=W(\theta,E,L)$ when expressing the radial coordinate and the radial momentum, respectively, as a function of the action-angle type variables $(\theta,E,L)$ introduced in Section~\ref{sc:aacoords}. More precisely,
\begin{align*}
	\partial_\theta R&=T(E,L)e^{\mu_0(R)-\lambda_0(R)}\,\frac W{\sqrt{1+W^2+\frac L{R^2}}},\\
	\partial_\theta W&=T(E,L)e^{\mu_0(R)-\lambda_0(R)}\left(\frac L{R^3\sqrt{1+W^2+\frac L{R^2}}}-\mu_0'(R)\,\sqrt{1+W^2+\frac L{R^2}}\right),
\end{align*}
where $(R,W)(0,E,L)=(r_-(E,L),0)$ and $(R,W)(\frac12,E,L)=(r_+(E,L),0)$ with $0<r_-(E,L)<r_+(E,L)$ defined for a.e.~$(E,L)\in\Omega_0^{EL}$ by Definition~\ref{def:jeans}. 
\begin{lemma}\phantomsection\label{lemma:kernel_B}
	\begin{enumerate}[label=(\alph*)]
		\item\label{it:kerB1} The kernel of $\B$ is given by 
		\begin{align*}
			\ker(\B )&= \left\{ g + 4\pi |\varphi'|E e^{-\lambda_0-\mu_0} \int_{r}^{\Rmax} e^{(3\lambda_0+\mu_0)(s)} p_g(s) s \,ds \mid g = g(E,L) \in \ker \T \right\} .
		\end{align*}
		When $f\in \ker(\B)$ is of the form above, we refer to $g$ as the {\em generator} of $f$. This generator is given by
		\[ 
		g(E,L)=f\left (\frac 1 2,E,L\right ) - 4\pi |\varphi'(E,L)| E \int_{r_+(E,L)}^{\Rmax} e^{2\lambda_0(s)} p_f(s) s \, ds
		\] 
		for a.e.\ $(E,L) \in \Omega^{EL}_0$. 		
		\item\label{it:kerB2} The mappings 
		\begin{align*}
			\ker(\T) \ni g & \mapsto g + 4\pi |\varphi'| E e^{-\lambda_0-\mu_0} \int_r^{\Rmax} e^{(3\lambda_0+\mu_0)(s)} p_g(s) s \, ds \in \ker(\B), \\
			\ker(\B) \ni f & \mapsto f\left (\frac 1 2,E,L \right )- 4\pi |\varphi'| E \int_{r_+(E,L)}^{\Rmax} e^{2\lambda_0(s)} p_f(s) s \, ds \in  \ker(\T)
		\end{align*}
		are well-defined, bijective, and inverse to each other. In particular, the generator of $f\in \ker(\B)$ is uniquely determined.
	\end{enumerate} 
\end{lemma}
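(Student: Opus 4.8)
The starting point is the decomposition $\B=\T+\mathcal S$ established in Lemma~\ref{lemma:Bselfadjoint}, with $\mathcal S f=-4\pi r|\varphi'|e^{2\mu_0+\lambda_0}\bigl(wp_f-\tfrac{w^2}{\sqrt{1+w^2+L/r^2}}j_f\bigr)$, so that $\B f=0$ is the nonlocal transport equation $\T f=-\mathcal S f$. I would first extract the two reductions that make this tractable. Since $f\mapsto\lambda_f$ is linear and $\lambda_0$ is the field of the zero function, $\B f=0$ gives $\lambda_{\B f}=0$, and Lemma~\ref{lemma:lambdadot}, eqn.~\eqref{eq:lambdadotB}, then forces $j_f\equiv0$ on $[\Rmin,\infty[$. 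Hence $\B f=0$ collapses to $\T f=4\pi r|\varphi'|e^{2\mu_0+\lambda_0}w\,p_f$. The right-hand side is odd in $w$ ($w$ times $w$-even quantities), so $\T f$ is odd; since $\T$ reverses $w$-parity (Proposition~\ref{prop:transport_char}~\ref{it:Tprop6}) and $\ker\T$ consists of $w$-even functions (Proposition~\ref{prop:transport_char}~\ref{it:Tprop3}), this forces $f=f_+$ to be even in $w$, consistent with the claimed form of $\ker(\B)$.

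Next I would derive the representation formula, which yields the inclusion ``$\subseteq$'' together with the generator. Using $(\T f)(\theta,E,L)=-T(E,L)^{-1}\partial_\theta f$ from Proposition~\ref{prop:transport_char}~\ref{it:Tprop2}, the reduced equation becomes $\partial_\theta f=-4\pi T\,R\,|\varphi'|\,e^{2\mu_0(R)+\lambda_0(R)}\,W\,p_f(R)$. Integrating in $\theta$ starting from $\theta=\tfrac12$ (where $(R,W)=(r_+,0)$), changing the integration variable from $\theta$ to $R$ on $[0,\tfrac12]$ via $\partial_\theta R=Te^{\mu_0(R)-\lambda_0(R)}W/\sqrt{1+W^2+L/R^2}$, and using $\sqrt{1+W^2+L/R^2}=E\,e^{-\mu_0(R)}$, the factors $T$ and $W$ cancel and one obtains, for $\theta\in[0,\tfrac12]$ and then all $\theta$ by $w$-evenness,
\[
f(\theta,E,L)=f\left(\tfrac12,E,L\right)+4\pi E\,|\varphi'(E,L)|\int_{R(\theta,E,L)}^{r_+(E,L)}s\,e^{2\lambda_0(s)}\,p_f(s)\,ds .
\]
Splitting the integral at $\Rmax$ (legitimate since $p_f$ is supported in $[\Rmin,\Rmax]$) and defining $g(E,L)$ by precisely the stated formula turns this into $f=g+h$ with $h(r,w,L)=4\pi E|\varphi'|\int_r^{\Rmax}s\,e^{2\lambda_0(s)}p_f(s)\,ds$. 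Since $g$ depends only on $(E,L)$ and lies in $H$ (as $f\in H$ and $h\in H$, the latter from \eqref{eq:phi_prime_bound} and a pointwise bound $|p_f(r)|\lesssim\|f\|_H$ on the compact support), Proposition~\ref{prop:transport_char}~\ref{it:Tprop3} gives $g\in\ker\T$. Finally, to match $h$ with the expression in the statement I would use $p_f=p_g+p_h$, compute $p_h$ directly from the form of $h$, and invoke the identity \eqref{eq:HLRidentity} in the form $4\pi r e^{2\lambda_0+\mu_0}\,\tfrac{\pi}{r^2}\iint w^2|\varphi'|\,dwdL=\lambda_0'+\mu_0'$: both $e^{-\lambda_0-\mu_0}\int_r^{\Rmax}e^{(3\lambda_0+\mu_0)(s)}p_g(s)s\,ds$ and $\int_r^{\Rmax}s\,e^{2\lambda_0(s)}p_f(s)\,ds$ vanish at $\Rmax$ and satisfy the same first-order linear ODE in $r$, hence coincide. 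This proves ``$\subseteq$'' and the generator formula.

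For the reverse inclusion I would argue by direct verification. Given $g\in\ker\T$, define $h_g$ by the stated expression; then $g+h_g\in\mathrm D(\T)$ by the domain characterization in Proposition~\ref{prop:transport_char}~\ref{it:Tprop2} (the weighted integrability again using \eqref{eq:phi_prime_bound}), and I would compute $\T(g+h_g)=\T h_g$ via the chain-rule identity $\{A(E,L)\,C(r),E\}=A\,C'\,\partial_w E$ — the key cancellation being that the $\partial_E A$ cross terms drop — together with $E/\sqrt{1+w^2+L/r^2}=e^{\mu_0}$ and once more \eqref{eq:HLRidentity}, obtaining $\T h_g=4\pi r|\varphi'|e^{2\mu_0+\lambda_0}w\,p_{g+h_g}$. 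Since $g+h_g$ is even in $w$ we have $j_{g+h_g}=0$, so $\mathcal S(g+h_g)=-4\pi r|\varphi'|e^{2\mu_0+\lambda_0}w\,p_{g+h_g}$ and therefore $\B(g+h_g)=0$ (a smooth-approximation step via Proposition~\ref{prop:transport_char}~\ref{it:Tprop5} and Lemma~\ref{lemma:Bselfadjoint}~\ref{it:Bsa2} licenses the classical transport computation). Part~\ref{it:kerB2} then follows by bookkeeping: both maps land in the claimed spaces by part~\ref{it:kerB1}; evaluating the representation formula at $\theta=\tfrac12$ (where $R=r_+$) shows $g+h_g$ has generator $g$, while for $f\in\ker(\B)$ the same formula shows $f=g+h_g$ with $g$ its generator, so the two maps are mutually inverse and the generator is unique.

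The step I expect to be the main obstacle is handling the nonlocality of $\mathcal S$: a priori $\B f=0$ is an integro-differential relation rather than an ODE along characteristics. The two facts that rescue the argument — that $j_f$ is forced to vanish identically, and that the remaining $p_f$-coupling is ``diagonal'' enough (it equals $4\pi r|\varphi'|e^{2\mu_0+\lambda_0}w\,p_f$ with $p_f$ depending only on $r$) that one can integrate along each orbit and be left only with reconciling the field identity \eqref{eq:HLRidentity} — are precisely what must be gotten right. The remaining issues are technical: the weighted-$L^2$ membership of $h$ and $h_g$, the pointwise estimate on $p_f$, the change of variables near the turning points $r_\pm(E,L)$, and the passage between the weak and classical forms of $\T$.
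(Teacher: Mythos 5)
Your proposal is correct and follows essentially the same route as the paper: reduce via $\lambda_{\B f}=0$ and \eqref{eq:lambdadotB} to $j_f\equiv0$, use parity to force $f=f_+$, integrate the reduced transport equation in $\theta$ from $\theta=\tfrac12$ and substitute $s=R(\tau,E,L)$, and then pin down the radial factor using \eqref{eq:HLRidentity}, finishing with the same direct verification of the reverse inclusion. The only cosmetic difference is that you identify the two candidate expressions for the radial function by showing they satisfy the same first-order linear ODE with the same terminal value, whereas the paper derives and solves a single ODE \eqref{eq:Hdiffequation} for $H$; these are equivalent.
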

\begin{proof}
		We first show that every element in the kernel of $\B$ has the form claimed in~\ref{it:kerB1}. For $f\in \ker \B$, eqn.~\eqref{eq:lambdadotB} implies that $0 = \lambda_{\B f} = -4\pi e^{\lambda_0+\mu_0}r j_{f}$ and hence $0=j_f=j_{f_-}$.
		Since $\B$ and $\T$ reverse $w$-parity and $f_\pm \in \mathrm D(\T)$, see Proposition~\ref{prop:transport_char}~\ref{it:Tprop6}, we have
	\begin{align}
		\B f_+ &= \T f_+ - 4\pi r |\varphi'| e^{2\mu_0+\lambda_0}w p_{f_+} = 0, \label{eq:kernelsplit1} \\
		\B f_- &= \T f_- + 4\pi r |\varphi'| e^{2\mu_0+\lambda_0} \frac{w^2}{\sqrt{1+w^2+\frac{L}{r^2}}} j_{f_-} = 0, \label{eq:kernelsplit2}
	\end{align}
	which yields that $\T f_- = 0$ and, thus, $f_- \in \ker(\T)$. However, the kernel of $\T$ consists only of even-in-$w$ functions and we obtain that $f=f_+$. 
	Hence, only eqn.~\eqref{eq:kernelsplit1} remains, which, written in $(\theta,E,L)$-variables using Proposition~\ref{prop:transport_char}~\ref{it:Tprop2}, reads
	\begin{equation}\label{eq:kernel_theta_eq}
		\frac{1}{T(E,L)} \partial_\theta f(\theta,E,L) =- 4\pi R |\varphi'(E,L)| e^{(2\mu_0+\lambda_0)(R)} W p_f(R).
	\end{equation}
	For $\theta \in [0, \frac 1 2]$ integrating~\eqref{eq:kernel_theta_eq} in $\theta$ yields that
	\[ 
	f(\theta,E,L) = f\left (\frac 1 2,E,L\right ) + 4\pi |\varphi'(E,L)| T(E,L) \int_\theta^{\frac 1 2 }  \left (  R e^{(2\mu_0+\lambda_0)(R)} W p_f(R) \right ) (\tau, E,L)\, d\tau;
	\]
	recall that  $f(\cdot, E,L)\in H^1_\theta$ for a.e.\ $(E,L)\in \Omega_0^{EL}$, i.e., the evaluation at $\theta=\frac 12$ is well-defined for the continuous-in-$\theta$ representative. 
	We next change variables via $s=R(\tau,E,L)$ and get
	\begin{align*}
		f(\theta,E,L) &= f\left (\frac 1 2,E,L\right ) + 4\pi |\varphi'(E,L)| E \int_{R(\theta,E,L)}^{r_+(E,L)} e^{2\lambda_0(s)} p_f(s) s \, ds\\
		&= g(E,L) + 4\pi |\varphi'(E,L)| E \int_{R(\theta,E,L)}^{\Rmax} e^{2\lambda_0(s)} p_f(s) s \, ds,
	\end{align*}
	where $g$ is defined as above, i.e., $g$ is the generator of $f$. We now know that $f$ has to be of the form
	\[ 
	f(\theta,E,L) = g(E,L) + |\varphi'(E,L)| E \, H(R(\theta,E,L)),
	\]
	for some function $H\in C^1( [\Rmin,\Rmax])$ which depends on $f$ with $H(\Rmax)=0$. By applying the chain rule and using \eqref{eq:kernel_theta_eq}, we obtain the following differential equation:
	\begin{equation}\label{eq:Hdiffequation}
		\partial_r H  = -4\pi r e^{2\lambda_0} (p_g + p_{|\varphi'|EH}), \quad H(\Rmax)=0.
	\end{equation} 
	We calculate via~\eqref{eq:HLRidentity} that
	\[ 
	p_{|\varphi'|EH}(r) = H(r) \frac{e^{-2\lambda_0(r)} }{4\pi r} (\lambda_0'+\mu_0')(r),
	\]
	and therefore the unique solution of~\eqref{eq:Hdiffequation} is
	\[ 
	H(r) = 4\pi e^{-\mu_0 -\lambda_0} \int_{r}^{\Rmax} e^{(3\lambda_0+\mu_0)(s)} p_g(s) s \, ds, \quad r\in[\Rmin,\Rmax],
	\]
	which proves the first inclusion in~\ref{it:kerB1}. 

	We now show that, for every $g=g(E,L) \in \ker(\T)$, the function
	\[ 
	f(\theta,E,L) = g(E,L) + 4\pi |\varphi'(E,L)|E e^{(-\lambda_0-\mu_0)(R)} \int_{R}^{\Rmax} e^{3\lambda_0+\mu_0} p_g s \,ds 
	\]
	is an element of the kernel of $\B$. The fact that $f\in \mathrm D(\T)$ can be seen from the characterization of $\mathrm D (\T)$ in Proposition~\ref{prop:transport_char}~\ref{it:Tprop2} together with Lemma~\ref{lemma:source_bounded}~\ref{it:scbd1}. Since $\T g=0$, we obtain by the chain rule that
	\begin{equation}\label{eq:kernel_Tf}
			\T f = 4\pi r |\varphi'| e^{2\mu_0+\lambda_0}  w \left ( p_g + \frac{ e^{-\mu_0-3\lambda_0}}{r} (\mu_0'+\lambda_0')  \int_r^{\Rmax} e^{3\lambda_0+ \mu_0} p_g s \, ds\right  ) .
	\end{equation}
	Furthermore, for  $\B f$ we  calculate that
	\begin{align*}
		p_f = p_g + \frac{e^{-3\lambda_0-\mu_0}}{r} (\lambda_0'+\mu_0')  \int_r^{\Rmax} e^{3\lambda_0+ \mu_0} p_g s \, ds
	\end{align*}
	after using \eqref{eq:HLRidentity} again. Since $f$ is even in $w$, this together with \eqref{eq:kernel_Tf} yields that
	\begin{equation*}
		\B f = \T f - 4\pi r |\varphi'| e^{2\mu_0+\lambda_0} w p_f = 0,
	\end{equation*}
	and completes the proof of~\ref{it:kerB1}. Part~\ref{it:kerB2} results from a straightforward calculation, the details of which we do not go into here.
\end{proof}
To summarize, we can characterize $\ker(\B)$ similarly to $\ker(\T)$ but need to provide an extra term additional to a function that only depends on $(E,L)$.  We want to stress that the integration from $\Rmax$ instead of $\Rmin$ in the formula for the generator of $f\in \ker(\B)$ offers a significant advantage in the following: It allows for a simpler characterization of $\ker(\mathcal{B})^\bot$.
\begin{lemma}\label{lemma:characterkerBbot}
	Let $f \in H$. Then, $f \in \ker(\mathcal{B})^\bot$ is equivalent to
	\begin{equation}\label{eq:imageB}
		\int_0^1 \left (f(\theta,E,L) + |\varphi'(E,L)| e^{2\mu_0(R)} \lambda_f(R) \frac{W^2}{E}\right ) \, d\theta = 0 \, \text{ for a.e. } (E,L)\in \Omega_0^{EL},
	\end{equation}
	i.e., $f+|\varphi'|e^{2\mu_0}\lambda_f\frac{w^2}E\in\ker(\T)^\perp$.
	In particular, $\H \subset \ker(\B)^\perp$.
\end{lemma}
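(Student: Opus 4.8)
I would prove the equivalence by pairing $f$ against a \emph{generic} element of $\ker(\B)$, exploiting the explicit description of $\ker(\B)$ and of its generators from Lemma~\ref{lemma:kernel_B}. Fix $f\in H$ and let $h\in\ker(\B)$ have generator $g=g(E,L)\in\ker(\T)$, so that by Lemma~\ref{lemma:kernel_B}~\ref{it:kerB1}
\[
h(\theta,E,L)=g(E,L)+|\varphi'(E,L)|\,E\,H\big(R(\theta,E,L)\big),\qquad H(r)\coloneqq 4\pi\,e^{-\mu_0(r)-\lambda_0(r)}\!\int_r^{\Rmax}\!e^{(3\lambda_0+\mu_0)(s)}p_g(s)\,s\,ds .
\]
A preliminary remark, obtained by passing to action-angle type variables via \eqref{eq:aacoordsvolumeelements}: for every $\psi\in H$ and every $g=g(E,L)\in\ker(\T)$ one has $\langle\psi,g\rangle_H=4\pi^2\iint_{\Omega_0^{EL}}\tfrac{T(E,L)}{|\varphi'(E,L)|}\,g(E,L)\big(\int_0^1\psi(\theta,E,L)\,d\theta\big)\,dEdL$, so that $\langle\psi,g\rangle_H=0$ for all such $g$ is equivalent to $\int_0^1\psi(\theta,E,L)\,d\theta=0$ for a.e.\ $(E,L)\in\Omega_0^{EL}$; this is Proposition~\ref{prop:transport_char}~\ref{it:Tprop4} recast in the present normalisation.

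The core step is to evaluate the correction part $\langle f,|\varphi'|\,E\,H(R)\rangle_H$. Using $e^{\lambda_0(r)}E\,H(r)=4\pi\sqrt{1+w^2+\tfrac{L}{r^2}}\int_r^{\Rmax}e^{(3\lambda_0+\mu_0)(s)}p_g(s)\,s\,ds$ together with $\tfrac{\pi}{r^2}\iint\sqrt{1+w^2+\tfrac{L}{r^2}}\,f\,dwdL=\rho_f(r)$, the $(w,L)$–integration collapses to $\rho_f$ and leaves, up to the universal constants, $\int_{\Rmin}^{\Rmax}r^2\rho_f(r)\int_r^{\Rmax}e^{(3\lambda_0+\mu_0)(s)}p_g(s)\,s\,ds\,dr$. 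One Fubini interchange and then \eqref{eq:lambdaeqexplicit} in the form $\int_{\Rmin}^{s}r^2\rho_f(r)\,dr=\tfrac{s}{4\pi}e^{-2\lambda_0(s)}\lambda_f(s)$ convert this into $\int_{\Rmin}^{\Rmax}e^{(\lambda_0+\mu_0)(s)}s^2\lambda_f(s)\,p_g(s)\,ds$ (up to constants); expanding $s^2p_g(s)=\pi\iint\tfrac{w^2}{\sqrt{1+w^2+L/s^2}}\,g(E,L)\,dwdL$ and using $e^{\mu_0}\tfrac{w^2}{\sqrt{1+w^2+L/r^2}}=\tfrac{e^{2\mu_0}w^2}{E}$ identifies the whole expression, after restoring all constants, as $\big\langle|\varphi'|\,e^{2\mu_0}\lambda_f\,\tfrac{w^2}{E},\,g\big\rangle_H$. (One notes in passing that $|\varphi'|\,e^{2\mu_0}\lambda_f\,\tfrac{w^2}{E}\in H$: the quantities $w$ and $\tfrac{L}{r^2}$ are bounded on the compact set $\Omega_0$, $\lambda_f$ is bounded, and $\iiint_{\Omega_0}e^{\lambda_0}|\varphi'|\,drdwdL<\infty$ by~\ref{it:stst4}.) Since $\langle f,h\rangle_H=\langle f,g\rangle_H+\langle f,|\varphi'|\,E\,H(R)\rangle_H$, this yields the clean identity
\[
\langle f,h\rangle_H=\Big\langle\, f+|\varphi'|\,e^{2\mu_0}\lambda_f\,\tfrac{w^2}{E}\,,\,g\,\Big\rangle_H\qquad\text{for every }h\in\ker(\B)\text{ with generator }g .
\]

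As $h\mapsto g$ is a bijection of $\ker(\B)$ onto $\ker(\T)$ by Lemma~\ref{lemma:kernel_B}~\ref{it:kerB2}, the relation $f\in\ker(\B)^\perp$ is equivalent to $f+|\varphi'|e^{2\mu_0}\lambda_f\tfrac{w^2}{E}\in\ker(\T)^\perp$, which by the preliminary remark is exactly \eqref{eq:imageB}. For the last assertion, note that every element of $\ker(\B)$ is even in $w$: its generator depends only on $(E,L)$, and the correction $|\varphi'|E\,H(R(\theta,E,L))$ is invariant under $\theta\mapsto1-\theta$ because this map fixes $R$ (it reflects $w$). Since $H$ splits orthogonally into its even- and odd-in-$w$ parts and $\H$ is precisely the odd part, the inclusion of $\ker(\B)$ in the even-in-$w$ subspace gives $\H\subseteq\ker(\B)^\perp$; equivalently, if $f\in\H$ then $\rho_f\equiv0$, hence $\lambda_f\equiv0$, and $\int_0^1 f\,d\theta=0$ by \eqref{eq:odd_in_w_theta}, so \eqref{eq:imageB} holds trivially. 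I expect the only genuine effort to be the bookkeeping in the middle paragraph — the repeated changes between the $(x,v)$-, $(r,w,L)$- and action-angle normalisations and the single Fubini step — rather than anything conceptual.
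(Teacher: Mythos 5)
Your proof is correct and follows essentially the same route as the paper: pair $f$ against a generic element of $\ker(\B)$ written in generator form via Lemma~\ref{lemma:kernel_B}, transform the $H$-pairing with the correction term into a pairing of $|\varphi'|e^{2\mu_0}\lambda_f\frac{w^2}{E}$ against $g\in\ker(\T)$, and then invoke the characterization of $\ker(\T)^\perp$. The only differences are cosmetic: the paper performs an integration by parts using \eqref{eq:fieldeq1_lin} where you use a Fubini interchange together with the integrated form \eqref{eq:lambdaeqexplicit} (algebraically the same step), and for $\H\subset\ker(\B)^\perp$ you additionally offer the slightly cleaner observation that $\ker(\B)$ consists of even-in-$w$ functions, whereas the paper reads the inclusion off the criterion just proved.
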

\begin{proof}
	From Lemma~\ref{lemma:kernel_B} we know that $f \in  \ker(\mathcal{B})^\bot$ if and only if 
	\begin{align}
		0 &= \iiint_{\Omega_0}  \frac{e^{\lambda_0}}{|\varphi'|} f \left ( g  + 4\pi |\varphi'|E e^{-\lambda_0-\mu_0} \int_{r}^{\Rmax} e^{(3\lambda_0+\mu_0)(s)} p_g(s) s \,ds\right  )  \, dr dw dL \notag \\ 
		&= 	\iiint_{\Omega_0}  \frac{e^{\lambda_0}}{|\varphi'|} f  g \,  dr dw dL + 4 \int_{\Rmin}^{\Rmax}  r^2 \rho_f \int_{r}^{\Rmax} e^{(3\lambda_0+\mu_0)(s)} p_g(s) s \, ds dr	\label{eq:conditionkernelB}
	\end{align}
	for every $g=g(E,L) \in \ker \T$. We first employ an integration by parts for the second term using~\eqref{eq:fieldeq1_lin} in order to recover $g$ from $p_g$:
	\begin{align*}
		&4 \int_{\Rmin}^{\Rmax} r^2 \rho_f \int_{r}^{\Rmax} e^{3\lambda_0+\mu_0} p_g(s) s \, ds dr =\frac 1 \pi  \int_{\Rmin}^{\Rmax} \left( re^{-2\lambda_0} \lambda_f \right )' \int_{r}^{\Rmax} e^{3\lambda_0+\mu_0} p_g(s) s \, ds dr \\
		&\quad = \frac 1 \pi  \int_{\Rmin}^{\Rmax} e^{\lambda_0+\mu_0}  \lambda_f p_g r^2 dr 
		= 	\iiint_{\Omega_0}    e^{\lambda_0+2\mu_0} \lambda_f \frac{w^{2}}E g(E,L) \, dr dw dL;
	\end{align*}
	note that there are no boundary terms when integrating by parts since $\lambda_f(\Rmin)=0$ by~\eqref{eq:lambdaeqexplicit}.
	Therefore, condition~\eqref{eq:conditionkernelB} is equivalent to 
	\begin{align*}
		0 &=   \iiint_{\Omega_0} g(E,L)e^{\lambda_0(r)}  \left ( \frac{f(r,w,L)}{|\varphi'(E,L)|} + e^{2\mu_0(r)} \lambda_f(r) \frac{w^2}{E} \right ) \,  dr dw dL\\
		&= \iint_{\Omega_0^{EL}}  g(E,L)T(E,L) \int_{0}^{1} \left ( \frac{f(\theta,E,L)}{|\varphi'(E,L)|} + e^{2\mu_0(R)} \lambda_f(R) \frac{W^2}{E} \right ) \, d\theta dE dL
	\end{align*}
	after changing from $(r,w,L)$ to action-angle type variables, cf.~\eqref{eq:aacoordsvolumeelements}. Since $g$ is an arbitrary function in $(E,L)$ and $T>0$, the inner integral must vanish almost everywhere and the claim follows.  
	
	As to the final inclusion, for $f\in \H$, i.e., odd-in-$w$ $f$ , we observe that $\lambda_f = 0$ and $\int_0^1 f(\theta,\cdot,\cdot) \, d\theta =0$ almost everywhere, see \eqref{eq:odd_in_w_theta}.
\end{proof}
One should compare this result with the characterization of $\ker(\T)^{\bot}$ in Proposition~\ref{prop:transport_char}~\ref{it:Tprop4}. For further analysis it is essential to characterize the image of~$\B$ which we do next. A useful tool for this is the existence of a right-inverse of $\B$.
\begin{definition}\label{def:Btildeinv}
	The operator $\widetilde{\B}^{-1} \colon \ker(\B)^\bot  \to \mathrm D(\T)$ is defined by
	\begin{multline*}
		\widetilde{\B}^{-1}f \coloneqq 	{\T}^{-1} \left (f+|\varphi'| e^{2\mu_0} \lambda_f \frac{w^2}{E}\right )
		\\+ 4\pi |\varphi'| E e^{-\lambda_0-\mu_0} \int_{r}^{\Rmax} e^{(3\lambda_0+\mu_0)(s)} p_{{\T}^{-1}(f+|\varphi'| e^{2\mu_0} \lambda_f \frac{w^2}{E})}(s) s \, ds .
	\end{multline*}
\end{definition}
We would of course prefer to give the actual inverse of $\B$, but we are not able to construct $\B^{-1}$ explicitly---we will see later why this is a difficult task. Besides, we omit the involved derivation of~$\widetilde{\B}^{-1}$, and simply verify that it is indeed a right-inverse of $\B$.

\begin{lemma}\label{lemma:Btildeinv_prop}
 	The operator  $\widetilde{\B}^{-1}$ is well-defined, bounded, reverses $w$-parity, and for every $f\in \ker(\B)^\bot$ it holds that $\B  \widetilde{\B}^{-1} f = f$. In particular $ \ker(\B)^\bot \subset \im (\B)$.
\end{lemma}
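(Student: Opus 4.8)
The plan is to verify the four assertions of the lemma in the order well-definedness, boundedness, $w$-parity reversal, and finally the right-inverse identity $\B\widetilde{\B}^{-1}f=f$, the last being where essentially all the work sits.

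\emph{Well-definedness, boundedness, parity.} Let $f\in\ker(\B)^\perp$. By Lemma~\ref{lemma:characterkerBbot}, $f+|\varphi'|e^{2\mu_0}\lambda_f\frac{w^2}{E}\in\ker(\T)^\perp=\im(\T)$, so $h\coloneqq\T^{-1}\left(f+|\varphi'|e^{2\mu_0}\lambda_f\frac{w^2}{E}\right)\in\mathrm D(\T)\cap\ker(\T)^\perp$ is well-defined with $\T h=f+|\varphi'|e^{2\mu_0}\lambda_f\frac{w^2}{E}$; here one uses that $f\mapsto|\varphi'|e^{2\mu_0}\lambda_f\frac{w^2}{E}$ is bounded on $H$, which follows from Lemma~\ref{lemma:source_bounded}~\ref{it:scbd1} together with $\frac{w^2}{E^2}\le e^{-2\mu_0}$ and the identity~\eqref{eq:HLRidentity}. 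Since $h\in H$ we have $|\varphi'|p_h\in H$ by Lemma~\ref{lemma:source_bounded}~\ref{it:scbd1}, and the second summand in $\widetilde{\B}^{-1}f$ has exactly the form of the correction term occurring in the description of $\ker(\B)$ in Lemma~\ref{lemma:kernel_B}~\ref{it:kerB1}; the estimates used there, via the characterisation of $\mathrm D(\T)$ in Proposition~\ref{prop:transport_char}~\ref{it:Tprop2}, show $\widetilde{\B}^{-1}f\in\mathrm D(\T)=\mathrm D(\B)$ and give boundedness of $f\mapsto\widetilde{\B}^{-1}f$ into $H$ (boundedness of $\T^{-1}$ is Proposition~\ref{prop:transport_char}~\ref{it:Tprop7}). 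For $w$-parity: $\T^{-1}$ reverses parity, $\lambda_f$ depends only on $f_+$ and $p_h$ only on $h_+$, and both $|\varphi'|e^{2\mu_0}\lambda_f\frac{w^2}{E}$ and the correction term are even in $w$; hence if $f$ is odd then $h=\T^{-1}f$ is even and $\widetilde{\B}^{-1}f$ is even, while if $f$ is even then $h$ is odd, $p_h=0$, the correction term vanishes, and $\widetilde{\B}^{-1}f=h$ is odd.

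\emph{The right-inverse identity.} I first record a general computation. For any $k\in H$ set $\Theta k\coloneqq k+4\pi|\varphi'|Ee^{-\lambda_0-\mu_0}\int_r^{\Rmax}e^{(3\lambda_0+\mu_0)(s)}p_k(s)s\,ds$, so that $\widetilde{\B}^{-1}f=\Theta h$. Repeating the two computations in the proof of Lemma~\ref{lemma:kernel_B}~\ref{it:kerB1} (carried out there for $k\in\ker\T$, but using only the chain rule and~\eqref{eq:HLRidentity}, so that now the term $\T k$ survives) yields
\[\T(\Theta k)=\T k+4\pi r|\varphi'|e^{2\mu_0+\lambda_0}\,w\,p_{\Theta k},\]
and since the correction term in $\Theta k$ is even in $w$ one has $j_{\Theta k}=j_k$. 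Consequently
\[\B(\Theta k)=\T(\Theta k)-4\pi r|\varphi'|e^{2\mu_0+\lambda_0}\left(wp_{\Theta k}-\frac{w^2}{\sqrt{1+w^2+L/r^2}}\,j_{\Theta k}\right)=\T k+4\pi r|\varphi'|e^{2\mu_0+\lambda_0}\frac{w^2}{\sqrt{1+w^2+L/r^2}}\,j_k.\]
Inserting $k=h$, using $\T h=f+|\varphi'|e^{2\mu_0}\lambda_f\frac{w^2}{E}$ and $\frac1{\sqrt{1+w^2+L/r^2}}=\frac{e^{\mu_0}}{E}$, this becomes
\[\B\widetilde{\B}^{-1}f=f+|\varphi'|e^{2\mu_0}\frac{w^2}{E}\left(\lambda_f+4\pi re^{\mu_0+\lambda_0}j_h\right),\]
so everything reduces to the identity $\lambda_f=-4\pi re^{\mu_0+\lambda_0}j_h$.

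\emph{The key identity, and the main obstacle.} To prove $\lambda_f=-4\pi re^{\mu_0+\lambda_0}j_h$ I apply Lemma~\ref{lemma:lambdadot}, eqn.~\eqref{eq:lambdadotmod}, to $h\in\mathrm D(\T)$, which gives $\lambda_{e^{\mu_0+\lambda_0}\T h}=-4\pi re^{2\mu_0+2\lambda_0}j_h$. On the other hand, from $\T h=f+|\varphi'|e^{2\mu_0}\lambda_f\frac{w^2}{E}$ and~\eqref{eq:HLRidentity} one computes that the density induced by $e^{\mu_0+\lambda_0}\T h$ equals $e^{\mu_0+\lambda_0}\rho_f+\lambda_f\frac{e^{\mu_0-\lambda_0}}{4\pi r}(\lambda_0'+\mu_0')$; feeding this into~\eqref{eq:lambdaeqexplicit} and using the linearised field equation~\eqref{eq:fieldeq1_lin} to produce the antiderivative (with no boundary term, since $\lambda_f(\Rmin)=0$) one finds $\lambda_{e^{\mu_0+\lambda_0}\T h}=e^{\mu_0+\lambda_0}\lambda_f$. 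Comparing the two expressions yields $\lambda_f=-4\pi re^{\mu_0+\lambda_0}j_h$, hence $\B\widetilde{\B}^{-1}f=f$, and in particular $\ker(\B)^\perp\subset\im(\B)$. The only genuinely delicate point is this last identity: it is precisely the compatibility relation that makes the correction term in Definition~\ref{def:Btildeinv} repair the failure of $\T^{-1}$ to invert $\B$, and the only way I see to obtain it is to pass through the auxiliary quantity $e^{\mu_0+\lambda_0}\T h$ so that Lemma~\ref{lemma:lambdadot} can be brought to bear; it is also here — and only here — that the hypothesis $f\in\ker(\B)^\perp$ rather than merely $f\in H$ is used, through Lemma~\ref{lemma:characterkerBbot}, namely to make $h$ well-defined in the first place. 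Everything else is parity bookkeeping together with the source-term estimates already established in Lemma~\ref{lemma:source_bounded} and Proposition~\ref{prop:transport_char}.
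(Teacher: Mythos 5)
Your proof is correct and follows essentially the same route as the paper: you reduce $\B\widetilde{\B}^{-1}f=f$ to the single compatibility identity $\lambda_f=-4\pi re^{\mu_0+\lambda_0}j_h$, which is exactly the paper's eqn.~\eqref{eq:bhf_first}, and you prove it via eqn.~\eqref{eq:lambdadotmod} and the same integration-by-parts computation as the paper's~\eqref{eq:useful_identity_1}; the well-definedness, boundedness, and parity arguments also coincide. Your only organizational novelty is abstracting the correction map $\Theta$ and observing $\T(\Theta k)=\T k+4\pi r|\varphi'|e^{2\mu_0+\lambda_0}wp_{\Theta k}$ for arbitrary $k$, which cleanly subsumes the paper's second identity~\eqref{eq:bhf_second} and makes the cancellation in $\B\widetilde{\B}^{-1}f$ somewhat more transparent.
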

\begin{proof}
	 The operator $\widetilde{\B}^{-1}$ is well-defined and bounded which can be seen from Proposition~\ref{prop:transport_char}~\ref{it:Tprop2} and~\ref{it:Tprop7} as well as Lemmas~\ref{lemma:source_bounded} and~\ref{lemma:characterkerBbot}. 
	 For the reversal of $w$-parity, we let $f \in \ker(\B)^\bot$ be odd in $w$. Then  $\lambda_f=0$, we use that $\T^{-1}f$ is even in $w$, and we observe that the last term in $\widetilde{\B}^{-1}$ is even in $w$. Thus, $\widetilde{\B}^{-1}f$ is even in $w$. For even-in-$w$  $f$ the claim follows after noting that $p_{{\T}^{-1} (f+|\varphi'| e^{2\mu_0} \lambda_f \frac{w^2}{E})} =0$. 
	
	It remains to show the right-inverse property of $\widetilde{\B}^{-1}$. For $f\in\ker(\B)^\perp$, 
	\begin{align*}
		&(\B\widetilde{\B}^{-1}f) (\theta,E,L)  \\
		& = f(\theta, E,L) + |\varphi'| \frac{e^{2\mu_0}W^2}E \left ( \lambda_f +4\pi R e^{\mu_0+\lambda_0} j_{\widetilde{\B}^{-1}f} \right ) \\
		& - 4\pi  |\varphi'|  e^{2\mu_0-\lambda_0} W \left (  \partial_r \left ( e^{-\lambda_0-\mu_0} \int_{r}^{\Rmax} e^{3\lambda_0+\mu_0} p_{\T^{-1} (f+|\varphi'| e^{2\mu_0} \lambda_f \frac{w^2}E) }(s) s \, ds\right ) + R e^{2\lambda_0} p_{\widetilde{\B}^{-1}f} \right ).
	\end{align*}
	In order to show $\B \widetilde{\B}^{-1}f = f$, we prove the validity of the two equations
	\begin{align}
		\lambda_f &= -4\pi r e^{\mu_0+\lambda_0} j_{\widetilde{\B}^{-1}f} \label{eq:bhf_first}, \\
		 p_{\widetilde{\B}^{-1}f}&=   p_{{\T}^{-1} (f+|\varphi'| e^{2\mu_0} \lambda_f \frac{w^2}{E}) } \notag \\  &\quad +\frac 1r (\lambda_0'+\mu_0')  e^{-3\lambda_0-\mu_0}  \int_{r}^{\Rmax} e^{(3\lambda_0+\mu_0)(s)} p_{{\T}^{-1} (f+|\varphi'| e^{2\mu_0} \lambda_f \frac{w^2}{E})}(s) s \, ds   \label{eq:bhf_second}  .
	\end{align}
	We know from $j_{\widetilde{\B}^{-1}f} =j_{(\widetilde{\B}^{-1}f)_-}$  and~\eqref{eq:lambdadotmod} that for $g \coloneqq f+|\varphi'| e^{2\mu_0} \lambda_f \frac{w^2}{E }  \in \im(\T)$,
	\begin{align} 
		-4\pi & r e^{2\lambda_0 + 2\mu_0} j_{ \widetilde{\B}^{-1} f}=  -4\pi  r e^{2\lambda_0 + 2\mu_0} j_{\T^{-1} g}   =	\lambda_{ e^{\mu_0+\lambda_0} g}  = \frac{4\pi e^{2\lambda_0}}{r} \int_{\Rmin}^r e^{\mu_0+\lambda_0} \rho_g s^2 \, ds \nonumber  \\
		&= \frac{4\pi e^{2\lambda_0}}{r}   \int_{\Rmin}^r e^{\mu_0+\lambda_0} \left ( \rho_f + e^{\mu_0} \lambda_f \left ( \frac{\pi}{s^2} \int_0^\infty \int_\R  w^2 |\varphi'|  \, dw dL \right ) \right ) s^2\, ds \nonumber \\ 
		&= \frac{4\pi e^{2\lambda_0}}{r}\left ( \int_{\Rmin}^r e^{\mu_0+\lambda_0} \rho_f s^2 \, ds + \frac{1}{4\pi} \int_{\Rmin}^r  s e^{-2\lambda_0} \lambda_f  \partial_s \left ( e^{\mu_0+\lambda_0}\right ) \, ds   \right ) \nonumber \\
		& = e^{\mu_0+\lambda_0} \lambda_f, \label{eq:useful_identity_1}
	\end{align}
	where we applied~\eqref{eq:HLRidentity} and integrated by parts using~\eqref{eq:fieldeq1_lin}. This proves \eqref{eq:bhf_first}.
	
	To show that~\eqref{eq:bhf_second} holds as well we put the definition of ${\widetilde{\B}^{-1}f}$ into $p$ and get
	\begin{multline*}
		p_{\widetilde{\B}^{-1}f} = p_ {{\T}^{-1} (f+|\varphi'| e^{2\mu_0} \lambda_f \frac{w^2}{E})} \\
		\quad+ 4\pi e^{-\lambda_0}  \left ( \int_{r}^{\Rmax}  e^{(3\lambda_0+\mu_0)(s)} p_{{\T}^{-1} (f+|\varphi'| e^{2\mu_0} \lambda_f \frac{w^2}{E })}(s) s \, ds \right ) \frac{\pi}{r^2} \int_0^\infty \int_\R   w^2 |\varphi'| \, dw dL, 
	\end{multline*}
	which yields~\eqref{eq:bhf_second} after inserting~\eqref{eq:HLRidentity}.	We have thus proven $\B \widetilde{\B}^{-1}f = f$ and in particular conclude that $f\in \im (\B)$.
\end{proof}
The characterization of the orthogonal complement of $\ker (\B)$ together with Lemma~\ref{lemma:Btildeinv_prop} yields the following crucial result:
\begin{proposition}\label{prop:imBkerB}
	It holds that
	\[ 
	\im  (\B) = \ker(\mathcal{B})^\bot. 
	\]
In particular, the range of $\B$ is closed. 
\end{proposition}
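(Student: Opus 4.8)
The plan is to combine the abstract functional-analytic characterization of the closure of the range of a skew-adjoint operator with the explicit right-inverse $\widetilde{\B}^{-1}$ constructed in Lemma~\ref{lemma:Btildeinv_prop}.

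First I recall that, by Lemma~\ref{lemma:Bselfadjoint}~\ref{it:Bsa1}, $\B\colon\mathrm D(\T)\to H$ is skew-adjoint, hence in particular densely defined and closed, and $\B^\ast=-\B$. For any densely defined closed operator $A$ on a Hilbert space one has $\overline{\im(A)}=\ker(A^\ast)^\bot$; applying this to $A=\B$ and using $\ker(\B^\ast)=\ker(-\B)=\ker(\B)$ yields
\[
\overline{\im(\B)}=\ker(\B^\ast)^\bot=\ker(\B)^\bot.
\]
On the other hand, Lemma~\ref{lemma:Btildeinv_prop} provides the reverse inclusion on the level of the (non-closed) range: for every $f\in\ker(\B)^\bot$ we have $\B\widetilde{\B}^{-1}f=f$, so that $f\in\im(\B)$, i.e.\ $\ker(\B)^\bot\subset\im(\B)$.

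Putting these together gives the chain of inclusions
\[
\ker(\B)^\bot\subset\im(\B)\subset\overline{\im(\B)}=\ker(\B)^\bot,
\]
so all three sets coincide. In particular $\im(\B)=\ker(\B)^\bot$ and, since $\im(\B)$ equals its own closure, the range of $\B$ is closed. $\hfill\Box$

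The real work has already been done: the only nontrivial input is the existence and boundedness of the right-inverse $\widetilde{\B}^{-1}$ on $\ker(\B)^\bot$ from Lemma~\ref{lemma:Btildeinv_prop} (which in turn rests on the characterization of $\ker(\B)^\bot$ in Lemma~\ref{lemma:characterkerBbot} and the identities in Lemma~\ref{lemma:lambdadot}); granting that, the present statement is immediate from skew-adjointness, so there is no genuine obstacle here. One could alternatively phrase the argument without invoking $\widetilde{\B}^{-1}$ abstractly by noting that surjectivity of $\B$ onto $\ker(\B)^\bot$ plus skew-adjointness forces closedness, but the route above is the most economical given what precedes.
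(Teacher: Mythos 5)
Your proof is correct and follows essentially the same route as the paper: combine the skew-adjointness identity $\overline{\im(\B)} = \ker(\B^\ast)^\bot = \ker(\B)^\bot$ with the explicit right-inverse from Lemma~\ref{lemma:Btildeinv_prop} to get $\ker(\B)^\bot \subset \im(\B)$, then conclude. Your version merely spells out the intermediate step $\ker(\B^\ast)=\ker(-\B)=\ker(\B)$ that the paper leaves implicit.
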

\begin{proof}
	Since $\B$ is skew-adjoint, $\ker(\mathcal{B})^\bot = \overline{\im (\B)} $, cf.~\cite[Cor.~2.18~(iv)]{Brezis2011}. Furthermore, from Lemma~\ref{lemma:Btildeinv_prop} we have that $\ker(\B)^\bot \subset \im (\B)$ and the claim follows. 
\end{proof}
In \cite[Remark~4.15]{HaLiRe2020} it was noted without proof that $\im (\B)$ is closed in $H$ if Jeans' theorem holds for the steady state (meaning that the steady state has single-well structure in our terminology). We have now proven this remark in detail. 

\subsubsection{The inverse of $\B$ and $\B^2$}\label{ssc:inverseB_inverseBsq}

In order to construct the actual inverse of $\B$, we need to project elements of $H$ onto $\ker(\B)$, since $\B^{-1} f \in \im(\B)=\ker(\B)^\bot$ has to hold, if $\B^{-1}$ exists. We denote by $\Pi:H \to \ker(\B)$ the orthogonal projection onto $\ker(\B)$ which is the unique bounded and symmetric operator such that $\Pi =\mathrm{id}$ on $\ker(\B)$ and $\Pi=0$ on $\ker(\B)^\bot$. For the existence theory of such projections, see, e.g., \cite[Section~5.1]{Brezis2011} or \cite[Section~5.4]{HiSi}. Note that precisely this non-explicit projection is also used in \cite{HaLiRe2020}.

With the projection $\Pi$ and the skew-adjointness of $\B$, we can now determine the kernel and the image of $\B^2$.
\begin{lemma}\label{lemma:kernelBsq}
	The kernel and image of $\B^2$ are given by
	\[ \ker(\B^2) = \ker(\B), \quad \im  (\B^2)=\im (\B). \]
\end{lemma}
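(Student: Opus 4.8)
The plan is to prove both identities by exploiting the fact that $\B$ is skew-adjoint (Lemma~\ref{lemma:Bselfadjoint}~\ref{it:Bsa1}) and that $\im(\B)=\ker(\B)^\perp$ is closed (Proposition~\ref{prop:imBkerB}). For the kernel, the inclusion $\ker(\B)\subset\ker(\B^2)$ is trivial. For the reverse inclusion, suppose $f\in\ker(\B^2)$, i.e.\ $f\in\mathrm D(\T^2)=\mathrm D(\B^2)$ and $\B(\B f)=0$. Then $\B f\in\ker(\B)$. On the other hand, $\B f\in\im(\B)=\ker(\B)^\perp$ by Proposition~\ref{prop:imBkerB} (note $f\in\mathrm D(\T)$ so $\B f$ is defined and lies in the range of $\B$). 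Hence $\B f\in\ker(\B)\cap\ker(\B)^\perp=\{0\}$, so $\B f=0$, i.e.\ $f\in\ker(\B)$. This gives $\ker(\B^2)=\ker(\B)$.

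For the image, the inclusion $\im(\B^2)\subset\im(\B)$ is immediate since $\B^2 f=\B(\B f)$. For the reverse inclusion, let $h\in\im(\B)=\ker(\B)^\perp$. First I would use the right-inverse $\widetilde\B^{-1}$ from Definition~\ref{def:Btildeinv} and Lemma~\ref{lemma:Btildeinv_prop}: set $g_0\coloneqq\widetilde\B^{-1}h\in\mathrm D(\T)$, so that $\B g_0=h$. This $g_0$ need not lie in $\ker(\B)^\perp$, but we can correct it: writing $g\coloneqq g_0-\Pi g_0$ where $\Pi$ is the orthogonal projection onto $\ker(\B)$, we have $g\in\ker(\B)^\perp=\im(\B)$ and, since $\Pi g_0\in\ker(\B)$ (so in particular $\Pi g_0\in\mathrm D(\T)$ with $\B\Pi g_0=0$), also $g\in\mathrm D(\T)$ and $\B g=\B g_0=h$. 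Now apply Lemma~\ref{lemma:Btildeinv_prop} once more to $h$ itself — wait, more directly: since $g\in\im(\B)=\ker(\B)^\perp$, we may again set $g^\sharp\coloneqq\widetilde\B^{-1}g\in\mathrm D(\T)$, so $\B g^\sharp=g$, and after subtracting its $\ker(\B)$-projection as before we obtain $\tilde g\in\mathrm D(\T)\cap\ker(\B)^\perp$ with $\B\tilde g=g$. Then $\tilde g\in\mathrm D(\B^2)$ because $\B\tilde g=g\in\ker(\B)^\perp=\im(\B)\subset\mathrm D(\B)$ via the right-inverse (concretely $g\in\mathrm D(\T)$ and $\B g=h$, so $\B\tilde g=g\in\mathrm D(\T)=\mathrm D(\B)$), and $\B^2\tilde g=\B g=h$. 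Hence $h\in\im(\B^2)$, proving $\im(\B)\subset\im(\B^2)$.

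Let me streamline the image argument to avoid the double application: given $h\in\im(\B)=\ker(\B)^\perp$, the right-inverse gives $g\coloneqq\widetilde\B^{-1}h\in\mathrm D(\T)$ with $\B g=h$; replacing $g$ by $g-\Pi g$ we may assume $g\in\ker(\B)^\perp=\im(\B)$ as well (this does not change $\B g$). Since now $g\in\ker(\B)^\perp$, a second application of the right-inverse yields $f\coloneqq\widetilde\B^{-1}g\in\mathrm D(\T)$ with $\B f=g$. Then $\B f=g\in\mathrm D(\T)=\mathrm D(\B)$, so $f\in\mathrm D(\B^2)=\mathrm D(\T^2)$, and $\B^2 f=\B g=h$, establishing $h\in\im(\B^2)$.

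The only subtle points — and hence the part deserving the most care — are bookkeeping of domains: verifying that the corrected element $g-\Pi g$ still lies in $\mathrm D(\T)$ (it does, since $\Pi g\in\ker(\B)\subset\mathrm D(\T)$ and $\B(g-\Pi g)=\B g$), and that $\mathrm D(\B^2)=\mathrm D(\T^2)$ is exactly the set characterized in Definition~\ref{def:TandB}~\ref{it:TBdef4}, which was established in Lemma~\ref{lemma:Bselfadjoint}. No genuinely new estimates are needed; everything reduces to the abstract structure "skew-adjoint operator with closed range and an explicit right-inverse on the range." This is the expected main (minor) obstacle, and it is routine.
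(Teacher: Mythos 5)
Your proof is correct, and the image half coincides with the paper's argument: project a first preimage onto $\ker(\B)^\perp$ via $\mathrm{id}-\Pi$, verify this stays in $\mathrm D(\T)$ because $\ker(\B)\subset\mathrm D(\T)$, and then solve once more using Proposition~\ref{prop:imBkerB} (or, as you do, the explicit right-inverse $\widetilde\B^{-1}$, which amounts to the same thing). The kernel half takes a slightly different route: you conclude $\B f=0$ from $\B f\in\ker(\B)\cap\im(\B)=\ker(\B)\cap\ker(\B)^\perp=\{0\}$, which relies on the closed-range result of Proposition~\ref{prop:imBkerB}. The paper instead uses only the skew-symmetry of $\B$, via the more elementary quadratic-form computation $0=\langle\B^2 f,f\rangle_H=-\langle\B f,\B f\rangle_H=-\|\B f\|_H^2$. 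Both are valid; yours simply leans on a heavier tool that happens to already be available, whereas the paper's is self-contained at the level of the operator's algebraic structure and does not need $\im(\B)$ to be closed. Your domain bookkeeping (the projection preserves $\mathrm D(\T)$, and $\B f\in\mathrm D(\T)$ forces $f\in\mathrm D(\T^2)$) is exactly the point that the paper leaves implicit, so it is good that you made it explicit.
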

\begin{proof}
	For the first equality, we only need to show $\ker(\B^2) \subset \ker(\B)$ since the reverse inclusion is trivial. If $f\in \ker(\B^2)$ we have that
	\[ 
	0=\left \langle\B^2f,f\right \rangle_H = -\left \langle\B f,\B f\right \rangle_H = - \|\B f\|^2_H
	\]
	by the skew-symmetry of $\B$ 
	and thus $\B f=0$. For the second claim, we only have to prove  $\im (\B^2) \supset \im (\B)$. Let $f\in \im (\B)$, i.e., there exists $\tilde h \in \mathrm D(\T)$ such that $\B\tilde h =f$. Define 
	\[ 
	h \coloneqq (\mathrm{id}-\Pi) 	\tilde h \in \ker(\B)^\bot
	\]
	for which $\B h = \B	\tilde h = f$. Applying Proposition~\ref{prop:imBkerB} gives $h \in \im(\B)$. This implies that there exists $g \in \mathrm D(\T)$  with $\B g= h$ and therefore $\B^2 g = f$, i.e., $f\in \im  (\B^2)$.
\end{proof}
We now show that $\B^{-1}$ and $(\B^2)^{-1}$ exist. There are two main reasons why we need to calculate $(\B^2)^{-1}$ as explicitly as possible. First, we need to make sure that the spectrum of $\B^{2}$ does not contain zero when considering odd-in-$w$ functions. This facilitates the analysis of the spectrum of $\B^2$. In addition, the inverse of $\B^2$ is crucial to derive a Birman-Schwinger principle.

We first show that we can invert $\B$ on an appropriate set. The right-inverse $\widetilde{\B}^{-1}$ need not map elements of $\im (\B)$ back into $\im (\B)=\ker(\B)^\perp$ which would be necessary for the actual inverse of $\B$. We thus have to subtract the projection onto the kernel of $\B$.
\begin{lemma}\label{lemma:Binv}
	The operator $\B \colon \mathrm D(\T) \cap \ker(\B)^\bot  \to \im(\B)$ is bijective. Its inverse is bounded on $\im(\B)$, reverses $w$-parity, and is given by 
	\[ 
	\B^{-1} = (\mathrm{id}-\Pi)\widetilde{\B}^{-1},
	\]
	with $\widetilde{\B}^{-1}$ explicitly defined in Definition~\ref{def:Btildeinv}.
\end{lemma}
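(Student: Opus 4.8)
The plan is to deduce the statement from the structural results already in place, chiefly Proposition~\ref{prop:imBkerB} (which identifies $\im(\B)$ with $\ker(\B)^\bot$) and Lemma~\ref{lemma:Btildeinv_prop} (which provides the bounded, $w$-parity reversing right inverse $\widetilde{\B}^{-1}$ of $\B$ on $\ker(\B)^\bot$). \emph{Injectivity} is immediate: if $f\in\mathrm D(\T)\cap\ker(\B)^\bot$ satisfies $\B f=0$, then $f\in\ker(\B)\cap\ker(\B)^\bot=\{0\}$.

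For \emph{surjectivity} together with the explicit formula, I would take $f\in\im(\B)$. By Proposition~\ref{prop:imBkerB} we have $f\in\ker(\B)^\bot$, so $\widetilde{\B}^{-1}f$ is defined and $\B\widetilde{\B}^{-1}f=f$ by Lemma~\ref{lemma:Btildeinv_prop}. Set $g\coloneqq(\mathrm{id}-\Pi)\widetilde{\B}^{-1}f$. Then $g\in\ker(\B)^\bot$ by construction, and $g\in\mathrm D(\T)$ because $\widetilde{\B}^{-1}f\in\mathrm D(\T)$ while $\Pi\widetilde{\B}^{-1}f\in\ker(\B)\subset\mathrm D(\B)=\mathrm D(\T)$; since $\Pi\widetilde{\B}^{-1}f\in\ker(\B)$, this gives $\B g=\B\widetilde{\B}^{-1}f=f$. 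Hence $\B$ maps $\mathrm D(\T)\cap\ker(\B)^\bot$ onto $\im(\B)$, and by the injectivity just shown $g$ is the \emph{unique} preimage, which is precisely the claimed identity $\B^{-1}=(\mathrm{id}-\Pi)\widetilde{\B}^{-1}$. Boundedness of $\B^{-1}$ then follows at once, since $\widetilde{\B}^{-1}$ is bounded by Lemma~\ref{lemma:Btildeinv_prop} and the orthogonal projection $\mathrm{id}-\Pi$ has operator norm at most one.

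It remains to check that $\B^{-1}$ reverses $w$-parity. The key observation is that $\ker(\B)$ consists only of even-in-$w$ functions by Lemma~\ref{lemma:kernel_B}~\ref{it:kerB1}; hence, in the orthogonal splitting of $H$ into its even-in-$w$ and odd-in-$w$ subspaces — the latter being $\H$, which satisfies $\H\subset\ker(\B)^\bot$ by Lemma~\ref{lemma:characterkerBbot} — the projection $\Pi$ annihilates the odd-in-$w$ part and maps the even-in-$w$ part into $\ker(\B)$, which is again even in $w$. Thus $\mathrm{id}-\Pi$ preserves $w$-parity, and composing with the parity-reversing $\widetilde{\B}^{-1}$ shows that $\B^{-1}=(\mathrm{id}-\Pi)\widetilde{\B}^{-1}$ reverses $w$-parity. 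I do not expect a genuine obstacle here; the only points demanding a little care are the verification that the corrected preimage $(\mathrm{id}-\Pi)\widetilde{\B}^{-1}f$ still lies in $\mathrm D(\T)$ — which rests on $\ker(\B)\subset\mathrm D(\T)$ — and the parity bookkeeping for $\Pi$.
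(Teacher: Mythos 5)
Your proposal is correct and takes essentially the same approach as the paper: injectivity from $\ker(\B)\cap\ker(\B)^\bot=\{0\}$, surjectivity by correcting a preimage with $\mathrm{id}-\Pi$ (using $\ker(\B)\subset\mathrm D(\T)$), boundedness from the composition of the bounded operators $\widetilde{\B}^{-1}$ and $\mathrm{id}-\Pi$, and the parity reversal from $\widetilde{\B}^{-1}$ reversing and $\mathrm{id}-\Pi$ preserving $w$-parity. The only cosmetic difference is that you derive surjectivity and the explicit formula $\B^{-1}=(\mathrm{id}-\Pi)\widetilde{\B}^{-1}$ in one step, whereas the paper proves surjectivity first with an abstract preimage and then identifies the formula via the right-inverse property; this is a reorganization, not a different argument.
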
 
\begin{proof}
	The well-definedness and boundedness of $(\mathrm{id}-\Pi)\widetilde{\B}^{-1} \colon \im(\B) \to H$ follow by Lemma~\ref{lemma:Btildeinv_prop} and Proposition~\ref{prop:imBkerB}. In addition, $\im((\mathrm{id}-\Pi)\widetilde{\B}^{-1})\subset\im(\mathrm{id}-\Pi)=\ker(\B)^\perp$, and for $f\in\mathrm D(\T)$ we have that $(\mathrm{id}-\Pi)f\in\mathrm D(\T)$ since $\im(\Pi)=\ker(\B)\subset\mathrm D(\T)$. Thus, $\im((\mathrm{id}-\Pi)\widetilde{\B}^{-1})\subset\mathrm D(\T) \cap \ker(\B)^\perp$. 
	The reversal of $w$-parity can be seen from the fact that $\widetilde{\B}^{-1}$ reverses and $\mathrm{id}-\Pi$ conserves $w$-parity; note that $\H\subset\ker(\B)^\perp$ and that $\ker(\B)$ consists only of functions even in $w$. 
	
	We now show that $\B$ as a mapping from $\mathrm D(\T) \cap \ker(\B)^\bot$ to $\im(\B)$ is bijective. First, we prove its surjectivity. Let $g\in \im (\B)$, i.e., there exists $f\in \mathrm D(\T)$ such that $\B f = g$. 
	We define $\tilde f \coloneqq (\mathrm{id}-\Pi)f$ and obtain that $\tilde f \in \mathrm D(\T)\cap\ker(\B)^\bot$ with $\B \tilde f = \B f = g$, i.e., $\B$ is surjective. 	Obviously,
	\[ 
	\ker(\B |_{\mathrm D(\T) \cap \ker(\B)^\bot}) = \ker(\B) \cap \ker(\B)^\bot = \{0\},
	\]
	which proves the injectivity. 
	
	It remains to show that $(\mathrm{id}-\Pi)\widetilde{\B}^{-1}$ is the inverse of $\B$. Lemma~\ref{lemma:Btildeinv_prop} and Proposition~\ref{prop:imBkerB} yield that
	\[ 
	\B (\mathrm{id}-\Pi) \widetilde{\B}^{-1} f =   \B \widetilde{\B}^{-1}f  = f, \quad f\in \ker(\B)^\bot=\im (\B).
	\]
	Therefore, 
	$ 
	(\mathrm{id}-\Pi) \widetilde{\B}^{-1} \colon \im(\B) \to \mathrm D(\T) \cap  \ker(\B)^\bot
	$ 
	is a right-inverse of 
	$
	\B \colon \mathrm D(\T) \cap  \ker(\B)^\bot \to \im(\B).
	$
	But since the latter mapping is already known to be bijective, we can conclude that \mbox{$\B^{-1} = 	(\mathrm{id}-\Pi) \widetilde{\B}^{-1} $}.
\end{proof}
We now show a similar result for $\B^2$. 
\begin{lemma}\label{lemma:Bsqinv}
	The operator 
	\[ 
		\B^{2} \colon \mathrm D(\T^2) \cap \ker(\B^2)^\bot \to \im(\B^2)
	\]
	is bijective. Its inverse is bounded on $\im(\B^2)$, symmetric, conserves $w$-parity, and is given by 
	\[ 
		\left (\B^2 \right )^{-1} = \B^{-1} \B^{-1} \eqqcolon \B^{-2}. 
	\] 
\end{lemma}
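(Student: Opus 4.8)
The plan is to deduce the statement entirely from the structural results already obtained for $\B$ and $\B^2$, without any further computation. The starting point is the observation that by Lemma~\ref{lemma:kernelBsq} we have $\ker(\B^2)^\bot=\ker(\B)^\bot$ and $\im(\B^2)=\im(\B)$, while Proposition~\ref{prop:imBkerB} identifies these two spaces, $\ker(\B^2)^\bot=\ker(\B)^\bot=\im(\B)=\im(\B^2)$; moreover, by Lemma~\ref{lemma:Bselfadjoint}, $\mathrm D(\B^2)=\mathrm D(\T^2)$ and $\B^2$ is self-adjoint on this domain.

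Next I would verify that $\B^{-2}\coloneqq\B^{-1}\B^{-1}$ is a well-defined bounded operator from $\im(\B^2)$ into $\mathrm D(\T^2)\cap\ker(\B^2)^\bot$. By Lemma~\ref{lemma:Binv}, $\B^{-1}$ maps $\im(\B)$ boundedly into $\mathrm D(\T)\cap\ker(\B)^\bot$, which by Proposition~\ref{prop:imBkerB} equals $\mathrm D(\T)\cap\im(\B)$; hence $\B^{-1}$ may legitimately be applied a second time. For $f\in\im(\B^2)=\im(\B)$ put $h_1\coloneqq\B^{-1}f$ and $h_2\coloneqq\B^{-1}h_1$. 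Then $h_2\in\mathrm D(\T)\cap\ker(\B)^\bot$ with $\B h_2=h_1\in\mathrm D(\T)$, so $h_2\in\mathrm D(\B^2)=\mathrm D(\T^2)$ and $h_2\in\ker(\B)^\bot=\ker(\B^2)^\bot$; also $\B^2h_2=\B h_1=f$. Thus $\B^{-2}$ is a right-inverse of $\B^2\colon\mathrm D(\T^2)\cap\ker(\B^2)^\bot\to\im(\B^2)$, it is bounded as a composition of two operators bounded on $\im(\B)$, and it conserves $w$-parity since each factor $\B^{-1}$ reverses it.

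I would then argue bijectivity exactly as in the proof of Lemma~\ref{lemma:Binv}: injectivity because $\ker(\B^2)\cap\ker(\B^2)^\bot=\{0\}$, and surjectivity because for $f\in\im(\B^2)$ the element $\B^{-2}f$ lies in $\mathrm D(\T^2)\cap\ker(\B^2)^\bot$ and satisfies $\B^2(\B^{-2}f)=f$. Since a right-inverse of a bijection is its two-sided inverse, this gives $(\B^2)^{-1}=\B^{-2}=\B^{-1}\B^{-1}$. Symmetry of $(\B^2)^{-1}$ then follows from the symmetry of $\B^2$: writing $f=\B^2\tilde f$ and $g=\B^2\tilde g$ with $\tilde f,\tilde g\in\mathrm D(\T^2)\cap\ker(\B^2)^\bot$, one has $\langle\B^{-2}f,g\rangle_H=\langle\tilde f,\B^2\tilde g\rangle_H=\langle\B^2\tilde f,\tilde g\rangle_H=\langle f,\B^{-2}g\rangle_H$.

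The argument is essentially bookkeeping, and the only point needing a little care --- the one place where a careless proof could slip --- is checking that $\B^{-1}$ really maps into $\im(\B)$, so that the composition $\B^{-1}\B^{-1}$ makes sense, and that the outcome lies in the stronger domain $\mathrm D(\T^2)$ rather than merely $\mathrm D(\T)$; both of these rest on Proposition~\ref{prop:imBkerB} (the identity $\im(\B)=\ker(\B)^\bot$) together with the identification $\mathrm D(\B^2)=\mathrm D(\T^2)$ from Lemma~\ref{lemma:Bselfadjoint}.
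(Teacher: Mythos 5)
Your proof is correct and follows essentially the same route as the paper: apply $\B^{-1}$ twice, check that the result lies in $\mathrm D(\T^2)\cap\ker(\B^2)^\bot$, verify the right-inverse property, and conclude bijectivity, boundedness, $w$-parity conservation, and symmetry from self-adjointness. The only cosmetic difference is that you place $\B^{-2}f$ into $\mathrm D(\T^2)$ by directly invoking the identity $\mathrm D(\B^2)=\mathrm D(\T^2)$ from Lemma~\ref{lemma:Bselfadjoint}, whereas the paper re-derives that inclusion in situ via Lemma~\ref{lemma:source_bounded}.
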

\begin{proof}	
	We consider  $f \in \im(\B^2) = \im(\B) $. Then $\B^{-1}f \in \ker(\B)^\bot \cap \mathrm D (\T)$ by Lemma~\ref{lemma:Binv}. 
	Together with Proposition~\ref{prop:imBkerB} we further deduce that
	$\B^{-2}f \in \ker(\B)^\bot \cap \mathrm D (\T)$. 
	Moreover, $\B \B^{-2}f \in \mathrm D(\T)$ implies that $\T \B^{-2}f \in \mathrm D(\T)$ similarly to Lemma~\ref{lemma:Bselfadjoint}~\ref{it:Bsa1} using Lemma~\ref{lemma:source_bounded}~\ref{it:scbd2}.
	Because of $\ker(\B)^\bot = \ker(\B^2)^\bot$, $\B^{-2} \colon \im(\B^2) \to  \mathrm D(\T^2) \cap \ker(\B^2)^\bot$ is well-defined. The fact that $\B^{-2}$ is bounded and the conservation of $w$-parity follow from Lemma~\ref{lemma:Btildeinv_prop} and Lemma~\ref{lemma:Binv}. 
	
	We now prove that $\B^{-2}$ is indeed the inverse of $\B^2$. Firstly, for $f\in \im(\B^2) = \im(\B)$ we immediately conclude that
	\[ 
	\B^2 \B^{-2}f= \B (\B \B^{-1} ) \B^{-1} f  = \B \B^{-1}f = f
	\]  
	since $\B^{-1} f\in \im(\B)$. 	Secondly, for $f \in \mathrm D(\T^2) \cap \ker(\B^2)^\bot $ we have
	\[ 
	\B^{-2} \B^2 f = {\B}^{-1} \left ( \B^{-1} \B \right ) \B f = {\B}^{-1} \B f = f 
	\]
	since $\B f \in \mathrm D(\T) \cap \ker(\B)^\bot$ and $f\in \mathrm D(\T)\cap \ker(\B)^\bot$ by Proposition~\ref{prop:imBkerB} and Lemma~\ref{lemma:kernelBsq}.	Consequently, $\B^{-2} = \left ( \B^2 \right )^{-1}$.
	
	The symmetry of $\B^{-2}$ can then be deduced from the self-adjointness of $\B^2$ and
	\[
		\langle f,\B^{-2}g\rangle_H=\langle\B^2\B^{-2}f,\B^{-2}g\rangle_H=\langle\B^{-2}f,g\rangle_H,\quad f,g\in \im(\B^2).\qedhere
	\]
\end{proof}

In particular, from the boundedness of $\B^{-1}$ we deduce a Poincar\'e-type inequality which can then be applied to establish the semi-boundedness of the spectrum of $\B^2$. An analogous estimate was shown in the non-relativistic case in \cite[Corollary~5.8]{HaReSt21}; however, the arguments there is simpler since the spectrum of the non-relativistic analogue of~$\B^2$ is explicitly known. 
\begin{cor}\label{cor:poincareB}
	There exists $C>0$ such that 
	\begin{equation}\label{eq:poincareB}
		\|\B f\|_{H} \geq C \|f\|_{H}, \quad f \in \mathrm D(\T) \cap \ker(\B)^\bot.
	\end{equation}
	Moreover, the spectrum of the self-adjoint operator $-\B^2|_{\H}\colon\mathrm D(\T^2)\cap\H\to\H$ is bounded from below by some $\epsilon>0$, i.e.,
	\[ 
	\sigma(-\B^2|_{\H} ) \subset[\epsilon,\infty[.
	\]
\end{cor}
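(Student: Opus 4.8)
The plan is to deduce both assertions directly from the boundedness of the inverse $\B^{-1}$ established in Lemma~\ref{lemma:Binv}, so that essentially no new work is required; all the difficulty has already been absorbed into the construction of $\widetilde{\B}^{-1}$ and the identity $\im(\B)=\ker(\B)^\bot$ (Proposition~\ref{prop:imBkerB}).

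First I would prove the Poincar\'e-type inequality~\eqref{eq:poincareB}. Let $f\in\mathrm D(\T)\cap\ker(\B)^\bot$. Then $\B f\in\im(\B)=\ker(\B)^\bot$ by Proposition~\ref{prop:imBkerB}, and Lemma~\ref{lemma:Binv} gives $f=\B^{-1}(\B f)$. Since $\B^{-1}$ is bounded on $\im(\B)$, we get
\[
\|f\|_H=\|\B^{-1}(\B f)\|_H\leq\bigl\|\B^{-1}\bigr\|\,\|\B f\|_H,
\]
which is~\eqref{eq:poincareB} with $C\coloneqq\bigl\|\B^{-1}\bigr\|^{-1}>0$ (the operator norm of $\B^{-1}$ is finite by Lemma~\ref{lemma:Binv} and nonzero because $\B$, hence $\B^{-1}$, is not the zero operator).

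For the spectral statement, recall from Lemma~\ref{lemma:characterkerBbot} that $\H\subset\ker(\B)^\bot$, so~\eqref{eq:poincareB} applies to every $f\in\mathrm D(\T^2)\cap\H\subset\mathrm D(\T)\cap\ker(\B)^\bot$. Using the skew-adjointness of $\B$ from Lemma~\ref{lemma:Bselfadjoint}~\ref{it:Bsa1},
\[
\bigl\langle -\B^2 f,f\bigr\rangle_H=\bigl\langle \B f,\B f\bigr\rangle_H=\|\B f\|_H^2\geq C^2\|f\|_H^2,\quad f\in\mathrm D(\T^2)\cap\H.
\]
Since $-\B^2|_{\H}\colon\mathrm D(\T^2)\cap\H\to\H$ is self-adjoint (again Lemma~\ref{lemma:Bselfadjoint}~\ref{it:Bsa1}) and this quadratic form is bounded below by $C^2\|\cdot\|_H^2$ on its dense domain, the standard correspondence between form lower bounds and spectral bounds for self-adjoint operators (cf.~\cite[Prop.~5.12]{HiSi}) yields $\sigma(-\B^2|_{\H})\subset[\epsilon,\infty[$ with $\epsilon\coloneqq C^2>0$.

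I do not anticipate a genuine obstacle: the only point worth emphasizing is that the Poincar\'e inequality~\eqref{eq:poincareB} is available precisely on $\ker(\B)^\bot$ and not on all of $\mathrm D(\T)$ (on $\ker(\B)\setminus\{0\}$ the operator $\B$ of course fails to be bounded below), which is exactly why the spectral gap is asserted only after restricting $-\B^2$ to the odd-in-$w$ subspace $\H$, using the inclusion $\H\subset\ker(\B)^\bot$.
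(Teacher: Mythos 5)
Your proof is correct and takes essentially the same route as the paper: deduce \eqref{eq:poincareB} from the boundedness of $\B^{-1}$ established in Lemma~\ref{lemma:Binv}, then combine it with $\H\subset\ker(\B)^\perp$, the skew-symmetry of $\B$, and \cite[Prop.~5.12]{HiSi} to obtain the spectral lower bound for $-\B^2|_{\H}$. You merely spell out the "immediate consequence" step (writing $f=\B^{-1}(\B f)$ explicitly), which the paper leaves implicit.
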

\begin{proof}
	The estimate~\eqref{eq:poincareB} is an immediate consequence of the boundedness of $\B^{-1}$ shown in Lemma~\ref{lemma:Binv}. As for the spectral bound, recall that $\H\subset\ker(\B)^\perp$ by Lemma~\ref{lemma:characterkerBbot}. Thus, the skew-symmetry of $\B$ implies that
	\[
	\langle-\B^2f,f\rangle_{H}=\|\B f\|_H^2\geq C^2\|f\|_H^2,\quad f\in\mathrm D(\T^2)\cap\H,
	\]
	from which we obtain the spectral bound using~\cite[Prop.~5.12]{HiSi}; note that $-\B^2|_{\H}$ is already known to be self-adjoint by Lemma~\ref{lemma:Bselfadjoint}~\ref{it:Bsa1}. 
\end{proof}
To summarize, we collect all the derived properties of $\B$ for future reference in the following proposition, which should be compared to the similar results for the pure transport operator $\T$ in Proposition~\ref{prop:transport_char}.
\begin{proposition}\phantomsection\label{prop:Bproperties}
	\begin{enumerate}[label=(\alph*)]
		\item\label{it:Bprop1} $\B \colon \mathrm D(\T) \to H$ is well-defined and skew-adjoint as a densely defined operator on $H$. Moreover, $\B^2 \colon \mathrm D(\T^2) \to H$ is self-adjoint.
		\item\label{it:Bprop2} The kernel of $\T$ maps one-to-one to the kernel of $\B$ and
		\begin{equation}\label{eq:kernel_B}
			\ker(\B) = \left\{ g + 4\pi |\varphi'|E e^{-\lambda_0-\mu_0} \int_{r}^{\Rmax} e^{(3\lambda_0+\mu_0)(s)} p_g(s) s \,ds \mid g  \in \ker \T \right\} .
		\end{equation}
		\item\label{it:Bprop3} It holds that  $\im(\B)=\im(\B^2)$, $\ker(\B) = \ker(\B^2)$, $\H \subset \ker(\B)^\bot=\im(\B)$, and 
		\begin{equation}
			 \ker(\B)^\bot=  \left\{ f\in H \mid 	\int_0^1 \left (f + |\varphi'| e^{2\mu_0(R)} \lambda_f(R) \frac{W^2}{E}\right ) \, d\theta = 0 \text{ for a.e.\ } (E,L)\in \Omega_0^{EL} \right\},
		\end{equation}
		where the notation introduced at the start of Section~\ref{ssc:kerB_kerBbot} is used.
		\item\label{it:Bprop4} $\B$ reverses $w$-parity, i.e., $(\B f)_{\pm} = \B(f_{\mp})$ for $f\in \mathrm D(\T)$. Moreover, $\B^2$ conserves $w$-parity and the restricted operator $\B^2=\B^2|_{\H}\colon\mathrm D(\T^2)\cap\H\to\H$ is self-adjoint as a densely defined operator on $\H$. 
		\item\label{it:Bprop5}  The operators $\B \colon \mathrm D(\T) \cap \ker(\B)^\bot  \to \im(\B)$  and $\B^2 \colon \mathrm D(\T^2) \cap \ker(\B^2)^\bot  \to \im(\B^2)$ are bijective. The bounded and symmetric inverses are given by
		\begin{equation}\label{eq:Binv_Bsqinv}
			\B^{-1} = (\mathrm{id}-\Pi)\widetilde{\B}^{-1}, \quad (\B^2)^{-1} = \B^{-1} \B^{-1}
		\end{equation}
		with $\widetilde{\B}^{-1}$ explicitly defined in Definition~\ref{def:Btildeinv} and $\Pi\colon H\to H$ being the orthogonal projection onto $\ker(\B)$ introduced at the beginning of Section~\ref{ssc:inverseB_inverseBsq}.
		\item\label{it:Bprop6} There exists $\epsilon > 0$ such that 
		\begin{equation}
			\sigma(-\B^2|_{\H} ) \subset[\epsilon,\infty[.
		\end{equation} 		
	\end{enumerate}
\end{proposition}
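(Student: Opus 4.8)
The plan is to assemble Proposition~\ref{prop:Bproperties} directly from the results established throughout this section; by design it is a consolidated restatement of those facts rather than a genuinely new argument, so the ``proof'' consists of pointing to the relevant lemmas in the right order and checking that nothing has been left out.

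First I would dispatch part~\ref{it:Bprop1}: the skew-adjointness of $\B$ on $\mathrm D(\T)$ and the self-adjointness of $\B^2$ on $\mathrm D(\T^2)$ are exactly the content of Lemma~\ref{lemma:Bselfadjoint}~\ref{it:Bsa1}, which itself rests on the decomposition $\B=\T+\mathcal S$ with $\mathcal S$ bounded and skew-symmetric (Lemma~\ref{lemma:source_bounded}), the Kato--Rellich theorem, and von~Neumann's theorem. Part~\ref{it:Bprop4} is likewise immediate from Lemma~\ref{lemma:Bselfadjoint}~\ref{it:Bsa1}: since $\T$ and $\mathcal S$ both reverse $w$-parity, so does $\B$, hence $\B^2$ preserves it, and self-adjointness of the restriction $\B^2|_\H$ follows by applying the von~Neumann argument on the reducing subspace $\H$. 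Part~\ref{it:Bprop2}, i.e.\ the one-to-one correspondence between $\ker(\T)$ and $\ker(\B)$ together with the explicit formula~\eqref{eq:kernel_B}, is precisely Lemma~\ref{lemma:kernel_B}~\ref{it:kerB1} and~\ref{it:kerB2}.

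Part~\ref{it:Bprop3} is obtained by combining three earlier results: the identities $\ker(\B)=\ker(\B^2)$ and $\im(\B)=\im(\B^2)$ come from Lemma~\ref{lemma:kernelBsq}; the equality $\ker(\B)^\bot=\im(\B)$ (and hence the closedness of the range) is Proposition~\ref{prop:imBkerB}, which uses skew-adjointness of $\B$ to get $\ker(\B)^\bot=\overline{\im(\B)}$ and the right-inverse $\widetilde{\B}^{-1}$ to get the reverse inclusion $\ker(\B)^\bot\subset\im(\B)$; and the explicit description of $\ker(\B)^\bot$, as well as the inclusion $\H\subset\ker(\B)^\bot$, are Lemma~\ref{lemma:characterkerBbot}. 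For part~\ref{it:Bprop5} I would cite Lemma~\ref{lemma:Binv} for $\B^{-1}=(\mathrm{id}-\Pi)\widetilde{\B}^{-1}$ and Lemma~\ref{lemma:Bsqinv} for $(\B^2)^{-1}=\B^{-1}\B^{-1}$, carrying over the boundedness, symmetry, and $w$-parity statements proven there; the required notation ($\widetilde{\B}^{-1}$, $\Pi$) is recalled from Definition~\ref{def:Btildeinv} and the start of Section~\ref{ssc:inverseB_inverseBsq}. Finally, part~\ref{it:Bprop6} is the spectral bound from Corollary~\ref{cor:poincareB}, a consequence of the Poincar\'e-type inequality~\eqref{eq:poincareB}, which in turn is equivalent to the boundedness of $\B^{-1}$ from Lemma~\ref{lemma:Binv}.

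Since every assertion has already been proven in this section, there is no genuine obstacle remaining at this stage: the proof is a bookkeeping exercise. The conceptual difficulty lies entirely upstream — the genuinely delicate steps are the construction and verification of the explicit right-inverse $\widetilde{\B}^{-1}$ in Definition~\ref{def:Btildeinv} and Lemma~\ref{lemma:Btildeinv_prop}, and the ensuing identity $\im(\B)=\ker(\B)^\bot$ in Proposition~\ref{prop:imBkerB}. Everything in Proposition~\ref{prop:Bproperties} that goes beyond the corresponding statements for the pure transport operator $\T$ ultimately leans on those two results, so the only real check to perform here is that the hypotheses of each cited lemma are met by the class of steady states fixed in Section~\ref{sc:ststconditions}, which they are.
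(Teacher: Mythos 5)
Your proposal is correct and matches the paper's own proof essentially verbatim: the paper also treats this proposition as a pure bookkeeping exercise, citing Lemma~\ref{lemma:Bselfadjoint}~\ref{it:Bsa1} for parts~\ref{it:Bprop1} and~\ref{it:Bprop4}, Lemmas~\ref{lemma:kernel_B} and~\ref{lemma:characterkerBbot} for the kernel characterizations, Proposition~\ref{prop:imBkerB} and Lemma~\ref{lemma:kernelBsq} for the range/kernel relations, Lemmas~\ref{lemma:Binv} and~\ref{lemma:Bsqinv} for the inverses, and Corollary~\ref{cor:poincareB} for the spectral bound. Your added remarks about which steps are genuinely delicate (the construction of $\widetilde{\B}^{-1}$ and the identity $\im(\B)=\ker(\B)^\bot$) are accurate but not required.
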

\begin{proof}
	Parts~\ref{it:Bprop1} and~\ref{it:Bprop4} are proven in Lemma~\ref{lemma:Bselfadjoint}~\ref{it:Bsa1}. The explicit characterizations of $\ker(\B)$ and its orthogonal complement are derived in Lemmas~\ref{lemma:kernel_B} and~\ref{lemma:characterkerBbot}. The relations between $\ker(\B)$, $\im(\B)$, and the respective sets for $\B^2$ are due to Proposition~\ref{prop:imBkerB} and Lemma~\ref{lemma:kernelBsq}. The inverses of $\B$ and $\B^2$ are studied in Lemmas~\ref{lemma:Binv} and~\ref{lemma:Bsqinv}, while the spectral semi-boundedness of $-\B^2|_{\H}$ from part~\ref{it:Bprop6} is shown in Corollary~\ref{cor:poincareB}.
\end{proof}
\subsection{The residual operator $\Ri $}
Compared to the operator $\B$, the residual operator $\Ri $ is rather nice; recall Definition~\ref{def:TandB}~\ref{it:TBdef3}. We first show that $\Ri$ is non-negative and that we can determine its square root. 
\begin{lemma}\label{lemma:Rproperties}
	The operator $\Ri \colon H \to H$ is bounded, symmetric, and non-negative (in the sense of quadratic forms), i.e., $ \langle \Ri f, f \rangle \geq 0$ for $f\in H$. The operator 
	\[ 
		\sqrt \Ri \colon H \to H, \quad \sqrt{\Ri}f\coloneqq4\pi \sqrt{r} |\varphi'| e^{2\mu_0+\lambda_0} \sqrt{\frac{2r\mu_0'+1}{\mu_0'+\lambda_0'}} \, w j_f,
	\]
	is bounded, symmetric, non-negative, and on $H$ we have that $\sqrt{\Ri} \sqrt{\Ri}  = \Ri $. Moreover, $\sqrt{\Ri}f\in\H$ and $\Ri f\in\H$ for $f\in H$.  
\end{lemma}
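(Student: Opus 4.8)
The plan is to reduce everything to the single identity $\|\sqrt{\Ri}f\|_H^2=\langle\Ri f,f\rangle_H$, from which the non-negativity of $\Ri$, the boundedness of $\sqrt{\Ri}$, and the relation $\sqrt{\Ri}\sqrt{\Ri}=\Ri$ all follow; the boundedness and symmetry of $\Ri$ and the oddness-in-$w$ claims are then read off directly from the defining formulae.

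First I would record the elementary facts about $\Ri$. Since $\Ri f=4\pi e^{3\mu_0}(2r\mu_0'+1)\,w\,(|\varphi'|j_f)$, and $f\mapsto|\varphi'|j_f$ is bounded on $H$ by Lemma~\ref{lemma:source_bounded}~\ref{it:scbd1}, while the remaining coefficient $4\pi e^{3\mu_0}(2r\mu_0'+1)\,w$ is bounded on $\Omega_0$ — note that $|w|$ is bounded there since $e^{\mu_0(\Rmin)}\sqrt{1+w^2}\le E<E_0<1$, and $2r\mu_0'+1=e^{2\lambda_0}(1+8\pi r^2 p_{f_0})>0$ by~\eqref{eq:fieldeq2} — the operator $\Ri\colon H\to H$ is bounded. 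For symmetry I would insert $\Ri f$ into $\langle\cdot,\cdot\rangle_H$: the weight $e^{\lambda_0}/|\varphi'|$ cancels exactly one power of $|\varphi'|$, and integrating $w\,g$ over $(w,L)$ reproduces a multiple of $j_g$, so that $\langle\Ri f,g\rangle_H$ equals, up to a positive constant, the one-dimensional integral $\int_{\Rmin}^{\Rmax} r^2 e^{\lambda_0+3\mu_0}(2r\mu_0'+1)\,j_f\,j_g\,dr$, which is manifestly symmetric in $f$ and $g$. I would also note here that $2r\mu_0'+1>0$ on the support and, via~\eqref{eq:HLRidentity} together with~\ref{it:stst3}, that $\mu_0'+\lambda_0'>0$ on the interior of the radial support, so that $\sqrt{\Ri}f$ is a well-defined measurable function on $\Omega_0$.

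The heart of the matter is the identity $\|\sqrt{\Ri}f\|_H^2=\langle\Ri f,f\rangle_H$. Computing $\|\sqrt{\Ri}f\|_H^2$ directly and cancelling one power of $|\varphi'|$ against the weight leaves a phase-space integral whose integrand carries the seemingly dangerous factor $\frac{2r\mu_0'+1}{\mu_0'+\lambda_0'}\,w^2|\varphi'|$. The crucial observation — and what makes this particular square-root formula the right one — is that for a.e.\ fixed $r$ the $(w,L)$-integral of $w^2|\varphi'|$ equals $\frac{r\,e^{-2\lambda_0-\mu_0}}{4\pi^2}(\mu_0'+\lambda_0')$ by~\eqref{eq:HLRidentity}, which cancels the denominator $\mu_0'+\lambda_0'$ and turns the whole expression into exactly the one-dimensional integral of $r^2 e^{\lambda_0+3\mu_0}(2r\mu_0'+1)\,j_f^2$ that the computation of $\langle\Ri f,f\rangle_H$ produced above. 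I expect this cancellation to be the only real subtlety in the whole proof: one must perform the $(w,L)$-integration — where $\mu_0'+\lambda_0'>0$ — before worrying about $r$-integrability near the edges of the support, where $\mu_0'+\lambda_0'$ degenerates to zero, so that the "$\infty/\infty$" is never actually encountered. Once the identity is in hand, $\langle\Ri f,f\rangle_H=\|\sqrt{\Ri}f\|_H^2\ge 0$ gives the non-negativity of $\Ri$, and $\|\sqrt{\Ri}f\|_H^2\le\|\Ri f\|_H\|f\|_H\le C\|f\|_H^2$ gives the boundedness of $\sqrt{\Ri}$.

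For the remaining properties of $\sqrt{\Ri}$ I would rerun the same cancel-one-$|\varphi'|$-and-integrate-$w$ computation: it yields $\langle\sqrt{\Ri}f,g\rangle_H$ as a one-dimensional integral of $r^{5/2}e^{3\lambda_0+2\mu_0}\sqrt{(2r\mu_0'+1)/(\mu_0'+\lambda_0')}\,j_f\,j_g$ over $[\Rmin,\Rmax]$, symmetric in $f$ and $g$ and with non-negative integrand when $g=f$; hence $\sqrt{\Ri}$ is symmetric and non-negative. The relation $\sqrt{\Ri}\sqrt{\Ri}=\Ri$ then follows by polarizing the norm identity (equivalently, redoing the norm computation with $j_f\,j_g$ in place of $j_f^2$) and using the symmetry of $\sqrt{\Ri}$: one obtains $\langle\sqrt{\Ri}\sqrt{\Ri}f,g\rangle_H=\langle\sqrt{\Ri}f,\sqrt{\Ri}g\rangle_H=\langle\Ri f,g\rangle_H$ for all $f,g\in H$. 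Finally, $\Ri f\in\H$ and $\sqrt{\Ri}f\in\H$ are immediate from the defining formulae: $|\varphi'(E(r,w,L),L)|$, $E$, and all the coefficients appearing are even in $w$, while $j_f$ depends only on $r$, so each of $\Ri f$ and $\sqrt{\Ri}f$ carries precisely one odd factor $w$ and is therefore odd in $w$.
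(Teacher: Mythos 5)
Your proof is correct and follows essentially the same route as the paper: the central tool in both cases is the identity~\eqref{eq:HLRidentity}, used to integrate out $(w,L)$ and reduce the relevant phase-space quadratic forms to the common one-dimensional radial integral $16\pi^2\int r^2 e^{3\mu_0+\lambda_0}(2r\mu_0'+1)\,j_f\,j_g\,dr$, from which symmetry and non-negativity of both $\Ri$ and $\sqrt\Ri$ and the relation $\sqrt\Ri\sqrt\Ri=\Ri$ all drop out, while boundedness comes from Lemma~\ref{lemma:source_bounded} together with the boundedness of the steady-state quantities. The only genuine deviations from the paper are cosmetic --- you obtain non-negativity of $\Ri$ from the norm identity rather than from $\mu_0'\geq 0$ directly, and boundedness of $\sqrt\Ri$ via Cauchy--Schwarz rather than by the analogue of the boundedness argument for $\Ri$ --- and you correctly pinpoint the subtlety that $\sqrt{(2r\mu_0'+1)/(\mu_0'+\lambda_0')}$ degenerates near the boundary of the support, which is why the $(w,L)$-integration must be performed first (as the paper tacitly does); one small slip: in your formula for $\langle\sqrt\Ri f,g\rangle_H$ the metric factor should be $e^{2\lambda_0+2\mu_0}$, not $e^{3\lambda_0+2\mu_0}$, but this does not affect the argument.
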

\begin{proof}
	$\Ri$ is well-defined and bounded because of Lemma~\ref{lemma:source_bounded}~\ref{it:Bsa1} and since the metric coefficients and $\Omega_0$ are bounded. Next, we compute that
	\[
		\langle \Ri f, g \rangle_H =  16\pi^2 \int_{\Rmin}^{\Rmax}  e^{3\mu_0+\lambda_0} (2r\mu_0'+1)  j_f j_g  r^2 \, dr,\quad f,g\in H,
	\]
	which implies the symmetry and non-negativity of $\Ri$ since $\mu_0'\geq 0$, see~\eqref{eq:fieldeq2}. Similar arguments using~\eqref{eq:HLRidentity} yield the claims for $\sqrt \Ri$; note that $\mu_0'+\lambda_0'=4\pi re^{2\lambda_0}(\rho_0+p_0)>0$ for $r>0$ in the radial support of the steady state. 
	The oddness in $w$ of $\Ri f$ and $\sqrt{\Ri}f$ is due to the fact that $\varphi'=\varphi'(E(r,w,L),L)$ is even-in-$w$.
\end{proof}
In order to apply the Birman-Schwinger principle, we need that $\sqrt \Ri $ is relatively $\B ^2$-compact on the space of odd-in-$w$ functions, i.e., 
\[ 
\sqrt \Ri  \B^{-2} : \H \to \H 
\]
is compact, see \cite[Definition 14.1]{HiSi}; note that this operator is well-defined by Proposition~\ref{prop:Bproperties}~\ref{it:Bprop3},~\ref{it:Bprop5}. This relative compactness will allow us to control the essential spectrum of $\L$ later on.
\begin{lemma}\label{lemma:relcompact_Bsq_R}
	The operator $\sqrt\Ri|_{\H} $ is relatively $(\B^2|_{\H})$-compact. 
\end{lemma}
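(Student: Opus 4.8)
The plan is to show that $\sqrt{\Ri}\,\B^{-2}\colon\H\to\H$ is compact by factoring it through the compact map $f\mapsto\lambda_f\in L^2([\Rmin,\Rmax])$ established in Lemma~\ref{lemma:source_bounded}~\ref{it:scbd1}. First I would recall from Lemma~\ref{lemma:Rproperties} that $\sqrt{\Ri}f = 4\pi\sqrt r\,|\varphi'|\,e^{2\mu_0+\lambda_0}\sqrt{\frac{2r\mu_0'+1}{\mu_0'+\lambda_0'}}\,w\,j_f$, so that $\sqrt{\Ri}$ depends on $f$ only through the radial function $j_f$. Hence it suffices to express $j_{\B^{-2}g}$ in terms of $\lambda_{\B^{-1}g}$ (or of $\lambda$ applied to some intermediate function) and to use that $\lambda$ applied to a bounded family is compact. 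Concretely, for $g\in\H\subset\ker(\B)^\perp=\im(\B)$ we have $\B^{-1}g\in\mathrm D(\T)\cap\ker(\B)^\perp$ by Proposition~\ref{prop:Bproperties}~\ref{it:Bprop5}, and then $\B^{-2}g=\B^{-1}(\B^{-1}g)$ with $\B(\B^{-2}g)=\B^{-1}g$. Applying Lemma~\ref{lemma:lambdadot}, eqn.~\eqref{eq:lambdadotB}, to $f=\B^{-2}g$ gives
\[
\lambda_{\B^{-1}g}(r)=\lambda_{\B(\B^{-2}g)}(r)=-4\pi r\,e^{(\lambda_0+\mu_0)(r)}\,j_{\B^{-2}g}(r),
\]
so that $j_{\B^{-2}g}(r) = -\frac{e^{-(\lambda_0+\mu_0)(r)}}{4\pi r}\,\lambda_{\B^{-1}g}(r)$ for a.e.\ $r\in[\Rmin,\Rmax]$; note $\Rmin>0$ in both settings, so the factor $1/r$ is bounded on the support.

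Next I would assemble the chain of maps
\[
\H\ni g\;\xmapsto{\ \B^{-1}\ }\;\B^{-1}g\in H\;\xmapsto{\ \lambda_{(\cdot)}\ }\;\lambda_{\B^{-1}g}\in L^2([\Rmin,\Rmax])\;\xmapsto{\ \text{mult.}\ }\;j_{\B^{-2}g}\in L^2([\Rmin,\Rmax])\;\xmapsto{\ \text{mult.\ by radial weight}\times w\ }\;\sqrt{\Ri}\,\B^{-2}g\in\H,
\]
and argue compactness of the composition. The first arrow $\B^{-1}\colon\im(\B)\to H$ is bounded (Lemma~\ref{lemma:Binv}), the second arrow $H\to L^2([\Rmin,\Rmax])$ is \emph{compact} (Lemma~\ref{lemma:source_bounded}~\ref{it:scbd1}), the third arrow is multiplication by the bounded function $-\frac{e^{-(\lambda_0+\mu_0)}}{4\pi r}$ on the compact support, hence bounded, and the final arrow sends a radial function $k(r)$ to $4\pi\sqrt r\,|\varphi'|\,e^{2\mu_0+\lambda_0}\sqrt{\frac{2r\mu_0'+1}{\mu_0'+\lambda_0'}}\,w\,k(r)$; this last map is bounded from $L^2([\Rmin,\Rmax])$ into $H$ by the Cauchy–Schwarz estimate together with~\eqref{eq:phi_prime_bound} and~\eqref{eq:HLRidentity} (exactly as in the proof that $f\mapsto|\varphi'|\,j_f$ is bounded), and its image lies in $\H$ by the parity remark in Lemma~\ref{lemma:Rproperties}. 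Since a composition in which one factor is compact and the others are bounded is compact, it follows that $\sqrt{\Ri}\,\B^{-2}|_{\H}$ is compact, which is the assertion. I would also note that, by~\eqref{eq:odd_in_w_theta}, the factor-through argument only ever uses the radial functions $j_{(\cdot)}$ and $\lambda_{(\cdot)}$, so restricting everything to $\H$ causes no difficulty; indeed $\lambda_f=0$ for odd-in-$w$ $f$, but we only apply $\lambda$ to $\B^{-1}g$, which is even in $w$ by Lemma~\ref{lemma:Binv}, so the compact map is applied exactly where it produces a nonzero contribution.

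The one point that needs a little care — and I expect it to be the main (though minor) obstacle — is the bookkeeping of $w$-parity through the chain: $\B^{-1}$ reverses parity, $\lambda_{(\cdot)}$ is insensitive to parity in the sense that it annihilates the odd part, and the final multiplication by $w$ restores oddness, so the composite genuinely maps $\H$ into $\H$ and agrees with $\sqrt{\Ri}\,\B^{-2}$ on $\H$; one must check this matches the honest definition of $\B^{-2}$ on $\im(\B^2)=\im(\B)$ rather than some ad hoc formula, which is guaranteed by Lemma~\ref{lemma:Bsqinv}. Everything else is a routine boundedness/compactness composition, so no hard analysis is required beyond invoking the already-established mapping properties of $\B^{-1}$, $\B^{-2}$, $\lambda_{(\cdot)}$, and $\sqrt{\Ri}$.
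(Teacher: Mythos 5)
Your proof takes essentially the same route as the paper's: both factor the operator through the compact map $f\mapsto\lambda_f$ of Lemma~\ref{lemma:source_bounded}~\ref{it:scbd1} by means of identity~\eqref{eq:lambdadotB}, and both invoke the boundedness of~$\B^{-1}$ (equivalently, the Poincar\'e estimate of Corollary~\ref{cor:poincareB}). The paper phrases this as a sequence argument on the graph-norm space, whereas you phrase it as a factorization of $\sqrt{\Ri}\,\B^{-2}$; these are interchangeable once one notes (as you do and as the paper does just before the lemma) that $0$ is in the resolvent set of $\B^2|_\H$.

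There is, however, a genuine though small gap: the claim \enquote{$\Rmin>0$ in both settings} is false. Singularity-free \emph{isotropic} steady states, which are exactly the ones covered by Lemmas~\ref{lemma:jeansfortwomoverrloweronethird} and~\ref{lemma:Tboundedfortwomoverrloweronethird} and are explicitly accommodated in Proposition~\ref{prop:Mathur_Kernel} (\enquote{in particular in the case $\Rmin=0$}), have $L_0=0$ and hence $\Rmin=0$. Consequently your \enquote{third arrow}, multiplication by $-e^{-(\lambda_0+\mu_0)}/(4\pi r)$, need \emph{not} be a bounded operator on $L^2([\Rmin,\Rmax])$. The correct fix is to merge the third and fourth arrows: the combined map sends $k\in L^2([\Rmin,\Rmax])$ to
\[
-\,r^{-1/2}\,|\varphi'|\,e^{\mu_0}\sqrt{\tfrac{2r\mu_0'+1}{\mu_0'+\lambda_0'}}\;w\;k(r),
\]
and computing its $H$-norm, after integrating in $(w,L)$ with the identity~\eqref{eq:HLRidentity}, one finds
\[
\left\|\,-\,r^{-1/2}\,|\varphi'|\,e^{\mu_0}\sqrt{\tfrac{2r\mu_0'+1}{\mu_0'+\lambda_0'}}\;w\;k\,\right\|_H^2
=\int_{\Rmin}^{\Rmax}e^{\mu_0-\lambda_0}\left(2r\mu_0'+1\right)|k(r)|^2\,dr\leq C\,\|k\|_{L^2}^2,
\]
with no singularity at $r=0$: the factors of $r$ cancel exactly. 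This is precisely the role of the HLR identity~\eqref{eq:HLRidentity} in the paper's proof, which your write-up invokes only for the (unproblematic) fourth arrow. With that repair your argument is complete and coincides with the paper's.
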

\begin{proof}
	Recall that $\B^2|_{\H}$ is self-adjoint with non-empty resolvent set by Proposition~\ref{prop:Bproperties}~\ref{it:Bprop4},~\ref{it:Bprop6}.  Hence, the relative $(\B^2|_{\H})$-compactness of $\sqrt{\Ri}|_{\H}$ is equivalent to
	\[ 
	\sqrt\Ri\colon \left(\mathrm D(\T ^2)\cap \H ,~ \|\B^2 \cdot\|_H + \|\cdot \|_H\right) \to \H
	\]
	being compact, cf.~\cite[III~Definition~2.15, Exercise~2.18.(1)]{EngNa2000}. Let $(f_n)_{n\in\N}\subset\mathrm D(\T^2)\cap\H$ be a sequence such that $(f_n)_{n\in\N}$ and $(\B ^2 f_n)_{n\in\N}$ are bounded in $\H$. The Poincar\'e-type estimate~\eqref{eq:poincareB} implies that $(\B f_n)_{n\in\N}$ is bounded in $H$ as well; note that $\B f_n\in\mathrm D(\T)\cap\im(\B)$. From~\eqref{eq:lambdadotB}, the compactness of $H\ni f \mapsto \lambda_f \in L^2([\Rmin,\Rmax])$, see Lemma~\ref{lemma:source_bounded}~\ref{it:scbd1}, and~\eqref{eq:HLRidentity} we obtain that $\sqrt \Ri f_n$ converges strongly in $\H$ up to a subsequence.
\end{proof}

\section{The Birman-Schwinger principle}\label{sc:birman_schwinger_principle}

We now apply a Birman-Schwinger type principle in order to derive a criterion for the existence of negative eigenvalues of $\L=-\B^2-\Ri$ which can be used for a large variety of steady states. Recall that the class of steady states used here is specified in Section~\ref{sc:ststconditions} and that all the operators and function spaces are defined in Section~\ref{sc:def_operators}. As a preparation we consider the following auxiliary family of operators:
\begin{definition}\label{def:L_delta}
	For $\gamma>0$ let
	\begin{equation*}
		\L_\gamma\coloneqq-\B^2-\frac1\gamma\Ri\colon \mathrm D(\T^2)\cap\H\to\H.
	\end{equation*}
\end{definition}
Note that we work solely on the space $\H$ of odd-in-$w$ functions since the Antonov operator $\L=\L_1$ covers only the evolution of the odd-in-$w$ part of the perturbation.

In order to arrive at a Birman-Schwinger principle, we have to analyze the functional analytic and spectral properties of these operators, in particular, the dependency of the spectrum on $\gamma$. This rather abstract investigation is conducted in the following section, the derivation of the actual Birman-Schwinger operator is then performed in Section~\ref{ssc:birman_schwinger_operator}.

\subsection{Analysis of the operators $\mathcal L_\gamma$}\label{appendix}

We first show that the operators are self-adjoint and that the essential spectrum $\sigma_{ess}$ of $\L_\gamma$ is independent of $\gamma$. Broadly speaking, the essential spectrum contains all the elements of the spectrum which are are not isolated eigenvalues of finite multiplicity; see, e.g., \cite[Chapter~7]{HiSi}.

\begin{lemma}\label{lemma:ess_spectrum_L}
	For $\gamma>0$ the operator $\L_\gamma\colon \mathrm D(\T^2)\cap\H\to\H$ is self-adjoint as a densely defined operator on $\H$ with essential spectrum given by
	\begin{equation*}
		\sigma_{ess}(\L_\gamma)= \sigma_{ess}(-\B^2|_{\H}) =\sigma_{ess}(\L).
	\end{equation*}
	Moreover, $\inf(\sigma_{ess}(\L))>0$. 
\end{lemma}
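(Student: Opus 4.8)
The plan is to establish the three assertions of the lemma in turn, using the Kato--Rellich theorem, Weyl's theorem on the stability of the essential spectrum under relatively compact perturbations, and the already-established spectral bound for $-\B^2|_{\H}$.

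\textbf{Self-adjointness.} First I would recall from Proposition~\ref{prop:Bproperties}~\ref{it:Bprop4} that $-\B^2|_{\H}\colon\mathrm D(\T^2)\cap\H\to\H$ is self-adjoint, and from Lemma~\ref{lemma:Rproperties} that $\Ri$ (hence $\tfrac1\gamma\Ri$) is a bounded, symmetric operator mapping $H$ into $\H$, so in particular it restricts to a bounded symmetric operator on $\H$. Since a bounded symmetric perturbation is trivially relatively bounded with relative bound $0$, the Kato--Rellich theorem (\cite[Thm.~X.12]{ReSi2}) implies that $\L_\gamma=-\B^2|_{\H}-\tfrac1\gamma\Ri$ is self-adjoint on the same domain $\mathrm D(\T^2)\cap\H$ for every $\gamma>0$. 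The domain being $\gamma$-independent is what makes the subsequent essential-spectrum comparison clean.

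\textbf{Invariance of the essential spectrum.} The key input is Lemma~\ref{lemma:relcompact_Bsq_R}, which states that $\sqrt\Ri|_{\H}$ is relatively $(\B^2|_{\H})$-compact. From this I would deduce that $\Ri|_{\H}=\sqrt\Ri|_{\H}\,\sqrt\Ri|_{\H}$ is also relatively $(\B^2|_{\H})$-compact: indeed $\sqrt\Ri|_{\H}(\B^2|_{\H}-z)^{-1}$ is compact for $z$ in the (non-empty, by Prop.~\ref{prop:Bproperties}~\ref{it:Bprop6}) resolvent set, and $\sqrt\Ri|_{\H}$ is bounded on $\H$ by Lemma~\ref{lemma:Rproperties}, so the product $\Ri|_{\H}(\B^2|_{\H}-z)^{-1}=\sqrt\Ri|_{\H}\bigl(\sqrt\Ri|_{\H}(\B^2|_{\H}-z)^{-1}\bigr)$ is compact. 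By the Weyl-type stability theorem for relatively compact self-adjoint perturbations (\cite[Thm.~14.6]{HiSi}, or \cite[Thm.~XIII.14]{ReSi4}), $\sigma_{ess}(\L_\gamma)=\sigma_{ess}(-\B^2|_{\H})$ for every $\gamma>0$; taking $\gamma=1$ gives in particular $\sigma_{ess}(\L)=\sigma_{ess}(-\B^2|_{\H})$, and hence all three essential spectra coincide.

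\textbf{Strict positivity.} Finally, $\inf\bigl(\sigma_{ess}(\L)\bigr)=\inf\bigl(\sigma_{ess}(-\B^2|_{\H})\bigr)\geq\inf\bigl(\sigma(-\B^2|_{\H})\bigr)\geq\epsilon>0$ by Proposition~\ref{prop:Bproperties}~\ref{it:Bprop6} (equivalently Corollary~\ref{cor:poincareB}). This closes the proof. I do not anticipate a serious obstacle here: all the heavy lifting — the self-adjointness of $\B^2|_{\H}$, the relative compactness of $\sqrt\Ri|_{\H}$, and the Poincar\'e-type bound $\sigma(-\B^2|_{\H})\subset[\epsilon,\infty[$ — has already been done in Sections~\ref{sc:properties_B} and~\ref{sc:properties_operators}; the only mild care needed is the passage from relative compactness of $\sqrt\Ri$ to that of $\Ri$, which is the standard ``bounded times compact'' argument spelled out above.
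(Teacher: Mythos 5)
Your proof is correct and follows essentially the same path as the paper: Kato--Rellich for self-adjointness, the factorization $\Ri|_{\H}=\sqrt\Ri|_{\H}\sqrt\Ri|_{\H}$ together with Lemma~\ref{lemma:relcompact_Bsq_R} and Weyl's theorem for the essential spectrum, and Proposition~\ref{prop:Bproperties}~\ref{it:Bprop6} for positivity. The only difference is that you spell out explicitly the ``bounded times compact'' step to pass from relative compactness of $\sqrt\Ri|_{\H}$ to that of $\Ri|_{\H}$, which the paper leaves implicit.
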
 
\begin{proof}
	$\L_\gamma = -\B^2 - \frac1\gamma\Ri$ is well-defined by Proposition~\ref{prop:Bproperties}~\ref{it:Bprop4} and Lemma~\ref{lemma:Rproperties}. Furthermore, since $\B^2|_{\H}$ is self-adjoint and $\Ri$ is bounded and symmetric, $\L_\gamma$ is self-adjoint by the Kato-Rellich theorem \cite[Thm.~X.12]{ReSi2}.
	Lemma~\ref{lemma:relcompact_Bsq_R} implies that $\Ri|_{\H}=\sqrt{\Ri}\sqrt{\Ri}|_{\H}$ is relatively $(\B^2|_{\H})$-compact and thus the essential spectrum of $\L_\gamma$ equals the one of $-\B^2|_{\H}$ by Weyl's theorem~\cite[Thm.~14.6]{HiSi}. The fact that $\sigma_{ess}(\L)$ is bounded away from zero follows from Proposition~\ref{prop:Bproperties}~\ref{it:Bprop6}. 
\end{proof}

\begin{remark}
	The essential spectrum of the Antonov operator $\L$ can be determined explicitly since $\sigma_{ess}(-\B^2|_{\H})=\sigma_{ess}(-\T^2|_{\H})$ by the Weyl theorem along with further techniques. Using action-angle type variables then allows one to explicitly determine the essential spectrum of the transport operator similar to~\cite[Thm.~5.7]{HaReSt21} or~\cite[Lemma~B.12]{Kunze}. Moreover, the spectrum of $ -\T^2|_{\H}$ is purely essential. It is an open problem if this is the case with $-\B^2|_{\H}$ as well.
\end{remark}

Hence, it remains to analyze the behavior of the isolated eigenvalues of $\L_\gamma$ when varying~$\gamma$. This is achieved by a variational characterization of these eigenvalues:
\begin{defnprop}\label{thm:minmaxLdelta}
	For $\gamma>0$ and $n\in\N$ let 
	\begin{align*}
		\mu_n(\gamma)\coloneqq \sup_{g_1,\ldots,g_{n-1}\in\H}\left( \inf_{\substack{h\in\mathrm D(\T^2)\cap\H,~\|h\|_H=1,\\ h\perp g_1,\ldots,g_{n-1}}}\langle h,\L_\gamma h\rangle_H \right).
	\end{align*}
	Then $\mu_n(\gamma)$ is finite, and either
	\begin{enumerate}
		\item[(i)] $\mu_n(\gamma)<\inf\left(\sigma_{ess}(-\B^2|_{\H})\right)$. In this case there exist at least $n$ eigenvalues (counting multiplicities) of $\L_\gamma$ below  $\inf\left(\sigma_{ess}(-\B^2|_{\H})\right)$, and $\mu_n(\gamma)$ is the $n$-th smallest eigenvalue (counting multiplicities) of $\L_\gamma$.
	\end{enumerate}
	or
	\begin{enumerate}
		\item[(ii)] $\mu_n(\gamma)=\inf\left(\sigma_{ess}(-\B^2|_{\H})\right)$. In this case there exist at most $n-1$ eigenvalues (counting multiplicities) of $\L_\gamma$ below  $\inf\left(\sigma_{ess}(-\B^2|_{\H})\right)$, and $\mu_{n+j}(\gamma)=\inf\left(\sigma_{ess}(-\B^2|_{\H})\right)$ for $j\in\N$.
	\end{enumerate}
\end{defnprop}
\begin{proof}
	First note that $\L_\gamma$ is bounded from below since $-\B^2\geq0$ and $\Ri$ is bounded, more precisely,
	\begin{equation*}
		\langle h,\L_\gamma h\rangle_H = \|\B h\|_H^2-\frac1\gamma\langle h,\Ri h\rangle_H\geq -\frac{\|\Ri\|_{H\to H}}\gamma
	\end{equation*}
	for $h\in \mathrm D(\T^2)\cap\H$ with $\|h\|_H=1$. Then the statement is simply the min-max principle for semi-bounded, self-adjoint operators, see~\cite[Thm.~XIII.1]{ReSi4} or~\cite[Prop.~II.32]{Si}.
\end{proof}
Before proceeding we want to make clear what we mean by the \enquote{multiplicity} of an eigenvalue of a self-adjoint operator, see, e.g.,~\cite[Sc.~7.1]{HiSi} for a detailed discussion.
\begin{remark}\label{thm:multiplicity}
	Let $A$ be a self-adjoint operator defined on a dense subset $ \mathrm D(A)$ of some Hilbert-space $V$. Let $\lambda\in\R$ be an eigenvalue of $A$, i.e., there exists $v\in\mathrm D(A)\setminus\{0\}$ such that $Av=\lambda v$. The {\em multiplicity} of $\lambda$ is defined as $\dim\left(\ker\left(A-\lambda\,\mathrm{id}\right)\right)\in\N\cup\{\infty\}$. Note that the multiplicity of $\lambda$ is finite if $\lambda\notin\sigma_{ess}(A)$.
\end{remark}

Our goal is to understand the properties of the mappings $]0,\infty[\ni\gamma\mapsto\mu_n(\gamma)$ for $n\in\N$. The following lemma is related to~\cite[XIII~Problem~2]{ReSi4} and~\cite[Thm.~II.33]{Si}.
\begin{lemma}\label{thm:evmonoton}
	For fixed $n\in\N$ the mapping $]0,\infty[\ni\gamma\mapsto\mu_n(\gamma)$ is non-decreasing and 
	\begin{equation}
		\left|\mu_n(\gamma)-\mu_n(\beta)\right|\leq\left|\frac1\gamma-\frac1\beta\right|\,\|\Ri\|_{H\to H}\label{eq:estevs}
	\end{equation}
	for $\gamma,\beta>0$. In particular, $]0,\infty[\ni\gamma\mapsto\mu_n(\gamma)$ is continuous.
\end{lemma}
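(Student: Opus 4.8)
The plan is to deduce everything from the min-max formula in Definition \& Proposition~\ref{thm:minmaxLdelta} together with the elementary fact that $\Ri\geq0$ is bounded, so that the quadratic forms of $\L_\gamma$ and $\L_\beta$ differ by a controllably small amount uniformly on the unit sphere of $\mathrm D(\T^2)\cap\H$. Note first that all operators $\L_\gamma$ share the same domain $\mathrm D(\T^2)\cap\H$ (since $\Ri$ is bounded), so the constraint sets appearing in the min-max expression are the same for every $\gamma$; only the functional $h\mapsto\langle h,\L_\gamma h\rangle_H$ changes with $\gamma$.

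For the monotonicity, I would fix $0<\gamma\leq\beta$ and observe that $\tfrac1\gamma\geq\tfrac1\beta$, hence by Lemma~\ref{lemma:Rproperties} (non-negativity of $\Ri$), for every $h\in\mathrm D(\T^2)\cap\H$,
\[
\langle h,\L_\gamma h\rangle_H=\|\B h\|_H^2-\frac1\gamma\langle h,\Ri h\rangle_H\leq\|\B h\|_H^2-\frac1\beta\langle h,\Ri h\rangle_H=\langle h,\L_\beta h\rangle_H.
\]
Taking the infimum over $h$ in the unit sphere orthogonal to fixed $g_1,\dots,g_{n-1}$ and then the supremum over $g_1,\dots,g_{n-1}$ preserves this inequality, so $\mu_n(\gamma)\leq\mu_n(\beta)$; thus $\gamma\mapsto\mu_n(\gamma)$ is non-decreasing.

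For the quantitative bound~\eqref{eq:estevs}, I would record that for any $h$ with $\|h\|_H=1$,
\[
\bigl|\langle h,\L_\gamma h\rangle_H-\langle h,\L_\beta h\rangle_H\bigr|=\Bigl|\frac1\gamma-\frac1\beta\Bigr|\,\langle h,\Ri h\rangle_H\leq\Bigl|\frac1\gamma-\frac1\beta\Bigr|\,\|\Ri\|_{H\to H},
\]
using $0\leq\langle h,\Ri h\rangle_H\leq\|\Ri\|_{H\to H}$. Since this bound is uniform in $h$, the infima in the definition of $\mu_n(\beta)$ and $\mu_n(\gamma)$ over any fixed admissible constraint set differ by at most $\bigl|\tfrac1\gamma-\tfrac1\beta\bigr|\|\Ri\|_{H\to H}$, and taking the supremum over the constraints preserves this; this yields~\eqref{eq:estevs}. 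Continuity of $\gamma\mapsto\mu_n(\gamma)$ on $]0,\infty[$ then follows immediately, since $\gamma\mapsto\tfrac1\gamma$ is continuous there. There is no real obstacle in this argument; the only point requiring a little care is the routine observation that a uniform estimate between two functionals passes through nested $\sup$–$\inf$ without loss, which is a standard property of the min-max expressions.
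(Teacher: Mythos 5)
Your proposal is correct and follows essentially the same route as the paper: prove the monotonicity and the uniform quantitative estimate at the level of the quadratic form $h\mapsto\langle h,\L_\gamma h\rangle_H$ (using $\Ri\geq 0$ and boundedness of $\Ri$), then observe that both properties pass unchanged through the nested $\sup$--$\inf$ in the min-max characterization. The paper merely packages the quadratic-form comparison under the auxiliary notation $f_h(\gamma)$ before carrying it over, which is a cosmetic difference.
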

\begin{proof}
	For $\gamma>0$ and $h\in\mathrm D(\T^2)\cap\H$ with $\|h\|_H=1$ let
	\begin{equation*}
		f_h(\gamma)\coloneqq\langle h,\L_\gamma h\rangle_H.
	\end{equation*}
	We first prove the claimed properties for $f_h$. For $\gamma<\beta$, $\Ri\geq0$ in the sense of quadratic forms (cf.\ Lemma~\ref{lemma:Rproperties}) implies that
	\begin{equation*}
		f_h(\gamma)=\|\B h\|_H^2-\frac1\gamma\,\langle h,\Ri h\rangle_H\leq\|\B h\|_H^2-\frac1\beta\,\langle h,\Ri h\rangle_H=f_h(\beta)
	\end{equation*}
	for $h$ as above. Furthermore, by the Cauchy-Schwarz inequality,
	\begin{equation*}
		\left|f_h(\gamma)-f_h(\beta)\right|=\left|\frac1\gamma-\frac1\beta\right|\,\langle h,\Ri h\rangle_H\leq \left|\frac1\gamma-\frac1\beta\right|\,\|\Ri\|_{H\to H}
	\end{equation*}
	for any $\gamma,\beta>0$.
	Now the monotonicity and estimate easily carry over from $f_h$ to the sup-inf in the definition of $\mu_n$. 
\end{proof}
The monotonicity of $\mu_n$ corresponds to the fact that decreasing $\gamma>0$ means that we assign more weight to the non-positive term $-\frac1\gamma\,\Ri$ of the operator $\L_\gamma$, which leads to the spectrum of $\L_\gamma$ to be shifted towards more negative values. In fact, the monotonicity from the previous lemma is even strict if $\mu_n$ departs from $\inf\left(\sigma(-\B^2|_{\H})\right)>0$; recall Proposition~\ref{prop:Bproperties}~\ref{it:Bprop6}.

\begin{lemma}\label{thm:evstrictlyincreasing}
	Fix $n\in\N$ and suppose that there exists $\gamma_0>0$ such that $\mu_n(\gamma_0)<\inf\left(\sigma(-\B^2|_{\H})\right)$. Then $]0,\gamma_0]\ni\gamma\mapsto\mu_n(\gamma)$ is (strictly) increasing.
\end{lemma}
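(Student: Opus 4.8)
The plan is to upgrade the non-decreasing monotonicity and continuity already established in Lemma~\ref{thm:evmonoton} to strict monotonicity by exhibiting a \emph{quantitative} gap. So fix $0<\gamma<\beta\le\gamma_0$; by Lemma~\ref{thm:evmonoton} it suffices to prove the strict inequality $\mu_n(\gamma)<\mu_n(\beta)$. Set $\lambda\coloneqq\mu_n(\gamma)$ and $s\coloneqq\inf\bigl(\sigma(-\B^2|_{\H})\bigr)>0$, cf.~Proposition~\ref{prop:Bproperties}\,\ref{it:Bprop6}. By monotonicity $\lambda\le\mu_n(\gamma_0)<s$, and since $s\le\inf\bigl(\sigma_{ess}(-\B^2|_{\H})\bigr)=\inf\bigl(\sigma_{ess}(\L_\gamma)\bigr)$ by Lemma~\ref{lemma:ess_spectrum_L}, case~(i) of Definition~\&~Proposition~\ref{thm:minmaxLdelta} applies: $\lambda$ is an isolated eigenvalue of $\L_\gamma$ of finite multiplicity, and by the standard refinement of the min-max principle (\cite[Thm.~XIII.1]{ReSi4}) the supremum defining $\mu_n(\gamma)$ is attained at the span $V_{n-1}$ of orthonormal eigenfunctions $h_1^\gamma,\dots,h_{n-1}^\gamma$ of $\L_\gamma$ belonging to $\mu_1(\gamma)\le\dots\le\mu_{n-1}(\gamma)$, so that $\mu_n(\gamma)=\inf\{\langle h,\L_\gamma h\rangle_H:h\in\mathrm D(\T^2)\cap\H,\ \|h\|_H=1,\ h\perp V_{n-1}\}$. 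The plan is to show that the corresponding infimum for $\L_\beta$ is strictly larger than $\lambda$ by a uniform amount, and then invoke the min-max characterization of $\mu_n(\beta)$ with the same trial space $V_{n-1}$.

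Two structural facts will drive the estimate. First, every eigenfunction $e$ of $\L_\gamma$ with eigenvalue $\lambda$ satisfies $\langle e,\Ri e\rangle_H>0$: otherwise $\sqrt{\Ri}\,e=0$, hence $\Ri e=0$ by Lemma~\ref{lemma:Rproperties}, so $-\B^2e=\L_\gamma e=\lambda e$ and $\lambda\in\sigma(-\B^2|_{\H})$, contradicting $\lambda<s$. Second, since $V_{n-1}$ is spanned by eigenfunctions it is $\L_\gamma$-invariant, so $\L_\gamma$ restricts to a self-adjoint operator $A$ on $V_{n-1}^\perp\cap\H$ with $\inf(\sigma(A))=\lambda$; as $\lambda<s\le\inf(\sigma_{ess}(A))$, the value $\lambda$ is isolated in $\sigma(A)$, giving a spectral gap $g_0\coloneqq\operatorname{dist}(\lambda,\sigma(A)\setminus\{\lambda\})>0$ and a finite-dimensional eigenspace $N\coloneqq\ker(A-\lambda)$ consisting of eigenfunctions of $\L_\gamma$; by the first fact and compactness of the unit sphere of $N$, the constant $c_0\coloneqq\min\{\langle e,\Ri e\rangle_H:e\in N,\ \|e\|_H=1\}$ is strictly positive. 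The core computation is then, for admissible $h\perp V_{n-1}$ with $\|h\|_H=1$: decompose $h=h'+h''$ with $h'\in N$ and $h''$ in the orthocomplement of $N$ inside $V_{n-1}^\perp\cap\H$; the spectral theorem for $A$ gives $\langle h,\L_\gamma h\rangle_H\ge\lambda+g_0\|h''\|_H^2$, while $\langle h,\L_\beta h\rangle_H=\langle h,\L_\gamma h\rangle_H+\bigl(\tfrac1\gamma-\tfrac1\beta\bigr)\langle h,\Ri h\rangle_H$ with $\tfrac1\gamma-\tfrac1\beta>0$ and $\langle h,\Ri h\rangle_H=\|\sqrt{\Ri}\,h\|_H^2$ bounded below, via the triangle inequality, by the square of $\sqrt{c_0}\,\|h'\|_H-\|\sqrt{\Ri}\|_{H\to H}\|h''\|_H$ whenever the latter is nonnegative. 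A short case distinction according to whether $\|h''\|_H^2$ lies above or below a small threshold $\eta\in{]0,1[}$ chosen from $c_0$ and $\|\sqrt{\Ri}\|_{H\to H}$ alone then yields $\langle h,\L_\beta h\rangle_H\ge\lambda+\min\{g_0\eta,\ \tfrac14(\tfrac1\gamma-\tfrac1\beta)c_0\}$ for all such $h$. Hence $\mu_n(\beta)\ge\lambda+\epsilon$ with $\epsilon>0$, so $\mu_n(\beta)>\mu_n(\gamma)$; together with Lemma~\ref{thm:evmonoton} this shows $]0,\gamma_0]\ni\gamma\mapsto\mu_n(\gamma)$ is strictly increasing.

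The main obstacle is precisely the passage from the pointwise strict inequality to a strict inequality for the infimum: the crude bound $\langle h,\L_\beta h\rangle_H>\langle h,\L_\gamma h\rangle_H\ge\lambda$ holds for every single admissible $h$ but by itself does not exclude $\mu_n(\beta)=\lambda$. What makes it work is the non-degeneracy of $\Ri$ on the $\lambda$-eigenspace (the first structural fact) combined with the spectral-gap splitting, so that $h$ either has a substantial transverse component $h''$, which raises $\langle h,\L_\gamma h\rangle_H$ by at least $g_0\|h''\|_H^2$, or is essentially concentrated in $N$, where $\langle h,\Ri h\rangle_H$ is bounded below by about $c_0$; in either regime the gain is uniform in $h$. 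This is the rigorous counterpart of the Feynman--Hellmann heuristic $\tfrac{d}{d(1/\gamma)}\mu_n(\gamma)=-\langle h,\Ri h\rangle_H<0$, and one could alternatively argue via analytic perturbation theory in the parameter $1/\gamma$ near each point of $]0,\gamma_0]$ followed by the fact that a continuous, locally strictly increasing function on an interval is strictly increasing; the direct argument above is preferred as it relies only on the min-max principle and the properties of $\B$ and $\Ri$ already at our disposal.
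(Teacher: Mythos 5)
Your proof is correct, but it follows a genuinely different route from the paper's.

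The paper's proof fixes $0<\gamma<\beta\le\gamma_0$ and constructs a single trial function $\tilde h=\sum_{j=1}^n c_j h_j^\beta$ as a normalized linear combination of $n$ orthonormal eigenfunctions of $\L_\beta$ (eigenvalues $\mu_1(\beta),\dots,\mu_n(\beta)$), subject to the $n-1$ linear constraints $\tilde h\perp h_1^\gamma,\dots,h_{n-1}^\gamma$; existence follows by dimension counting. Then $\langle\tilde h,\L_\beta\tilde h\rangle_H=\sum_j c_j^2\mu_j(\beta)\le\mu_n(\beta)<\inf\left(\sigma(-\B^2|_{\H})\right)$, which forces $\langle\tilde h,\Ri\tilde h\rangle_H>0$ and hence the strict pointwise inequality $\langle\tilde h,\L_\gamma\tilde h\rangle_H<\langle\tilde h,\L_\beta\tilde h\rangle_H$. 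The attainment of the sup in $\mu_n(\gamma)$ at $h_1^\gamma,\dots,h_{n-1}^\gamma$ then gives $\mu_n(\gamma)\le\langle\tilde h,\L_\gamma\tilde h\rangle_H<\langle\tilde h,\L_\beta\tilde h\rangle_H\le\mu_n(\beta)$. This is the eigenfunction-exchange trick (using eigenfunctions of $\L_\beta$ as test functions for $\L_\gamma$), modeled on \cite[Thm.~12.1]{LiLo01}.

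You instead establish a \emph{quantitative} gap $\mu_n(\beta)\ge\mu_n(\gamma)+\epsilon$ by splitting every admissible trial function $h\perp V_{n-1}$ into a part $h'$ in the $\lambda$-eigenspace $N$ of the restricted operator $A=\L_\gamma|_{V_{n-1}^\perp\cap\H}$ and a part $h''\perp N$, then exploiting the spectral gap $g_0$ of $A$ above $\lambda$ on the one hand and the uniform positivity $c_0$ of $\langle\cdot,\Ri\cdot\rangle_H$ on the unit sphere of $N$ (finite-dimensional, hence compact) on the other. Both proofs hinge on exactly the same structural fact --- that $\langle e,\Ri e\rangle_H>0$ for any eigenfunction $e$ of $\L_\gamma$ whose eigenvalue lies below $\inf\left(\sigma(-\B^2|_{\H})\right)$ --- but the paper applies it to a single, cleverly chosen trial function, whereas you apply it uniformly over $N$ and run a case distinction on $\|h''\|_H$. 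Your version is longer and needs more machinery (spectral reduction to $A$, compactness of the eigenspace, a threshold $\eta$), but it does yield an explicit lower bound on the increment $\mu_n(\beta)-\mu_n(\gamma)$ in terms of $g_0$, $c_0$, and $\frac1\gamma-\frac1\beta$, which the paper's argument does not produce (and which the lemma does not require).
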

\begin{proof}
	The following proof is related to \cite[proof of Thm.~12.1]{LiLo01}.
	First observe that Proposition~\ref{thm:minmaxLdelta} and Lemma~\ref{thm:evmonoton} imply that $\mu_j(\gamma)<\inf\left(\sigma(-\B^2|_{\H})\right)$ for all $1\leq j\leq n$ and $0<\gamma\leq\gamma_0$. In particular, for all such $j$ and $\gamma$ we know that $\mu_j(\gamma)$ is an eigenvalue of $\L_\gamma$. Choosing orthonormal eigenfunctions, we deduce that for every $0<\gamma\leq\gamma_0$ and $1\leq j\leq n$ there exists $h_j^\gamma\in \mathrm D(\T^2)\cap\H$ such that $\L_\gamma h_j^\gamma=\mu_j(\gamma)\,h_j^\gamma$, $\|h_j^\gamma\|_H=1$, and $h_i^\gamma\perp h_j^\gamma$ for $i\neq j$.
	
	Now fix $0<\gamma<\beta\leq\gamma_0$ and let $c_1,\ldots,c_n\in\R$ be such that $\tilde h\coloneqq\sum_{j=1}^nc_jh_j^\beta$ satisfies $1=\|\tilde h\|_H^2=\sum_{j=1}^nc_j^2$ and $\tilde h\perp h_1^\gamma,\ldots,h_{n-1}^\gamma$. Then
	\begin{equation}\label{eq:tildehquadraticform}
		\langle \tilde h,\L_\beta\tilde h\rangle_H = \sum_{j=1}^nc_j^2\,\mu_j(\beta)\leq\mu_n(\beta)\,\sum_{j=1}^nc_j^2 = \mu_n(\beta)<\inf\left(\sigma(-\B^2|_{\H})\right),
	\end{equation}
	which implies that $\langle\tilde h,\Ri\tilde h\rangle_H>0$ since $\|\B\tilde h\|_H^2\geq \inf\left(\sigma(-\B^2|_{\H})\right)$, cf.~\cite[Prop.~5.12]{HiSi}. Thus,
	\begin{equation}\label{eq:tildehquadraticformstrict}
		\langle \tilde h,\L_\gamma\tilde h\rangle_H=\|\B\tilde h\|_H^2-\frac1\gamma\langle\tilde h,\Ri\tilde h\rangle_H < \|\B\tilde h\|_H^2-\frac1\beta\langle\tilde h,\Ri\tilde h\rangle_H=\langle \tilde h,\L_\beta\tilde h\rangle_H.
	\end{equation}
	The last step is to observe that the supremum in $\mu_n(\gamma)$, see Definition~\ref{thm:minmaxLdelta}, is attained when choosing $g_1,\ldots,g_{n-1}=h_1^\gamma,\ldots,h_{n-1}^\gamma$.
	Hence, using~\eqref{eq:tildehquadraticform} and~\eqref{eq:tildehquadraticformstrict} yields that
	\begin{equation*}
		\mu_n(\gamma)=\inf_{\substack{h\in \mathrm D(\T^2)\cap\H,~\|h\|_H=1,\\ h\perp h_1^\gamma,\ldots,h_{n-1}^\gamma}}\langle h,\L_\gamma h\rangle_H\leq\langle\tilde h,\L_\gamma\tilde h\rangle_H<\langle\tilde h,\L_\beta\tilde h\rangle_H\leq\mu_n(\beta).\qedhere
	\end{equation*}
\end{proof}
We have seen in the proof of Lemma~\ref{thm:evstrictlyincreasing} that strict monotonicity of an eigenvalue can only be expected if an eigenvalue $\mu_n(\gamma)$ departs from $\inf\left(\sigma(-\B^2|_{\H})\right)>0$ as $\gamma$ decreases. We do not know if this happens for $n\geq2$, but we get the following result for the smallest eigenvalue:
\begin{lemma}\label{thm:ev1deltatominusinfty}
	It holds that
	$\lim_{\gamma\to0}\mu_1(\gamma)=-\infty$.	
\end{lemma}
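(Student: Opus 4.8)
The plan is to apply the $n=1$ case of the variational characterization in Definition \& Proposition~\ref{thm:minmaxLdelta}, which — since $\L_\gamma$ is self-adjoint and bounded below on $\mathrm D(\T^2)\cap\H$ — reads
\[
\mu_1(\gamma)=\inf_{\substack{h\in\mathrm D(\T^2)\cap\H,\;\|h\|_H=1}}\langle h,\L_\gamma h\rangle_H=\inf_{\substack{h\in\mathrm D(\T^2)\cap\H,\;\|h\|_H=1}}\Bigl(\|\B h\|_H^2-\tfrac1\gamma\,\langle\Ri h,h\rangle_H\Bigr).
\]
Hence it suffices to produce one fixed competitor $h_0\in\mathrm D(\T^2)\cap\H$ with $\|h_0\|_H=1$ and $c_0\coloneqq\langle\Ri h_0,h_0\rangle_H>0$: then $\mu_1(\gamma)\le\|\B h_0\|_H^2-c_0/\gamma$ for all $\gamma>0$, and letting $\gamma\to0$ forces $\mu_1(\gamma)\to-\infty$ because $\|\B h_0\|_H^2$ is independent of $\gamma$ and $c_0>0$.

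It remains to construct such an $h_0$, i.e.\ to show that $\Ri$ does not vanish on $\H$. By the computation in the proof of Lemma~\ref{lemma:Rproperties},
\[
\langle\Ri f,f\rangle_H=16\pi^2\int_{\Rmin}^{\Rmax}e^{3\mu_0+\lambda_0}\,(2r\mu_0'+1)\,j_f(r)^2\,r^2\,dr,
\]
with $2r\mu_0'+1>0$ on $]\Rmin,\Rmax[$ by the field equation~\eqref{eq:fieldeq2}, so $\langle\Ri f,f\rangle_H>0$ as soon as $j_f\not\equiv0$. I would take a nonnegative, not identically vanishing $g\in C^\infty_c(\Omega_0)$ that is even in $w$, and set $h_0\coloneqq wg/\|wg\|_H$. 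Then $h_0\in C^\infty_c(\Omega_0)$ is odd in $w$, hence $h_0\in\H$; expressed in the action-angle type variables~\eqref{eq:AAvariables} it is twice continuously differentiable in $\theta$ with periodic derivatives and is compactly supported in the $(E,L)$--variables, so by the characterization of $\mathrm D(\T^2)$ in Proposition~\ref{prop:transport_char}~\ref{it:Tprop2} we have $h_0\in\mathrm D(\T^2)$; and, up to the normalizing constant,
\[
j_{h_0}(r)=\frac{\pi}{r^2}\int_0^\infty\!\!\int_\R w^2\,g(r,w,L)\,dw\,dL
\]
is strictly positive for every $r$ in the (nonempty) radial projection of $\{g>0\}$, so $j_{h_0}\not\equiv0$ and $c_0>0$, as required.

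There is no serious obstacle here: the claim is a routine consequence of the min-max principle once the family $\L_\gamma$ has been set up. The only items needing a line of justification are that $\Ri\ne0$ — immediate from its explicit quadratic form together with $\mu_0'\ge0$ — and that the chosen test function genuinely lies in the operator domain $\mathrm D(\T^2)\cap\H$, which is why I exhibit a concrete smooth, compactly supported, odd-in-$w$ function rather than merely appealing to form domains. If one prefers to avoid the regularity bookkeeping, one can argue abstractly instead: $\Ri\ne0$, $\Ri$ depends on its argument only through $j_f=j_{f_-}$ and maps $H$ into $\H$, and $\mathrm D(\T^2)\cap\H$ is dense in $\H$ since $\T^2$ preserves $w$-parity; hence $\langle\Ri h,h\rangle_H>0$ for some $h\in\mathrm D(\T^2)\cap\H$, which is all that is needed.
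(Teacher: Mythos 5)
Your proof is correct and follows the same strategy as the paper's: invoke the $n=1$ case of the min-max characterization and exhibit a fixed competitor $h_0\in\mathrm D(\T^2)\cap\H$ with $\langle\Ri h_0,h_0\rangle_H>0$, so that $\mu_1(\gamma)\le\|\B h_0\|_H^2-c_0/\gamma\to-\infty$. The only difference is that the paper simply asserts the existence of such an $\tilde h$ without spelling it out, whereas you supply a concrete construction (an odd-in-$w$ multiple of a nonnegative bump in $C^\infty_c(\Omega_0)$) and verify membership in $\mathrm D(\T^2)$ and positivity of $j_{h_0}$ — a useful but inessential elaboration.
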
 
\begin{proof}
	Since 
	\begin{equation*}
		\mu_1(\gamma)=\inf_{\substack{h\in \mathrm D(\T^2)\cap\H,\\\|h\|_H=1}}\langle h,\L_\gamma h\rangle_H = \inf_{\substack{h\in \mathrm D(\T^2)\cap\H,\\\|h\|_H=1}}\left ( \|\B h\|_H^2-\frac1\gamma\langle h,\Ri h\rangle_H\right )
	\end{equation*}
	for $\gamma>0$, we just have to fix some $\tilde h\in\mathrm D(\T^2)\cap\H$ with $\|\tilde h\|_H=1$ and $\langle \tilde h,\Ri\tilde h\rangle_H\neq0$ to deduce that
	\begin{equation*}
		\mu_1(\gamma)\leq \|\B\tilde h\|_H^2-\frac1\gamma\langle\tilde h,\Ri\tilde h\rangle_H\to-\infty ,\quad\gamma\to0.\qedhere
	\end{equation*} 
\end{proof}
On the other hand, the limiting behavior for the eigenvalues $\mu_n(\gamma)$ as $\gamma$ goes to infinity is rather simple:
\begin{lemma}\label{thm:evsdeltatoinfty}
	For every $n\in\N$, $\lim_{\gamma\to\infty}\mu_n(\gamma)\geq\inf\left(\sigma(-\B^2|_{\H})\right)>0$.
\end{lemma}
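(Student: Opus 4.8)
The plan is to read off a $\gamma$-uniform lower bound for the quadratic form of $\L_\gamma$ from the spectral semi-boundedness of $-\B^2|_{\H}$ (Proposition~\ref{prop:Bproperties}~\ref{it:Bprop6}) together with the boundedness of $\Ri$, and then push this bound through the min-max characterization of $\mu_n(\gamma)$.

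First I would recall that $\H\subset\ker(\B)^\perp=\im(\B)$ by Proposition~\ref{prop:Bproperties}~\ref{it:Bprop3}, that $-\B^2|_{\H}$ is self-adjoint, and that $\sigma(-\B^2|_{\H})\subset[\inf(\sigma(-\B^2|_{\H})),\infty[$ with $\inf(\sigma(-\B^2|_{\H}))>0$ by Proposition~\ref{prop:Bproperties}~\ref{it:Bprop6}. Hence, by the standard lower bound for the quadratic form of a semi-bounded self-adjoint operator (cf.~\cite[Prop.~5.12]{HiSi}) combined with the skew-symmetry of $\B$,
\[
\|\B h\|_H^2=\langle h,-\B^2 h\rangle_H\geq \inf\left(\sigma(-\B^2|_{\H})\right)\|h\|_H^2,\quad h\in\mathrm D(\T^2)\cap\H.
\]
Together with $\langle h,\Ri h\rangle_H\leq\|\Ri\|_{H\to H}\|h\|_H^2$ from Lemma~\ref{lemma:Rproperties}, every $h\in\mathrm D(\T^2)\cap\H$ with $\|h\|_H=1$ satisfies
\[
\langle h,\L_\gamma h\rangle_H=\|\B h\|_H^2-\frac1\gamma\langle h,\Ri h\rangle_H\geq\inf\left(\sigma(-\B^2|_{\H})\right)-\frac{\|\Ri\|_{H\to H}}{\gamma}.
\]

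Since the right-hand side does not depend on $h$, it is in particular a lower bound on the inf over any subset of normalized $h$ orthogonal to a fixed tuple $g_1,\dots,g_{n-1}\in\H$, hence also on the sup over all such tuples; that is, by the min-max formula in Definition~\ref{thm:minmaxLdelta},
\[
\mu_n(\gamma)\geq\inf\left(\sigma(-\B^2|_{\H})\right)-\frac{\|\Ri\|_{H\to H}}{\gamma}\qquad\text{for all }\gamma>0,\ n\in\N.
\]
Finally I would let $\gamma\to\infty$: the limit on the left exists in $]-\infty,\infty]$ by the monotonicity established in Lemma~\ref{thm:evmonoton}, and the displayed inequality yields $\lim_{\gamma\to\infty}\mu_n(\gamma)\geq\inf(\sigma(-\B^2|_{\H}))>0$.

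I do not expect a genuine obstacle here; the result is essentially the observation that the non-positive perturbation $-\frac1\gamma\Ri$ of the positive operator $-\B^2|_{\H}$ becomes negligible as $\gamma\to\infty$. The only mild point is that the inf in $\mu_n(\gamma)$ runs over the smaller, orthogonality-constrained set of test functions, but since the quadratic-form bound holds for every normalized $h\in\mathrm D(\T^2)\cap\H$ it holds a fortiori on that set, so the estimate is automatically uniform in $n$.
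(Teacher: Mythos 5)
Your proof is correct and takes essentially the same approach as the paper: combining the semi-bound $\sigma(-\B^2|_{\H})\subset[\epsilon,\infty[$ with the boundedness of $\Ri$ to get the $\gamma$-uniform quadratic-form lower bound, then passing this through the min-max formula and invoking Lemma~\ref{thm:evmonoton} for existence of the limit. The paper's proof is merely terser, stating the displayed estimate and leaving the min-max step implicit.
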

\begin{proof}
	First note that the limit exists by Lemma~\ref{thm:evmonoton}. Furthermore, for every $h\in  \mathrm D(\T^2)\cap\H$ with $\|h\|_H=1$ we have the estimate
	\begin{equation*}
		\langle h,\L_\gamma h\rangle_H=\|\B h\|_H^2-\frac1\gamma\langle h,\Ri h\rangle_H\geq \inf\left(\sigma(-\B^2|_{\H})\right)-\frac{\|\Ri\|_{H\to H}}\gamma.\qedhere
	\end{equation*}
\end{proof}
The monotonicity of $\mu_n$ allows us to translate the number of negative eigenvalues of $\L=\L_1$ into the position of the zeros of the mappings $]0,\infty[\ni\gamma\mapsto\mu_n(\gamma)$ for $n\in\N$. This is why we define the following quantities:
\begin{defnrem}\label{def:gammastar}
	For $n\in\N$ we define $\gamma^\ast_n\in[0,\infty[$ as follows:
	\begin{enumerate}[label=(\roman*)]
		\item\label{it:defgammastar1} If $\mu_n(\gamma)>0$ for every $\gamma>0$, let $\gamma^\ast_n\coloneqq0$.
		\item\label{it:defgammastar2} Otherwise, define $\gamma^\ast_n$ via $\mu_n(\gamma^\ast_n)=0$.
	\end{enumerate}
	In the case \ref{it:defgammastar2}, $\gamma^\ast_n>0$ is uniquely determined due to the Lemmas~\ref{thm:evmonoton}, \ref{thm:evstrictlyincreasing}, and \ref{thm:evsdeltatoinfty}. By Proposition~\ref{thm:minmaxLdelta} zero is an eigenvalue of $\L_{\gamma^\ast_n}$.
\end{defnrem}
Note that $\gamma_{n+1}^\ast\leq\gamma_n^\ast$ for $n\in\N$. Furthermore, the analysis of the present section yields the following key result:
\begin{proposition}\label{thm:evsLanddeltastar} It holds that
	\begin{equation*}
		\#\{\mathrm{negative~eigenvalues~of~}\L\mathrm{~(counting~multiplicities)}\}=\#\{n\in\N\mid\gamma_n^\ast>1\}.
	\end{equation*}
	Here, negative means $<0$, and the multiplicity of an eigenvalue is explained in Remark~\ref{thm:multiplicity}. Note that each negative eigenvalue of $\L$ has finite multiplicity since $\inf\left(\sigma_{ess}(\L)\right)>0$. Nonetheless, the number of negative eigenvalues could be infinite in principal.
\end{proposition}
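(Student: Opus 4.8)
The plan is to read off the number of negative eigenvalues of $\L=\L_1$ from the min-max values $\mu_n(1)$ and then translate the condition $\mu_n(1)<0$ into a condition on $\gamma_n^\ast$ using the monotonicity of $\gamma\mapsto\mu_n(\gamma)$. First I would invoke Lemma~\ref{lemma:ess_spectrum_L}, which gives $\inf(\sigma_{ess}(\L))>0$ and $\sigma_{ess}(-\B^2|_\H)=\sigma_{ess}(\L)$; consequently every negative eigenvalue of $\L$ lies strictly below $\inf(\sigma_{ess}(-\B^2|_\H))$, so case~(i) of Definition \& Proposition~\ref{thm:minmaxLdelta} is applicable and shows that the eigenvalues of $\L$ below $\inf(\sigma_{ess}(\L))$, counted with multiplicity and listed increasingly, are exactly $\mu_1(1)\le\mu_2(1)\le\dots$ (up to the index where they reach $\inf(\sigma_{ess}(-\B^2|_\H))$ and stay there). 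Hence the number of negative eigenvalues of $\L$, counted with multiplicity, equals $\#\{n\in\N\mid\mu_n(1)<0\}$, and it remains to prove the elementwise equivalence $\mu_n(1)<0\iff\gamma_n^\ast>1$ for each fixed $n$.

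For the direction ``$\gamma_n^\ast>1\Rightarrow\mu_n(1)<0$'' I would argue as follows. Since $\gamma_n^\ast>0$, Definition \& Remark~\ref{def:gammastar} puts us in case~\ref{it:defgammastar2}, so $\mu_n(\gamma_n^\ast)=0$; note $0<\inf(\sigma(-\B^2|_\H))$ by Proposition~\ref{prop:Bproperties}~\ref{it:Bprop6}. Therefore Lemma~\ref{thm:evstrictlyincreasing}, applied with $\gamma_0=\gamma_n^\ast$, shows that $\gamma\mapsto\mu_n(\gamma)$ is strictly increasing on $]0,\gamma_n^\ast]$. Because $1\in]0,\gamma_n^\ast[$ when $\gamma_n^\ast>1$, strict monotonicity yields $\mu_n(1)<\mu_n(\gamma_n^\ast)=0$.

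For the converse ``$\mu_n(1)<0\Rightarrow\gamma_n^\ast>1$'', the assumption rules out the possibility $\mu_n>0$ on all of $]0,\infty[$, so again case~\ref{it:defgammastar2} of Definition \& Remark~\ref{def:gammastar} holds and $\gamma_n^\ast>0$ is the unique value with $\mu_n(\gamma_n^\ast)=0$. By Lemma~\ref{thm:evmonoton} the map $\gamma\mapsto\mu_n(\gamma)$ is non-decreasing, so if we had $\gamma_n^\ast\le 1$ this would force $0=\mu_n(\gamma_n^\ast)\le\mu_n(1)<0$, a contradiction; hence $\gamma_n^\ast>1$. Combining the two implications gives $\#\{\text{negative eigenvalues of }\L\}=\#\{n\in\N\mid\mu_n(1)<0\}=\#\{n\in\N\mid\gamma_n^\ast>1\}$, which is the claim; the equality of cardinalities holds even when both are infinite, since the two index sets are literally equal.

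The only point requiring a little care is the bookkeeping in the first step: one must be sure that $\mu_n(1)$ enumerates the eigenvalues of $\L$ below $\inf(\sigma_{ess}(\L))$ with the correct multiplicities and that this happens precisely for an initial segment of $n\in\N$ (possibly empty or all of $\N$), consistent with the monotonicity $\mu_n(1)\le\mu_{n+1}(1)$ in the min-max formula and with $\gamma_{n+1}^\ast\le\gamma_n^\ast$. This is entirely contained in Definition \& Proposition~\ref{thm:minmaxLdelta} together with $\inf(\sigma_{ess}(\L))>0$, so no new argument is needed, and all remaining steps are formal consequences of the monotonicity Lemmas~\ref{thm:evmonoton} and~\ref{thm:evstrictlyincreasing}.
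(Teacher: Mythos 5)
Your proof is correct and follows essentially the same route as the paper: you reduce the count to $\#\{n\mid\mu_n(1)<0\}$ via the min-max principle and the fact that $\inf(\sigma_{ess}(\L))>0$, and then translate $\mu_n(1)<0$ into $\gamma_n^\ast>1$ using the monotonicity results (Lemmas~\ref{thm:evmonoton} and~\ref{thm:evstrictlyincreasing}, with Lemma~\ref{thm:evsdeltatoinfty} implicitly supplying well-definedness of $\gamma_n^\ast$). The paper states these two steps in two sentences; you have merely spelled out the elementwise equivalence in both directions, which is a faithful elaboration rather than a different argument.
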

\begin{proof}
	By Definition~\ref{thm:minmaxLdelta},
	\begin{equation*}
		\#\{\text{negative eigenvalues of }\L\text{ (counting multiplicities)}\}=\#\{n\in\N\mid\mu_n(1)<0\}.
	\end{equation*} 
	Lemmas~\ref{thm:evmonoton}, \ref{thm:evstrictlyincreasing}, and \ref{thm:evsdeltatoinfty} imply that $\mu_n(1)<0$ for some $n\in\N$ is equivalent to $\gamma_n^\ast>1$.
\end{proof}

\subsection{The Birman-Schwinger operator}\label{ssc:birman_schwinger_operator}

We have now collected all necessary tools to establish the connection between the spectrum of $\L$ and the following operator.

\begin{definition}
 	The operator
	\begin{equation*}
		Q\coloneqq-\sqrt\Ri\,\B^{-2}\,\sqrt\Ri\colon\H\to\H
	\end{equation*}
	is the {\em Birman-Schwinger operator} associated to $\L$. 
\end{definition}
Recall Proposition~\ref{prop:Bproperties}~\ref{it:Bprop5} and Lemma~\ref{lemma:Rproperties} for the definition of $\B^{-2}$ and $\sqrt{\Ri}$, respectively, and note that $Q$ is well-defined since $\H\subset\ker(\B^2)^\perp$ by Proposition~\ref{prop:Bproperties}~\ref{it:Bprop3}.
Observe that $Q$ looks different from the analogue operator $Q_\lambda$ with $\lambda=0$ in \cite[(8.1)]{HaReSt21}; the operator there corresponds to $\Ri\,\B^{-2}$ in our setting. However, using the square root of $\Ri$ has the advantage that our $Q$ is symmetric, which is not the case for $Q_\lambda$ from \cite{HaReSt21}. Such a \enquote{symmetric Birman-Schwinger operator} is common in quantum mechanics, see, e.g.,~\cite[Thm.~XIII.10]{ReSi4} or \cite[Thm.~12.4]{LiLo01}. 

We now derive the connection between the eigenvalues of $\L_\gamma$ and $Q$.

\begin{proposition}[Birman-Schwinger principle]\label{thm:Birman}
	Let $\gamma>0$. Then $0$ is an eigenvalue of $\L_\gamma$ if and only if $\gamma$ is an eigenvalue of $Q$. 
	
	In this case, the multiplicities of the these eigenvalues are equal, and the associated eigenfunctions can be transformed explicitly into one another:
	\begin{enumerate}[label=(\alph*)]
		\item If $f\in \mathrm D(\T^2)\cap\H$ is an eigenfunction of $\L_\gamma$ to the eigenvalue $0$, then
		\begin{equation}\label{eq:hfromg}
			g\coloneqq\sqrt\Ri\,f \in\H
		\end{equation}
		defines an eigenfunction of $Q$ to the eigenvalue $\gamma$. 
		\item If $g\in\H$ is an eigenfunction of $Q$ to the eigenvalue $\gamma$, then
		\begin{equation}\label{eq:gfromh}
			f\coloneqq -\B^{-2}\sqrt\Ri\,g\in\mathrm D(\T^2)\cap\H
		\end{equation}
		defines an eigenfunction of $\L_\gamma$ to the eigenvalue $0$. 
	\end{enumerate}
\end{proposition}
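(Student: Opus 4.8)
The plan is to carry out the two formal calculations indicated by the statement and then upgrade them to honest statements about multiplicities. First I would prove the equivalence together with the two transformations (a) and (b). For direction (a), suppose $f\in\mathrm D(\T^2)\cap\H$ satisfies $\L_\gamma f=0$, i.e.\ $\B^2 f=-\tfrac1\gamma\Ri f$. Since $\H\subset\ker(\B^2)^\perp=\im(\B^2)$ by Proposition~\ref{prop:Bproperties}~\ref{it:Bprop3}, the element $\B^2 f$ lies in $\im(\B^2)$, so $\B^{-2}$ may be applied to it and $f=(\mathrm{id}-\Pi)f$ (because $f\in\ker(\B^2)^\perp$) gives $f=-\tfrac1\gamma\B^{-2}\Ri f=-\tfrac1\gamma\B^{-2}\sqrt\Ri\sqrt\Ri f$. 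Applying $\sqrt\Ri$ and setting $g\coloneqq\sqrt\Ri\,f$ (which lies in $\H$ by Lemma~\ref{lemma:Rproperties}) yields $Qg=-\sqrt\Ri\,\B^{-2}\sqrt\Ri\,g=\gamma\,g$. One must check $g\ne0$: if $\sqrt\Ri f=0$ then $\Ri f=0$, so $\B^2 f=0$, so $f\in\ker(\B^2)\cap\ker(\B^2)^\perp=\{0\}$, a contradiction. Hence $g$ is a genuine eigenfunction of $Q$ for the eigenvalue $\gamma$.

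For direction (b), suppose $g\in\H$ with $Qg=\gamma g$, $g\ne0$. Set $f\coloneqq-\B^{-2}\sqrt\Ri\,g$; this is well-defined because $\sqrt\Ri\,g\in\H\subset\im(\B^2)$, and $f\in\mathrm D(\T^2)\cap\ker(\B^2)^\perp\subset\mathrm D(\T^2)\cap\H$ by Lemma~\ref{lemma:Bsqinv} together with the fact that $\B^{-2}$ conserves $w$-parity. Then $\sqrt\Ri\,f=-\sqrt\Ri\,\B^{-2}\sqrt\Ri\,g=Qg=\gamma g$, so $g=\tfrac1\gamma\sqrt\Ri\,f$. Applying $\B^2$ to $f$ and using $\B^2\B^{-2}=\mathrm{id}$ on $\im(\B^2)$ gives $\B^2 f=-\sqrt\Ri\,g=-\tfrac1\gamma\sqrt\Ri\sqrt\Ri f=-\tfrac1\gamma\Ri f$, i.e.\ $\L_\gamma f=-\B^2 f-\tfrac1\gamma\Ri f=0$. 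Again $f\ne0$, since $f=0$ would force $g=\tfrac1\gamma\sqrt\Ri f=0$. This establishes the equivalence and shows that the maps $f\mapsto\sqrt\Ri f$ and $g\mapsto-\B^{-2}\sqrt\Ri g$ go between the two eigenspaces.

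It remains to show these two maps are mutually inverse linear isomorphisms between $\ker(\L_\gamma)\cap\H$ and $\ker(Q-\gamma\,\mathrm{id})$, which then gives equality of multiplicities (both finite, since $\inf\sigma_{ess}(\L)>0$ forces $0\notin\sigma_{ess}(\L_\gamma)$ when $\gamma$ is a Birman–Schwinger eigenvalue, by the same relative-compactness argument as in Lemma~\ref{lemma:ess_spectrum_L}). Linearity is clear; I would verify the compositions. Starting from an eigenfunction $f$ of $\L_\gamma$: applying (a) then (b) gives $-\B^{-2}\sqrt\Ri\sqrt\Ri f=-\B^{-2}\Ri f=\gamma\B^{-2}\B^2 f=\gamma f$ — wait, the normalization: from $\B^2 f=-\tfrac1\gamma\Ri f$ we get $\B^{-2}\Ri f=-\gamma\B^{-2}\B^2 f=-\gamma f$, so the composition returns $\gamma f$, i.e.\ it is $\gamma$ times the identity on $\ker(\L_\gamma)\cap\H$; similarly the other composition is $\gamma$ times the identity on $\ker(Q-\gamma\,\mathrm{id})$ using $\sqrt\Ri(-\B^{-2}\sqrt\Ri g)=Qg=\gamma g$. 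Since $\gamma>0$, both compositions are invertible, so each of the two maps is a bijection between the eigenspaces, proving $\dim\ker(\L_\gamma)\cap\H=\dim\ker(Q-\gamma\,\mathrm{id})$.

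The main obstacle is bookkeeping about domains and the non-explicit projection $\Pi$: one must be careful that $\B^{-2}$ is only defined on $\im(\B^2)$ and only produces elements of $\ker(\B^2)^\perp\cap\mathrm D(\T^2)$, and that every intermediate object (in particular $\sqrt\Ri f$, $\sqrt\Ri g$, and $f$ itself) actually lands in $\H$ and in the correct domain; all of this is supplied by Proposition~\ref{prop:Bproperties}~\ref{it:Bprop3},~\ref{it:Bprop5}, Lemma~\ref{lemma:Bsqinv}, and Lemma~\ref{lemma:Rproperties}, so the argument is essentially a careful assembly of already-established facts rather than a new estimate.
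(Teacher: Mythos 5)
Your proposal is correct and follows essentially the same route as the paper: apply $\B^{-2}$ (resp.\ $\B^2$) to the eigenvalue equation, insert $\Ri=\sqrt\Ri\sqrt\Ri$, and verify non-vanishing of the transformed eigenfunction. The only place you go beyond the paper is in the multiplicity step: the paper simply asserts that linear independence is preserved by the two transformations, while you check that the two compositions equal $\gamma\cdot\mathrm{id}$ on the respective eigenspaces, which is a cleaner way to see that both maps are bijections; this is a welcome extra detail but not a different argument.
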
 
\begin{proof}
	Let $f\in \mathrm D(\T^2)\cap\H$ be a solution of $\L_\gamma f=0$, i.e., $-\gamma\,\B^2f=\Ri\,f$. Applying $-\sqrt\Ri\,\B^{-2}$ onto the latter equation and writing $\Ri=\sqrt\Ri\,\sqrt\Ri$ then yields that
	\begin{equation*}
		\gamma\,g = \gamma\,\sqrt\Ri\,f= Q\left(\sqrt\Ri\,f\right)=Qg,
	\end{equation*}  
	with $g$ defined by~\eqref{eq:hfromg}. Moreover, the eigenfunction identity for $f$ can be written as $f=-\frac1\gamma\B^{-2}\Ri f=-\frac1\gamma\B^{-2}\sqrt{\Ri}g$, which shows $g\neq0$ since $f\neq0$.  
	
	Conversely, if $g\in\H$ is a solution of the eigenvalue equation $Qg=\gamma g$ and $f\in \mathrm D(\T^2)\cap\H$ is defined via~\eqref{eq:gfromh}, then 
	\begin{equation*}
		\L_\gamma f= -\B^2f-\frac1\gamma\,\Ri f=\sqrt\Ri\,g-\frac1\gamma\,\sqrt\Ri\,Qg=0.
	\end{equation*} 
	In particular, applying $\sqrt{\Ri}$ on~\eqref{eq:gfromh} yields that $\sqrt{\Ri}f=Qg=\gamma g\neq0$, i.e., $f\neq0$. 
	
	Linear independence of eigenfunctions is preserved by the transformations \eqref{eq:hfromg} and \eqref{eq:gfromh}.
\end{proof}
Although Proposition~\ref{thm:Birman} only establishes a connection between zero eigenvalues of $\L_\gamma$ and eigenvalues of $Q$, we can apply Proposition~\ref{thm:evsLanddeltastar} to infer the following quantitative control on the number of negative eigenvalues of $\L=\L_1$. 
\begin{proposition}\label{thm:evsLandQ} It holds that
	\begin{align*}
		\#\{\mathrm{negative~eigenvalues~of~}\L\}=\#\{\mathrm{eigenvalues}>1\mathrm{~of~}Q\}.
	\end{align*}
	In both sets we count the eigenvalues including their multiplicities.
\end{proposition}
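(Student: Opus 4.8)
The plan is to combine the two pieces of machinery that have already been set up: on the one hand Proposition~\ref{thm:evsLanddeltastar}, which rewrites the number of negative eigenvalues of $\L=\L_1$ (with multiplicity) as $\#\{n\in\N\mid\gamma_n^\ast>1\}$, where $\gamma_n^\ast$ is the unique $\gamma$ at which the $n$-th min-max value $\mu_n(\gamma)$ of $\L_\gamma$ vanishes (or $0$ if $\mu_n$ never vanishes); and on the other hand the Birman-Schwinger principle Proposition~\ref{thm:Birman}, which says that for each fixed $\gamma>0$, $0$ is an eigenvalue of $\L_\gamma$ precisely when $\gamma$ is an eigenvalue of $Q$, with matching multiplicities. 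The bridge between these two statements is the observation that the $\gamma_n^\ast$, counted with multiplicity, are exactly the points $\gamma>0$ at which $\L_\gamma$ has $0$ in its spectrum, hence exactly the eigenvalues of $Q$ counted with multiplicity.

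Concretely, I would proceed as follows. First, fix $\gamma>0$ and count how many indices $n$ satisfy $\gamma_n^\ast=\gamma$. By Definition~\&~Remark~\ref{def:gammastar} together with Lemmas~\ref{thm:evmonoton} (monotonicity and continuity of $\gamma\mapsto\mu_n(\gamma)$) and~\ref{thm:evstrictlyincreasing} (strict monotonicity wherever $\mu_n<\inf\sigma(-\B^2|_\H)$), the value $\mu_n(\gamma)=0$ holds for exactly those $n$ with $\gamma_n^\ast=\gamma$; since $\mu_n(\gamma)$ is, by Proposition~\ref{thm:minmaxLdelta}, the $n$-th eigenvalue of $\L_\gamma$ below the essential spectrum (and $0<\inf\sigma_{ess}(\L_\gamma)$ by Lemma~\ref{lemma:ess_spectrum_L}), this number equals the multiplicity of the eigenvalue $0$ of $\L_\gamma$. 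By Proposition~\ref{thm:Birman} this in turn equals the multiplicity of $\gamma$ as an eigenvalue of $Q$. Summing over $\gamma>1$ and using that $\gamma_n^\ast>1$ for $n=1,\dots,N$ and $\gamma_n^\ast\le 1$ for $n>N$ (the sequence $(\gamma_n^\ast)$ being non-increasing, as noted after Definition~\&~Remark~\ref{def:gammastar}), we get
\[
\#\{n\in\N\mid\gamma_n^\ast>1\}=\sum_{\gamma>1}\#\{n\in\N\mid\gamma_n^\ast=\gamma\}=\sum_{\gamma>1}\dim\ker(Q-\gamma\,\mathrm{id})=\#\{\text{eigenvalues}>1\text{ of }Q\},
\]
all with multiplicities. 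Combining with Proposition~\ref{thm:evsLanddeltastar} finishes the proof.

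One point that needs a little care, rather than a genuine obstacle, is that this argument implicitly uses that $Q$ has no eigenvalues other than those of the form $\gamma_n^\ast$; this is immediate from the Birman-Schwinger principle in the other direction, since an eigenvalue $\gamma$ of $Q$ forces $0\in\sigma(\L_\gamma)$, hence $\mu_n(\gamma)=0$ for some $n$, i.e.\ $\gamma=\gamma_n^\ast$. A second subtlety is the compatibility of the two notions of multiplicity: the geometric multiplicity $\dim\ker(Q-\gamma\,\mathrm{id})$ used for the self-adjoint (in fact compact, by Lemma~\ref{lemma:relcompact_Bsq_R} and the structure of $Q=-\sqrt\Ri\,\B^{-2}\sqrt\Ri$) operator $Q$, and the ``number of linearly independent eigenfunctions'' counting used for $\L$; these agree because both $\L$ and $Q$ are self-adjoint, so geometric and algebraic multiplicities coincide, and Proposition~\ref{thm:Birman} already records that the transformations~\eqref{eq:hfromg}, \eqref{eq:gfromh} send a basis of one eigenspace to a basis of the other. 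The main work has therefore all been front-loaded into the preceding lemmas; the proof here is essentially a bookkeeping argument chaining Proposition~\ref{thm:evsLanddeltastar} to Proposition~\ref{thm:Birman} via the defining property of the $\gamma_n^\ast$.
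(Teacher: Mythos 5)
Your proof is correct and follows the same route the paper takes: the paper establishes the proposition by pointing directly to Proposition~\ref{thm:evsLanddeltastar} (translating negative eigenvalues of $\L$ into $\#\{n\mid\gamma_n^\ast>1\}$) combined with the Birman-Schwinger principle Proposition~\ref{thm:Birman}, exactly as you do; your write-up simply makes explicit the bookkeeping (matching of multiplicities via the min-max values $\mu_n(\gamma)$ at a fixed $\gamma$) that the paper leaves implicit.
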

At first glance, Proposition~\ref{thm:evsLandQ} does not seem to be of any help since it simply translates the original eigenvalue problem for $\L$ into another eigenvalue problem. However, from a functional analysis point of view, $Q$ is much nicer than the unbounded operator $\L$.
\begin{lemma}\label{thm:Qprop}
	The operator $Q=-\sqrt\Ri\,\B^{-2}\,\sqrt\Ri\colon\H\to\H$ is linear, bounded, symmetric, non-negative, and compact.
\end{lemma}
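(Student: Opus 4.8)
The plan is to read the statement off the structural results already established for $\Ri$, $\B$, and $\B^{-2}$; the argument is essentially a bookkeeping of domains and ranges together with two short computations. First I would record well-definedness, linearity, and boundedness. For $g\in\H$ we have $\sqrt\Ri\,g\in\H$ by Lemma~\ref{lemma:Rproperties}, and $\H\subset\ker(\B^2)^\perp=\im(\B^2)$ by Proposition~\ref{prop:Bproperties}~\ref{it:Bprop3}, so $\B^{-2}$ may legitimately be applied to $\sqrt\Ri\,g$ and maps it into $\mathrm D(\T^2)\cap\ker(\B^2)^\perp\subset H$; a further application of $\sqrt\Ri$ returns an element of $\H$. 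Hence $Q\colon\H\to\H$ is a well-defined linear operator, and it is bounded because $\sqrt\Ri\colon H\to H$ and $\B^{-2}\colon\im(\B^2)\to H$ are bounded (Lemma~\ref{lemma:Rproperties}, Proposition~\ref{prop:Bproperties}~\ref{it:Bprop5}).

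Next I would check symmetry. Since $\sqrt\Ri$ and $\B^{-2}$ are symmetric and $\sqrt\Ri\,f,\sqrt\Ri\,g\in\H\subset\im(\B^2)=\mathrm D(\B^{-2})$ for $f,g\in\H$, one has
\[
\langle Qf,g\rangle_H=-\langle\B^{-2}\sqrt\Ri\,f,\sqrt\Ri\,g\rangle_H=-\langle\sqrt\Ri\,f,\B^{-2}\sqrt\Ri\,g\rangle_H=\langle f,Qg\rangle_H.
\]
For non-negativity, set $h\coloneqq\B^{-2}\sqrt\Ri\,f\in\mathrm D(\T^2)\cap\ker(\B^2)^\perp$ for $f\in\H$, so that $\B^2h=\sqrt\Ri\,f$; using the skew-adjointness of $\B$ from Proposition~\ref{prop:Bproperties}~\ref{it:Bprop1} gives
\[
\langle Qf,f\rangle_H=-\langle\B^{-2}\sqrt\Ri\,f,\sqrt\Ri\,f\rangle_H=-\langle h,\B^2h\rangle_H=\langle\B h,\B h\rangle_H=\|\B h\|_H^2\geq0.
\]
(Equivalently, $-\B^2|_{\H}$ is non-negative---indeed bounded below by $\epsilon>0$ by Corollary~\ref{cor:poincareB}---hence so is its inverse $-\B^{-2}$ on $\im(\B^2)$, and $Q=\sqrt\Ri\,(-\B^{-2})\,\sqrt\Ri$ is then non-negative as a symmetric sandwich of a non-negative operator.)

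Finally, compactness is immediate: Lemma~\ref{lemma:relcompact_Bsq_R}, i.e.\ the relative $(\B^2|_{\H})$-compactness of $\sqrt\Ri|_{\H}$, says precisely that $\sqrt\Ri\,\B^{-2}\colon\H\to\H$ is compact, so $Q=-\bigl(\sqrt\Ri\,\B^{-2}\bigr)\circ\sqrt\Ri$ is the composition of a compact operator with the bounded operator $\sqrt\Ri\colon\H\to\H$ and is therefore compact. I do not expect a genuine obstacle in this lemma; the only point requiring care is the domain/range bookkeeping around the identifications $\ker(\B^2)^\perp=\im(\B^2)$ and $\H\subset\im(\B^2)$ (so that every composition is taken on a space where $\B^{-2}$ is defined, and so that each factor preserves the subspace $\H$), and this is entirely supplied by Proposition~\ref{prop:Bproperties}.
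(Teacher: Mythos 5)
Your proof is correct and takes essentially the same route as the paper: linearity, boundedness, and symmetry come from the corresponding properties of $\sqrt\Ri$ and $\B^{-2}$ together with the domain/range identifications $\H\subset\ker(\B^2)^\perp=\im(\B^2)$; compactness from Lemma~\ref{lemma:relcompact_Bsq_R}; and non-negativity by turning $\langle Qf,f\rangle_H$ into a square. The only cosmetic difference is that the paper writes that square as $\|\B^{-1}\sqrt\Ri f\|_H^2$ using the skew-symmetry of $\B^{-1}$, while you set $h=\B^{-2}\sqrt\Ri f$ and use the skew-adjointness of $\B$ to get $\|\B h\|_H^2$ -- the same computation in slightly different clothing.
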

\begin{proof} 
	This proof relies on the properties of $\B^2$ and $\sqrt \Ri$ shown in  Proposition~\ref{prop:Bproperties} and Lemma~\ref{lemma:Rproperties}.
	$Q$ is linear, bounded, and symmetric, since $\B^{-2}$ and $\sqrt\Ri$ have these three properties.
	In addition, $\sqrt\Ri$ being relatively $\B^2$-compact, see Lemma~\ref{lemma:relcompact_Bsq_R}, means that $\sqrt\Ri\,\B^{-2}$ is a compact operator. 	Thus, $Q$ is compact as the composition of a compact and a bounded operator.
	
	To see the non-negativity of $Q$, we use the symmetry of $\sqrt \Ri$ and $\B^{-1}$ to obtain that
	\[ 
		\langle Qf,f \rangle_H = \langle -\B^{-2} \sqrt \Ri f, \sqrt \Ri f\rangle_H = \| \B^{-1} \sqrt \Ri f\|_H^2 \geq 0, \quad f\in \H. \qedhere
	\] 
\end{proof}
Of course, $Q$ having all these nice properties extends our understanding of its spectrum immensely. For example, we immediately obtain that $\sigma(Q)\setminus\{0\}$ is contained in $]0,\infty[$ and consists of discrete eigenvalues of finite multiplicity with only possible accumulation point at zero which might be an eigenvalue of infinite multiplicity. Together with Proposition~\ref{thm:evsLandQ} we now know that $\L$ can only have a finite number of negative eigenvalues.

But that is not all. The specific structure of $Q=-\sqrt\Ri\,\B^{-2}\,\sqrt\Ri$ allows us to limit the hunt for eigenvalues $>1$ of $Q$ to a reduced setting.

\subsection{The Mathur operator}\label{sc:mathur_operator}
\subsubsection{Definition of the Mathur operator}\label{ssc:mathur_definition}

The reduction process is based on the following simple observation which goes back to {Mathur}~\cite{Ma}.
\begin{remark}\label{thm:Mathursobservation}
	If $f\in\H$ is an eigenfunction of $Q=-\sqrt\Ri\,\B^{-2}\,\sqrt\Ri$ corresponding to a non-zero eigenvalue, then $f\in\im\left(\sqrt\Ri\right)$.
\end{remark} 
The beautiful thing is that functions in $\im\left(\sqrt\Ri\right)$ have a particularly nice structure. More precisely, 
\begin{equation*}
	\im\left(\sqrt\Ri\right)\subset\left\{f=f(r,w,L)=\vert\varphi'(E,L)\vert\,w\, \alpha_0(r) \,F(r)\text{ a.e.}\mid F\in L^2([\Rmin,\Rmax])\right\},
\end{equation*}
where $\Rmin$ and $\Rmax$ denote the minimal and maximal radii of the steady state and 
\begin{equation}
	\alpha_0(r) \coloneqq \frac{e^{ (\frac{\lambda_0}{2}+\frac{\mu_0}{2} )(r)}}{\sqrt{r(\lambda_0'+\mu_0')(r)}},\quad r\in]\Rmin,\infty[.
\end{equation}
Moreover, if $f,g$ are of the form $f(r,w,L)=\vert\varphi'(E,L)\vert\,w\,\alpha_0(r)\,F(r)$ and $g(r,w,L)=\vert\varphi'(E,L)\vert\,w\,\alpha_0(r)\,G(r)$, then
\begin{equation}\label{eq:imRisoL2}
	\langle f,g\rangle_H = \langle F,G \rangle_{L^2([\Rmin,\Rmax])}
\end{equation}
by~\eqref{eq:HLRidentity}. 
Based on these observations, 
the reduced operator is defined as follows.
\begin{definition}\label{def:mathur_op}
	Let $F\in L^2([\Rmin,\Rmax])$ and define $f\in\H$ by
	\begin{align}\label{eq:gseperatedG}
		f(r,w,L)\coloneqq\vert\varphi'(E,L)\vert\,w\,\alpha_0(r)\,F(r)\quad\text{for a.e.\ } (r,w,L)\in\Omega_0.
	\end{align}
	Since $Qf\in\im\left(\sqrt\Ri\right)\subset\H$, there exists a unique $G\in L^2([\Rmin,\Rmax])$ such that
	\begin{align*}
		Qf(r,w,L)=\vert\varphi'(E,L)\vert\,w\,\alpha_0(r)\,G(r)\quad\text{for a.e.\ } (r,w,L)\in\Omega_0.
	\end{align*}
	The resulting mapping
	\begin{align*}
		\M\colon L^2([\Rmin,\Rmax])\to L^2([\Rmin,\Rmax]),\;F\mapsto G
	\end{align*}
	is the {\em reduced operator} or {\em Mathur operator}.
\end{definition}
As already indicated by Remark~\ref{thm:Mathursobservation}, non-zero eigenvalues of $Q$ and $\M$ are equivalent to each other.
\begin{lemma}\label{thm:evsQandM}
	Let $\gamma\in\R\setminus\{0\}$. Then $\gamma$ is an eigenvalue of $Q$ if and only if $\gamma$ is an eigenvalue of $\M$. In this case, the multiplicities of these eigenvalues are equal.
\end{lemma}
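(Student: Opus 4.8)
The plan is to exploit the fact that $Q$ and $\M$ are intertwined by the isometry $F\mapsto f:=|\varphi'|\,w\,\alpha_0\,F$ from $L^2([\Rmin,\Rmax])$ onto the closed subspace $V:=\overline{\im(\sqrt\Ri)}\subset\H$, as made precise in~\eqref{eq:gseperatedG} and~\eqref{eq:imRisoL2}. First I would record that $V$ is invariant under $Q$: indeed $Q(\H)\subset\im(Q)\subset\im(\sqrt\Ri)\subset V$, so in particular $Q$ maps $V$ into itself, and by symmetry of $Q$ (Lemma~\ref{thm:Qprop}) the orthogonal complement $V^\perp$ is also $Q$-invariant. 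Writing $U\colon L^2([\Rmin,\Rmax])\to V$ for the unitary map induced by~\eqref{eq:gseperatedG} (which is onto $V$ since $\im(\sqrt\Ri)$ has exactly this form and is dense in $V$), the definition of $\M$ reads precisely $Q|_V = U\,\M\,U^{-1}$; that is, $\M$ is unitarily equivalent to the restriction $Q|_V$.

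Given this, the forward direction is immediate: if $\M F=\gamma F$ with $F\neq0$, then $f:=UF\in V\setminus\{0\}$ satisfies $Qf=\gamma f$, so $\gamma$ is an eigenvalue of $Q$. For the converse, suppose $\gamma\in\R\setminus\{0\}$ is an eigenvalue of $Q$ with eigenfunction $f\in\H\setminus\{0\}$. By Remark~\ref{thm:Mathursobservation}, $f\in\im(\sqrt\Ri)\subset V$; hence $f=UF$ for a unique $F\in L^2([\Rmin,\Rmax])\setminus\{0\}$, and $U\,\M F=Q f=\gamma f=U(\gamma F)$, so $\M F=\gamma F$. Thus the nonzero eigenvalues of $Q$ and $\M$ coincide.

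For the equality of multiplicities, fix $\gamma\neq0$ and consider $\ker(Q-\gamma)\subset\H$. Every element of this kernel lies in $V$ by Remark~\ref{thm:Mathursobservation} (each nonzero eigenfunction to the eigenvalue $\gamma$ is in $\im(\sqrt\Ri)$, and $0\in V$ trivially), so $\ker(Q-\gamma)\subset V$, and since $Q|_V=U\M U^{-1}$ we get $\ker(Q-\gamma)=U\big(\ker(\M-\gamma)\big)$. As $U$ is a linear isomorphism onto $V$, it preserves dimension, whence $\dim\ker(Q-\gamma)=\dim\ker(\M-\gamma)$; both are finite because $Q$ is compact (Lemma~\ref{thm:Qprop}) and $\gamma\neq0$. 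The only point requiring a little care—and the one I would flag as the main technical obstacle—is the precise statement that $\im(\sqrt\Ri)$ consists \emph{exactly} of the functions of the form $|\varphi'|\,w\,\alpha_0\,F$ with $F\in L^2$, together with the isometry property~\eqref{eq:imRisoL2}; this rests on the explicit formula for $\sqrt\Ri$ in Lemma~\ref{lemma:Rproperties} and the identity~\eqref{eq:HLRidentity}, and one should check that the map $F\mapsto f$ is genuinely onto $\im(\sqrt\Ri)$ (not merely into the displayed set) so that every $Q$-eigenfunction pulls back to a well-defined $\M$-eigenfunction. Once that identification is in hand, the rest is the soft unitary-equivalence argument sketched above. $\Box$
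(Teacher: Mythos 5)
Your argument is correct and follows essentially the same route as the paper: the forward direction uses Remark~\ref{thm:Mathursobservation} to place every $Q$-eigenfunction (to a nonzero eigenvalue) in $\im(\sqrt\Ri)$, and the isometry~\eqref{eq:imRisoL2} transfers eigenspaces and hence multiplicities. The unitary-equivalence framing is a tidy way to organize the same two ingredients.

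One small imprecision in the setup: you define $V := \overline{\im(\sqrt\Ri)}$ and assert that the map $U$ induced by~\eqref{eq:gseperatedG} is onto $V$; this would require $\im(\sqrt\Ri)$ to be dense in $\im(U)$, but the paper only records the inclusion $\im(\sqrt\Ri)\subset\{|\varphi'|\,w\,\alpha_0\,F \mid F\in L^2([\Rmin,\Rmax])\}$, not equality. The cleaner choice is to take $V := \im(U)$ directly: this set is automatically closed (being the range of an isometry from a Hilbert space), it contains $\im(\sqrt\Ri)$ and hence $\im(Q)$ so $Q|_V$ makes sense, and by Remark~\ref{thm:Mathursobservation} every $Q$-eigenfunction to a nonzero eigenvalue lies in it. With that $V$, the identity $Q|_V = U\,\M\,U^{-1}$ is literally Definition~\ref{def:mathur_op}, and the rest of your argument goes through verbatim. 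The concern you flag at the end — whether $\im(U)$ covers $\im(\sqrt\Ri)$ — is precisely the inclusion displayed before Definition~\ref{def:mathur_op}, so nothing further needs checking; in particular, you never need $\im(\sqrt\Ri)$ to \emph{equal} $\im(U)$, only to be contained in it.
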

\begin{proof}
	The equivalence of eigenvalues is essentially given by Remark~\ref{thm:Mathursobservation}. 
	The fact that the multiplicities of an eigenvalue of $Q$ and $\M$ are the same follows by~\eqref{eq:imRisoL2} since orthogonality of eigenfunctions is conserved.
\end{proof}
Having this lemma and Proposition~\ref{thm:evsLandQ} in mind, we now want to analyze the spectrum of $\M$. It seems reasonable that this is easier than the analogous spectral analysis of $Q$ since $\M$ acts on a function space consisting of functions of one variable only (compared to three variables in the case of $Q$); this is why we call $\M$ {\em reduced}.
Still, using~\eqref{eq:imRisoL2} it is easy to verify that $\M$ inherits all the functional analytic properties of $Q$ from Lemma~\ref{thm:Qprop}.
\begin{proposition}\label{thm:Mprop}
	The Mathur operator $\M\colon L^2([\Rmin,\Rmax])\to L^2([\Rmin,\Rmax])$ is a linear, bounded, symmetric, non-negative, and compact operator.
\end{proposition}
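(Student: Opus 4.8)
The plan is to realize $\M$ as a conjugate of (a restriction of) the Birman--Schwinger operator $Q$ and then simply transport the properties of $Q$ established in Lemma~\ref{thm:Qprop}. First I would introduce the linear map $\iota\colon L^2([\Rmin,\Rmax])\to\H$ defined by $(\iota F)(r,w,L)\coloneqq\vert\varphi'(E,L)\vert\,w\,\alpha_0(r)\,F(r)$, i.e.\ $\iota F$ is the function $f$ from~\eqref{eq:gseperatedG}. Since $\varphi'=\varphi'(E(r,w,L),L)$ is even in $w$, the function $\iota F$ is odd in $w$, and taking $f=g=\iota F$ in~\eqref{eq:imRisoL2} gives $\|\iota F\|_H=\|F\|_{L^2([\Rmin,\Rmax])}$; hence $\iota$ does map into $\H$, is a linear isometry, and---by polarization, or directly by~\eqref{eq:imRisoL2}---satisfies $\langle\iota F,\iota G\rangle_H=\langle F,G\rangle_{L^2([\Rmin,\Rmax])}$ for all $F,G$. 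In particular $\iota$ is injective, and its image $V\coloneqq\iota\big(L^2([\Rmin,\Rmax])\big)$ is a closed subspace of $\H$ (being isometric to a complete space), so $\iota^{-1}\colon V\to L^2([\Rmin,\Rmax])$ is a well-defined bounded linear map of norm $1$.

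Next I would record that $Q(\H)\subset V$: since $Q=-\sqrt\Ri\,\B^{-2}\sqrt\Ri$, we have $Qf=-\sqrt\Ri\big(\B^{-2}\sqrt\Ri f\big)\in\im\big(\sqrt\Ri\big)\subset V$ for every $f\in\H$, the last inclusion being the structural fact recorded just before Definition~\ref{def:mathur_op}. Consequently the element $G$ appearing in Definition~\ref{def:mathur_op} is precisely $\iota^{-1}(Qf)$ with $f=\iota F$, so that
\[
\M=\iota^{-1}\circ Q\circ\iota\colon L^2([\Rmin,\Rmax])\to L^2([\Rmin,\Rmax]).
\]

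From this representation each claimed property follows from the corresponding property of $Q$ in Lemma~\ref{thm:Qprop}. Linearity is clear as a composition of linear maps. Boundedness follows from $\|\M F\|_{L^2}=\|Q\iota F\|_H\le\|Q\|_{\H\to\H}\,\|\iota F\|_H=\|Q\|_{\H\to\H}\,\|F\|_{L^2}$. For symmetry, using that $\iota$ preserves the inner product and that $Q\iota F,\,Q\iota G\in V$, one gets $\langle\M F,G\rangle_{L^2}=\langle Q\iota F,\iota G\rangle_H=\langle\iota F,Q\iota G\rangle_H=\langle F,\M G\rangle_{L^2}$. Non-negativity follows from $\langle\M F,F\rangle_{L^2}=\langle Q\iota F,\iota F\rangle_H\ge0$. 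Finally, $\M$ is compact as the composition of the compact operator $Q$ with the bounded operators $\iota$ and $\iota^{-1}$. I do not expect a genuine obstacle here: the only points needing a little care are verifying that $\iota$ takes values in $\H$ and is isometric (both immediate from~\eqref{eq:imRisoL2}) and that $Q$ maps into $V$, so that the composition $\iota^{-1}\circ Q\circ\iota$ is meaningful on all of $L^2([\Rmin,\Rmax])$.
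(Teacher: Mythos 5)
Your proposal is correct and fills in exactly the reasoning the paper compresses into the sentence preceding the proposition ("using~\eqref{eq:imRisoL2} it is easy to verify that $\M$ inherits all the functional analytic properties of $Q$"). Writing $\M=\iota^{-1}\circ Q\circ\iota$ with $\iota$ the isometric embedding and transporting linearity, boundedness, symmetry, non-negativity, and compactness from Lemma~\ref{thm:Qprop} is precisely the intended argument.
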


\subsubsection{Explicit representation of the Mathur operator}\label{ssc:mathur_operator_explicit}
Up to this point, the Birman-Schwinger operator and the Mathur operator are given as abstract objects which do not seem particularly useful for applications. One would expect that we have to know $\mathcal B^{-1}$ or the projection $\Pi$, defined in Section~\ref{ssc:inverseB_inverseBsq}, explicitly to infer further properties of the Birman-Schwinger operator $Q=-\sqrt{\Ri} \, \B^{-2} \sqrt{\Ri}$ similar to~\cite{HaReSt21}. Rather surprisingly, knowledge about $\widetilde{\B}^{-1}$ is sufficient to bring $\M$ into a handy form, cf.~Definition~\ref{def:Btildeinv}.

As seen in the last subsection, we need to consider functions $f\in\H$ of the form $ f= |\varphi'| w \alpha_0(r) F(r)$ with $F\in L^2([\Rmin,\Rmax])$. For such an ansatz we observe
\[ 
\sqrt \Ri \, f = \sqrt \Ri \ \left (|\varphi'| w \alpha_0 F\right ) = |\varphi' | w \beta_0 F
\]
by~\eqref{eq:HLRidentity} and Lemma~\ref{lemma:Rproperties}, where we have introduced
\begin{equation}\label{eq:d0_def}
	\beta_0(r) \coloneqq e^{\frac{3\mu_0}{2}- \frac{\lambda_0}{2}} \frac{\sqrt{2r \mu_0'+1}}{r}, \quad r\in ]\Rmin,\infty[. 
\end{equation}
In the following calculation we employ the same notational conventions as in Section~\ref{sc:properties_operators}.
\vspace{3mm}\\
\underline{\textit{Step 1: Computing ${\B}^{-1}\sqrt \Ri \, f$}}
\vspace{3mm}

\noindent Since $\sqrt{\Ri}f$ is odd in $w$, we have
\begin{equation}
	\widetilde{\B}^{-1} \sqrt \Ri \, f = \T^{-1} \sqrt \Ri \, f + 4\pi |\varphi'| E e^{-\lambda_0-\mu_0} \int_r^{\Rmax} e^{(3\lambda_0+\mu_0)(s)} p_{ \T^{-1}\sqrt \Ri \, f}(s) s \, ds; \label{eq:TinvsqrtR}
\end{equation}
recall Definition~\ref{def:Btildeinv}, $\lambda_{\sqrt{\Ri}f}=0$, and that the expression~\eqref{eq:TinvsqrtR} is even in $w$.
We now apply Proposition~\ref{prop:transport_char}~\ref{it:Tprop7} to express the first term in action-angle type variables; we use the same notation as introduced at the start of Section~\ref{ssc:kerB_kerBbot}. Having~\eqref{eq:even_in_w_theta} in mind, we only consider $\theta\in[0,\frac12]$. Then, using the oddness-in-$w$ of $\sqrt{\Ri}f$, Fubini's theorem, and changing variables via $s=R(\tau,E,L)$ or $\tau=\theta(s,E,L)$, cf.~\eqref{eq:defthetarEL}, yields that
\begin{align*}
	(\T^{-1}\sqrt \Ri \,&f)(\theta,E,L) = 	\left (\T^{-1}(|\varphi'| w \beta_0 F)\right )(\theta,E,L)\\
	&= -T(E,L) \Bigg ( \int_0^\theta |\varphi'(E,L)| W(\tau,E,L) \beta_0(R(\tau,E,L)) F(R(\tau,E,L)) \, d\tau \\
	&\quad  + \int_0^1 \tau  |\varphi'(E,L)|  W(\tau,E,L) \beta_0(R(\tau,E,L)) F(R(\tau,E,L))  d\tau\Bigg ) \\
	&= - \int_{r_-(E,L)}^{R(\theta,E,L)} |\varphi'(E,L)|  E e^{(\lambda_0-2\mu_0)(s)} \beta_0(s) F(s) \, ds \\
	&\quad  -  \int_{r_-(E,L)}^{r_+(E,L)} (2\theta(s,E,L)-1) |\varphi'(E,L)|  E e^{(\lambda_0-2\mu_0)(s)} \beta_0(s) F(s) \, ds \\
	&=  |\varphi'(E,L)|  E \int^{\Rmax}_{R(\theta,E,L)} e^{\lambda_0-2\mu_0} \beta_0 F \, ds \\
	&\quad - |\varphi'(E,L)|  E \left ( \int^{\Rmax}_{r_+(E,L)}  e^{\lambda_0-2\mu_0} \beta_0 F \, ds + 2 \int_{r_-(E,L)}^{r_+(E,L)} \theta(s,E,L)  e^{\lambda_0-2\mu_0} \beta_0 F \, ds \right ) \\
	&=  |\varphi'(E,L)|  E \int^{\Rmax}_{R(\theta,E,L)}  e^{\lambda_0-2\mu_0} \beta_0F \, ds + h_F(E,L),
\end{align*}
for $\theta \in [0,\frac 12]$ and a.e.\ $(E,L) \in \Omega_0^{EL}$ with $h_F(E,L)$ defined appropriately; note that $h_F\in H$ by~\eqref{eq:phi_prime_bound}.  As to the second term in \eqref{eq:TinvsqrtR}, eqn.~\eqref{eq:HLRidentity} yields that
\[ 
p_{ \T^{-1} \sqrt \Ri f}(s) = \frac{e^{-2\lambda_0(s)} (\lambda_0'+\mu_0')(s)}{4\pi s} \int^{\Rmax}_{s}  e^{\lambda_0-2\mu_0} \beta_0 F \, d\sigma  + p_{h_F}(s), \quad s \in [\Rmin,\Rmax].
\]
Altogether, we can now compute that
\begin{align*}
	\widetilde{\B}^{-1} \sqrt \Ri \,f &=  |\varphi'|  E  \left ( \int^{\Rmax}_{r}  e^{\lambda_0-2\mu_0} \beta_0 F \, ds +  e^{-\lambda_0-\mu_0} \int_{r}^{\Rmax} (e^{\lambda_0+\mu_0})' \int_s^{\Rmax} e^{\lambda_0-2\mu_0}  \beta_0 F \, d\sigma ds \right ) \\
	& \quad + h_F + 4\pi   |\varphi'|  E e^{-\lambda_0-\mu_0}  \int_{r}^{\Rmax} e^{3\lambda_0+\mu_0} p_{h_F} s\, ds\\
	&=  |\varphi'|  E e^{-\lambda_0-\mu_0} \int_{r}^{\Rmax}e^{2\lambda_0-\mu_0}  \beta_0 F \, ds  \\
	&\quad + h_F + 4\pi   |\varphi'|  E e^{-\lambda_0-\mu_0}  \int_{r }^{\Rmax} e^{3\lambda_0+\mu_0} p_{h_F} s\, ds
\end{align*}
after integrating by parts in the last step. The last two terms in this equation constitute an element of $\ker(\B)$ with generator $h_F$ in the sense of Lemma~\ref{lemma:kernel_B} since $h_F$ is a function of $(E,L)$ only; recall that $h_F$ is an element of $H$. In particular, since $\Pi$ is the orthogonal projection onto $\ker(\B)$, we get with \eqref{eq:Binv_Bsqinv} that
\begin{equation}\label{eq:Binv_seperate}
	\B^{-1} \sqrt \Ri \,f = (\mathrm{id}- \Pi) 	\widetilde{\B}^{-1} \sqrt \Ri \,f   =(\mathrm{id}- \Pi)\left ( |\varphi'|  E e^{-\lambda_0-\mu_0} \int_{r}^{\Rmax}e^{2\lambda_0-\mu_0}  \beta_0 F \, ds\right).
\end{equation}
\vspace{3mm}
\underline{\textit{Step 2: The Birman-Schwinger operator $Q$}}
\vspace{3mm}

\noindent We now insert this result into $Q$, i.e.,
\begin{align}
	Qf &= -\sqrt \Ri\, \B^{-2} \sqrt \Ri \, f = -4\pi \sqrt r |\varphi'| e^{2\mu_0+\lambda_0} \sqrt{\frac{2r\mu_0'+1}{\lambda_0'+\mu_0'}}\, w \, j_{\B^{-2} \sqrt \Ri \, f} \nonumber\\
	&= |\varphi'| w e^{\mu_0} \sqrt{\frac{2r\mu_0'+1}{r(\lambda_0'+\mu_0')}}\, \lambda_{\B^{-1}\sqrt \Ri\, f}. \label{eq:Qf_first}
\end{align}
For the last equality we have applied \eqref{eq:lambdadotB} to get rid of one $\B^{-1}$-term. We emphasize that this is a crucial step for our investigation: Firstly, \eqref{eq:lambdadotB} allows us to massively reduce the complexity of $Qf$ as we do not have to calculate $\B^{-2} \sqrt \Ri\, f$ which might not even be possible explicitly. Secondly, $\lambda_{\B^{-1}\sqrt \Ri\, f}$ is a nice term in itself, as it introduces an integration in $(r,w,L)$ of $\B^{-1}\sqrt \Ri\, f$ and we can thus write it as a scalar product. At first glance, this does not seem to help with the problem of not being able to determine $\B^{-1}$ explicitly. However, the symmetry of $\Pi$ facilicates the decoupling of the projection from the unknown~$F$.  More precisely, for $r\in [\Rmin,\Rmax]$,
\begin{multline*}
	\lambda_{\B^{-1}\sqrt \Ri\, f}(r)= \frac{e^{2\lambda_0(r)}}{r} \left \langle  |\varphi'|E e^{-\lambda_0-\mu_0} \mathds{1}_{[\Rmin,r]},  \B^{-1}\sqrt \Ri\, f  \right \rangle_H \\
	= \frac{e^{2\lambda_0(r)}}{r} \left \langle (\mathrm{id}-\Pi) \left ( |\varphi'|E e^{-\lambda_0-\mu_0}\mathds{1}_{[\Rmin,r]} \right ), |\varphi'|  E e^{-\lambda_0-\mu_0} \int_{s}^{\Rmax}e^{2\lambda_0-\mu_0}  \beta_0 F \, d\sigma \right \rangle_H,
\end{multline*}
where we inserted~\eqref{eq:lambdaeqexplicit} and~\eqref{eq:Binv_seperate}, and used the symmetry of $\Pi$. In addition, writing the scalar product as an integral and integrating by parts yields that
\begin{align*}
	\lambda&_{\B^{-1}\sqrt \Ri\, f}(r) = \frac{e^{2\lambda_0(r)}}{r} \int_{\Rmin}^{\Rmax} 4\pi s^2 \rho_{(\mathrm{id}-\Pi) \left ( |\varphi'|E e^{-\lambda_0-\mu_0} \mathds{1}_{[\Rmin,r]}\right )  }(s) \int_s^{\Rmax} e^{2\lambda_0-\mu_0} \beta_0  F \, d\sigma ds\\
	&= \frac{e^{2\lambda_0(r)}}{r} \int_{\Rmin}^{\Rmax} \partial_s \left (se^{-2\lambda_0}  \lambda_{(\mathrm{id}-\Pi) \left ( |\varphi'|E e^{-\lambda_0-\mu_0} \mathds{1}_{[\Rmin,r]}\right ) }(s)\right ) \int_s^{\Rmax} e^{2\lambda_0-\mu_0} \beta_0  F \, d\sigma ds \\
	&= \frac{e^{2\lambda_0(r)}}{r} \int_{\Rmin}^{\Rmax} s e^{-\mu_0(s)} \lambda_{(\mathrm{id}-\Pi) \left ( |\varphi'|E e^{-\lambda_0-\mu_0} \mathds{1}_{[\Rmin,r]}\right ) }(s) \beta_0(s) F(s) \, ds;
\end{align*}
recall~\eqref{eq:fieldeq1_lin} and note that the boundary terms at $s=\Rmin$ and $s=\Rmax$ vanish.  It turns out that we can reveal a \enquote{hidden} symmetry in the projection term, more precisely, 
\begin{multline*}
	s e^{-2\lambda_0(s)}   \lambda_{(\mathrm{id}-\Pi) \left ( |\varphi'|E e^{-\lambda_0-\mu_0} \mathds 1_{[\Rmin,r]}\right ) }(s)  \\
	= \left \langle (\mathrm{id}-\Pi) \left ( |\varphi'|E e^{-\lambda_0-\mu_0} \mathds{1}_{[\Rmin,r]}\right ), |\varphi'|  E e^{-\lambda_0-\mu_0} \mathds{1}_{[\Rmin,s]}  \right \rangle_H \eqqcolon I(r,s)
\end{multline*}
is obviously symmetric in $r,s\geq \Rmin$ since $\Pi$ is symmetric. We put these results into $\eqref{eq:Qf_first}$ and obtain that
\[ 
Qf = |\varphi'| w \frac{e^{2\lambda_0+\mu_0} }{r}\sqrt{\frac{2r\mu_0'+1}{r(\lambda_0'+\mu_0')}}\, \int_{\Rmin}^{\Rmax}  e^{\frac{\mu_0(s)}{2}+\frac{3\lambda_0(s)}{2}}\frac{\sqrt{2s\mu_0'(s)+1}}s
  \, I(r,s)  F(s) \, ds;
\]
recall the definition of $\beta_0$ in~\eqref{eq:d0_def}.
\vspace{3mm} \\
\underline{\textit{Step 3: The Mathur operator $\M$}}
\vspace{3mm}

\noindent In order to explicitly derive the Mathur operator $\M$ introduced in Definition~\ref{def:mathur_op}, we adjust for the factor $\alpha_0$ and get the following result: 

\begin{proposition}\label{prop:Mathur_Kernel}
	For  $G\in L^2([\Rmin,\Rmax])$ we have
	\[ 
	(\M G)(r) =  \int_{\Rmin}^{\Rmax} K(r,s) G(s) \, ds, \quad r\in[\Rmin,\Rmax],
	\]
	where the kernel $K \in L^2([\Rmin,\Rmax]^2)$ is defined as
	\begin{equation}\label{eq:kernel_K}
		K(r,s) = 	e^{ \frac{\mu_0(r)}{2} + \frac{3\lambda_0(r)}{2} }  e^{\frac{\mu_0(s)}{2}+\frac{3\lambda_0(s)}{2}}   \frac{\sqrt{2r\mu_0'(r)+1}\sqrt{2s\mu_0'(s)+1}}{rs}  \, I(r,s),
	\end{equation}
	with $I$ given by  
	\begin{equation}
		I(r,s) = \left \langle (\mathrm{id}-\Pi) \left ( |\varphi'|E e^{-\lambda_0-\mu_0}  \mathds{1}_{[\Rmin,r]} \right ), |\varphi'|  E e^{-\lambda_0-\mu_0}  \mathds{1}_{[\Rmin,s]}  \right \rangle_H
	\end{equation}
	for $r,s \in [\Rmin,\Rmax]$. The kernel is symmetric, i.e., $K(r,s)=K(s,r)$.
	In particular, $\M$ is a Hilbert-Schmidt operator, see~\cite[Thm.~VI.22 et~seq.]{ReSi1}.
\end{proposition}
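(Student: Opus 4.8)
The plan is to collect the explicit computation performed in Steps~1--3 above and then to dispatch the two genuinely new claims in the statement: symmetry and square-integrability of the kernel. First I would record the closed form of $Qf$ obtained at the end of Step~2 for an input $f=|\varphi'|\,w\,\alpha_0(r)\,F(r)$ with $F\in L^2([\Rmin,\Rmax])$, and then read off $\M F$ by dividing through by $|\varphi'|\,w\,\alpha_0(r)$, as prescribed in Definition~\ref{def:mathur_op}. The only bookkeeping is to simplify the $r$-dependent prefactor $\alpha_0(r)^{-1}\cdot\frac{e^{2\lambda_0+\mu_0}}{r}\sqrt{\frac{2r\mu_0'+1}{r(\lambda_0'+\mu_0')}}$; inserting the definition of $\alpha_0$ collapses it to $e^{\frac{\mu_0(r)}2+\frac{3\lambda_0(r)}2}\frac{\sqrt{2r\mu_0'(r)+1}}{r}$, and since the matching factor in $s$ is already present inside the integral this immediately yields the representation $(\M G)(r)=\int_{\Rmin}^{\Rmax}K(r,s)G(s)\,ds$ with $K$ as in~\eqref{eq:kernel_K}.

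Next I would check $K(r,s)=K(s,r)$. The prefactor of $K$ is a product of one and the same function evaluated at $r$ and at $s$, hence symmetric, so it suffices that $I(r,s)=I(s,r)$. Writing $g_r\coloneqq|\varphi'|\,E\,e^{-\lambda_0-\mu_0}\,\mathds1_{[\Rmin,r]}$, this is simply $I(r,s)=\langle(\mathrm{id}-\Pi)g_r,g_s\rangle_H=\langle g_r,(\mathrm{id}-\Pi)g_s\rangle_H=I(s,r)$, using that $\Pi$ (and hence $\mathrm{id}-\Pi$) is an orthogonal, in particular symmetric, projection; this is of course consistent with the symmetry of $\M$ already established in Proposition~\ref{thm:Mprop}. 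For the square-integrability I would first note that $g_r\in H$ with $\sup_{r\in[\Rmin,\Rmax]}\|g_r\|_H<\infty$, since $\|g_r\|_H^2=4\pi^2\iiint_{\Omega_0\cap\{r'\le r\}}e^{-\lambda_0(r')}\,|\varphi'|\,\bigl(1+w^2+\tfrac L{r'^2}\bigr)\,dr'dwdL$ is finite by~\ref{it:stst4} and~\eqref{eq:HLRidentity} and the boundedness of $\Omega_0$. Since $\|\mathrm{id}-\Pi\|_{H\to H}\le1$, the Cauchy-Schwarz inequality gives $|I(r,s)|\le\|g_r\|_H\|g_s\|_H$, and because $\mu_0,\lambda_0,\mu_0'$ are bounded on the compact radial support one obtains $|K(r,s)|\le C\,\frac{\|g_r\|_H}{r}\cdot\frac{\|g_s\|_H}{s}$ for some constant $C$.

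The only point requiring a little care---the main obstacle, though a mild one---is the isotropic singularity-free case, where $\Rmin=0$ and the factor $1/r$ is singular at the left endpoint. Here I would exploit that $\|g_r\|_H$ decays fast enough: using the identity $w^2+\tfrac L{r'^2}=|v|^2$ after passing to the $v$-integral, and then~\ref{it:stst4} together with~\eqref{eq:HLRidentity} to bound $\int_{\R^3}|\varphi'|(1+|v|^2)\,dv$ uniformly as $r'\to0$, one finds $\|g_r\|_H^2=\mathcal O(r^3)$, so that $\|g_r\|_H/r\to0$ and $\int_{\Rmin}^{\Rmax}\bigl(\|g_r\|_H/r\bigr)^2\,dr<\infty$; when $\Rmin>0$---in particular for all shells around a black hole---this is trivial since $1/r$ is then bounded. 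Squaring the bound on $|K|$ and integrating then yields $\|K\|_{L^2([\Rmin,\Rmax]^2)}^2\le C^2\bigl(\int_{\Rmin}^{\Rmax}(\|g_r\|_H/r)^2\,dr\bigr)^2<\infty$, so $K\in L^2([\Rmin,\Rmax]^2)$ and $\M$ is Hilbert-Schmidt by the standard characterization, cf.~\cite[Thm.~VI.22 et~seq.]{ReSi1}.
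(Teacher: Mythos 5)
Your proposal is correct and follows essentially the same route as the paper's proof: read the kernel off from the explicit computation of $Qf$ in Steps 1--3 (and verify the prefactor simplification), obtain symmetry from the symmetry of $\Pi$, and prove square-integrability by combining the Cauchy–Schwarz inequality with $\|\mathrm{id}-\Pi\|=1$ and the estimate $\|g_r\|_H^2=\mathcal O(r^3)$ (which in the paper appears as $\leq C\int_{\Rmin}^r\sigma^2\,d\sigma$ via~\eqref{eq:phi_prime_bound}), the $r^3$ being exactly what tames the $1/r$ factor at $\Rmin=0$. You spell out a couple of steps the paper leaves implicit (the $\alpha_0^{-1}$ bookkeeping, the identity $w^2+L/r'^2=|v|^2$), but these are clarifications, not a different argument.
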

\begin{proof}
	The existence of and the formula for the kernel follow from the calculations above. It remains to show that $K \in L^2([\Rmin,\Rmax]^2)$, in particular in the case $\Rmin =0$. By bounding the various steady state quantities by a constant $C>0$, which may change from line to line, we get
	\begin{align}
		\| K \|^2_{L^2([\Rmin,\Rmax]^2)} \leq C \int_{\Rmin}^{\Rmax} \int_{\Rmin}^{\Rmax} \frac{I(r,s)^2}{r^2s^2} \, dr ds. \label{eq:KinL2}
	\end{align}
	The Cauchy-Schwarz inequality, $\|\mathrm{id}-\Pi \| = 1$, and the estimate~\eqref{eq:phi_prime_bound} imply that
	\begin{align*}
		I(r,s)^2 &\leq \| |\varphi'|E e^{-\lambda_0-\mu_0} \mathds{1}_{[\Rmin,r]} \|_H^2\, \| |\varphi'|E e^{-\lambda_0-\mu_0} \mathds{1}_{[\Rmin,s]} \|_H^2 \\
		&\leq  C \left ( \int_{\Rmin}^r \sigma^2 \, d\sigma\right )  \left ( \int_{\Rmin}^s \sigma^2 \, d\sigma\right ) 
	\end{align*}
	for $r,s \in [\Rmin,\Rmax]$. Together with~\eqref{eq:KinL2} we conclude that $K \in L^2([\Rmin,\Rmax]^2)$.
\end{proof}
\begin{remark}
	Obviously, the Mathur operator $\M$ can be extended to an operator $\M\colon L^2([0,\infty[) \to L^2([0,\infty[)$ by setting the kernel $K$ to zero on $\R^2\setminus\left ([\Rmin,\Rmax]^2\right )$. All properties observed above stay valid for this extension. In fact, $K$ vanishes on $\partial [\Rmin,\Rmax]^2$ since $|\varphi'| E e^{-\lambda_0-\mu_0}  \in  \ker(\B)$. In addition, it can be shown that $K$ is continuous on $[\Rmin,\Rmax]^2$ such that this extension is actually continuous as well.
\end{remark}
An explicit bound on the number of negative eigenvalues of $\L$ is now given by the following properties of Hilbert-Schmidt operators. On the one hand, the so-called \enquote{Hilbert-Schmidt norm} $\|\cdot\|_{HS}$ of $\M$ is given by
\begin{equation}\label{eq:HSnormkernel}
	\|\M\|_{HS}^2 = \|K\|_{L^2([0,\infty[^2)}^2 = \int_0^{\infty}\int_0^{\infty} \vert K(r,s)\vert^2 \,dr ds,
\end{equation}
see \cite[Thm.~VI.23]{ReSi1}; we again emphasize that $K$ is supported on $[\Rmin, \Rmax]^2$ and extended by zero. On the other hand, let $\lambda_1\geq\lambda_2\geq\ldots\geq0$ denote the eigenvalues of $\M$ respecting multiplicities, i.e., we repeat each eigenvalue according to its multiplicity. Choosing an $L^2([0,\infty[)$ orthonormal basis of eigenfunctions to these eigenvalues---which is possible by the Hilbert-Schmidt theorem~\cite[Thm.~VI.16]{ReSi1}---and using~\cite[Thm.~VI.22(b)]{ReSi1} together with $\M^\ast=\M$ then yields that
\begin{equation}\label{eq:HSnormsum}
	\|\M\|_{HS}^2 = \sum_{j=1}^\infty\lambda_j^2 ;
\end{equation}
where we extend $\N\ni j\mapsto\lambda_j$ by $0$ if necessary.

Furthermore, since $\M$ is symmetric, non-negative, and compact by Proposition~\ref{thm:Mprop}, \cite[Prop.~5.12]{HiSi} and~\cite[Thm.~VI.6]{ReSi1} imply that
\begin{equation}\label{eq:Mspectralradius}
	\sup\left(\sigma(\M)\right)=\max\left(\sigma(\M)\right)=\|\M\|,
\end{equation}
where $\|\cdot\|$ denotes the operator norm on $L^2([0,\infty[)$, i.e.,
\begin{align}
	\|\M\|\coloneqq&\sup\{\|\M G\|_{L^2([0,\infty[)}\mid G\in L^2([0,\infty[),\;\|G\|_{L^2([0,\infty[)}=1\}\nonumber\\
	=&\sup\{\langle G,\M G\rangle_{L^2([0,\infty[)}\mid G\in L^2([0,\infty[),\;\|G\|_{L^2([0,\infty[)}=1\};\label{eq:operatornorm_def}
\end{align}
the latter equality is due to the symmetry and non-negativity of $\M$.

\subsection{Results on stability}
We now formulate and prove the main results for the steady states as specified in Section~\ref{sc:ststconditions}.
\begin{theorem}[A reduced variational principle]\label{thm:mathur_stability_2}
	$\L$ has a negative eigenvalue if, and only if, the Mathur operator $\M$ has an eigenvalue greater than one, i.e., 
	\[
	\| \M \| = \sup \limits_{ \substack{G\in L^2([0,\infty[) \\ \|G\|_{2}=1} }  \int_{0}^{\infty}\int_{0}^{\infty} K(r,s) G(r)G(s) \, ds dr > 1.
	\]
	Moreover, zero is the smallest eigenvalue of $\L$ if, and only if, $\| \M \| = 1$. 
\end{theorem}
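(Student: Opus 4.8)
The plan is to chain together the correspondences between the spectra of $\L$, $Q$, and $\M$ established above, together with the fact that $\M$ is compact, symmetric, and non-negative, so that $\|\M\|=\max(\sigma(\M))$ is attained as an eigenvalue whenever it is positive; recall~\eqref{eq:Mspectralradius}.

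For the first statement, I would argue that $\L$ having a negative eigenvalue is the same as $\#\{\text{negative eigenvalues of }\L\}\geq1$, which by Proposition~\ref{thm:evsLandQ} is equivalent to $Q$ possessing an eigenvalue $>1$, and this in turn, by Lemma~\ref{thm:evsQandM} applied with a value $\gamma>1\neq0$, is equivalent to $\M$ possessing an eigenvalue $>1$. It then remains to observe that $\M$ has an eigenvalue $>1$ precisely when $\|\M\|>1$: on the one hand any eigenvalue is bounded in modulus by the operator norm, and on the other hand, since $\|\M\|=\max(\sigma(\M))$ by~\eqref{eq:Mspectralradius} and $\M$ is compact, this maximum is itself an eigenvalue as soon as it is positive, in particular when it exceeds~$1$. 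The displayed variational formula is then simply~\eqref{eq:operatornorm_def} combined with the integral-kernel representation of~$\M$ from Proposition~\ref{prop:Mathur_Kernel}.

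For the second statement, I would first note that ``zero is the smallest eigenvalue of $\L$'' means exactly that $\L$ has no negative eigenvalue and that $0$ is an eigenvalue of~$\L$; this reformulation is legitimate because $\inf(\sigma_{ess}(\L))>0$ by Lemma~\ref{lemma:ess_spectrum_L}, so every non-positive element of $\sigma(\L)$ is an isolated eigenvalue of finite multiplicity. By the first statement, the absence of a negative eigenvalue of $\L$ is equivalent to $\|\M\|\leq1$. By the Birman-Schwinger principle (Proposition~\ref{thm:Birman} with $\gamma=1$) together with Lemma~\ref{thm:evsQandM}, the fact that $0$ is an eigenvalue of $\L=\L_1$ is equivalent to $1$ being an eigenvalue of $\M$. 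Hence ``zero is the smallest eigenvalue of $\L$'' is equivalent to the conjunction ``$\|\M\|\leq1$ and $1$ is an eigenvalue of $\M$''. I would close by observing that this conjunction is the same as $\|\M\|=1$: if $1$ is an eigenvalue then $\|\M\|\geq1$, which together with $\|\M\|\leq1$ forces $\|\M\|=1$; conversely, if $\|\M\|=1$ then there is no negative eigenvalue of $\L$, and by compactness of $\M$ the value $1=\max(\sigma(\M))$ is an eigenvalue of $\M$, so $0$ is an eigenvalue of~$\L$.

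I do not anticipate a serious obstacle, since all the analytic heavy lifting---self-adjointness of $\L$, positivity of $\sigma_{ess}(\L)$, the construction of $\B^{-2}$ and $\sqrt{\Ri}$, the Birman-Schwinger correspondence, and the Hilbert-Schmidt reduction to $\M$---has been carried out in the preceding sections. The only points that require mild care are bookkeeping: reading ``negative'' consistently as ``$<0$'' as in Proposition~\ref{thm:evsLandQ}, and justifying that $\max(\sigma(\M))$ is attained as an eigenvalue, which is precisely where the compactness and non-negativity of $\M$ from Proposition~\ref{thm:Mprop} are used.
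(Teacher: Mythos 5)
Your proposal is correct and takes essentially the same route as the paper, which proves the theorem by exactly the same combination of Proposition~\ref{thm:Birman}, Proposition~\ref{thm:evsLandQ}, Lemma~\ref{thm:evsQandM}, Proposition~\ref{prop:Mathur_Kernel}, and eqns.~\eqref{eq:Mspectralradius}, \eqref{eq:operatornorm_def}; you merely spell out the bookkeeping that the paper's terse ``combine'' elides, in particular the point that the positive spectral radius of the compact, non-negative operator $\M$ is attained as an eigenvalue.
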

\begin{proof}
	Combine Propositions~\ref{thm:Birman},~\ref{thm:evsLandQ}, and~\ref{prop:Mathur_Kernel}, Lemma~\ref{thm:evsQandM}, and eqns.~\eqref{eq:Mspectralradius}, \eqref{eq:operatornorm_def}.
\end{proof}

We feel obliged to add that this criterion might be equivalent to the existence of a negative direction of the reduced operator in~\cite{HaLiRe2020}. In fact, an equivalence of the Mathur operator and some other reduced operator has been observed in the context of the Vlasov-Poisson system in~\cite[Chapter~5]{Kunze}. However, as in the context of the Vlasov-Poisson system, the Mathur operator is much nicer from a functional analysis point of view.

\begin{cor}[A stability criterion]\label{cor:stab_cond}
	The Antonov operator $\L$ has no non-positive eigenvalues if \mbox{$\|K\|_{L^2([0,\infty[^2)}<1$}.
\end{cor}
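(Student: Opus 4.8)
The plan is to chain together the quantitative eigenvalue information already assembled in the excerpt. By Theorem~\ref{thm:mathur_stability_2} (equivalently Propositions~\ref{thm:Birman}, \ref{thm:evsLandQ} and~\ref{prop:Mathur_Kernel} together with Lemma~\ref{thm:evsQandM}), the Antonov operator $\L$ has a negative eigenvalue if and only if $\M$ has an eigenvalue strictly greater than $1$, and $0$ is the smallest eigenvalue of $\L$ precisely when $\|\M\|=1$. Hence \enquote{$\L$ has no non-positive eigenvalue} is equivalent to $\|\M\|<1$. So it suffices to show that $\|K\|_{L^2([0,\infty[^2)}<1$ forces $\|\M\|<1$.

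The key step is the standard Hilbert-Schmidt bound on the operator norm. First I would invoke Proposition~\ref{prop:Mathur_Kernel}, which tells us that $\M$ is a Hilbert-Schmidt operator with integral kernel $K\in L^2([0,\infty[^2)$, so that $\|\M\|_{HS}^2=\|K\|_{L^2([0,\infty[^2)}^2$ by~\eqref{eq:HSnormkernel}. Since $\M$ is symmetric, non-negative and compact (Proposition~\ref{thm:Mprop}), its eigenvalues $\lambda_1\geq\lambda_2\geq\cdots\geq0$ satisfy $\|\M\|=\lambda_1$ by~\eqref{eq:Mspectralradius}, while $\|\M\|_{HS}^2=\sum_j\lambda_j^2$ by~\eqref{eq:HSnormsum}. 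Combining these,
\[
\|\M\|^2=\lambda_1^2\leq\sum_{j=1}^\infty\lambda_j^2=\|\M\|_{HS}^2=\|K\|_{L^2([0,\infty[^2)}^2<1,
\]
so $\|\M\|<1$. Therefore $\M$ has no eigenvalue $\geq1$, and by the equivalence above $\L$ has no non-positive eigenvalue, which is the claim.

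There is no real obstacle here: every ingredient---the Birman-Schwinger reduction, the Hilbert-Schmidt property of $\M$, and the elementary inequality $\lambda_1^2\leq\sum_j\lambda_j^2$---has already been established in the excerpt, so the proof is a two-line deduction. The only point requiring a moment's care is that one must use the \emph{strict} inequality $\|K\|_{L^2}<1$ to rule out $\|\M\|=1$ as well, since $\|\M\|=1$ would correspond to a zero-frequency mode (a non-positive eigenvalue of $\L$, namely $0$); the strictness propagates through the chain of inequalities without difficulty. One could also phrase the whole argument directly via the variational expression in Theorem~\ref{thm:mathur_stability_2} and the Cauchy-Schwarz estimate $\int\!\!\int K(r,s)G(r)G(s)\,dsdr\leq\|K\|_{L^2([0,\infty[^2)}\|G\|_2^2$, but routing through $\|\M\|_{HS}$ is cleaner since Proposition~\ref{prop:Mathur_Kernel} already packages it.
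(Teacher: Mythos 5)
Your proof is correct and follows essentially the same route as the paper's: both rest on the Hilbert--Schmidt bound $\|\M\|\leq\|\M\|_{HS}=\|K\|_{L^2([0,\infty[^2)}$ (derived from~\eqref{eq:HSnormsum}, \eqref{eq:Mspectralradius}, and~\eqref{eq:HSnormkernel}) combined with Theorem~\ref{thm:mathur_stability_2}. The only difference is presentational---you spell out the chain $\lambda_1^2\leq\sum_j\lambda_j^2$ and the bidirectional equivalence with $\|\M\|<1$, whereas the paper simply cites the norm inequality and the previous theorem.
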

\begin{proof}
	Observe $\|\M\|\leq\|\M\|_{HS}$, which follows directly from~\eqref{eq:HSnormsum} and~\eqref{eq:Mspectralradius}, but is also stated in \cite[Thm.~VI.22(d)]{ReSi1}. The statement then follows by~\eqref{eq:HSnormkernel} together with the previous theorem.
\end{proof}

\begin{theorem}[A Birman-Schwinger bound on the number of growing modes]\label{thm:birman_bound}
	It holds that
	\begin{align*}
		\#\{\mathrm{negative~eigenvalues~of~}\L\mathrm{~(counting~multiplicities)}\}<\|K\|_{L^2([0,\infty[^2)}^2.
	\end{align*}
	In particular, $K\not\equiv0$. 
\end{theorem}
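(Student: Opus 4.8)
The plan is to push everything through the chain $\L\rightsquigarrow Q\rightsquigarrow\M$ and then exploit that the Mathur operator is Hilbert--Schmidt with square‑summable eigenvalues. First I would set $N\coloneqq\#\{\text{negative eigenvalues of }\L\text{ (counting multiplicities)}\}$. By Proposition~\ref{thm:evsLandQ}, $N$ equals the number of eigenvalues $>1$ of $Q$ (with multiplicity), and by Lemma~\ref{thm:evsQandM} the non‑zero eigenvalues of $Q$ and of $\M$ coincide with equal multiplicities; hence $N=\#\{\text{eigenvalues}>1\text{ of }\M\text{ (counting multiplicities)}\}$. By Propositions~\ref{thm:Mprop} and~\ref{prop:Mathur_Kernel}, $\M$ is a non‑negative, compact, self‑adjoint, Hilbert--Schmidt operator; let $\lambda_1\geq\lambda_2\geq\cdots\geq0$ be its eigenvalues repeated according to multiplicity. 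Compactness (equivalently, square‑summability of the $\lambda_j$) already forces $N=\#\{j\mid\lambda_j>1\}<\infty$.

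Next I would carry out the elementary estimate. If $N\geq1$, then $\lambda_1,\dots,\lambda_N>1$, so $\lambda_j^2>1$ for $1\leq j\leq N$, and using~\eqref{eq:HSnormsum} and~\eqref{eq:HSnormkernel},
\[
N=\sum_{j=1}^{N}1<\sum_{j=1}^{N}\lambda_j^2\leq\sum_{j=1}^{\infty}\lambda_j^2=\|\M\|_{HS}^2=\|K\|_{L^2([0,\infty[^2)}^2 ,
\]
which is exactly the asserted strict bound whenever $\L$ possesses at least one negative eigenvalue. If $N=0$, the claim reduces to $\|K\|_{L^2([0,\infty[^2)}^2>0$, i.e.\ $K\not\equiv0$, and this also gives the "in particular" part of the statement. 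To see it, I would argue $\M\neq0$ for the nontrivial steady states considered: since $\mu_0'\geq0$ on the radial support with $\mu_0'>0$ on $\{\rho_0>0\}$ (cf.~\eqref{eq:fieldeq2}) and $\varphi'<0$ there, the residual operator $\Ri$ does not vanish, hence neither does $\sqrt{\Ri}$ (Lemma~\ref{lemma:Rproperties}), and as $\sqrt{\Ri}$ depends only on the odd‑in‑$w$ part there is $f\in\H$ with $\sqrt{\Ri}f\neq0$. Then $\sqrt{\Ri}f\in\H\subset\im(\B)$ and $\B^{-1}$ is injective on $\im(\B)$ (Proposition~\ref{prop:Bproperties}), so $\B^{-1}\sqrt{\Ri}f\neq0$, whence $\langle Qf,f\rangle_H=\|\B^{-1}\sqrt{\Ri}f\|_H^2>0$ by the identity in the proof of Lemma~\ref{thm:Qprop}; thus $Q\neq0$. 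Being compact and self‑adjoint, $Q$ then has a non‑zero eigenvalue, which by Lemma~\ref{thm:evsQandM} is a non‑zero eigenvalue of $\M$, so $\M\neq0$ and $\|K\|_{L^2([0,\infty[^2)}^2=\|\M\|_{HS}^2>0$.

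Combining the two cases gives $N<\|K\|_{L^2([0,\infty[^2)}^2$ in all situations, and in particular $K\not\equiv0$. I do not expect a genuine obstacle here: once the correspondences of Section~\ref{ssc:birman_schwinger_operator} and Section~\ref{sc:mathur_operator} and the Hilbert--Schmidt structure of $\M$ are available, the proof is bookkeeping. The only point requiring care—and the one I would emphasize—is the strictness of the inequality when $N=0$, i.e.\ ruling out $K\equiv0$, which is precisely where the non‑vanishing of $\Ri$ (hence of $Q$ and of $\M$) is used.
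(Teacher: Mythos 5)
Your counting estimate is exactly the paper's: translate via Propositions~\ref{thm:evsLandQ} and~\ref{thm:evsQandM} to eigenvalues~$>1$ of~$\M$, then use that each such eigenvalue squared exceeds~$1$ together with~\eqref{eq:HSnormsum} and~\eqref{eq:HSnormkernel}. Where you diverge is in ruling out $K\equiv0$. The paper cites Lemma~\ref{thm:ev1deltatominusinfty} (that $\mu_1(\gamma)\to-\infty$ as $\gamma\to0$) to get $\gamma_1^\ast>0$, then invokes Propositions~\ref{thm:Birman} and Lemma~\ref{thm:evsQandM} to produce a non-zero eigenvalue of~$\M$; you instead verify $\Ri\neq0$ directly, pass to an odd-in-$w$ $f$ with $\sqrt{\Ri}f\neq0$, and use injectivity of $\B^{-1}$ on $\im(\B)$ plus the identity $\langle Qf,f\rangle_H=\|\B^{-1}\sqrt{\Ri}f\|_H^2$ from Lemma~\ref{thm:Qprop} to conclude $Q\neq0$, hence $\M\neq0$. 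Both routes are valid; yours is more elementary and self-contained in this spot, and in fact spells out the existence of $\tilde h$ with $\langle\tilde h,\Ri\tilde h\rangle_H\neq0$ that Lemma~\ref{thm:ev1deltatominusinfty} quietly assumes. One tiny overstatement: you do not need $\mu_0'>0$ on $\{\rho_0>0\}$; the relevant factor in $\Ri$ is $2r\mu_0'+1\geq1$, which is automatically positive, and non-vanishing of $j_f$ (e.g.\ for $f=|\varphi'|w$, using~\eqref{eq:HLRidentity}) is what actually drives $\Ri\neq0$.
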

\begin{proof}
	First note that Lemma~\ref{thm:ev1deltatominusinfty} yields that $\gamma_1^\ast>0$, recall Definition~\ref{def:gammastar}, which together with Proposition~\ref{thm:Birman} and Lemma~\ref{thm:evsQandM} implies that $\M$ has at least one non-zero eigenvalue, and thus $K\not\equiv0$.
	Furthermore, Proposition~\ref{thm:evsLandQ} and Lemma~\ref{thm:evsQandM} imply that
	\begin{multline*}
		\#\{\text{negative~eigenvalues~of~}\L\text{~(counting~multiplicities)}\}\\ = \#\{\text{eigenvalues~greater~one~of~}\M\text{~(counting~multiplicities)}\}.
	\end{multline*}
	Hence, if $\M$ has no eigenvalues $>1$, $\L$ has no negative eigenvalues and the statement is trivial. Otherwise, let $\lambda_1\geq\lambda_2\geq\ldots\geq0$ denote the eigenvalues of $\M$ respecting multiplicities, i.e., we repeat each eigenvalue according to its multiplicity. Then, by~\eqref{eq:HSnormkernel} and~\eqref{eq:HSnormsum},
	\begin{align*}
		\#&\{\mathrm{eigenvalues~greater~one~of~}\M\mathrm{~(counting~multiplicities)}\}=\#\{j\in\N\mid\lambda_j>1\}\\
		&<\sum_{j\in\N,~\lambda_j>1}\lambda_j^2\leq\sum_{j\in\N}\lambda_j^2=\|\M\|_{HS}^2=\|K\|_{L^2([0,\infty[^2)}^2.\qedhere
	\end{align*}

\end{proof}
We now combine these statements to prove the first main result stated in the introduction:
\begin{proof}[Proof of Theorem~\ref{thm:mathur_stability}]
	First note that $\inf\left(\sigma_{ess}(\L)\right)>0$ by Lemma~\ref{lemma:ess_spectrum_L}, and $\sigma(\L)\setminus\sigma_{ess}(\L)$ consists of isolated eigenvalues of finite multiplicity by  definition of the essential spectrum. Thus, linear stability is equivalent to~$\L$ having no non-positive eigenvalues; recall Definition~\ref{def:linear_stab}. Theorem~\ref{thm:mathur_stability_2} then yields~\ref{it:stab1}. Parts~\ref{it:stab2} and~\ref{it:stab3} follow by Theorem~\ref{thm:birman_bound} and Corollary~\ref{cor:stab_cond}, respectively; note that each negative eigenvalue of $\L$ with multiplicity~$n$ corresponds to~$n$ exponentially growing modes, see Definition~\ref{def:linear_stab} and Remark~\ref{remark:linear_stab}.
\end{proof}

\section{Linear stability of matter shells around a Schwarzschild black hole}\label{sc:stab_blackhole_solutions}
We now apply the methods derived in Section~\ref{sc:birman_schwinger_principle} and, in particular, the reduced variational principle from Theorem~\ref{thm:mathur_stability}. We prove that for $0<\delta \ll  1$ the steady states $f^\delta$ with a Schwarzschild-singularity of fixed mass $M$ constructed in Section~\ref{sc:stst_hole} are linearly stable. 

\begin{theorem}\label{thm:shell_stab_2}
	For fixed choices of $\chi$, $l$, $\Phi$, $L_0$, $r_0$, $\eta_0$, and $E^0$ as specified in Section~\ref{sc:stst_hole} let $(f^\delta)_{\delta>0}$ be the resulting family of static solutions provided by Proposition~\ref{prop:lochexistencestst}. Consider only the case where $\Phi$ is continuously differentiable on $\R$. Then there exists $\delta_0 >0$ such that for every $0<\delta \leq \delta_0$ the static solution $f^\delta$ is linearly stable in the sense of Definition~\ref{def:linear_stab}.
\end{theorem}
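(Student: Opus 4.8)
The strategy is to verify that for sufficiently small $\delta>0$ the steady state $f^\delta$ satisfies all the hypotheses of Theorem~\ref{thm:mathur_stability} and then to show that the Hilbert--Schmidt norm $\|K^\delta\|_{L^2([0,\infty[^2)}$ of the associated Mathur kernel tends to zero as $\delta\to0$, so that part~\ref{it:stab3} of Theorem~\ref{thm:mathur_stability} (equivalently Corollary~\ref{cor:stab_cond}) applies once $\delta$ is small enough. First I would collect the structural hypotheses: conditions~\ref{it:stst1} and~\ref{it:stst2} hold for $0<\delta\le\delta_0$ by Proposition~\ref{prop:lochjeans} and Lemma~\ref{lemma:lochperiodsbounded}; condition~\ref{it:stst3} holds because $\Phi\in C^1(\R)$ is assumed and $\varphi'=\partial_E\varphi<0$ on the support by the ansatz~\eqref{eq:ansatzsingularityfreestst} (note $\Phi\ge0$ and $\Phi>0$ on $]0,\alpha_0[$ by~\ref{it:assphi3}, together with the cut-off producing strict monotonicity in $E$); and condition~\ref{it:stst4} holds by Remark~\ref{remark:single_well_numerics}~\ref{remark:S4condition}, since $\Phi\in L^\infty_{\mathrm{loc}}$ and $\Rmin=\Rmin^0>4M>0$ make the $L$-integral in~\eqref{eq:phi_prime_bound} finite and uniformly bounded on the compact radial support. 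Thus Theorem~\ref{thm:mathur_stability} applies to each $f^\delta$ with $0<\delta\le\delta_0$.

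The core of the argument is the estimate $\|K^\delta\|_{L^2([0,\infty[^2)}\to0$ as $\delta\to0$. Here I would use the explicit representation~\eqref{eq:kernel_K}: $K^\delta(r,s)$ is a product of bounded steady-state factors (the exponentials $e^{\mu_0^\delta/2+3\lambda_0^\delta/2}$, the square roots $\sqrt{2r(\mu_0^\delta)'+1}$, the factors $1/(rs)$, all uniformly bounded on $[\Rmin^0,\Rmax^0]$ by Lemma~\ref{lemma:lochyconvergence}) times the pivotal quantity
\[
I^\delta(r,s)=\bigl\langle(\mathrm{id}-\Pi^\delta)\bigl(|\varphi'| E e^{-\lambda_0^\delta-\mu_0^\delta}\mathds1_{[\Rmin^0,r]}\bigr),\,|\varphi'| E e^{-\lambda_0^\delta-\mu_0^\delta}\mathds1_{[\Rmin^0,s]}\bigr\rangle_H.
\]
By Cauchy--Schwarz and $\|\mathrm{id}-\Pi^\delta\|=1$ we have $|I^\delta(r,s)|\le\bigl\||\varphi'| E e^{-\lambda_0^\delta-\mu_0^\delta}\mathds1_{[\Rmin^0,r]}\bigr\|_H\cdot\bigl\||\varphi'| E e^{-\lambda_0^\delta-\mu_0^\delta}\mathds1_{[\Rmin^0,s]}\bigr\|_H$, and the squared $H$-norm here is $4\pi^2\int\!\!\int\!\!\int_{\Omega_0^\delta}e^{\lambda_0^\delta}|\varphi'| E^2 e^{-2\lambda_0^\delta-2\mu_0^\delta}\mathds1_{[\Rmin^0,r]}\,dr\,dw\,dL$, which after doing the $(w,L)$-integral using~\eqref{eq:phi_prime_bound} is bounded by $C\int_{\Rmin^0}^{\Rmax^0}\sigma^2\,d\sigma\le C$ uniformly in $\delta$. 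This reproduces the bound from the proof of Proposition~\ref{prop:Mathur_Kernel}, but crucially I want a factor of $\delta$: the weight $|\varphi'|=|\partial_E\varphi|$ in the ansatz~\eqref{eq:lochdf} carries the prefactor $\delta$, i.e.\ $\partial_E(\delta\varphi)=\delta\,\partial_E\varphi$, so $|\varphi'|^\delta\le\delta\,C$ pointwise on the (uniformly bounded) support. Hence $\||\varphi'|^\delta E e^{-\lambda_0^\delta-\mu_0^\delta}\mathds1_{[\Rmin^0,r]}\|_H^2\le \delta\,C$, giving $|I^\delta(r,s)|\le\delta\,C$ and therefore $\|K^\delta\|_{L^2([0,\infty[^2)}^2\le\delta\,C$ with $C$ independent of $\delta$. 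Choosing $\delta_0$ smaller if necessary so that $\delta_0 C<1$, we get $\|K^\delta\|_{L^2([0,\infty[^2)}<1$ for all $0<\delta\le\delta_0$, and linear stability follows from Theorem~\ref{thm:mathur_stability}~\ref{it:stab3}.

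The main obstacle I anticipate is bookkeeping the $\delta$-dependence of the weight $e^{\lambda_0^\delta}/|\varphi'^\delta|$ in the Hilbert space $H=H^\delta$, which itself varies with $\delta$: one must be careful that the $\delta$ that appears as a prefactor of $\varphi$ in~\eqref{eq:lochdf} indeed produces a net factor of $\delta$ (and not $\delta^{-1}$ or $\delta^0$) in the $H$-norm of the relevant functions, because $|\varphi'|$ appears in the numerator inside the integrand defining $I^\delta$ but in the denominator of the $H$-weight. Writing $f=|\varphi'|\,(\cdots)$ and computing $\|f\|_H^2=4\pi^2\int e^{\lambda_0}|\varphi'|^{-1}|f|^2=4\pi^2\int e^{\lambda_0}|\varphi'|\,(\cdots)^2$ shows the net power of $|\varphi'|$ is $+1$, so the factor is genuinely $\delta^{+1}$; this is the point to check carefully. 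The remaining ingredients---uniform boundedness of $\mu_0^\delta,\lambda_0^\delta,(\mu_0^\delta)'$ on $[\Rmin^0,\Rmax^0]$, uniform validity of~\eqref{eq:phi_prime_bound}, and the fact that the radial support stays inside the fixed compact interval $[\Rmin^0,\Rmax^0]$---are all provided by Lemmas~\ref{lemma:lochydeltaproperties} and~\ref{lemma:lochyconvergence}. The convergence statements in Theorem~\ref{thm:shell_stab} ($\rho^\delta\to0$ pointwise, metric coefficients $\to$ Schwarzschild) are then immediate consequences of Lemma~\ref{lemma:lochyconvergence}.
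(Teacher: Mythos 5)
Your proof is correct and follows essentially the same route as the paper's: verify (S1)--(S4) for small $\delta$ using Proposition~\ref{prop:lochjeans}, Lemma~\ref{lemma:lochperiodsbounded}, and Remark~\ref{remark:single_well_numerics}~\ref{remark:S4condition}; then isolate the factor of $\delta$ in $|I^\delta(r,s)|$ via Cauchy--Schwarz and $\|\mathrm{id}-\Pi^\delta\|=1$; then appeal to Theorem~\ref{thm:mathur_stability}~\ref{it:stab3}. You correctly identify the crucial bookkeeping point: in the weighted space $H^\delta$ (weight $e^{\lambda^\delta}/|\partial_E(\delta\varphi)|=e^{\lambda^\delta}/(\delta|\varphi'|)$) the function $\delta|\varphi'|Ee^{-\lambda^\delta-\mu^\delta}\mathds1_{[\Rmin^\delta,r]}$ has squared norm proportional to $\delta^2/\delta=\delta$, so $|I^\delta|\lesssim\delta$ uniformly; this, together with the uniform control of the metric quantities from Lemma~\ref{lemma:lochyconvergence} and the $\delta$-uniform compact radial support, is exactly the paper's argument.

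Two small remarks. First, your bound $\|K^\delta\|_{L^2}^2\le C\delta$ is correct but slightly weaker than what your own pointwise estimate yields: from $|I^\delta(r,s)|\le C\delta$ you get $K^\delta(r,s)\le C'\delta$ pointwise on the fixed compact square $[\Rmin^0,\Rmax^0]^2$, hence $\|K^\delta\|_{L^2}\le C''\delta$; either version gives the conclusion. Second, your parenthetical justification of~\ref{it:stst3} is not quite right: $\Phi\in C^1(\R)$ together with~\ref{it:assphi1},~\ref{it:assphi3} does \emph{not} imply $\Phi'>0$ where $\Phi>0$ (consider $\Phi(\alpha)=\alpha^2(1-\alpha)_+$ on $[0,1]$), so the cut-off does not by itself force $\partial_E\varphi<0$. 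In the paper this is treated as a standing assumption on the ansatz (cf.\ Remark~\ref{remark:single_well_numerics}~\ref{remark:S4condition} and~\eqref{eq:ansatzmonotonic}), so the step is fine if one simply invokes~\ref{it:stst3} as part of the hypotheses rather than attempting to derive it.
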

\begin{proof}
	In order to apply Theorem~\ref{thm:mathur_stability} we have to check that the steady states $f^\delta$ satisfy the assumptions from Section~\ref{sc:ststconditions}, at least for small values of $\delta>0$. 
	In Proposition~\ref{prop:lochjeans} and Lemma~\ref{lemma:lochperiodsbounded} we have shown that~\ref{it:stst1} and~\ref{it:stst2} hold for $0<\delta \leq \delta_0$ for $\delta_0>0$ sufficiently small. 
	Condition~\ref{it:stst3} is fulfilled by assumption and \ref{it:stst4} is valid as seen in Remark~\ref{remark:single_well_numerics}~\ref{remark:S4condition}. 
	
	From now on we employ the notation from Section~\ref{sc:stst_hole}, i.e., all quantities which depend on the steady state $f^\delta$ are denoted with a superscript $\delta$. Theorem~\ref{thm:mathur_stability}~\ref{it:stab3} yields that $f^\delta$ is linearly stable if $\|K^\delta\|_{L^2([0,\infty[^2)}<1$, where, by Proposition~\ref{prop:Mathur_Kernel}, 
	\begin{equation}\label{eq:Kdelta_def}
		K^\delta(r,s) = e^{ \frac{\mu^\delta(r)}{2}+\frac{3\lambda^\delta(r)}{2}}  e^{\frac{\mu^\delta(s)}{2}+\frac{3\lambda^\delta(s)}{2}}   \frac{\sqrt{2r\left (\mu^\delta\right )' (r)+1}\, \sqrt{2s\left (\mu^\delta\right )' (s)+1}}{rs}  \, I^\delta(r,s)
	\end{equation}
	and 
	\begin{equation*}
		I^\delta(r,s) = \left \langle (\mathrm{id}-\Pi^\delta) \left ( \delta |\varphi'|E e^{-\lambda^\delta-\mu^\delta}  \mathds{1}_{[\Rmin^\delta,r]} \right ), \delta | \varphi'|  E e^{-\lambda^\delta-\mu^\delta}  \mathds{1}_{[\Rmin^\delta,s]}  \right \rangle_{H^\delta}
	\end{equation*}
	for $r,s>2M$ with $K^\delta$, $I^\delta$ extended by zero onto $[0,\infty[^2$ and $K^\delta$, $I^\delta$ are supported inside $[\Rmin^\delta,\Rmax^\delta]^2$, where $0<\Rmin^\delta<\Rmax^\delta$ are the radial bounds of the steady state. Furthermore, note that we have to use $\delta \varphi$ instead of $\varphi$ when comparing with Section~\ref{sc:birman_schwinger_principle} and that $\varphi$ depends on $\delta$ via the cut-off energy $E^\delta$, but we suppress this dependency in our notation. The orthogonal projection onto $\ker(\B^\delta)$ introduced at the start of Section~\ref{ssc:inverseB_inverseBsq} is denoted by $\Pi^\delta$.
	We now estimate $I^\delta$ with the Cauchy-Schwarz inequality as follows:
	\begin{equation*}
		|I^\delta(r,s) | \leq   \left \| \delta |\varphi'| E e^{-\lambda^\delta-\mu^\delta}  \mathds{1}_{[\Rmin^\delta,r]} \right \|_{H^\delta}  \, \left \| \delta |\varphi'| E e^{-\lambda^\delta-\mu^\delta} \mathds{1}_{[\Rmin^\delta,s]} \right \|_{H^\delta},
	\end{equation*}
	where we have used $\|\mathrm{id} - \Pi^\delta\|= 1$ since $\Pi^\delta$ (and thus also $\mathrm{id} - \Pi^\delta$) is an orthogonal projection. The fact that $[\Rmin^\delta,\Rmax^\delta] \subset [\Rmin^0,\Rmax^0]$ from~\eqref{eq:lochrhopcompactsupport} implies that for  $(r,s) \in ]0,\infty[^2$ we have that
	\begin{equation*}
		\frac{|I^\delta(r,s)|}{rs} \leq \frac{1}{(\Rmin^0)^2}\,\left \| \delta |\varphi'| E e^{-\lambda^\delta-\mu^\delta}  \right \|_{H^\delta}^2 = C_0\delta\iiint_{\Omega^\delta}  |\varphi'|\, e^{-\lambda^\delta-2\mu^\delta} E(\sigma,w,L)^2  \, d\sigma dw dL
	\end{equation*} 
	for some $\delta$-independent constant $C_0>0$ which may change from line to line; recall that $\frac{e^{\lambda^\delta}}{\delta|\varphi'|}$ is the integral-weight in the present Hilbert space $H^\delta$. From Lemma~\ref{lemma:lochyconvergence} we know that $\mu^\delta$, $(\mu^\delta)'$, and $\lambda^\delta$ converge to the pure Schwarzschild quantities $\mu^0$, $(\mu^0)'$, and $\lambda^0$, respectively, uniformly on $]2M,\infty[$. Moreover, $E\leq E^\delta<1$ on the steady state support and the area of integration 
	$
		\Omega^\delta=\{(r,w,L)\mid r>4M,~E(r,w,L)<E^\delta\}
	$
	is bounded uniformly in $\delta\in]0,\delta_0]$; the radial boundedness follows by~\eqref{eq:lochrhopcompactsupport} and the $L$-bound can be obtained as in the proof of Proposition~\ref{prop:lochjeans}, which then imply the uniform $w$-boundedness.
	
	Since $\Phi'$ is continuous, we conclude that
	\begin{equation*}
		\frac{|I^\delta(r,s)|}{rs} \leq C_0 \delta, \quad (r,s) \in ]0,\infty[^2,
	\end{equation*}
	after possibly shrinking $\delta_0$. Inserting the latter estimate into~\eqref{eq:Kdelta_def} and again using the uniform bounds yields that
	\begin{equation*}
		\| K^\delta \|_{L^2([0,\infty[^2)} \leq C_0 \delta .
	\end{equation*}
	For $\delta_0 < \frac{1}{C_0}$  we can now apply Theorem~\ref{thm:mathur_stability}~\ref{it:stab3} and obtain the linear stability of $f^\delta$ for $0<\delta\leq \delta_0$.
\end{proof}
It should be mentioned that an alternate way to derive a coercivity estimate for $\L^\delta$, if $0<\delta\ll1$, is to use the methods from~\cite{HaRe2013,HaRe2014}, which simplify in the case of a Schwarzschild-singularity. This approach leads to a result similar to Theorem~\ref{thm:shell_stab_2}. 

The proof of Theorem~\ref{thm:shell_stab} now consists of merely gathering the results above:

\begin{proof}[Proof of Theorem~\ref{thm:shell_stab}]
	The stability property follows from Theorem~\ref{thm:shell_stab_2}. The fact that $\mu^\delta \to \frac 12 \ln({1-\frac{2M}{r}}) $ and  $\lambda^\delta \to -\frac 12 \ln({1-\frac{2M}{r}})$ uniformly on $]2M,\infty[$ as $\delta \to 0$ was shown in Lemma~\ref{lemma:lochyconvergence}. The pointwise convergence of $f^\delta$ to zero is a simple consequence of $f^\delta = \delta \varphi$ on the support of $f^\delta$. 
\end{proof}

\end{document}